\definecolor{wwhhii}{rgb}{1.,1.,1.}
\definecolor{rreedd}{rgb}{1.,0.,0.}
\definecolor{uuuuuu}{rgb}{0.26666666666666666,0.26666666666666666,0.26666666666666666}
\definecolor{darkgreen}{HTML}{0d8513}
\newtheorem{theorem}{Theorem}[section]
\newtheorem{lemma}[theorem]{Lemma}
\newtheorem{cla}[theorem]{Claim}
\newtheorem{prop}[theorem]{Proposition}
\theoremstyle{definition}
\DeclareMathOperator{\argmax}{argmax}
\DeclareMathOperator{\Exp}{Exp}
\DeclareMathOperator{\Var}{Var}
\DeclareMathOperator{\Graph}{Graph}
\DeclareMathOperator{\hypo}{hypo}
\newcommand{\NTP}{\mathsf{NoTP}}
\newcommand{\E}{\mathbb E}
\newcommand{\PP}{\mathbb P}
\newcommand{\Z}{\mathbb Z}
\newcommand{\R}{\mathbb R}
\newcommand{\N}{\mathbb N}
\newcommand{\LL}{\mathbb L}
\newcommand{\don}{\mathds{1}}
\newcommand{\cC}{\mathcal C}
\newcommand{\cK}{\mathcal K}
\newcommand{\cB}{\mathcal B}
\newcommand{\cE}{\mathcal E}
\newcommand{\cA}{\mathcal A}
\newcommand{\cL}{\mathcal L}
\newcommand{\ocE}{\overline{\cE}}
\newcommand{\cH}{\mathcal H}
\newcommand{\cU}{\mathcal U}
\newcommand{\sF}{\mathscr F}
\newcommand{\LE}{\mathscr L}
\newcommand{\fG}{\mathsf R}
\newcommand{\sfS}{\mathsf S}
\newcommand{\fr}{\mathsf r}
\newcommand{\bc}{\mathbf c}
\newcommand{\boo}{\mathbf 0}
\newcommand{\bn}{\mathbf n}
\renewcommand\tableofcontents{
  \null\hfill\textbf{\Large\contentsname}\hfill\null\par
  \@mkboth{\MakeUppercase\contentsname}{\MakeUppercase\contentsname}
  \@starttoc{toc}
}
\g@addto@macro\normalsize{
  \setlength\abovedisplayskip{5pt}
  \setlength\belowdisplayskip{5pt}
  \setlength\abovedisplayshortskip{3pt}
  \setlength\belowdisplayshortskip{3pt}
}
\numberwithin{equation}{section}
\begin{document}
\title{Discrete geodesic local time converges under KPZ scaling}
\author{Shirshendu Ganguly
\thanks{Department of Statistics, UC Berkeley, Berkeley, CA, USA. e-mail: sganguly@berkeley.edu}
\and
Lingfu Zhang
\thanks{Department of Statistics, UC Berkeley, Berkeley, CA, USA. e-mail: lfzhang@berkeley.edu}
}
\date{}

\maketitle

\begin{abstract}
The directed landscape constructed in \cite{DOV} produces a directed, planar, random geometry, and is believed to be the universal scaling limit of two-dimensional first and last passage percolation models in the Kardar-Parisi-Zhang (KPZ) universality class. 
Geodesics in this random geometry form an important class of random continuous curves exhibiting fluctuation theory quite different from that of Brownian motion. In this vein, counterpart to Brownian local time, BLT (a self-similar measure supported on the set of zeros of Brownian motion), a local time for geodesics, GLT, was recently constructed and used to study fractal properties of the directed landscape in \cite{GZ+}.
It is a classical fact and can be proven using the Markovian property of Brownian motion that the uniform discrete measure on the set of zeros of the simple random walk converges to BLT. 
In this paper, we prove the ``KPZ analog'' of this by showing that the local times for discrete geodesics in pre-limiting integrable last passage percolation models converge to GLT under suitable scaling guided by KPZ exponents.

In absence of any Markovianity, our arguments rely on the recently proven convergence of geodesics in pre-limiting models to that in the directed landscape \cite{DV21}.
However, this input concerns macroscopic properties and is too coarse to capture the microscopic information required for local time analysis.  To relate the macroscopic and microscopic behavior,  a key ingredient is an \emph{a priori} smoothness estimate of the local time in the discrete model, proved relying on geometric ideas such as the coalescence of geodesics as well as their stability  under perturbations of boundary data.
\end{abstract}

\setcounter{tocdepth}{2}
\tableofcontents

\section{Introduction and main results} 
The zero set of simple random walks is an object whose study is as old as that of the random walk itself. Estimates on the random walk hitting zero are crucial in understanding the properties of random walk bridges and excursions. The Brownian counterpart gives rise to a canonical fractal subset $Z$ of the real line analogous to the Cantor set but random. The Hausdorff dimension of $Z$ is known to be $1/2$ stemming from the fact that the probability a simple random walk is at $0$ at time $n$ decays as $\frac{1}{\sqrt{n}}.$ There is a natural measure supported on $Z$ known as the Brownian local time (BLT) which can be obtained as the density of the occupation measure of Brownian motion (This is the measure $\mu$ on $\R$ such that $\mu(A)=\int_{0}^1 \mathbf{1}(\mathbb{B}_s\in A)\mathrm{d}s$) at zero. 

While Brownian motion is a canonical one-dimensional random curve, this article focuses on the local time of another class of random curves arising from models of random geometry. A paradigmatic example is what is known as First passage percolation (FPP) where one considers a distortion of the usual graph metric on $\Z^2$
by putting i.i.d.\ random non-negative weights on the edges and considering the corresponding random weighted path metric. A natural class of random curves arising in this situation is formed by the weight-minimizing paths between points; namely the geodesics in the associated metric space. 

The geodesic behavior, encoding a lot of information 
about the underlying geometry, has been an important 
object of investigation both in the rigorous and non-
rigorous literature going back to the 1960s  when the 
model of FPP was first proposed by Hammersley and Welsh. 

However, despite the interest,  the innate hardness of the model has made it very hard to verify rigorously any of these predictions. 
Nonetheless, an important related class of models for which some progress has been made is
known as Last passage percolation (LPP). Ignoring 
microscopic specifications, in general terms they all 
consist of an environment of random noise
through which \emph{directed} paths travel, accruing the 
integral of the noise along it --- a quantity known as 
energy or weight. Given two endpoints, a maximization is done 
over the weights of all paths with these endpoints with the 
optimizing path still being called a \emph{geodesic}, despite its maximizing nature as opposed to minimizing as in FPP.

Remarkably, particular choices of the underlying noise
environment make LPP ``integrable'', admitting certain 
exact algebraic properties such as distributional identities 
involving eigenvalues of random matrices, young 
diagrams, and so on, which provide key inputs for 
probabilistic analysis. 

Both FPP and LPP models are expected to belong to the 
 Kardar-Parisi-Zhang (KPZ) universality class which 
 encompasses  a broad family of models of one-dimensional random growth 
which are believed to exhibit certain common features, such as local growth driven by white noise and a 
smoothing effect from some notion of surface tension and
a slope-dependent non-linear growth.
 It is predicted  that the fluctuation theories of such models are dictated by universal scaling exponents and limiting distributions, 
exhibited by solutions to a stochastic PDE known as the KPZ equation \cite{Corwin12,Corwin16,HQ18}.
For these $(1+1)$-dimensional models, the resulting
evolution of the growth interface $h(t,x)$ is described by 
 the characteristic triple $(1,\frac{1}{3},\frac{2}{3})$ of 
exponents: at time $t$, the value of $h(t,x)$ is of order
$t^1$, with order $t^{1/3}$ deviations from its mean, and 
further, non-trivial correlations are observed when $x$ is
 varied on the scale of $t^{2/3}$.
Furthermore, $h(t,x)$ on proper centering and scaling  
according to these exponents, is expected to converge to a universal limit $t\to\infty$ \cite{QR14, MQR}. 
 In terms of geodesics in, say, LPP, this corresponds to having a weight fluctuation exponent $1/3$ and a 
 transversal fluctuation exponent (deviation from the 
 straight line joining its endpoints) $2/3$.

Another key feature in such random settings is the
 phenomenon of {\it{geodesic coalescence}} where 
 geodesics between different points  tend to merge 
 together and share a significant fraction of their journey,
  passing through the atypically high values of the 
  underlying noise variables.  Note that this is quite 
  different from standard Euclidean geometry where 
  geodesics are merely straight lines and hence have 
  trivial intersection patterns.

Owing to such coalescence properties, even though the geodesic is a global object, with high probability, a local segment of it is likely to depend only on the nearby noise. In other words, it exhibits a form of weak decay of correlation property.

In \cite{DOV} the directed landscape was constructed. 
This is a random continuum metric space expected to be
 the universal scaling limit of a class of FPP and LPP
models. Moreover, in a follow-up work \cite{DV21}, a class
of exactly solvable LPP models was indeed shown to
converge to the landscape. The latter also admits geodesics (a non-trivial fact) and it was further shown that 
the pre-limiting geodesics converge to the limiting one in a certain topology.    

However, the topology of convergence is not refined enough to conclude the convergence of several observables of interest associated with the geodesics to their limiting counterparts. 

In this article, we focus on the convergence of one such observable, namely that of the discrete local time, i.e., the amount of time a pre-limiting discrete geodesic intersects a line.
While defining the discrete local time is straightforward (the formal definition appears later in \eqref{dlt}), its analog for the directed landscape is rather non-trivial and was one of the key accomplishments in \cite{GZ+}  by the same authors. The notion of local time turned out to be a  crucial ingredient in the analysis of the exceptional set of points witnessing geodesic non-coalescence in the above-mentioned paper. 
 
Thus this paper along with \cite{GZ+} initiates the study of local times in the context of geodesics, akin to the study of the same for random walks and Brownian motion which  can be considered classical, and hence forms a part of a bigger program of investigating the similarities and dissimilarities between the geodesic and Brownian motion. The arguments in this paper can be seen as one exploiting the weak decay of correlation the geodesic exhibits as alluded to above --- a style of argument that we expect would be useful in several other applications going forward.  We now move on to the statement of our results first by 
introducing the key objects in play.

\subsection{Directed landscape}
The directed landscape $\cL$ is a random continuous function from the parameter space 
\[
\R^4_\uparrow = \{u = (p; q) = (x, s; y, t) \in \R^4 : s < t\}
\]
to $\R$ constructed in the breakthrough work \cite{DOV}. 
It satisfies the following composition law 
\begin{equation}  \label{eq:DL-compo}
\cL(x,r;z,t) = \max_{y\in \R}     \cL(x,r;y,s) + \cL(y,s;z,t)
\end{equation}
for any $x,z \in \R$ and $r<s<t$.
For a continuous path $\pi:[s,t] \to \R$, its weight under the metric $\cL$ is given by
\[
\|\pi\|_\cL = \inf_{k\in \N} \inf_{s=t_0< t_1 < \cdots < t_k = t} \sum_{i=1}^k \cL(\pi(t_{i-1}), t_{i-1}; \pi(t_i), t_i).
\]
Such a path $\pi$ is said to be a geodesic from $(\pi(s), s)$ to $(\pi(t), t)$ if $\|\pi\|_\cL = \cL(\pi(s), s; \pi(t), t)$.
It is shown in \cite{DOV} that almost surely, there exists at least one geodesic from $p$ to $q$ for any $u = (p; q) = (x, s; y, t) \in \R^4_\uparrow$.
We let $\pi_u = \pi_{(p;q)}=\pi_{(x,s;y,t)}:[s,t]\to\R$ denote any such geodesic from $p$ to $q$.
Further, for any fixed $u\in \R^4_\uparrow$, almost surely the geodesic $\pi_u$ is unique.
{Towards establishing its universal nature, in \cite{DOV,DV21}, the directed landscape $\cL$ and the associated geodesics are proved to be the scaling limits of their counterparts for various exactly solvable LPP models, including Brownian LPP on $\R\times \Z$, Poissonian LPP on $\R^2$, and LPP on $\Z^2$ with i.i.d.\ Exponential or Geometric weights (Exponential/Geometric LPP).}\\

\noindent
\textbf{Semi-infinite geodesics:} Going beyond point-to-point geodesics, 
for each $p=(x,s) \in \R^2$ and $r\in \R$, we denote by $\pi_p^r:[s,\infty)\to \R$ a semi-infinite geodesic started from $p$ in the $r$ direction; i.e. we have $t^{-1}\pi_p^r(t) \to r$ as $t\to\infty$, and for any $s\le s_1 < s_2 <\infty$, the restriction of $\pi_p^r$ on $[s_1,s_2]$ is a geodesic (from $(\pi_p^r(s_1), s_1)$ to $(\pi_p^r(s_2), s_2)$).
When $r=0$, we will drop the $r$ dependence and simply use $\pi_p=\pi_p^0$. 
While the issues regarding the existence and uniqueness of such semi-infinite geodesics are not straightforward, they were treated in a systematic way in \cite[Section 4]{GZ+} and \cite{RV21,BSS22}, with the following established:
almost surely, such semi-infinite geodesic $\pi_p$ exists for any $p\in \R^2$;
and for fixed $p$, almost surely there is a unique semi-infinite geodesic.

The landscape and thereby the semi-infinite geodesic $\pi_{(0,0)}$ admits invariance under appropriate scalings
guided by the exponent triple $(1,1/3,2/3)$, characteristic 
of the KPZ universality class, much like Brownian motion 
which is invariant under diffusive scaling. Hence, as in the 
case of the latter, both $\cL$ and $\pi_{(0,0)}$ are expected to exhibit various fractal or self-similar properties.   \\

\subsection{Pre-limiting model: Exponential LPP} On the pre-limiting side, our choice of the model is Exponential LPP\footnote{Our arguments would work equally well for Geometric LPP, the other known integrable LPP model on $\Z^2$. Two other known integrable models are Poissonian LPP on $\R^2$ and Brownian LPP on $\R\times \Z$, and the arguments in this paper also imply similar conclusions for the natural notions of local times in these settings. A longer discussion appears following Theorem \ref{thm:converge-lc}.}.  
Consider directed last passage percolation (LPP) on $\Z^2$ with i.i.d.\ Exponential weights on the vertices, i.e., we have a random field $\{\omega_p: p\in \Z^2\}$ where $\omega_p$ are i.i.d.\ $\Exp(1)$ random variables. For any up/right path $\gamma$ from $p$ to $q$ where $p\preceq q$ (i.e., $p$ is co-ordinate wise smaller than or equal to $q$)  the weight of $\gamma$ is defined by 
\[T(\gamma):=\sum_{w\in \gamma} \omega_{w}.\] (Note that owing to the continuity of the Exponential random variables, almost surely, $T(\gamma_1)\neq T(\gamma_2)$ for $\gamma_1\neq \gamma_2$).
For any two points $p$ and $q$ with $p\preceq q$, we shall denote by $T_{p,q}$ the last passage time from $p$ to $q$; i.e., 
\[T_{p,q}=\max_\gamma T(\gamma),\]
where the maximum is over all directed paths from $p$ to $q$.
The (almost surely unique) directed path with the maximum weight is the geodesic from $p$ to $q$, denoted as $\Gamma_{p,q}$.
When $p\not\preceq q$, we let $T_{p,q}=-\infty$.

In the setting of Exponential LPP, the existence and uniqueness of semi-infinite geodesics have been established (see \cite{Cou11, FP05}), and we summarize the known facts here.
For any $p\in\Z^2$, there is an almost surely unique upper-right path from it, denoted as $\Gamma_p=\{p=(x_0, y_0), (x_1,y_1), \cdots \}$, such that for any $0\le i \le j$, the path $(x_i,y_i), \cdots, (x_j,y_j)$ is $\Gamma_{(x_i,y_i), (x_j,y_j)}$, and $\lim_{i\to\infty} \frac{x_i}{y_i}=1$.\\
\noindent\textbf{Discrete semi-infinite geodesic:} Such a $\Gamma_p$ is referred to as the $(1,1)$-direction semi-infinite geodesic from $p$.

Almost surely, for any sequence $q_n\in\Z^2$ that goes to infinity in the $(1,1)$-direction, $\Gamma_{p,q_n}$ converges to $\Gamma_p$ in the sense of local limits (i.e., any initial segment of $\Gamma_{p}$ is also the initial segment of $\Gamma_{p,q_n}$ for all large $n$).
Also for any $p, q\in \Z^2$, the paths $\Gamma_p$ and $\Gamma_q$ are the same except for finitely many points, i.e., they will eventually merge and stay together after finitely many steps.
Thus all the semi-infinite geodesics in the $(1,1)$-direction form a tree structure.

Throughout we shall always assume (the probability $1$ event) that for any $p\preceq q\in \Z^2$, there is a unique geodesic $\Gamma_{p,q}$,
and a unique $(1,1)$-direction semi-infinite geodesic $\Gamma_p$.

Further, for the rest of this paper, we let $d, ad:\Z^2\to \Z$ be the functions $d(x,y)=x+y$ and $ad(x,y)=x-y$. For each $n\in\Z$, we let $\LL_n=\{p\in\Z^2: d(p)=2n\}$.
Given that the orientation of the directed landscape is vertically up, for comparison purposes it might be more natural to rotate the Exponential LPP model by $\pi/4$ so that the vertical and horizontal directions are the temporal and spatial directions respectively. However, we do not adopt such a change and stick to the traditional definition so as to be consistent with the previous literature on Exponential LPP,  which will be quoted several times in this paper.

\subsection{Main result}

We first recall the local time for the semi-infinite geodesic $\pi_{(0,0)}(t)$ in the directed landscape {from \cite[Section 7.1]{GZ+}} where it was introduced and constructed.
We start by considering the occupation measure by defining for any $w>0$ and $h\ge 0$,
\[
L_w(h):= \int_0^h \don[\pi_{(0,0)}(t)\in [-w,w]] dt.
\]
The following proposition defines the local time as the density of the occupation measure at $0.$
\begin{prop}[\protect{\cite[Proposition 2.4, Proposition 7.4]{GZ+}}]  \label{prop:om-limit}
Almost surely there exists a non-decreasing function $L:[0,\infty) \to \R$, such that $L(h)=\lim_{w\to 0} (2w)^{-1}L_w(h)<0$ for any $h\ge 0$.
In addition, there exist constants $c,C>0$ such that for any $0\le g < h$ and $M>0$, we have
\[
\PP[L(h)-L(g)>M(h-g)^{1/3}] < Ce^{-cM}.
\]
\end{prop}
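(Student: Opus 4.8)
The plan is to realize $L(h)$ as the occupation density at level $0$ of the semi-infinite geodesic $\pi:=\pi_{(0,0)}$, produce it as an $L^{2}$ and then almost sure limit of the normalized occupation times $(2w)^{-1}L_{w}(h)$, and control its increments through a Kac-type moment expansion; throughout, the KPZ scaling invariance of $(\cL,\pi)$ under $(x,t)\mapsto(\lambda^{2}x,\lambda^{3}t)$ reduces every estimate to a bounded window. Concretely, this invariance yields $\{\lambda^{-2}\pi(\lambda^{3}t)\}_{t\ge0}\overset{d}{=}\{\pi(t)\}_{t\ge0}$, hence $L_{\lambda^{2}w}(\lambda^{3}h)\overset{d}{=}\lambda^{3}L_{w}(h)$ jointly in $(w,h)$, so that a limit $L$, if it exists, must satisfy $\{L(\lambda^{3}h)\}_{h}\overset{d}{=}\lambda\{L(h)\}_{h}$; this is the source of the exponent $\tfrac13$ in the tail bound.

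The technical core is a set of regularity estimates for the law of the geodesic's location --- precisely the microscopic information that the coarse convergence of geodesics does not supply. I would establish: (i) for $t$ in a compact subset of $(0,\infty)$, $\pi(t)$ has a density $f_{t}$ that is bounded and locally H\"older of some exponent $\alpha\in(0,\tfrac12)$ near $0$, uniformly in $t$; (ii) for $0<s<t$ the pair $(\pi(s),\pi(t))$ has a joint density $p_{s,t}$ obeying $p_{s,t}(x,y)\le C(t-s)^{-2/3}\Phi\!\big((y-x)(t-s)^{-2/3}\big)$ for some rapidly decaying $\Phi$ (reflecting the $(t-s)^{2/3}$ transversal fluctuation of the geodesic across $[s,t]$), together with a H\"older-$\alpha$ modulus in $x$ near $0$ whose constant blows up strictly slower than $(t-s)^{-1}$ as $s\to t$; and (iii) the iterated version: the conditional density of $\pi(t_{i})$ at $0$ given the trajectory up to time $t_{i-1}$ is at most $C(t_{i}-t_{i-1})^{-2/3}$, uniformly. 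These would be extracted from the Airy-sheet/Busemann description of the direction-$0$ semi-infinite geodesic combined with geodesic coalescence and the stability of geodesics under perturbations of boundary data, and I expect these estimates --- especially (ii)--(iii) --- to be the main obstacle: this is the continuum analogue of the \emph{a priori} smoothness estimate that this paper develops in the discrete model.

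\emph{Existence of the limit.} Fix $h$; for $w'<w$ put $g_{w,w'}(x):=(2w)^{-1}\mathbf 1[|x|\le w]-(2w')^{-1}\mathbf 1[|x|\le w']$, which integrates to zero. Expanding the square and using (ii),
\[
\E\Big[\big((2w)^{-1}L_{w}(h)-(2w')^{-1}L_{w'}(h)\big)^{2}\Big]=\int_{0}^{h}\!\!\int_{0}^{h}\iint g_{w,w'}(x)\,g_{w,w'}(y)\,p_{s,t}(x,y)\,dx\,dy\,ds\,dt;
\]
replacing $p_{s,t}(x,y)$ by $p_{s,t}(x,y)-p_{s,t}(0,y)$ (legitimate since $\int g_{w,w'}=0$) and invoking the $x$-H\"older bound, the inner double integral is $O(w^{\alpha})$ times a kernel that is integrable in $(s,t)$ --- the singularity at $s=t$ being integrable by the blow-up rate in (ii) --- so the left-hand side is $\le C_{h}\,w^{\alpha}$. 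Thus $(2w)^{-1}L_{w}(h)$ is $L^{2}$-Cauchy along any geometric subsequence, and Chebyshev together with Borel--Cantelli upgrades this to a.s.\ convergence along it; since $w\mapsto L_{w}(h)$ is non-decreasing, letting the common ratio of the subsequence tend to $1$ pins down the full $w\to0$ limit $L(h)$. Doing this for all dyadic $h$, then using monotonicity of $L$ together with the continuity of $L$ implied by the tail bound below, extends the conclusion to every $h$; the monotonicity of $L$ is immediate from $L_{w}(h)\ge L_{w}(g)$.

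\emph{The exponential tail.} By scaling it suffices to bound the upper tail of $L(1)$. Writing $\E\big[((2w)^{-1}L_{w}(1))^{k}\big]=k!\int_{0<t_{1}<\cdots<t_{k}<1}(2w)^{-k}\,\PP\big[\,|\pi(t_{i})|\le w\ \forall i\,\big]\,dt$ and applying (i)--(iii) iteratively gives $\PP\big[\,|\pi(t_{i})|\le w\ \forall i\,\big]\le(Cw)^{k}\prod_{i=1}^{k}(t_{i}-t_{i-1})^{-2/3}$ with $t_{0}:=0$, so the Dirichlet integral yields $\E\big[((2w)^{-1}L_{w}(1))^{k}\big]\le C^{k}\,k!/\Gamma(1+k/3)\le(C')^{k}(k!)^{2/3}$ uniformly in $w$; hence $\E[L(1)^{k}]\le(C')^{k}(k!)^{2/3}$, and optimizing Markov's inequality over $k$ gives $\PP[L(1)>M]\le e^{-cM^{3/2}}\le Ce^{-cM}$. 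For a general increment $0\le g<h$: when $h-g\lesssim g$ one runs the same expansion on the interval $[g,h]$, the extra one-point factor $f_{t_{1}}(0)\le Cg^{-2/3}$ being compensated by an overall factor $(h-g)^{2/3}$; when $h-g\gtrsim h$ one simply uses $L(h)-L(g)\le L(h)$ together with the bound for $L(h)\overset{d}{=}h^{1/3}L(1)$; in either case $\PP[L(h)-L(g)>M(h-g)^{1/3}]<Ce^{-cM}$.
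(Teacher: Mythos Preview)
This proposition is quoted from \cite{GZ+} and not proved in the present paper, so I compare your sketch to what can be inferred about that paper's approach from the inputs imported here. Your route to \emph{existence} --- an $L^{2}$-Cauchy argument for $(2w)^{-1}L_{w}(h)$ driven by a H\"older-type two-point smoothness estimate for the law of $\pi$ --- is correct in outline and matches \cite{GZ+}; your claim (ii) is exactly the continuum analogue of Proposition~\ref{prop:occup-diff} in this paper, and you correctly flag its proof as the substantive input.

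The gap is in the tail argument. Your iterated estimate (iii), bounding the conditional density of $\pi(t_{i})$ at $0$ given $\pi|_{[0,t_{i-1}]}$ uniformly by $C(t_{i}-t_{i-1})^{-2/3}$, presumes a Markov-type control that the geodesic does not possess: the $\sigma$-field generated by $\pi|_{[0,t_{i-1}]}$ carries information about the Busemann function at height $t_{i-1}$ and hence about the landscape above that height, so this conditioning genuinely distorts the law of the future segment of the geodesic. You acknowledge (iii) is ``the main obstacle'' but supply no argument for it, and it does not follow from one- or two-point estimates or from coalescence alone; so the Kac-moment bound $\E[L(1)^{k}]\le (C')^{k}(k!)^{2/3}$ is unjustified. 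The tail, however, needs no $k$-point input: Lemma~\ref{lem:geo-ocp-tail} already gives $\PP\big[(2w)^{-1}(L_{w}(h)-L_{w}(g))>M(h-g)^{1/3}\big]<Ce^{-cM}$ with constants independent of $w$, and passing $w\to0$ once $L$ exists yields the stated bound directly --- this is the route taken in \cite{GZ+}. Your Kac-moment scheme, if (iii) could be salvaged, would give a sharper $e^{-cM^{3/2}}$ tail, but only $e^{-cM}$ is claimed.
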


It is clear from the definition that $L$ is supported inside the zero set of $\pi_{(0,0)}$. Further, the above estimate indicates, and it was indeed proved in \cite{GZ+}, that the latter has Hausdorff dimension $1/3.$ This is in contrast to the $1/2$ dimension-ness of the zero set of Brownian motion. The disparity stems from the fact that while Brownian motion is $\frac12^{-}$-H\"older regular, $\pi_{(0,0)}$ is $\frac23^{-}$-H\"older regular \cite[Proposition 12.3]{DOV} (see also \cite{dauvergne2022three} where geodesics are shown to have $3/2$-variation). 

Correspondingly, in Exponential LPP, for each $n\in\N$ define the discrete local time function $L^{(n)}:\R_{\ge 0}\to \R_{\ge 0}$ by 
\begin{equation}\label{dlt}
L^{(n)}(h) = 2^{2/3}n^{-1/3}|\{0\le i \le hn: (i,i) \in \Gamma_{(0,0)} \}|.
\end{equation} 
Here the factor $2^{2/3}n^{-1/3}$ makes $L^{(n)}$ be in the same scale as $L$, dictated by the appearance of the same factors in the convergence of Exponential LPP to the directed landscape (to be recorded shortly in Section \ref{ssec:lpptodl}).
The main result of this paper is the convergence of $L^{(n)}$ to $L$.
\begin{theorem}  \label{thm:converge-lc}
As $n\to\infty$, we have $L^{(n)}\to L$ weakly, in the topology of uniform convergence on compact sets.
\end{theorem}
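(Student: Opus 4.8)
The strategy is to establish tightness of $\{L^{(n)}\}_n$ together with the identification of every subsequential limit as $L$, the two being bridged by a window‑comparison argument that connects the macroscopic convergence of discrete geodesics to $\cL$ from \cite{DV21} with the microscopic count defining $L^{(n)}$. \emph{The technical heart, and what I expect to be the main obstacle, is an a priori regularity estimate that is uniform in $n$}: there are $c,C>0$ so that for all $n$, all $0\le g<h$ with $h-g\gtrsim 1/n$, and all $M>0$,
\[
\PP\big[L^{(n)}(h)-L^{(n)}(g) > M(h-g)^{1/3}\big] \le C e^{-cM}.
\]
This is the discrete counterpart of Proposition \ref{prop:om-limit}, but since the geodesic tree carries no Markov structure it must be proven geometrically. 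The plan is to express $L^{(n)}(h)-L^{(n)}(g)$ through the portion of $\Gamma_{(0,0)}$ between the levels $\LL_{gn}$ and $\LL_{hn}$; using coalescence one replaces this semi‑infinite geodesic by a point‑to‑point geodesic that enters and exits that slab, and then uses the stability of geodesics under perturbation of the entry/exit data to compare the local‑time count with one in a resampled configuration in which it is dominated by a more tractable quantity --- e.g.\ a sum over many translates of locally independent‑like contributions, or a hierarchical decomposition of the slab --- whose exponential concentration yields the stated tail.

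Granting this estimate, tightness of $\{L^{(n)}\}_n$ in the space of functions on $[0,\infty)$ with the topology of uniform convergence on compacts is routine: each $L^{(n)}$ is non‑decreasing, summing the increment bound over dyadic scales gives a modulus‑of‑continuity estimate uniform in $n$ that forces every subsequential limit to be locally $(1/3-\varepsilon)$‑Hölder, hence continuous, and pointwise tightness at a fixed $h$ is the case $g=0$. Since the $L^{(n)}$ are step functions converging to a continuous limit, the distinction between the Skorokhod and uniform topologies is immaterial in the limit.

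For the identification, fix $w>0$ and let $M^{(n)}_w(h)$ be the number of levels $i\le hn$ at which $\Gamma_{(0,0)}$ meets $\LL_i$ within transversal distance of order $wn^{2/3}$ of the diagonal, normalized so that $M^{(n)}_w$ and $(2w)^{-1}L_w$ sit at the same scale. Two inputs are needed. First, for each fixed $w$ and every continuity point $h$ of $(2w)^{-1}L_w$, one has $M^{(n)}_w(h)\to (2w)^{-1}L_w(h)$ as $n\to\infty$: this is a Riemann‑sum statement, following from the convergence of the rescaled discrete semi‑infinite geodesic $\Gamma_{(0,0)}$ to $\pi_{(0,0)}$ --- a consequence of the geodesic convergence in \cite{DV21} together with convergence of semi‑infinite geodesics --- and the fact that $\pi_{(0,0)}$ a.s.\ spends zero time exactly at $\pm w$. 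Second, $|L^{(n)}(h)-M^{(n)}_w(h)|$ is of order $w^{1/2}$, with probability tending to one as $w\to 0$, uniformly in $n$: both quantities are normalized sums of the per‑line occupation counts of $\Gamma_{(0,0)}$, the occupation of a line at transversal coordinate $j$ agrees in law (by translation invariance of the environment) with the diagonal occupation of a semi‑infinite geodesic started at transversal distance $j$ from the diagonal, and coalescence --- two such geodesics at distance $\delta$ merge within order $\delta^{3/2}$ levels --- confines the discrepancy to the occupation accumulated over the first order $(wn^{2/3})^{3/2}$ levels, which the a priori estimate bounds by order $(wn^{2/3})^{1/2}$ per line; summing over the $\asymp wn^{2/3}$ lines and renormalizing yields the claimed $w^{1/2}$. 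Promoting the distributional comparison here to the needed pathwise bound, using the exponential tails and resulting moment control from the a priori estimate, is one of the more delicate points.

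Finally, fix a subsequence along which $L^{(n)}$ converges in law and realize it (Skorokhod) on a common space so that, along the subsequence, the rescaled environment and its geodesics converge a.s.\ to $\cL$ and $\pi_{(0,0)}$ --- hence $L^{(n)}\to\widetilde L$ uniformly on compacts a.s. From
\[
|L^{(n)}(h)-L(h)| \le |L^{(n)}(h)-M^{(n)}_w(h)| + |M^{(n)}_w(h)-(2w)^{-1}L_w(h)| + |(2w)^{-1}L_w(h)-L(h)|,
\]
letting $n\to\infty$ and then $w\to 0$ gives $\widetilde L(h)=L(h)$ a.s.\ for each $h$: the first term is of order $w^{1/2}$ uniformly, the second vanishes for fixed $w$, and the third vanishes as $w\to 0$ by Proposition \ref{prop:om-limit}. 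Since $\widetilde L$ and $L$ are continuous and agree on a dense set, $\widetilde L=L$ a.s.; with tightness this yields $L^{(n)}\to L$ weakly in the topology of uniform convergence on compact sets.
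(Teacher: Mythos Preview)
Your overall architecture matches the paper's: the same three-term decomposition through the window quantity $M^{(n)}_w$ (the paper calls it $(2w)^{-1}L^{(n)}_w$), the same use of Hausdorff convergence of the rescaled geodesic for the middle term, and Proposition~\ref{prop:om-limit} for the outer term. Where your argument diverges, and where it has a genuine gap, is exactly the step you flag as ``delicate'': bounding $|L^{(n)}(h)-M^{(n)}_w(h)|$, i.e., comparing $Z^{(n)}[0]$ with $Z^{(n)}[m]$ for $|m|\le wn^{2/3}$.

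Your coalescence heuristic does not control the right object. Translation invariance gives $Z^{(n)}[m]\overset{d}{=}$ (diagonal occupation of $\Gamma_p$) for a suitably shifted start $p$, and coalescence in the \emph{same} environment bounds the pathwise difference between the diagonal occupations of $\Gamma_\boo$ and $\Gamma_p$. But $Z^{(n)}[m]$ is the occupation of the line $\{x-y=m\}$ by the \emph{same} geodesic $\Gamma_\boo$, not the diagonal occupation of $\Gamma_p$; these only agree in law. So the coalescence bound you obtain controls $|Z^{(n)}[0]-X|$ for some $X\overset{d}{=}Z^{(n)}[m]$, which says nothing about $|Z^{(n)}[0]-Z^{(n)}[m]|$ pathwise or in $L^1$. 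Exponential tails on the individual variables cannot promote this distributional coupling to a pathwise one; there is no triangle inequality to close here. The paper bypasses this obstruction by working at the level of second moments: one expands $\E[(Z^{(n)}[m]-Z^{(n)}[0])^2]$ into sums of two-point probabilities $\PP[p_1,p_2\in\Gamma_\boo]$ and proves that this two-point function varies smoothly as one of the points is shifted spatially by order $wn^{2/3}$ (Proposition~\ref{prop:occup-diff}). That smoothness is obtained by shifting the noise above a separating line and using a stability estimate for geodesics under perturbation of boundary data (Proposition~\ref{prop:stb}) --- a coalescence argument, but applied to the environment-shifted coupling rather than to two geodesics in a fixed environment. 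This is the missing idea in your proposal.

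A minor point: the paper does not use your uniform a priori tail bound for tightness. Once $\E[(L^{(n)}(h)-L(h))^2]\to 0$ at each $h$, uniform convergence on compacts follows from monotonicity of $L^{(n)}$ and $L$ and the continuity of $L$, avoiding a separate tightness argument.
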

We remark that our arguments also go through (essentially verbatim) for other exact-solvable LPP models, such as Geometric, Poissonian, and Brownian LPP. For Geometric LPP,  one issue that arises is that of  multiple geodesics between vertices owing to the discreteness of vertex weights. In such a situation one could consider the ``rightmost'' (or ``leftmost'') semi-infinite geodesic from $(0,0)$.
In Poissonian LPP, one considers a Poisson point process on $\R^2$, and the geodesic between two points $p, q \in \R^2$ with $p\preceq q$ (i.e., $p\le q$ in each co-ordinate) is any up/right path between them, passing through the most number of points.
In this setting, the pre-limiting local time can similarly be defined by counting the number of intersections of the ``rightmost'' (or ``leftmost'') semi-infinite geodesic from $(0,0)$ with the diagonal $y=x$.
Finally, in Brownian LPP, the random field is given by a sequence of independent two-sided Brownian motions $\{\cB_i\}_{i\in\Z}$, and for any two points $p, q \in \R\times\Z$ with $p\preceq q$ (i.e., $p\le q$ in each co-ordinate as before), the geodesic between them is the (almost surely unique) up/right path in $\R\times\Z$, which accumulates the largest Brownian motion increments.
In this case, the analogous pre-limiting local time could be taken to be the number of integers $i$ such that $(i,i)$ is on the semi-infinite geodesic from $(0,0)$.
For each of these exactly-solvable LPP models, using minor variations of the arguments in this paper, one can deduce  results analogous to Theorem \ref{thm:converge-lc}.

As another remark on the generality of the methods in this paper, we point out that (by slightly adapting our arguments) similar convergence of local times can also be proved for point-to-point geodesics, going beyond semi-infinite geodesics, with the  limit  being the local time of the corresponding point-to-point geodesic in the directed landscape, which can also be constructed using the same methods employed in \cite{GZ+} for the construction of the semi-infinite geodesic local time.\\

We finish this subsection with a brief overview of the more classical literature on BLT as well as the more recent advances in understanding geodesics models of random geometry which this paper contributes to as well. 
The notion of local time was invented by Paul L\'evy, and BLT was constructed as a continuous process by Trotter \cite{trotter1958property}. 
In terms of convergence from discrete models, it was first shown by Knight in \cite{knight1963random} that the simple symmetric random walk local time converges to BLT. 
Convergence of the local time of more general random walks to BLT was later established in \cite{perkins1982weak, borodin1981asymptotic}.
(See also \cite{skorokhod1965limit, skorokhod1966limit} whose results plus Tanaka's formula also imply convergence to BLT of general random walks, as explained in the introduction of \cite{borodin1986character}. In the same paper \cite{borodin1986character} the difference between the local time of random walks and BLT was carefully analyzed.)

However, perhaps more relevant to our paper is the recent explosion of developments around the understanding of planar models of random geometry in the KPZ universality class (see e.g.\ \cite{balazs2020non,busani2022universality,BHS,BSS,BSS19,dauvergne2022three,dauvergne2021disjoint,janjigian2022geometry,seppalainen2020coalescence,Z20}). As already mentioned above, a key feature of these models is coalescence where geodesics between different points tend to merge with each other. This has naturally led to a recent investigation of atypical behavior of non-coalescence exhibited by a fractal set of points (see e.g. \cite{BGH,BGHhau,GH21,D21last}). In fact, such a recent study in \cite{GZ+} necessitated the construction of the geodesic local time where it was shown that the latter is $\frac{1}{3}^-$ H\"older regular in contrast to $\frac{1}{2}^-$ H\"older regularity for BLT. 

 The consequences of coalescence as well as regularity properties of passage times have found several other applications as well recently. (See \cite{ganguly2021random} for a short survey).

\subsection{Idea of proof}  \label{sec:iop}
Before outlining the key ideas in the proof it might be instructive to discuss an approach to prove the counterpart statement showing the convergence of the random walk local time to the Brownian version. For example, in \cite[Section 6.1]{morters2010brownian}) it is shown that the occupation measure of Brownian motion $\cB$ of the cylinder $[0,1]\times [-w,w]$ is approximated well by the number of times $\cB$ crosses the cylinder, and hence one can equivalently take the limit of the number of such crossings to construct Brownian local time.
The proof of this equivalence relies heavily on the Markovian nature of $\cB$ which provides precise crossing number estimates. The exact embedding of random walk in $\cB$ then allows one to compare the crossing number of the former to the crossing number of the latter, thereby implying convergence.

However, for the geodesic, being a rather global object, there is no Markovianity. 
Nonetheless, owing to coalescence, it does, albeit in a rather implicit sense, exhibit some decay of correlation.  

Our proof relies on making use of this and the known convergence of Exponential LPP and the associated geodesics to their counterparts in the directed landscape (from \cite{DV21}).
However, the topology of convergence is in terms of Hausdorff distance between the geodesics thought of as compact subsets of $\R^2$ which is clearly not refined enough to capture the local time on the line $y=0.$ Nonetheless, it does help relate the occupation measures of the geodesics in the limiting and pre-limiting settings. For instance, the convergence allows us to deduce that the amount of time the ``suitably scaled'' pre-limiting geodesic $\Gamma_{0}$ spends in the cylinder $[-w,w] \times [0,1]$ is close to that spent by $\pi_{(0,0)}$ (relying on an apriori estimate on the occupation measure developed in \cite{SSZ} which rules out $\Gamma_{(0,0)}$ spending a significant amount of time near the boundary of such a cylinder, i.e. in the narrower cylinders $[-w-\delta,-w+\delta] \times [0,1]$ or $[w-\delta,w+\delta] \times [0,1]$).

As already stated, the occupation time of the cylinder $[-w,w] \times [0,1]$ for $\pi_{(0,0)}$ will be denoted by $L_{w}$. The corresponding quantity for $\Gamma_{(0,0)}$ will be denoted by $L^{(n)}_w$ (formal definition appears in \eqref{eq:defn-Ln}).

With the above notation, the preceding discussion implies  that $L^{(n)}_w\to L_{w}$ as $n\to \infty$.
Further, it was shown in \cite[Proposition 2.4]{GZ+} that $L=\lim_{w\to \infty}\frac{L_w}{2w}.$ Thus the proof of Theorem \ref{thm:converge-lc} will be complete once we prove the following key estimate (which most of the paper is devoted to): $$\big|L^{(n)}-\frac{L^{(n)}_w}{2w}\big|=o_w(n^{1/3}),$$ where $o_w(1)$ denotes a quantity that converges to $0$ with $w.$ This will be shown by establishing that the pre-limiting local time exhibits a degree of \emph{smoothness}. More precisely, for any $m,$ we will show that the amounts of time spent by the geodesic $\Gamma_{(0,0)}$ on the lines $\{x-y=m\}$ with $|m|\le wn^{2/3}$, {up to time $n$ (i.e., below the line $\{x+y=2n\}$)}, are very close to each other. For expository purposes, it would be convenient to work with unscaled coordinates in the proof in which the unscaled local time on the line $\{x-y=m\}$ will be denoted by $Z^{(n)}[m]$. In this notation it will suffice to show that $\frac{\E[(Z^{(n)}[m]-Z^{(n)}[0])^2]}{n^{2/3}}=o_{w}(1)$ for all $|m|\le wn^{2/3}$ (recall that $Z^{(n)}[0]$ is expected to be of order $n^{1/3}$ owing to the transversal fluctuations of $\Gamma_{(0,0)}$ being of order $n^{2/3}$ below the line $\{x+y=2n\}$).

Note that it is also natural to speculate that $Z^{(n)}[m]$ varies smoothly as $m$ varies, while the expectation is maximized at $m=0$. Thus we expect $\E[(Z^{(n)}[m]-Z^{(n)}[0])^2]$ to be of order $w^4n^{2/3}$ (due to a second order effect) when $|m|$ is approximately $wn^{2/3}$.
However, we refrain from pursuing such a refined estimate.

We briefly outline in the remainder of this subsection the key steps involved in establishing the above-mentioned smoothness across Sections \ref{ssec:conti-ocp-mea} and \ref{ssec:stab}, which can be considered as the main novelty in the paper. The key input is a stability estimate which shows geodesics, between arbitrary nice enough boundary conditions (formal definition appears in Proposition \ref{prop:stb}), are stable under mild perturbations of the boundary data.  This is done in two steps. First, we show that if the boundary data is perturbed, the original and the perturbed geodesic must be close to each other during a large fraction of their journey owing to certain regularity and decay properties of last passage values as the endpoints are varied. The next step is to show that it is unlikely that two geodesic paths spending a lot of time next to each other do not coalesce. This last statement has already been proven and applied in a few other instances (\cite{GH20, BHS, MSZ}).

Given this general stability, the smoothness of the probability of the geodesic passing through nearby points, and hence the local time, is shown by shifting the noise environment suitably (causing a perturbation of the boundary data as depicted in Figure \ref{fig:42}). Under the natural coupling of the corresponding geodesics, the event of one geodesic passing through a given point and the other geodesic not passing through the corresponding shifted point would entail a non-coalescence of the two geodesics whose probability is shown to be low using the arguments in the preceding paragraph. 

We end this discussion by remarking on a technical point that in fact to perform second-moment computations we would need to show the smoothness of two-point functions, i.e., the probability of the geodesic passing through a pair of points, as one of the two points is varied in space.

\subsection*{Organization of the remaining text}
The rest of this paper is organized as follows.
In Section \ref{sec:prelim} we set up notation conventions, and collect useful results about Exponential LPP and the directed landscape from the literature.
The next three sections are devoted to proving Theorem \ref{thm:converge-lc}.
Section \ref{sec:convlt} gives the general framework by carrying out the second-moment computations and finishes the proof of Theorem \ref{thm:converge-lc}, modulo the smoothness of the LPP local time on different lines (Proposition \ref{prop:Ln-conti}).
In Section \ref{ssec:conti-ocp-mea} we prove Proposition \ref{prop:Ln-conti}, via a key estimate on the two-point distribution of the geodesic $\Gamma_{(0,0)}$ (Proposition \ref{prop:occup-diff}).
In Section \ref{ssec:stab} we prove the stability of geodesics with respect to boundary conditions, which is a crucial input in the proof of Proposition \ref{prop:occup-diff}.
In Appendix \ref{ssec:max-ocp-seg}
we prove a key occupation time estimate (Lemma \ref{lem:disc-max}), using arguments from \cite{SSZ}.

\subsection*{Acknowledgement}
LZ would like to thank Allan Sly for many discussions on the stability of geodesics with respect to boundary conditions.
SG is partially supported by NSF grant DMS-1855688, NSF Career grant DMS-1945172, and a Sloan Fellowship. 
The research of LZ is supported by the Miller Institute for Basic Research in Science, at the University of California, Berkeley.

\section{Preliminaries}  \label{sec:prelim}
\subsection{Notations}
We record some basic notations that will appear throughout the paper (in addition to those already introduced in the previous section). 
For any real numbers $x$ and $y$, let $x\wedge y=\min\{x, y\}$ and $x\vee y=\max\{x, y\}$.
For $x<y,$ we let $\llbracket x,y\rrbracket=[x,y]\cap\Z$.
For any set $A$ we use $|A|$ to denote its cardinality while we will use $\LE$ to denote the Lebesgue measure on $\R$. 
We will also use $\boo$ to denote $(0,0)$. 

We would consider the space of all non-empty compact subsets of $\R^2$.
With the Hausdorff distance, it becomes a metric space, and the induced topology is referred to as the Hausdorff topology.

For the notation $\argmax$ throughout this paper, when using it we do not assume that there is a unique maximum (but there must exist at least one); and when there are multiple ones we take an arbitrary one.

We end by stating a standard practice we will adopt in many of the proofs in this paper where we will use $c,C>0$ to denote small and large constants.
Their specific values will depend on the context and may change from line to line. To avoid repetition, we will not be restating this before every proof.

\subsection{Properties of the directed landscape}

In this subsection, we collect some known results about the directed landscape.

\subsubsection{Basic symmetries and estimates}
We first record some of the various symmetries that the directed landscape satisfies.
\begin{lemma}[\protect{\cite[Lemma 10.2]{DOV}}]  \label{lem:DL-symmetry}
As continuous functions on $\R^4_\uparrow$, $\cL$ is equal in distribution as
\begin{itemize}
    \item (flip invariance) \[
(x,s;y,t) \mapsto \cL(-y,-t;-x,-s);
\]
\item (skew-shift invariance) \[
(x,s;y,t) \mapsto \cL(x+cs+z,s+r;y+ct+z,t+r) + (t-s)^{-1}((x-y-c(t-s))^2 - (x-y)^2)
\]
for any $c,z,r\in \R$ (when $c=0$ this is also referred to as translation invariance);
\item (scaling invariance) \[
(x,s;y,t) \mapsto w\cL(w^{-2}x,w^{-3}s;w^{-2}y,w^{-3}t);
\]
for any $w>0$.
\end{itemize}
\end{lemma}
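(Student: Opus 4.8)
The statement to prove is Lemma~\ref{lem:DL-symmetry}, the three basic symmetries of the directed landscape (flip, skew-shift, scaling invariance). This is quoted from \cite[Lemma 10.2]{DOV}, so the cleanest route is to derive it from the defining properties of $\cL$ established in \cite{DOV}, chiefly its construction as a limit of (rescaled) last passage values across the Airy sheet / parabolic Airy line ensemble, together with the composition law \eqref{eq:DL-compo}. The plan is to verify each symmetry at the level of finite-dimensional distributions and then invoke continuity of $\cL$ (so that equality of finite-dimensional marginals of two continuous processes on $\R^4_\uparrow$ upgrades to equality in distribution as random continuous functions).

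\textbf{Scaling invariance.} I would start here since it is the most structural. Recall that $\cL$ is built from the Airy sheet $\mathcal{S}$, which itself is a functional of the parabolic Airy line ensemble, and that the Airy sheet satisfies the $1$:$2$:$3$ scaling relation $\mathcal{S}(x,y) \stackrel{d}{=} w^{-1}\mathcal{S}(w^2 x, w^2 y)$ (as a process). The directed landscape is assembled from independent Airy-sheet-type increments over a dyadic (or arbitrary) hierarchy of time intervals via \eqref{eq:DL-compo}; applying the sheet scaling to each increment and keeping track of how the time variable enters the normalization of each increment produces exactly the claimed transformation $(x,s;y,t)\mapsto w\,\cL(w^{-2}x, w^{-3}s; w^{-2}y, w^{-3}t)$. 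Concretely I would check that both sides satisfy the same composition law and have the same one-time marginals (which are governed by the Airy sheet), and then use the fact — also from \cite{DOV} — that these two pieces of data characterize the law of $\cL$.

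\textbf{Skew-shift invariance.} The spatial translation part ($c=z=0$) and the temporal shift part ($r$) are immediate from the analogous stationarity of the Airy line ensemble and the time-homogeneity built into the construction. The genuinely nontrivial piece is the shear $x\mapsto x+cs$, $y\mapsto y+ct$ together with the parabolic correction term $(t-s)^{-1}\big((x-y-c(t-s))^2-(x-y)^2\big)$. The natural way to see this is via the affine/shear covariance of the Airy sheet: the Airy sheet composed with a linear tilt equals (in law) the Airy sheet plus an explicit parabola, because the parabolic Airy process $\mathcal{A}(x)-x^2$ has stationary increments under spatial shift, and converting between the ``$\mathcal{A}(x)$'' normalization and the ``$\mathcal{A}(x)-x^2$'' normalization is precisely what generates the quadratic correction. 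Pushing this through a composition of increments, the per-increment corrections telescope: the sum $\sum_i (t_i - t_{i-1})^{-1}[(\cdots)^2 - (\cdots)^2]$ over a partition $s=t_0<\cdots<t_k=t$ collapses, after the right algebra with the shear, to the single boundary term $(t-s)^{-1}\big((x-y-c(t-s))^2-(x-y)^2\big)$. I would verify the telescoping identity as the crux of this part and otherwise reduce to the sheet-level statement.

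\textbf{Flip invariance.} This is the ``time-reversal'' symmetry $(x,s;y,t)\mapsto \cL(-y,-t;-x,-s)$. It follows from the corresponding symmetry of the Airy sheet, $\mathcal{S}(x,y)\stackrel{d}{=}\mathcal{S}(-y,-x)$ (which in turn comes from the fact that last passage percolation is symmetric under reflecting both coordinates and swapping endpoints), combined with the observation that reversing time in the composition law \eqref{eq:DL-compo} — reading a partition $s=t_0<\cdots<t_k=t$ backwards — gives the same recursive structure. Again I would check it on one-time marginals and on compatibility with composition, then conclude by the characterization of $\cL$. The main obstacle across all three parts is bookkeeping: correctly tracking how the time variables rescale the spatial normalization of each Airy-sheet increment, and in the skew-shift case verifying that the parabolic corrections telescope to the stated boundary expression; none of the steps is conceptually deep given the construction in \cite{DOV}, but the algebra of the corrections in the skew-shift case is where an error is most likely to creep in.
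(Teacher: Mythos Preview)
The paper does not prove this lemma; it is simply quoted from \cite[Lemma 10.2]{DOV} as an external input, with no argument given. Your sketch is a reasonable outline of how the symmetries are established in \cite{DOV}, but there is nothing in the present paper to compare it against.
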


Another basic property of the directed landscape {(owing to planarity}) is the following quadrangle inequality.
\begin{lemma}[\protect{\cite[Lemma 9.1]{DOV}\cite{BGH}}]  \label{lem:DL-quad}
For any $s<t$, $x_1<x_2$, $y_1<y_2$, we have
\[
\cL(x_1,s;y_1,t) + \cL(x_2,s;y_2,t) \ge \cL(x_1,s;y_2,t) + \cL(x_2,s;y_1,t).
\]
\end{lemma}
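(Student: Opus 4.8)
\textbf{Proof proposal for Lemma \ref{lem:DL-quad} (the quadrangle inequality).}

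The plan is to derive the inequality from the composition law \eqref{eq:DL-compo} together with a planarity argument about crossing geodesics. First I would fix a time $u$ strictly between $s$ and $t$ (say $u = (s+t)/2$), and use \eqref{eq:DL-compo} to write each of the four quantities as a maximum over an intermediate point: $\cL(x_1,s;y_2,t) = \cL(x_1,s;z_1,u) + \cL(z_1,u;y_2,t)$ for some maximizing $z_1 \in \R$, and similarly $\cL(x_2,s;y_1,t) = \cL(x_2,s;z_2,u) + \cL(z_2,u;y_1,t)$ for some maximizing $z_2$. The point is to argue that the "crossing" configuration — a path from $x_1$ at time $s$ to $y_2$ at time $t$ passing through $z_1$, and a path from $x_2 > x_1$ at time $s$ to $y_1 < y_2$ at time $t$ passing through $z_2$ — must have these paths intersect in the strip $[s,t]$, which lets us swap their tails.

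The cleaner route, which avoids discussing geodesics directly, is purely in terms of the values of $\cL$. Since $x_1 < x_2$ and $y_1 < y_2$, and without loss of generality $z_1 \le z_2$ OR $z_1 > z_2$. In the case $z_1 \le z_2$: we have $x_1 < x_2$ and $z_1 \le z_2$, so $\cL(x_1,s;z_1,u)$ and $\cL(x_2,s;z_2,u)$ are the two "parallel" terms, while $\cL(x_1,s;z_2,u) + \cL(x_2,s;z_1,u)$ is the crossing pair at the lower level — to handle this I would recurse, i.e. the quadrangle inequality on the sub-strip $[s,u]$ gives $\cL(x_1,s;z_1,u) + \cL(x_2,s;z_2,u) \ge \cL(x_1,s;z_2,u) + \cL(x_2,s;z_1,u)$. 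This is circular as stated, so instead I would bound directly: $\cL(x_1,s;y_1,t) \ge \cL(x_1,s;z_2,u) + \cL(z_2,u;y_1,t)$ (by \eqref{eq:DL-compo}, since the max is over all intermediate points) and $\cL(x_2,s;y_2,t) \ge \cL(x_2,s;z_1,u) + \cL(z_1,u;y_2,t)$. The first uses $z_2$ as the (suboptimal) intermediate point for the $(x_1,s;y_1,t)$ passage, and the second uses $z_1$ for the $(x_2,s;y_2,t)$ passage. Adding these two inequalities yields
\[
\cL(x_1,s;y_1,t) + \cL(x_2,s;y_2,t) \ge \big(\cL(x_1,s;z_2,u) + \cL(z_1,u;y_2,t)\big) + \big(\cL(x_2,s;z_1,u) + \cL(z_2,u;y_1,t)\big),
\]
and the right-hand side, regrouped, equals $\big(\cL(x_2,s;z_1,u) + \cL(z_1,u;y_2,t)\big) + \big(\cL(x_1,s;z_2,u) + \cL(z_2,u;y_1,t)\big) = \cL(x_2,s;y_2,t)\text{'s decomposition}$... wait — more carefully, the right side is exactly $\big[\cL(x_1,s;z_1,u)+\cL(z_1,u;y_2,t)\big]+\big[\cL(x_2,s;z_2,u)+\cL(z_2,u;y_1,t)\big]$ only if $z_1,z_2$ were chosen as the maximizers for the $(x_1,s;y_2,t)$ and $(x_2,s;y_1,t)$ passages respectively — which is exactly how I picked them. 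So the right-hand side equals $\cL(x_1,s;y_2,t) + \cL(x_2,s;y_1,t)$, completing the proof. The case $z_1 > z_2$ is symmetric: one checks that then it is the pair $\cL(x_1,s;y_1,t), \cL(x_2,s;y_2,t)$ whose decompositions can be lower-bounded by crossing through the "wrong" intermediate points, or one appeals to flip/reflection symmetry (Lemma \ref{lem:DL-symmetry}) to reduce to the previous case after relabeling.

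The main obstacle, and the place requiring care, is verifying that the crossing argument genuinely goes through in \emph{both} orderings of $z_1, z_2$ without circularity, and in particular that no additional hypothesis (such as regularity of the maximizers, or their a.s. uniqueness) is secretly needed — the argument above only uses that \eqref{eq:DL-compo} holds as an \emph{equality} for the two "crossing" passages and as an \emph{inequality} ($\ge$, taking a non-optimal intermediate point) for the two "parallel" passages, so it is in fact robust. An alternative, more geometric presentation — which is perhaps what \cite{DOV} and \cite{BGH} do — is to take actual geodesics $\pi_{(x_1,s;y_2,t)}$ and $\pi_{(x_2,s;y_1,t)}$, observe that since $x_1 < x_2$ at time $s$ and $y_1 < y_2$ at time $t$ the two continuous paths must cross at some time $u^* \in [s,t]$ at a common point $z^*$, then surgically splice: the path $x_1 \rightsquigarrow z^* \rightsquigarrow y_1$ has weight $\ge \cL(x_1,s;z^*,u^*) + \cL(z^*,u^*;y_1,t)$ which (by superadditivity of weights along subdivisions, i.e. $\|\pi\|_\cL$ decomposing) equals the corresponding pieces of the two original geodesics' weights, and similarly for $x_2 \rightsquigarrow z^* \rightsquigarrow y_2$; summing recovers the inequality. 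I would present the value-based argument as the main proof since it sidesteps existence/regularity of geodesics entirely, and merely remark on the geometric picture.
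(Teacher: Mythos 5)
This is a quoted result (the paper gives no proof, only the citations to \cite{DOV} and \cite{BGH}), so the only question is whether your argument is sound — and your ``main'' value-based argument is not. The error is in the final regrouping. After summing your two inequalities, the right-hand side is
\[
\cL(x_1,s;z_2,u) + \cL(z_2,u;y_1,t) + \cL(x_2,s;z_1,u) + \cL(z_1,u;y_2,t),
\]
which contains the \emph{crossed} initial segments $\cL(x_1,s;z_2,u)$ and $\cL(x_2,s;z_1,u)$. The target $\cL(x_1,s;y_2,t)+\cL(x_2,s;y_1,t)$ decomposes as $\cL(x_1,s;z_1,u)+\cL(z_1,u;y_2,t)+\cL(x_2,s;z_2,u)+\cL(z_2,u;y_1,t)$, with the \emph{uncrossed} initial segments. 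No choice of maximizers makes $\cL(x_1,s;z_2,u)$ equal to $\cL(x_1,s;z_1,u)$; to close the gap you would need $\cL(x_1,s;z_2,u)+\cL(x_2,s;z_1,u)\ge \cL(x_1,s;z_1,u)+\cL(x_2,s;z_2,u)$, which is itself an instance of the quadrangle inequality on the sub-strip $[s,u]$ (in the favorable ordering of $z_1,z_2$) and fails in the unfavorable ordering. Your case split on $z_1\le z_2$ versus $z_1>z_2$ does not escape this: in each case one direction of the reduction is exactly the statement being proved on a shorter strip, and the bisection never bottoms out. The obstruction is structural — the swap of tails is only legitimate if the two crossing passages pass through a \emph{common} point at a \emph{common} time, and the maximizers $z_1,z_2$ at a fixed time $u$ are generally distinct.

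The argument you relegate to a remark is the correct (and standard) one: take geodesics $\pi$ from $(x_1,s)$ to $(y_2,t)$ and $\pi'$ from $(x_2,s)$ to $(y_1,t)$ (these exist almost surely by \cite{DOV}); since $\pi(s)<\pi'(s)$ and $\pi(t)>\pi'(t)$, continuity gives a time $u^*\in(s,t)$ (strictly interior because $x_1\ne x_2$ and $y_1\ne y_2$) with $\pi(u^*)=\pi'(u^*)=z^*$. Restrictions of geodesics are geodesics, so $\cL(x_1,s;y_2,t)=\cL(x_1,s;z^*,u^*)+\cL(z^*,u^*;y_2,t)$ and similarly for $\pi'$; regrouping the four pieces around the shared point $z^*$ and applying the composition law \eqref{eq:DL-compo} as an inequality yields the claim. (An extension to all quadruples simultaneously on a single full-probability event then follows from continuity of $\cL$.) You should promote that to the main proof and drop the value-based one.
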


We next record some basic estimates on the passage times and its regularities.

\begin{lemma}[\protect{\cite[Corollary 10.7]{DOV}}]  \label{lem:DLbound}
There is a random number $R$ such that the following is true.
First, for any $M>0$ we have $\PP[R>M] < Ce^{-cM^{3/2}}$ for some universal constants $c,C>0$.
Second, for any $u=(x,s;y,t) \in \R_\uparrow^4$, we have
\[
\left| \cL(x,s;y,t) + \frac{(x-y)^2}{t-s}\right| \le R(t-s)^{1/3} \log^{4/3}(2(\|u\|+2)^{3/2}/(t-s)) \log^{2/3}(\|u\|+2). 
\]
\end{lemma}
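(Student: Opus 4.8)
\textbf{Proof plan for Lemma \ref{lem:DLbound}.}

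The statement is the standard global upper/lower bound on the directed landscape with a logarithmic correction uniform over all of $\R^4_\uparrow$, so the plan is to build it up from the known control on a single compact box and then stitch boxes together via the scaling and skew-shift invariances of Lemma \ref{lem:DL-symmetry}. First I would establish the estimate on a fixed normalized box, say for $u=(x,0;y,1)$ with $\|u\|\le 2$: here $\cL(x,0;y,1)+(x-y)^2$ is a continuous function on a compact set, and one-point tail bounds for $\cL$ (Tracy--Widom-type upper tail with exponent $3/2$ and lower tail with exponent $3$) combined with the modulus-of-continuity estimates for $\cL$ from \cite{DOV} give that $\sup$ of $|\cL(x,0;y,1)+(x-y)^2|$ over this box has a tail of the form $Ce^{-cr^{3/2}}$. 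Chaining these over a countable exhausting family of such unit-scale boxes (indexed by dyadic spatial and temporal translates) with a union bound produces a single random constant $R_0$ controlling $\cL$ simultaneously on all unit-time boxes, at the cost of the logarithmic factor in $\|u\|$ (the number of boxes within distance $r$ is polynomial in $r$, so $\log$ of it is $O(\log r)$, absorbed into the claimed $\log^{2/3}(\|u\|+2)$ factor).

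Next I would remove the restriction that $t-s=1$ by scaling. Given an arbitrary $u=(x,s;y,t)$ with $\tau:=t-s$, the scaling invariance $(x,s;y,t)\mapsto w\,\cL(w^{-2}x,w^{-3}s;w^{-2}y,w^{-3}t)$ with $w=\tau^{1/3}$ turns the time increment into $1$ and rescales the spatial increment by $\tau^{-2/3}$. Applying the unit-scale bound to the rescaled point, then scaling back, converts the $(x-y)^2$ term correctly into $(x-y)^2/\tau$, multiplies the error term by $\tau^{1/3}$, and — this is the delicate bookkeeping — replaces $\|u\|$ inside the logarithm by the norm of the rescaled point, which is of order $\|u\|\tau^{-2/3}$ up to constants. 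This is exactly the source of the $\log(2(\|u\|+2)^{3/2}/(t-s))$ argument in the statement: $(\|u\|+2)^{3/2}/\tau$ matches $((\|u\|+2)/\tau^{2/3})^{3/2}$. I would use skew-shift (with $c=0$, i.e.\ translation) invariance to first reduce to $s=0$, so that only one parameter $\tau$ needs scaling.

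The main obstacle I expect is the uniformity: one needs the single random constant $R$ to work for \emph{all} $u$ simultaneously, and a naive union bound over a discrete net must be upgraded to a bound over the continuum via the H\"older-type modulus of continuity of $\cL$ on each box, while keeping all the resulting logarithmic losses within the precise functional form stated. The b\-ookkeeping of which logarithm absorbs which polynomial-in-$r$ counting factor, and checking that after scaling back the combined error still has the stated shape (one power of $\log^{4/3}$ from the modulus-of-continuity chaining on the rescaled box, one power of $\log^{2/3}$ from the spatial range of the net), is where the care is needed; the tail $Ce^{-cM^{3/2}}$ for $R$ then comes out of the one-point upper-tail exponent $3/2$, which is the weakest of the tails involved and hence controls the final bound.
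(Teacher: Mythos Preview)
The paper does not give its own proof of this lemma: it is quoted directly from \cite[Corollary 10.7]{DOV} and recorded in the preliminaries without argument. So there is nothing in the present paper to compare your plan against.

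That said, your outline is a reasonable sketch of how the result is obtained in \cite{DOV}: one controls $\cL$ on unit-scale boxes using one-point tails plus modulus-of-continuity, takes a union bound over a dyadic net indexed by location and scale, and reads off the stated logarithmic factors from the rescaling $w=\tau^{1/3}$ and the polynomial growth of the net. If you want to present a proof rather than a citation, the main thing to tighten is the accounting for the two separate logarithmic factors (the $\log^{4/3}$ from the scale parameter and the $\log^{2/3}$ from the spatial location), which in \cite{DOV} comes out of a specific chaining estimate rather than a generic statement; but for the purposes of this paper a citation suffices.
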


\begin{lemma}[\protect{\cite[Proposition 1.6]{DOV}}]  \label{lem:modcont}
Let $K$ be a compact subset of $\R_\uparrow^4$.
There is a random number $R$ depending only on $K$, such that the following is true.
First, for any $M>0$ we have $\PP[R>M] < Ce^{-cM^{3/2}}$ for some universal constants $c,C>0$.
Second, for any $(x,s;y,t), (x',s';y',t') \in K$, we have
\begin{multline*}
\left| \left(\cL(x,s;y,t) + \frac{(x-y)^2}{t-s} \right)- \left(\cL(x',s';y',t') + \frac{(x'-y')^2}{t'-s'} \right)\right|
\\ 
\le R\left(\tau^{1/3}\log^{2/3}(1+\tau^{-1}) + \xi^{1/2}\log^{1/2}(1+\xi^{-1}) \right),
\end{multline*}
where $\tau = |s-s'| \vee |t-t'|$ and $\tau = |x-x'| \vee |y-y'|$.
\end{lemma}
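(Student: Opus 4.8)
The statement to be proved (Lemma \ref{lem:modcont}) is Proposition 1.6 of \cite{DOV}, a modulus of continuity estimate for the map $(x,s;y,t) \mapsto \cL(x,s;y,t) + (x-y)^2/(t-s)$ on a compact set $K \subset \R^4_\uparrow$. The natural strategy is to reduce the four-parameter continuity statement to a one-parameter-at-a-time statement, and for each parameter to invoke the relevant modulus of continuity for the building blocks of the landscape — namely the Airy sheet / parabolic Airy line ensemble for the spatial variables $x, y$ at fixed times, and the metric composition law \eqref{eq:DL-compo} together with the H\"older regularity in the time variables $s, t$. More precisely, I would first reduce to bounding each of the four one-sided increments: fix three of the four coordinates and vary the fourth, obtaining bounds of the form $\tau^{1/3}\log^{2/3}(1+\tau^{-1})$ when a time coordinate moves by $\tau$, and $\xi^{1/2}\log^{1/2}(1+\xi^{-1})$ when a space coordinate moves by $\xi$; the general estimate follows by the triangle inequality after splitting a move from $(x,s;y,t)$ to $(x',s';y',t')$ into (at most) four single-coordinate moves through intermediate points, all of which can be kept inside a slightly enlarged compact set.

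\textbf{The spatial increments.} For the spatial variables at fixed times, I would use the representation of $\cL$ restricted to a fixed pair of time levels $s<t$ in terms of the Airy sheet, which in turn is built from the parabolic Airy line ensemble via the Airy sheet construction / the RSK-type variational formula of \cite{DOV}. The parabolic Airy line ensemble enjoys a Brownian-type modulus of continuity (each line is locally absolutely continuous with respect to Brownian motion, with Gaussian tails on the Radon--Nikodym derivative, uniformly on compacts), which yields the $\xi^{1/2}\log^{1/2}(1+\xi^{-1})$ modulus with a random constant $R$ satisfying $\PP[R>M] < Ce^{-cM^{3/2}}$ — the $3/2$ in the exponent being the characteristic tail exponent for Airy-type fluctuations. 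The subtraction of the parabola $(x-y)^2/(t-s)$ is exactly what removes the deterministic quadratic drift so that what remains is of the stationary (locally Brownian) type; this is why the estimate is stated for $\cL + (x-y)^2/(t-s)$ rather than $\cL$ itself. A small technical point is handling the dependence on both $x$ and $y$ simultaneously: one fixes $y$, varies $x$, then fixes $x'$ and varies $y$, and the two contributions add.

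\textbf{The temporal increments.} For the time variables, the key tool is the composition law \eqref{eq:DL-compo}: to compare $\cL(x,s;y,t)$ with $\cL(x,s';y,t)$ for $s < s'$ close together, write $\cL(x,s;y,t) = \max_{z} \cL(x,s;z,s') + \cL(z,s';y,t)$, localize the maximizer $z$ to within $O(\tau^{2/3})$ of $x$ using the parabolic curvature and a priori fluctuation bounds (Lemma \ref{lem:DLbound} gives the needed control on $\cL$ over short time spans with the correct power of $\log$), and then estimate $\cL(x,s;z,s')$ — a passage time over a time span $\tau = s'-s$ — by its mean $-(x-z)^2/\tau$ plus a $\tau^{1/3}$-order fluctuation, again with the $e^{-cM^{3/2}}$ tail from Lemma \ref{lem:DLbound}. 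Combining, the difference is $O(\tau^{1/3}\log^{2/3}(1+\tau^{-1}))$; the symmetric argument at the top time $t$ handles the fourth coordinate, using flip invariance (Lemma \ref{lem:DL-symmetry}) to reduce it to the case of the bottom time if desired.

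\textbf{Main obstacle.} The hard part is making the reduction to the building blocks rigorous with \emph{uniform-in-$K$} random constants and the precise $e^{-cM^{3/2}}$ tail, while controlling the logarithmic corrections exactly as stated. The chaining/localization argument for the time increments must simultaneously (i) confine all maximizers to a compact region with overwhelming probability, (ii) handle the short-time passage times where the naive bound degenerates (hence the $\log(1+\tau^{-1})$ factors), and (iii) patch together the estimates at a net of base points and upgrade to all points by continuity, all without losing the tail exponent — a union bound over a polynomially-sized net is affordable precisely because the tails are stretched-exponential. I would organize this as: prove the single-coordinate estimates at a fixed base point with the stated tails (this is where \cite{DOV}'s construction is used most heavily), extend to a dense net by a union bound, then use a Kolmogorov-type continuity / chaining argument to get the full modulus on $K$, absorbing all constants into a single random $R$.
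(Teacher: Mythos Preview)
The paper does not prove this lemma at all: it is simply quoted as \cite[Proposition 1.6]{DOV} and used as a black box, so there is no ``paper's own proof'' to compare against. Your proposal is a reasonable high-level sketch of how such a modulus-of-continuity statement is established in \cite{DOV} (spatial increments via the Brownian regularity of the Airy line ensemble, temporal increments via the composition law plus short-time bounds, then chaining), but for the purposes of this paper no proof is required --- the correct response here is to cite \cite{DOV} and move on.
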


\subsubsection{Airy$_2$ process and local Brownian properties}  \label{sssec:atlocbr}
Another central object in the KPZ universality class is the parabolic Airy$_2$ process on $\R$,
denoted as $\cA_2:\R\to\R$.
It had in fact been rigorously established much before 
$\cL$ \cite{prahofer2002PNG}, and has the same 
distribution as the function $\cL(0,0;\cdot,1)$.
On the other hand, by translation invariance of $\cL$ (see Lemma \ref{lem:DL-symmetry}), there is $\cL(0,0;\cdot,1)\overset{d}{=}\cL(y,0; y+\cdot,1)$ for any $y\in \R$.
Therefore the landscape $\cL$ provides a natural coupling of infinitely many parabolic Airy$_2$ processes rooted at different spatial points.

The process $\cA_2$ also has the following Brownian property.
For $K\in \R, d>0$, let $\cB^{[K,K+d]}$ denote a Brownian 
motion on $[K,K+d]$, with $\cB^{[K,K+d]}(K)=0$. Let $\cA_2^{[K,K+d]}$ denote the random function on $[K,K+d]$ defined by 
$$\cA_2^{[K,K+d]}(x):=\cA_2(x)-\cA_2(K), ~\forall x\in [K,K+d].$$

\begin{theorem}[\protect{\cite[Theorem 1.1]{HHJ}}] \label{thm:airytail}
There exists a universal constant $G>0$ such that the following holds.
For any fixed $M>0$, there exists $a_0=a_0(M)$ such that for any interval $[K,K+d]\subset [-M,M]$ and any measurable set $A \subset \cC\big([K,K+d],\mathbb{R} \big)$ (the space of real continuous functions on $[K,K+d]$) with $0<\PP[\cB^{[K,K+d]}\in A]=a\le a_0$, we have
\[\PP\left(\cA_2^{[K,K+d]} \in A\right) \leq a  \exp \Big( G {M} \big( \log a^{-1} \big)^{5/6} \Big).\]
\end{theorem}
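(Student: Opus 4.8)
The plan is to deduce the stated bound from the Brownian Gibbs property of the Airy line ensemble. Write $\{\mathfrak A_k\}_{k\ge 1}$ for that line ensemble, suitably normalized and related to the parabolic Airy$_2$ process by a fixed deterministic affine change of variables under which $\cA_2(x)=\mathfrak A_1(x)-x^2$ and $\{\mathfrak A_k\}$ enjoys the Brownian Gibbs property: for any $J=[a,b]$, conditionally on $\mathfrak A_1(a),\mathfrak A_1(b)$ and $\mathfrak A_2|_J$, the curve $\mathfrak A_1|_J$ is a Brownian bridge from $\mathfrak A_1(a)$ to $\mathfrak A_1(b)$ conditioned to stay above $\mathfrak A_2$ on $J$. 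Since $\cA_2^{[K,K+d]}(x)=(\mathfrak A_1(x)-\mathfrak A_1(K))-g(x)$ with $g(x):=x^2-K^2$ deterministic, smooth, vanishing at $x=K$, and $\sup_{[K,K+d]}|g'|\le 2(|K|+d)\le 6M$, it suffices to (i) compare $x\mapsto\mathfrak A_1(x)-\mathfrak A_1(K)$ on $[K,K+d]$ with $\cB^{[K,K+d]}$, and then (ii) absorb the deterministic shift $-g$ by Cameron--Martin.

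\emph{Step 1 (Brownian comparison for the Airy line ensemble).} First I would show that $x\mapsto\mathfrak A_1(x)-\mathfrak A_1(K)$ on $[K,K+d]$ is absolutely continuous with respect to $\cB^{[K,K+d]}$, with Radon--Nikodym derivative $D$ satisfying
\[
\PP[D>\ell]\le\exp\!\bigl(-c_M(\log\ell)^{6/5}\bigr),\qquad \ell\ge\ell_0(M),
\]
for some $c_M\gtrsim M^{-6/5}$. To obtain this I would resample $\mathfrak A_1$ via the Brownian Gibbs property on an \emph{enlarged} interval $[K-N,K+d+N]$ --- on which $\mathfrak A_1$ is a Brownian bridge conditioned to stay above $\mathfrak A_2$, hence has density $\mathbf 1_{\mathrm{avoid}}/Z$ against the free bridge, $Z$ being the acceptance probability of this resampling --- and then bound the Radon--Nikodym derivative by $Z^{-1}$ times the (pathwise bounded, heat-kernel) ratio produced by restricting the free bridge on $[K-N,K+d+N]$ back to $[K,K+d]$. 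The crux is a sharp lower-tail bound for $Z$ together with an optimization over $N$: a short $N$ makes the bridge-to-motion ratio heavy-tailed, a long $N$ makes $Z$ small. For this I would use Hammond's jump-ensemble technique, fed by the Tracy--Widom upper tail $\PP[\cA_2(0)>t]\le e^{-ct^{3/2}}$ \cite{prahofer2002PNG} and standard Brownian bridge excursion estimates; it is precisely this optimization that produces the exponent $6/5$ (equivalently the $5/6$ in the final bound), the constant degrading as the interval $[K,K+d]\subseteq[-M,M]$ grows.

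\emph{Steps 2--3 (parabola and truncation).} By Cameron--Martin, adding the deterministic path $-g$ to a Brownian motion multiplies the law by a density whose $p$-th moment is at most $\exp(Cp^2M^3)$; combined with Step 1, $\cA_2^{[K,K+d]}$ is absolutely continuous with respect to $\cB^{[K,K+d]}$ with Radon--Nikodym derivative $\widetilde D$ still satisfying $\PP[\widetilde D>\ell]\le\exp(-c_M(\log\ell)^{6/5})$ for $\ell\ge\ell_1(M)$ (the Gaussian contribution of $g$ has only a $(\log\ell)^2$-type tail, which is lower order and absorbed once $\log\ell\gtrsim M^{3/2}$). Then, for measurable $A$ with $\PP[\cB^{[K,K+d]}\in A]=a$, the identity $\PP[\cA_2^{[K,K+d]}\in A]=\E[\mathbf 1_{\cB^{[K,K+d]}\in A}\widetilde D]$ and the layer-cake formula give $\PP[\cA_2^{[K,K+d]}\in A]\le \ell a+\E[\widetilde D\,\mathbf 1_{\widetilde D>\ell}]\le \ell a+C\ell\exp(-c_M(\log\ell)^{6/5})$ for every $\ell>0$. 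Choosing $\log\ell=(c_M^{-1}\log a^{-1})^{5/6}$ balances the two terms and yields $\PP[\cA_2^{[K,K+d]}\in A]\le a\exp(GM(\log a^{-1})^{5/6})$ once $c_M^{-5/6}\lesssim M$; one takes $a_0(M)$ small enough that this optimal $\ell$ exceeds $\ell_1(M)$ and that the parabola term is genuinely lower order, which is what forces $a_0$ to depend on $M$.

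I expect Step 1 to be the main obstacle --- concretely, the sharp lower-tail estimate for the Brownian Gibbs acceptance probability and the extension-length optimization that pins down the $6/5$ exponent; Steps 2 and 3 are essentially bookkeeping once Step 1 is in hand.
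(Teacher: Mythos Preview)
The paper does not prove this statement: Theorem~\ref{thm:airytail} is quoted as \cite[Theorem 1.1]{HHJ} and used as a black box, with no argument given. There is therefore no proof in the paper to compare your proposal against.

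For what it is worth, your outline is broadly the strategy of the cited reference and its predecessors: the Brownian Gibbs property of the Airy line ensemble, a resampling on an enlarged interval, a lower-tail bound on the acceptance probability, and an optimization over the extension length are indeed what produce the $5/6$ exponent. You are also right that Step~1 is where essentially all the work lies; the jump-ensemble machinery needed to get the sharp tail on the Radon--Nikodym derivative is substantial and is the content of the cited paper, not something one can sketch in a paragraph. One point to watch in Step~2: you want the final constant in front of $(\log a^{-1})^{5/6}$ to scale like $M$, so you need $c_M^{-5/6}\lesssim M$ from Step~1 and you need the Cameron--Martin cost of the parabola (which on $[-M,M]$ contributes a factor like $\exp(CM^3)$ in the density) to be genuinely lower order after the truncation; this is what forces $a_0$ to depend on $M$, as you note, but the bookkeeping deserves care.
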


We will also need the Brownian absolute continuity of the KPZ fixed point with arbitrary initial conditions.
\begin{theorem}[\protect{\cite[Theorem 1.2]{SV21}}]  \label{thm:abs-bm-gen}
Let $t>0$. Take any function $f_0:\R\to\R\cup\{-\infty\}$, such that $f(x)\neq -\infty$ for some $x$, and $\frac{f_0(x)-x^2/t}{|x|} \to -\infty$ as $|x|\to \infty$. Denote
\[
f_t(y) = \sup_{x\in\R} f_0(x) + \cL(x,0;y,t),
\]
then for any $y_1<y_2$, the law of $y\mapsto f_t(y+y_1)-f(y_1)$ for $y \in [0, y_2-y_1]$ is absolutely continuous with respect to the law of a Brownian motion on $[0, y_2-y_1]$.
\end{theorem}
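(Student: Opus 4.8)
The plan is to reduce to the parabolic Airy$_2$ process, whose local Brownian absolute continuity is already built into Theorem~\ref{thm:airytail}, and then to show that passing from the narrow wedge to a general initial condition perturbs the process only by a \emph{valley-shaped} (unimodal) random function, which should be too tame to destroy absolute continuity. By the scaling invariance of Lemma~\ref{lem:DL-symmetry} it suffices to treat $t=1$, and it is enough to prove absolute continuity on an arbitrary fixed window $[y_1,y_2]$. Write $\cS(x,y):=\cL(x,0;y,1)$ and $\xi(y):=\argmax_x\bigl(f_0(x)+\cS(x,y)\bigr)$. First I would localize the argmax: combining the parabolic decay estimate of Lemma~\ref{lem:DLbound} with the hypothesis $\frac{f_0(x)-x^2}{|x|}\to-\infty$ produces an a.s.\ finite random $M$, with rapidly decaying tail, such that $\xi(y)\in[-M,M]$ for every $y\in[y_1,y_2]$; here the quadrangle inequality (Lemma~\ref{lem:DL-quad}) shows that $\xi$ is nondecreasing in $y$, so it is enough to control $\xi(y_1)$ and $\xi(y_2)$.

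On the event $\{M\le m\}$ one then has, for $y\in[y_1,y_2]$,
\[
f_1(y)=\cS(0,y)+\Phi_m(y),\qquad \Phi_m(y):=\sup_{x\in[-m,m]}\bigl(f_0(x)+\cS(x,y)-\cS(0,y)\bigr)
\]
(with $0$ replaced by any fixed point where $f_0$ is finite, if necessary). By Lemma~\ref{lem:DL-quad}, $y\mapsto\cS(x,y)-\cS(0,y)$ is nondecreasing when $x\ge0$ and nonincreasing when $x\le0$, so $\Phi_m=\max(\Phi_m^+,\Phi_m^-)$ with $\Phi_m^+$ nondecreasing and $\Phi_m^-$ nonincreasing; hence $\Phi_m$ is continuous and unimodal on $[y_1,y_2]$, with total variation at most $2\bigl(\sup_{[y_1,y_2]}\Phi_m-\inf_{[y_1,y_2]}\Phi_m\bigr)$, a quantity that Lemma~\ref{lem:modcont} bounds by a random constant with stretched-exponential tail. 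Since $\cS(0,\cdot)\overset{d}{=}\cA_2$ is, on $[y_1,y_2]$, absolutely continuous with respect to Brownian motion, with a Radon--Nikodym derivative having finite moments of all orders by Theorem~\ref{thm:airytail}, the theorem reduces to showing that adding $\Phi_m$ preserves absolute continuity with respect to Brownian motion on $[y_1,y_2]$.

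\emph{The main obstacle} is precisely this last step, and it is where the fine structure of the Airy sheet must be used: a general continuous perturbation of Brownian motion of bounded variation — even a monotone one — need not remain absolutely continuous with respect to Brownian motion (a Cantor-type increasing part is the obstruction), so Cameron--Martin/Girsanov does not apply to $\Phi_m$ directly, and one cannot treat $\cS(0,\cdot)$ and $\Phi_m$ as if they were independent. The route I would pursue is to realize the Airy sheet on the box $[-m,m]\times[y_1,y_2]$ through the Airy line ensemble underlying it and to invoke its Brownian Gibbs property: after resampling the finitely many relevant top curves on a compact sub-interval, that part of the ensemble becomes finitely many Brownian bridges conditioned to be ordered (a conditioning of positive probability, hence absolutely continuous with an explicit bounded Radon--Nikodym derivative), and then $f_1|_{[y_1,y_2]}$ — a fixed measurable functional of these bridges given by the variational formula — should inherit Brownian absolute continuity together with a quantitative bound; the delicate points are making the reduction to finitely many curves on a compact window honest with the required tails (the Airy-sheet value genuinely samples deep curves, so a careful quantitative truncation is needed) and verifying that the variational functional transfers the absolute continuity. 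As an alternative, which I would also try, one can attempt to prove enough regularity of the Airy-sheet difference profile $y\mapsto\cS(x,y)-\cS(0,y)$, uniformly in $x\in[-m,m]$, to place $\Phi_m$ in the Cameron--Martin space and conclude by Girsanov; this hinges on how far the difference profile is from merely monotone, which is itself a subtle question.
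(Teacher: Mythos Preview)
This theorem is not proved in the paper at all: it is quoted verbatim from \cite{SV21} as a preliminary input (note the attribution ``\cite[Theorem 1.2]{SV21}'' in the statement header), so there is no ``paper's own proof'' to compare against. Your task was to prove something the authors simply cite.

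On the substance of your attempt: you correctly identify the obstruction yourself. The decomposition $f_1(y)=\cS(0,y)+\Phi_m(y)$ with $\Phi_m$ unimodal of bounded variation is accurate, but as you note, a bounded-variation (even monotone) perturbation of Brownian motion need not stay absolutely continuous with respect to Brownian motion --- the Cantor staircase is the standard counterexample --- and $\cS(0,\cdot)$ and $\Phi_m$ are not independent, so no amount of tail control on $\Phi_m$ alone will close this gap. Your Cameron--Martin alternative also fails: the Airy sheet difference profile $y\mapsto \cS(x,y)-\cS(0,y)$ is genuinely singular (it is constant on a random full-measure set and increases on a fractal), so $\Phi_m$ is not in $H^1$.

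The route that actually works --- and the one Sarkar--Vir\'ag take --- is the Brownian Gibbs resampling you sketch at the end, but it is substantially more delicate than your outline suggests. One cannot simply ``realize the Airy sheet on $[-m,m]\times[y_1,y_2]$ through finitely many top curves'': the RSK/melon description of the Airy sheet samples arbitrarily deep curves of the Airy line ensemble, and the heart of \cite{SV21} is a careful quantitative argument showing that the contribution of deep curves to the \emph{increment} $f_1(y)-f_1(y_1)$ can be absorbed. So your instinct about the mechanism is right, but what you have written is a statement of the problem rather than a proof.
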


\subsection{Properties of Exponential LPP}
We next collect some known results on Exponential LPP that will be important for us.

\subsubsection{Busemann functions}  \label{ssec:busemann}
Busemann functions were first used to study geodesics in Riemannian manifolds.
They were introduced  in the context of first passage percolation (FPP) models by Hoffman \cite{Ho05, Ho08}.
We now set up the Busemann function in Exponential LPP, which has also been intensively studied and widely used (see e.g. \cite{Se17}). 

By the tree structure of semi-infinite geodesics (already alluded to in the earlier discussion on Exponential LPP), for each $p\in\Z^2$ we let $G(p)= T_{p,\bc} - T_{\boo,\bc}$, where $\bc\in\Z^2$ is the coalescing point of $\Gamma_p$ and $\Gamma_\boo$;
i.e., $\bc$ is the vertex in $\Gamma_p \cap \Gamma_\boo$ with the smallest $d(\bc)$.
Such Busemann function $G$ satisfies the following properties.
\begin{enumerate}
\item For any $p=(x,y)\in\Z^2$ and $n>y$, if we take $m_*=\argmax_{m\ge x} T_{p,(m,n)}+G(m,n+1)$, then $\Gamma_p = \Gamma_{p,(m_*,n)}\cup \Gamma_{(m_*,n+1)}$, and $G(p)=T_{p,(m_*,n)}+G((m_*,n+1))$.
\item For any down-right path $\cU=\{u_k\}_{k\in\Z}$, the random variables $G(u_k)-G(u_{k-1})$ are independent. The law of $G(u_k)-G(u_{k-1})$ is $\Exp(1/2)$ if $u_k=u_{k-1}-(0,1)$, and is $-\Exp(1/2)$ if $u_k=u_{k-1}+(1,0)$. They are also independent of $\omega(p)$ for all $p\in \cU_-$, where $\cU_-$ is the lower part of $\Z^2\setminus \cU$,
\end{enumerate}
The first property is a consequence of the definition of $G$. For the second property, its proof can be found in \cite{Sep20}.

\subsubsection{Passage time estimates}  \label{sssec:ptest}

The passage time $T_{\boo,(m,n)}$ has the same law as the largest eigenvalue of $X^*X$, where $X$ is an $(m+1)\times (n+1)$ matrix of i.i.d.\ standard complex Gaussian entries (see \cite[Proposition 1.4]{Jo20}).
Hence we get the following one-point estimates from \cite[Theorem 2]{LR10}.
\begin{theorem}
\label{thm:lpp-onepoint}
There exist constants $C,c>0$, such that for any $m\geq n \geq 1$ and $x>0$, we have
\begin{equation}  \label{e:wslope}
\PP[T_{\boo, (m,n)}-(\sqrt{m}+\sqrt{n})^{2} \geq xm^{1/2}n^{-1/6}] \leq Ce^{-cx}.    
\end{equation}
In addition, for each $\psi>1$, there exist $C',c'>0$ depending on $\psi$ such that if $\frac{m}{n}< \psi$, we have
\begin{equation}  \label{e:slope}
\begin{split}
&\PP[T_{\boo, (m,n)}-(\sqrt{m}+\sqrt{n})^{2} \geq xn^{1/3}] \leq C'e^{-c'\min\{x^{3/2},xn^{1/3}\}},\\
&\PP[T_{\boo, (m,n)}-(\sqrt{m}+\sqrt{n})^{2} \leq -xn^{1/3}] \leq C'e^{-c'x^3},
\end{split}
\end{equation}
and as a consequence
\begin{equation}
\label{e:mean}
|\E T_{\boo, (m,n)} -(\sqrt{m}+\sqrt{n})^2|\leq C'n^{1/3}.
\end{equation}
\end{theorem}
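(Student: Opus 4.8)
The plan is to deduce the three probabilistic estimates from the exact distributional identity between $T_{\boo,(m,n)}$ and the largest eigenvalue of a Laguerre Unitary Ensemble matrix, together with the non-asymptotic edge tail bounds of Ledoux--Rider, and then to obtain the bound on the mean by integrating those tails. First I would invoke Johansson's identity \cite[Proposition 1.4]{Jo20}: $T_{\boo,(m,n)}$ has the same law as $\lambda_{\max}(X^*X)$, where $X$ is an $(m+1)\times(n+1)$ matrix with i.i.d.\ standard complex Gaussian entries; since $m\ge n$ this is the top eigenvalue of a complex Wishart matrix of dimension $n+1$ built from $m+1$ samples. Its soft edge sits at the classical location $(\sqrt m+\sqrt n)^2$ (replacing $m,n$ by $m+1,n+1$ shifts this by $O(1)$ and alters the fluctuation scale by a bounded factor, both harmless), with edge fluctuations on the scale $\sigma_{m,n}:=(\sqrt m+\sqrt n)^{4/3}(mn)^{-1/6}$. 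Two elementary features of $\sigma_{m,n}$ get used: when $m\ge n$ one has $\sqrt m+\sqrt n\asymp\sqrt m$, hence $\sigma_{m,n}\asymp m^{1/2}n^{-1/6}$ with absolute constants; and when moreover $m/n<\psi$, then $\sigma_{m,n}\asymp n^{1/3}$ with constants depending only on $\psi$.

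With this dictionary, \eqref{e:wslope} follows from the aspect-ratio-uniform upper tail bound of \cite[Theorem 2]{LR10}, which at scale $\sigma_{m,n}\asymp m^{1/2}n^{-1/6}$ gives $\PP[\lambda_{\max}-(\sqrt m+\sqrt n)^2\ge x\,m^{1/2}n^{-1/6}]\le Ce^{-cx}$ for all $x>0$. In the window $m/n<\psi$, where $\sigma_{m,n}\asymp n^{1/3}$, the sharper estimates of \cite[Theorem 2]{LR10} yield the two bounds of \eqref{e:slope}: the upper tail carries the Tracy--Widom-type exponent $x^{3/2}$ for moderate $x$, crossing over to the linear-in-deviation exponent $xn^{1/3}$ once $x\gtrsim n^{2/3}$ (the crossover encoding the exponential right tail of the vertex weights, i.e.\ the ``one atypically large weight'' mechanism), and the lower tail carries the exponent $x^3$ characteristic of depressing the entire edge of the spectrum.

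Finally, for \eqref{e:mean} I would write, with $D:=T_{\boo,(m,n)}-(\sqrt m+\sqrt n)^2$,
\[
\E D=\int_0^\infty \PP[D>t]\,dt-\int_0^\infty\PP[D<-t]\,dt ,
\]
substitute $t=xn^{1/3}$, and bound the integrands by \eqref{e:slope}; since $n\ge 1$ gives $\min\{x^{3/2},xn^{1/3}\}\ge\min\{x^{3/2},x\}$, both $\int_0^\infty e^{-c'\min\{x^{3/2},x\}}\,dx$ and $\int_0^\infty e^{-c'x^3}\,dx$ are finite constants, so each integral is $O(n^{1/3})$ and hence $|\E D|\le C'n^{1/3}$, as claimed.

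The only real work is the bookkeeping in the middle step: carefully matching the centering and scaling conventions of \cite{LR10} to $(\sqrt m+\sqrt n)^2$ and to $m^{1/2}n^{-1/6}$ (respectively $n^{1/3}$), checking that the $O(1)$ perturbation $(m,n)\mapsto(m+1,n+1)$ disturbs none of the exponents, and — the genuinely delicate point — confirming that the constants $C,c$ in \eqref{e:wslope} may be taken independent of the aspect ratio $m/n$ while the constants $C',c'$ in \eqref{e:slope}--\eqref{e:mean} are uniform over $m/n<\psi$, since the LUE edge behaviour degenerates, and the relevant scale migrates from $n^{1/3}$ to $m^{1/2}n^{-1/6}$, as the rectangle becomes thin.
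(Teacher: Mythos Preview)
Your proposal is correct and matches the paper's own treatment: the paper does not give a proof but simply states that the theorem follows from Johansson's distributional identity \cite[Proposition 1.4]{Jo20} together with the tail bounds of \cite[Theorem 2]{LR10}, which is exactly the route you spell out. Your additional discussion of how \eqref{e:mean} follows by integrating the tails in \eqref{e:slope} is the natural (and only) way to read the paper's phrase ``as a consequence.''
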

We also have the following parallelogram estimate.
\begin{prop}  [\protect{\cite[Theorem 4.2]{BGZ}}]
\label{prop:seg-to-seg}
Let $U$ be the parallelogram whose one pair of sides have length $2n^{2/3}$ and are aligned with $\LL_0$ and $\LL_{n}$ respectively, with their midpoints being $(m,-m)$ and $\bn$.
Let $U_1, U_2$ be the parts of $U$ below $\LL_{n/3}$ and above $\LL_{2n/3}$ respectively.
For each $\psi<1$, there exist constants $C,c>0$ depending only on $\psi$, such that when $|m|<\psi n$,
\[
\PP\Big[\sup_{u\in U_1, v\in U_2} |T_{u,v}-\E T_{u,v}| \geq x n^{1/3}\Big]\leq Ce^{-cx}.
\]
\end{prop}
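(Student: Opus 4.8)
The plan is to derive the uniform bound from two inputs: the one-point moderate deviation estimates of Theorem~\ref{thm:lpp-onepoint}, and the lattice quadrangle inequality (the analogue for Exponential LPP of Lemma~\ref{lem:DL-quad}, valid by planarity since two crossing geodesics can be uncrossed without losing weight), together with superadditivity of $T$. The hypothesis $|m|<\psi n$ with $\psi<1$ guarantees that for every $u\in U_1$, $v\in U_2$, and for all the auxiliary point-pairs produced below, the displacement $v-u$ has both coordinates positive with ratio bounded away from $0$ and $\infty$ by a constant depending only on $\psi$, so the tails \eqref{e:slope} and the mean estimate \eqref{e:mean} apply with $\psi$-dependent constants. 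Since \eqref{e:slope} gives both tails, it suffices to bound $\sup_{u,v}(T_{u,v}-\E T_{u,v})$ and $\sup_{u,v}(\E T_{u,v}-T_{u,v})$ by the same argument, and by \eqref{e:mean} we may replace $\E T_{u,v}$ by the explicit profile $g(v-u):=(\sqrt a+\sqrt b)^2$ with $(a,b):=v-u$, up to an additive $O(n^{1/3})$; so the target becomes a uniform bound on $|W_{u,v}|$, $W_{u,v}:=T_{u,v}-g(v-u)$.

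The core ingredient is a modulus-of-continuity estimate: there exist $C,c>0$ (depending only on $\psi$) so that for $u,u'\in U_1$, $v,v'\in U_2$ with $|u-u'|\vee|v-v'|\le\ell$ and $s\ge1$,
\[
\PP\big[\,|W_{u,v}-W_{u',v'}|\ge s\,\ell^{1/3}\,\big]\le C e^{-cs}
\]
(suppressing a harmless logarithmic factor in $2+\ell^{-1}$). One reduces to moving a single endpoint, and splits the move into a transversal and a temporal part. For a transversal move of $u$, the quadrangle inequality makes the increment $T_{u,v}-T_{u',v}$ monotone in the position of $u$ along its anti-diagonal line and of $v$ along its own, so it is pinched between the increments obtained by pushing $u,u',v$ to segment endpoints; each of those is a difference of two last passage times whose starting points are at transversal distance $\le\ell$, and such a transversal increment has an order-$\ell^{1/3}$ fluctuation with exponential tail --- a standard consequence of \eqref{e:slope} and the quadrangle inequality, the point being that the two geodesics coalesce quickly, so the variance of the difference is $\Theta(\ell^{2/3})$ rather than $\Theta(n^{2/3})$. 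For a temporal move, say to $u'$ at height $d(u')\ge d(u)$, put $\tilde u$ on the segment from $u$ to $v$ at height $d(u')$ (so $u\preceq\tilde u\preceq v$ and $\tilde u$ is within $O(\ell)$ of $u'$): superadditivity gives $T_{u,v}\ge T_{u,\tilde u}+T_{\tilde u,v}-\omega_{\tilde u}$, and a matching reverse bound is obtained by over-extending both $T_{u,v}$ and $T_{u',v}$ down to a common base point within $O(\ell)$ of $u$ and $u'$; what remains are short last passage times over journeys of length $O(\ell)$ (controlled by \eqref{e:slope}, and matched by the corresponding change in $g$) and transversal increments between equal-height points at distance $O(\ell)$ (controlled as above). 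Doing the same in the $v$-variable yields the modulus estimate.

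Finally one runs a dyadic chaining over $U_1\times U_2$: at the coarsest scale $\rho_0$ of order $\diam(U)$, only $O(1)$ ``anchor'' pairs remain, each a last passage time between fixed points in the admissible direction class, handled directly by \eqref{e:slope} and a union bound; then one refines through scales $\rho_k=\rho_0 2^{-k}$ down to the lattice, controlling the scale-$k$ telescoping increment by the modulus estimate. The decisive point is that inside each scale-$k$ cell the supremum of the increment is pinned down by the quadrangle-monotonicity argument through a bounded number of reference increments, so the union bound runs only over the $n^{O(1)}2^{O(k)}$ cells; choosing the parameter $s_k$ to grow linearly in $x+k$ keeps $\sum_k s_k\rho_k^{1/3}$ below $\tfrac12 x n^{1/3}$ while the super-exponential $e^{-cs_k^{3/2}}$ decay carried by the upper tail of \eqref{e:slope} absorbs the polynomial cell counts, leaving $Ce^{-cx}$. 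The main obstacle is the temporal half of the modulus estimate: there $g(v-u)$ varies on the scale of the temporal gap, which dwarfs the $n^{1/3}$ fluctuation scale, so monotonicity of increments is unavailable and one must cleanly separate the deterministic drift from the fluctuation via the superadditive/over-extension device and check that the fluctuation of the difference stays of order $\ell^{1/3}$ uniformly; a secondary nuisance is bookkeeping the crossover $\min\{x^{3/2},xn^{1/3}\}$ in the upper tail of \eqref{e:slope} and the logarithmic factors through the multi-scale sum so that the constants in the final bound are $n$-independent.
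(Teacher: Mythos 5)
The paper does not prove this proposition itself: it is imported from \cite[Theorem 4.2]{BGZ}, with the text noting that the result was first proved as \cite[Propositions 10.1, 10.5]{BSS} for Poissonian LPP and in \cite[Appendix C]{BGZ} for the Exponential case. Your overall architecture --- center the field to $W_{u,v}=T_{u,v}-g(v-u)$, prove a modulus of continuity using quadrangle-inequality monotonicity for transversal moves and superadditive over-extension for temporal moves, then chain dyadically against the one-point tails of Theorem \ref{thm:lpp-onepoint} --- is the same kind of argument as in those references, so the strategy is sound in outline.

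There is, however, a concrete error in the central modulus estimate. For a purely transversal displacement of size $\ell$ along an anti-diagonal, the increment $T_{u,v}-T_{u',v}$ fluctuates at order $\ell^{1/2}$, not $\ell^{1/3}$: the two geodesics coalesce by temporal height of order $\ell^{3/2}$ (transversal $\sim$ temporal$^{2/3}$), and the passage-time fluctuation over a journey of that length is $(\ell^{3/2})^{1/3}=\ell^{1/2}$, so the variance is $\Theta(\ell)$ rather than the $\Theta(\ell^{2/3})$ you assert. This is consistent with the random-walk increments of the Busemann function (Section \ref{ssec:busemann}) and with the paper's own regularity condition \eqref{eq:betareg}, both of which carry the exponent $1/2$ in the transversal variable. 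As stated, your bound $\PP[|W_{u,v}-W_{u',v'}|\ge s\,\ell^{1/3}]\le Ce^{-cs}$ is therefore false for transversal separations $1\ll\ell\le 2n^{2/3}$. The chaining can still be closed after the correction, because the transversal diameter of $U$ is only $2n^{2/3}$ and $(n^{2/3})^{1/2}=n^{1/3}$, but you must then track the two directions with anisotropic scales (transversal side $\rho^{2/3}$ when the temporal side is $\rho$, say). A second, related gap: even with the right exponent, the transversal-increment tail bound is not ``a standard consequence of \eqref{e:slope} and the quadrangle inequality.'' The quadrangle inequality only pins the supremum to the extreme endpoints; showing that the resulting endpoint differences fluctuate at scale $\ell^{1/2}$ rather than the trivial $n^{1/3}$ is a genuine local-fluctuation estimate, which in the literature is obtained by comparison with stationary LPP (Busemann/exit-point arguments) or by quantitative coalescence, and would need to be supplied as an additional input.
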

Such a result was first proved as \cite[Proposition 10.1, 10.5]{BSS}, in the setting of Poissionian LPP.
For Exponential LPP, a proof is given in \cite[Appendix C]{BGZ}, following similar ideas.

\subsubsection{Transversal fluctuation}   \label{sssec:transf}
We will rely on the following transversal fluctuation estimates of geodesics, which can be proved using Proposition \ref{prop:seg-to-seg} and some geometric arguments.
Such estimates have appeared and been used in several works \cite{BGZ,MSZ,BSS,BSS19,Z20}.
We quote the following versions for point-to-point and semi-infinite geodesics.
\begin{lemma}[\protect{\cite[Proposition C.9]{BGZ}}] \label{lem:trans-dis-fini}
For any $0<\psi<1$, there exist constants $C,c>0$ such that the following is true.
Take any $n\in\N$ large enough and any $m\in \Z, |m|<\psi n$, $x>0$.
Consider the parallelogram whose four vertices are
$(-xn^{2/3}, xn^{2/3})$, $(xn^{2/3}, -xn^{2/3})$, and $(n+m-xn^{2/3}, n-m-xn^{2/3})$, $(n+m+xn^{2/3}, n-m+xn^{2/3})$.
Then with probability $1-Ce^{-cx^3}$, the geodesic $\Gamma_{\boo,(n+m,n-m)}$ is contained in that parallelogram.
\end{lemma}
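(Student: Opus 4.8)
The approach is a union bound over dyadic scales combined with the one-point deviation estimates and the parallelogram estimate quoted above. The key observation is that if the geodesic $\Gamma_{\boo,(n+m,n-m)}$ exits the claimed parallelogram, then it must pass through a point $v$ that is transversally displaced from the straight line joining $\boo$ to $(n+m,n-m)$ by at least $xn^{2/3}$; such a detour forces the geodesic to accrue an anomalously small weight on one of the two sub-journeys (from $\boo$ to $v$, or from $v$ to $(n+m,n-m)$) relative to the straight-line passage time, while the straight path must have been anomalously large, or the detour path anomalously large. More precisely, on the event of exit, there is a vertex $v$ on an intermediate line $\LL_k$ (for some $k$ between $0$ and $n$, and by symmetry/flip invariance we may assume $k \ge n/2$, so the displacement is built up over the first half of the journey) with $ad(v)$ far from the interpolated value, and the concatenation $T_{\boo,v} + T_{v,(n+m,n-m)} = T_{\boo,(n+m,n-m)}$. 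Comparing with the curvature of the mean passage-time profile $(\sqrt{m}+\sqrt{n})^2$, a transversal displacement of order $\ell n^{2/3}$ on line $\LL_k$ (with $k \asymp n$) reduces the sum of the deterministic parts $\E T_{\boo,v} + \E T_{v,(n+m,n-m)}$ below $\E T_{\boo,(n+m,n-m)}$ by an amount of order $\ell^2 n^{1/3}$ (this is the standard parabolic-curvature computation, using \eqref{e:mean} to pass from means to the explicit profile). Hence one of the three passage times involved must deviate from its mean by order $\ell^2 n^{1/3}$, an event of probability at most $C e^{-c \ell^3}$ by \eqref{e:slope}.

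The steps, in order, are as follows. First I would fix notation: let $P$ denote the parallelogram in the statement, parametrized so that on each line $\LL_k$, $0 \le k \le n$, it is the set of vertices $v$ with $ad(v)$ within $xn^{2/3}$ of the linearly interpolated anti-diagonal coordinate $\tfrac{k}{n}(2m) $ (up to the obvious rescaling and lattice rounding). Second, decompose the exit event: $\{\Gamma_{\boo,(n+m,n-m)} \not\subset P\}$ implies there exist $k$ and $v \in \LL_k$ with $\mathrm{dist}(v, \text{center line}) \in [xn^{2/3}, 2xn^{2/3}) \cdot 2^j$ for some $j \ge 0$ (dyadic shells), such that $v \in \Gamma_{\boo,(n+m,n-m)}$; by the flip invariance of the model (swapping the roles of the two endpoints) we may restrict to $k \ge n/2$. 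Third, for each such shell, use $T_{\boo,v} + T_{v,(n+m,n-m)} = T_{\boo,(n+m,n-m)}$ together with \eqref{e:slope} applied to each of the three passage times (upper tail for the full passage time, lower tails for the two pieces) and the deterministic curvature gap of order $(2^j x)^2 n^{1/3}$, to bound the probability of that shell contributing by $C e^{-c (2^j x)^3}$. There are at most $O(n)$ choices of $k$ and $O(2^j x n^{2/3})$ choices of $v$ in the shell, but the super-exponential decay $e^{-c(2^jx)^3}$ dominates these polynomial factors once $n$ is large and $x \ge 1$ (for $x<1$ the statement is vacuous since probabilities are $\le 1$, absorbing into the constant $C$). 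Fourth, sum over $j \ge 0$ and over $k$: the geometric series in $j$ converges and is dominated by its first term $e^{-cx^3}$, giving the claimed bound $C e^{-cx^3}$. A remark: one should be slightly careful near the endpoints $k = 0$ and $k = n$ where the parallelogram is narrowest; there the displacement $xn^{2/3}$ is already the full width, and the same curvature argument applies with $k$ (resp. $n-k$) replaced by $\max(k, n^{2/3})$ to avoid degeneracy, or one simply notes the geodesic starts exactly at $\boo$ so small $k$ contributes negligibly.

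The main obstacle I expect is the bookkeeping around the curvature gap when the intermediate line $\LL_k$ is close to an endpoint, i.e.\ controlling the lower-tail deviation of a passage time $T_{\boo,v}$ over a short, skewed domain where the slope $m/n$-type ratio may be large. The clean form of \eqref{e:slope} requires the aspect ratio to be bounded by some $\psi$, which can fail for $v$ very close to $\boo$ with a large transversal displacement. This is handled by a standard trick: either invoke the cruder uniform upper-tail bound \eqref{e:wslope} (which holds for all $m \ge n$ without an aspect-ratio restriction) on the short, steep piece, or — more simply — observe that a geodesic achieving displacement $\ell n^{2/3}$ at a line within $\ell n^{2/3}$ of an endpoint has itself traveled a macroscopic transversal distance relative to the remaining endpoint, so one can always choose the "far" endpoint to set up a bounded-aspect-ratio comparison. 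I would also need the elementary but slightly tedious verification that the deterministic profile $(\sqrt{a}+\sqrt{b})^2$ is strictly concave along anti-diagonal slices with a quantitative second-derivative lower bound of the right order — this is where \eqref{e:mean} enters to replace $\E T$ by the explicit formula up to an $O(n^{1/3})$ error that is harmless compared to the $\ell^2 n^{1/3}$ gap for $\ell$ large.
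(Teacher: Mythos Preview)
The paper does not give a proof of this lemma; it is quoted from \cite[Proposition C.9]{BGZ}, with only the remark that it ``can be proved using Proposition \ref{prop:seg-to-seg} and some geometric arguments.''  So there is no in-paper proof to compare against, but that hint is important, because your proposal as written does not go through.

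Your third and fourth steps contain a genuine gap.  You write that ``there are at most $O(n)$ choices of $k$ and $O(2^j x n^{2/3})$ choices of $v$ in the shell, but the super-exponential decay $e^{-c(2^jx)^3}$ dominates these polynomial factors once $n$ is large.''  This is false: the exponential decay is in $x$, not in $n$.  For fixed $x$ (say $x=2$) the union bound you describe gives a bound of order $n^{5/3}e^{-cx^3}$, which blows up as $n\to\infty$ and cannot be absorbed into a constant $C$ depending only on $\psi$.  The curvature-gap heuristic for a \emph{single} exit point $v$ is correct and gives $Ce^{-c\ell^3}$, but the naive union bound over all intermediate lines $\LL_k$ and all lattice points in the shell is too wasteful.

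The standard cure, and the reason the paper points to Proposition \ref{prop:seg-to-seg}, is twofold.  First, replace the pointwise estimates \eqref{e:slope} by the segment-to-segment estimate of Proposition \ref{prop:seg-to-seg}, which controls $\sup |T_{u,v}-\E T_{u,v}|$ uniformly over $u,v$ ranging in segments of length $n^{2/3}$; this collapses the union over $v$ on a given $\LL_k$ to a union over $O(2^jx)$ many segments rather than $O(2^jx\,n^{2/3})$ many points.  Second, one does \emph{not} union bound over all heights $k$.  Instead one runs a multi-scale chaining in $k$: first control the geodesic's location on $\LL_{\lfloor n/2\rfloor}$, then on $\LL_{\lfloor n/4\rfloor}$ and $\LL_{\lfloor 3n/4\rfloor}$ given the first, and so on through dyadic levels, using at level $r$ the segment-to-segment estimate at scale $(n/2^r)$.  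At each level the number of sub-problems doubles but the fluctuation scale shrinks like $2^{-2r/3}$, so the allotted transversal budgets are summable and the probabilities form a convergent series.  This yields the uniform-in-$n$ bound $Ce^{-cx^3}$.  Your dyadic decomposition is in the transversal displacement $2^jx$, which is fine for summing over shells, but the missing ingredient is the dyadic chaining in the \emph{time} variable $k$.
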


\begin{lemma}[\protect{\cite[Corollary 2.11]{MSZ}}] \label{lem:trans-dis-inf}
There exist constants $C,c>0$ such that the following is true.
Take any $n\in\N$ large enough and any $x>0$, and consider the rectangle whose four vertices are
$(-xn^{2/3}, xn^{2/3})$, $(xn^{2/3}, -xn^{2/3})$, and $(n-xn^{2/3}, n+xn^{2/3})$, $(n+xn^{2/3}, n-xn^{2/3})$.
Then with probability $1-Ce^{-cx^3}$, the part of the geodesic $\Gamma_\boo$ below $\LL_n$ is contained in that rectangle.
\end{lemma}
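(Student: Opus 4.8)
The plan is to reduce the claim about the semi-infinite geodesic $\Gamma_\boo$ to a point-to-point transversal fluctuation estimate (Lemma~\ref{lem:trans-dis-fini}) together with control on the vertex at which $\Gamma_\boo$ crosses $\LL_n$. Since $Ce^{-cx^3}\ge 1$ for $x$ below an absolute constant, and since $|ad(p)|\le 2n$ for every $p\in\Gamma_\boo$ with $d(p)\le 2n$ (so the event is vacuous once $x\ge n^{1/3}$), we may assume $x\in[x_0,n^{1/3}]$ for a large fixed $x_0$. Write $R$ for the parallelogram in the statement, whose slice by $\LL_k$ is $\{(k+t,k-t):|t|\le xn^{2/3}\}$, and let $q^\ast\in\LL_n$ be the vertex at which the up/right path $\Gamma_\boo$ crosses $\LL_n$; by consistency of semi-infinite geodesics, the part of $\Gamma_\boo$ with $d\le 2n$ equals the point-to-point geodesic $\Gamma_{\boo,q^\ast}$. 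It then suffices to prove
\[
\PP\big[\,|ad(q^\ast)|>\tfrac12 xn^{2/3}\,\big]\le Ce^{-cx^3}
\quad\text{and}\quad
\PP\Big[\exists\,q\in\LL_n,\ |ad(q)|\le\tfrac12 xn^{2/3},\ \Gamma_{\boo,q}\not\subseteq R\Big]\le Ce^{-cx^3}.
\]
The second bound is a standard ``uniform over nearby endpoints'' point-to-point statement: by the non-crossing/ordering property of lattice geodesics, the union over $q$ is contained in the corresponding event for the two extreme geodesics $\Gamma_{\boo,q_\pm}$, where $q_\pm\in\LL_n$ have $ad(q_\pm)=\pm\tfrac12 xn^{2/3}$, and Lemma~\ref{lem:trans-dis-fini}, applied with parameter $\asymp x$, places each of these inside $R$ with probability $1-Ce^{-cx^3}$ (non-trivially so, since $x\ge x_0$).

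The heart of the matter is the first bound. The key tool is the Busemann function $G$ for the $(1,1)$-direction from Section~\ref{ssec:busemann}: iterating property~(1) and letting the auxiliary level tend to infinity, the crossing point $q^\ast$ is characterised, up to an $O(1)$ shift along $\LL_n$, as the maximiser over $q\in\LL_n$ of $T_{\boo,q}+\cG(q)$, where $\cG$ is the restriction of $G$ to the down/right staircase through the vertices of $\LL_n$. By property~(2), $\cG$ evolves along $\LL_n$ as a mean-zero random walk whose increments are differences of independent $\Exp(1/2)$ variables; in particular, for $q=(n+M,n-M)$ one has $\E[\cG(q)-\cG(n,n)]=0$, with $\cG(q)-\cG(n,n)$ a sum of $\Theta(|M|)$ i.i.d.\ centred sub-exponential terms, of fluctuation scale $\sqrt{|M|}$. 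This description is precisely what makes the argument work: a crude truncation of $\Gamma_\boo$ at a far diagonal endpoint $(N,N)$ fails, because $T_{\boo,(N,N)}$ fluctuates on scale $N^{1/3}\gg n^{1/3}$, overwhelming the cost of a transversal excursion at the finite level $\LL_n$; the Busemann function replaces the unstable difference $T_{q_1,(N,N)}-T_{q_2,(N,N)}$ for $q_1,q_2\in\LL_n$ by $\cG(q_1)-\cG(q_2)$, which lives at the correct scale. (Equivalently, one could run this through the segment-to-segment concentration estimate, Proposition~\ref{prop:seg-to-seg}, together with a coalescence/local-limit argument.)

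To exploit this, note that on $\{q^\ast=q\}$ we have $T_{\boo,q}+\cG(q)\ge T_{\boo,(n,n)}+\cG(n,n)$, while the curvature of the limit shape gives $\E[T_{\boo,(n,n)}]-\E[T_{\boo,q}]\ge 2n-2\sqrt{n^2-M^2}-Cn^{1/3}\ge cM^2/n$ once $|M|\ge x_0 n^{2/3}$ (using \eqref{e:mean}), and $M^2/n$ then dominates both $n^{1/3}$ and $\sqrt{|M|}$. Hence $\{q^\ast=q\}$ forces one of: $T_{\boo,q}$ above its mean by $\gtrsim M^2/n$; $\cG(q)-\cG(n,n)\gtrsim M^2/n$; or $T_{\boo,(n,n)}$ below its mean by $\gtrsim M^2/n$. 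For $|M|$ in a dyadic band $[2^jxn^{2/3},2^{j+1}xn^{2/3})$ with $2^{j+1}xn^{2/3}\le\psi n$, the passage-time events are controlled by Theorem~\ref{thm:lpp-onepoint} (via \eqref{e:slope}, the aspect ratio being bounded) and the Busemann event by a Bernstein bound, each with probability $\le Ce^{-c(2^jx)^3}$; summing over $j$ and over the sign of $M$ gives the first bound. The remaining range $|M|\gtrsim\psi n$ forces $q^\ast$ near a corner of the reachable part of $\LL_n$, i.e.\ a macroscopic deviation of the geodesic through a thin rectangle; this contributes $e^{-\Omega(n)}$, handled by the standard linear-rate large-deviation estimates for last passage values (rather than the moderate-deviation bound \eqref{e:wslope}), and $e^{-\Omega(n)}\le Ce^{-cx^3}$ since $x\le n^{1/3}$.

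I expect the decisive step to be the localization of the far direction via the Busemann function: this is exactly what allows transversal excursions at the finite level $\LL_n$ to be priced at the $n^{1/3}$ scale rather than the $N^{1/3}$ scale suggested by a naive truncation. Everything else --- the parabolic curvature input, the one-point estimates, the dyadic union bound, and the planar ordering of geodesics --- is routine.
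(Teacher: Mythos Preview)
The paper does not prove this lemma: it is quoted verbatim from \cite[Corollary~2.11]{MSZ} and used as a black box, so there is no in-paper argument to compare your proposal against.

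That said, your sketch is a correct and standard route to such a semi-infinite transversal fluctuation bound. The reduction to localizing the crossing point $q^\ast\in\LL_n$ and then invoking the point-to-point estimate (Lemma~\ref{lem:trans-dis-fini}) via geodesic ordering is the right decomposition. The Busemann-function step is the key idea, and your identification of why it is needed is exactly right: replacing $T_{q,(N,N)}$ by $G(q)$ puts the ``upper boundary data'' at the correct $\sqrt{|M|}$ fluctuation scale along $\LL_n$, so that the parabolic gap $M^2/n$ wins. Your variance-versus-curvature bookkeeping checks out: for $|M|\asymp 2^j x n^{2/3}$ the three competing events (upper tail of $T_{\boo,q}$, Bernstein tail of $\cG(q)-\cG(n,n)$, lower tail of $T_{\boo,(n,n)}$) each cost $e^{-c(2^j x)^3}$, and the dyadic sum converges. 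Two small points worth tightening if you write this out in full: (i) property~(1) in Section~\ref{ssec:busemann} is stated for horizontal slices rather than anti-diagonals, so the characterization of $q^\ast$ as an argmax of $T_{\boo,\cdot}+G(\cdot)$ along $\LL_n$ needs a one-line adaptation (via the staircase you already invoke for property~(2)); (ii) the regime $|M|\ge\psi n$ is indeed handled by a crude large-deviation bound, but note that here the mean gap $2n-2\sqrt{n^2-M^2}$ is of order $n$, so the relevant input is simply the exponential upper tail \eqref{e:wslope} together with the sub-exponential tail of $\cG$, giving $e^{-cn}\le e^{-cx^3}$ since $x\le n^{1/3}$.
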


\subsubsection{Intersections of geodesics}
\label{ssec:mdg}

The following estimate from \cite{BHS} bounds the number of intersections that geodesics can have with a line.

\begin{prop} [\protect{\cite[Proposition 3.10]{BHS}}]  \label{prop:num-int-dis-weak}
Let $A_n$ and $B_n$ be line segment along the lines $\LL_0$ and $\LL_n$, with midpoints $(xn^{2/3}, -xn^{2/3})$ and $(n,n)$, respectively, and each has length $2n^{2/3}$.
Let $F_{n,m}=\left(\bigcup_{u\in A_n, v\in B_n} \Gamma_{u,v}\right) \cap \LL_m$, for $0<m<n$.
For any $\psi \in (0,1)$, $\alpha\in (0, 1/2)$,  there is a constant $c>0$, such that when $|x|<\psi n^{1/3}$ and $\alpha n < m < (1-\alpha)n$, we have
\[
\PP[|F_{n,m}| > M] < e^{-cM^{1/128}}
\]
for any sufficiently large $M<n^{0.01}$ and sufficiently large $n$.
\end{prop}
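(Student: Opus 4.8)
The plan is to reduce, via the planar ordering of geodesics, to controlling two ``corner'' geodesics, to show that these coalesce and remain together across $\LL_m$ with overwhelming probability, and to extract the stretched-exponential tail from a dyadic multi-scale estimate. The quantitative coalescence input is the hard part, and it is also what forces the unusual exponent $1/128$.

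\emph{Ordering reduction.} List the lattice points of $A_n$ in increasing order of the anti-diagonal coordinate $ad$ as $u_L \preceq \dots \preceq u_R$, and those of $B_n$ as $v_L \preceq \dots \preceq v_R$. Since an up/right path meets the anti-diagonal $\LL_m$ in exactly one vertex, and since (by uniqueness) two distinct geodesics that cross must share a vertex while geodesics with a common endpoint split off a tree, the quantity $ad(\Gamma_{u,v}\cap\LL_m)$ is non-decreasing in each of $u$ and $v$ separately. Hence $F_{n,m}$ is contained in the $ad$-interval between $ad(\Gamma_{u_L,v_L}\cap\LL_m)$ and $ad(\Gamma_{u_R,v_R}\cap\LL_m)$, so $|F_{n,m}| \le 1 + \tfrac12\big(ad(\Gamma_{u_R,v_R}\cap\LL_m)-ad(\Gamma_{u_L,v_L}\cap\LL_m)\big)$, and it remains to bound the transversal separation of these two corner geodesics at level $\LL_m$.

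\emph{Coalescence.} Two distinct point-to-point geodesics intersect in a single contiguous sub-path (by uniqueness, two common vertices would force them to agree in between). Consequently, on the event $|F_{n,m}| \ge 2$ the line $\LL_m$ lies outside the common stretch of $\Gamma_{u_L,v_L}$ and $\Gamma_{u_R,v_R}$, so these two geodesics are disjoint throughout a strip abutting $\LL_m$ and extending all the way to $\LL_0$ or to $\LL_n$. Moreover, by the transversal-fluctuation bounds (Lemmas \ref{lem:trans-dis-fini}--\ref{lem:trans-dis-inf}), two geodesics coinciding just below a sub-strip of height $h$ can separate by at most $O(h^{2/3})$ across it; hence for their separation at $\LL_m$ to exceed $M$ they must stay disjoint across a sub-strip of height of order $M^{3/2}$ abutting $\LL_m$, within which they are confined to transversal width $O(M)$. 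The crucial input is then the coalescence mechanism of \cite{GH20,BHS,MSZ}, built on the segment-to-segment comparison of Proposition \ref{prop:seg-to-seg}: two geodesics confined to transversal width $r$ across a sub-strip of height $\gtrsim r^{3/2}$ must merge inside it except with probability stretched-exponentially small in a fixed power of $r$. Tiling the disjointness strip by $\Theta(\log n)$ dyadic sub-strips of heights $2^j$ for $j$ running upward from $\sim\tfrac32\log_2 M$, with the two geodesics confined to transversal width $\lesssim 2^{2j/3}$ on the $j$-th one (using the fluctuation bounds to move from one scale to the next), and union-bounding the non-coalescence events over scales, yields $\PP[|F_{n,m}|>M] \le e^{-cM^{1/128}}$; the restrictions $M<n^{0.01}$ and $n$ large only serve to keep all scales inside the window where Proposition \ref{prop:seg-to-seg} and the fluctuation estimates are effective, and to absorb the $\Theta(\log n)$-fold union bound.

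\emph{Main obstacle.} All of the real difficulty sits in the quantitative coalescence step: upgrading ``the two corner geodesics stay transversally close over a sub-strip'' to ``they have already coalesced inside it'' with a stretched-exponential error, uniformly over the $\Theta(\log n)$ scales. Each such implication is obtained by chaining a bounded number of applications of Proposition \ref{prop:seg-to-seg} and of the transversal-fluctuation estimates; each link costs a fixed power, and although only finitely many links are composed — so the final exponent stays a strictly positive constant — the accumulated loss is what brings it down to $1/128$.
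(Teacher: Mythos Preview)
Your ordering reduction is correct: $F_{n,m}$ lies in the lattice segment of $\LL_m$ between the intersections of the two corner geodesics $\Gamma_{u_L,v_L}$ and $\Gamma_{u_R,v_R}$. The next step, however, is fatally wrong. The corner endpoints satisfy $|ad(u_R)-ad(u_L)|=|ad(v_R)-ad(v_L)|\sim 2n^{2/3}$, so $\Gamma_{u_L,v_L}$ and $\Gamma_{u_R,v_R}$ sit at the critical transversal scale: with probability bounded away from zero (a positive constant independent of $n$, by KPZ scaling) they are disjoint throughout $[\LL_0,\LL_n]$, and even when they do meet, their separation at $\LL_m$ is typically of order $n^{2/3}$. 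Your bound $|F_{n,m}|\le 1+\tfrac12(\text{separation})$ therefore yields only $|F_{n,m}|\lesssim n^{2/3}$, which is trivial. The claim that ``for their separation at $\LL_m$ to exceed $M$ they must stay disjoint across a sub-strip of height $M^{3/2}$ \ldots\ within which they are confined to transversal width $O(M)$'' is false: nothing confines the corner geodesics to width $O(M)$; they begin and end $2n^{2/3}$ apart.

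The actual content of the proposition is that the geodesics from $A_n$ to $B_n$, while spanning an interval of width of order $n^{2/3}$ on $\LL_m$, hit only $O(1)$ \emph{distinct} points there --- not because that interval is short, but because they coalesce into a few highways. The paper does not prove this and simply quotes \cite[Proposition~3.10]{BHS}, noting that the argument there extends verbatim to general $\alpha$. The \cite{BHS} proof proceeds quite differently from your sketch: having $|F_{n,m}|>M$ forces the existence of order $M$ geodesic segments between $\LL_0$ and $\LL_m$ (or between $\LL_m$ and $\LL_n$) that are pairwise \emph{disjoint}, and one then bounds the probability that a box of the appropriate aspect ratio supports many pairwise disjoint geodesics. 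That many-disjoint-geodesics estimate (and the chaining that produces it) is where the exponent $1/128$ originates, not from the coalescence of a single pair of extremal paths.
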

The above statement is a consequence of geodesic coalescence which causes only a few (order $1$ many) highways passing through the line $\LL_m$ in an order $n^{2/3}$ window and all the geodesics ($\Gamma_{u,v}$ for some $u\in A_n$, $v\in B_n$) merging with one of them. 

We note that Proposition \ref{prop:num-int-dis-weak} is slightly more general than \cite[Proposition 3.10]{BHS}, which is only for $\alpha = 1/3$; but the proof in \cite{BHS} goes through verbatim for general $\alpha \in (0,1)$.

In Proposition \ref{prop:num-int-dis-weak}, we considered geodesics with endpoints varying on some segments of length in the order of $n^{2/3}$, and their intersections with the line $\LL_m$.
We next state a different version that allows the upper end of the geodesics to vary on the whole line $\LL_n$, but we only consider their intersections with a segment of the length of the order of $n^{2/3}$.
\begin{lemma}  \label{lem:num-int-dis}
Take any $m<n\in\N$ and $\ell>1$, let $F=\left(\bigcup_{|i|<\ell n^{2/3}, j \in \Z} \Gamma_{(i,-i),(n+j,n-j)}\right) \cap \{(m+i,m-i): |i|<\ell n^{2/3}\} |$.
Then for each $0<\alpha < 1/2$, there exist $c,C>0$, such that $\PP[|F| > M] < Ce^{-c M^{1/12800}}$ when $\ell < cn^{1/3}\wedge M^{1/3}$, and $\alpha n < m < (1-\alpha)n$.
\end{lemma}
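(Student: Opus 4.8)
The plan is to reduce Lemma~\ref{lem:num-int-dis} to Proposition~\ref{prop:num-int-dis-weak} by a covering argument. The difference between the two statements is that in Proposition~\ref{prop:num-int-dis-weak} the upper endpoints $v$ are constrained to a window $B_n$ of length $O(n^{2/3})$ around $(n,n)$, whereas here $v=(n+j,n-j)$ may range over all of $\LL_n$. The key geometric input that lets us ignore far-away upper endpoints is the transversal fluctuation estimate for point-to-point geodesics (Lemma~\ref{lem:trans-dis-fini}): a geodesic from a point near $\boo$ to a point $(n+j,n-j)$ with $|j|$ large must, by the time it reaches $\LL_m$, have drifted far from the segment $\{(m+i,m-i):|i|<\ell n^{2/3}\}$, and so cannot contribute to $F$. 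Concretely, a geodesic from $(i,-i)$ with $|i|<\ell n^{2/3}$ to $(n+j,n-j)$ passes through $\LL_m$ at a point whose $ad$-coordinate is, up to fluctuations of order $(m)^{2/3}\lesssim n^{2/3}$, equal to the linear interpolant $\tfrac{m}{n}\cdot 2j + (1-\tfrac{m}{n})\cdot 2i$. Hence if $|j| > C\ell n^{2/3}$ for a suitable constant $C=C(\alpha)$, then on an event of probability $1-C e^{-c(\ell)^3}$ (wait---we need this uniformly in $j$, so one sums the tail $\sum_{|j|>C\ell n^{2/3}} Ce^{-c(|j| n^{-2/3})^3}$, which is $Ce^{-c\ell^3}$), the geodesic $\Gamma_{(i,-i),(n+j,n-j)}$ does not meet that segment. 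So up to an event of probability $Ce^{-c\ell^3}$ (which is dominated by the target bound when $\ell < cM^{1/3}$), only upper endpoints with $|j| \le C\ell n^{2/3}$ matter.

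Next, partition the relevant range $\{|j|\le C\ell n^{2/3}\}$ of upper endpoints into $O(\ell)$ consecutive windows $B_n^{(k)}$ of length $n^{2/3}$, and similarly note the lower endpoints $\{|i|<\ell n^{2/3}\}$ form $O(\ell)$ windows $A_n^{(k')}$ of length $n^{2/3}$; also cover the target segment $\{(m+i,m-i):|i|<\ell n^{2/3}\}$ of $\LL_m$ by $O(\ell)$ subsegments of length $2n^{2/3}$. For each of the $O(\ell^2)$ pairs $(A_n^{(k')}, B_n^{(k)})$, apply Proposition~\ref{prop:num-int-dis-weak} (after recentering so the segments have the form required there, and noting the displacement parameter is $O(\ell) < \psi n^{1/3}$ since $\ell < cn^{1/3}$, and $\alpha n<m<(1-\alpha)n$ is exactly the hypothesis there). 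This gives, for each pair, $\PP[|F_{\text{pair}}|>M_0] < e^{-cM_0^{1/128}}$ for $M_0 < n^{0.01}$. The intersection $F$ is contained in the union over all $O(\ell^2)$ pairs of the corresponding $F_{\text{pair}}$, so $|F| > M$ forces some pair to have $|F_{\text{pair}}| > M/(C\ell^2)$; a union bound over the $O(\ell^2)$ pairs then gives $\PP[|F|>M] < C\ell^2 e^{-c(M/(C\ell^2))^{1/128}} + Ce^{-c\ell^3}$. Choosing the exponent carefully: since $\ell < M^{1/3}$, we have $M/(C\ell^2) > M^{1/3}/C$, so $(M/(C\ell^2))^{1/128} > cM^{1/384}$, and $\ell^2 \le M^{2/3}$ is absorbed; one checks this collapses to $Ce^{-cM^{1/12800}}$ with room to spare (the worse exponent $1/12800$ in the statement is presumably chosen to leave slack for the constants in this bookkeeping and for the side conditions $M < n^{0.01}$, $M_0 < n^{0.01}$, which hold since $\ell < cn^{1/3}$ forces $M/\ell^2$ to be a valid argument for Proposition~\ref{prop:num-int-dis-weak} only if $M$ itself is not too large---but the lemma only needs to be non-vacuous, and for $M \geq n^{0.01}$ one can argue the bound holds trivially or restrict attention to the stated range).

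The main obstacle I expect is the careful handling of the transversal-fluctuation step when $m$ is comparable to $n$ rather than $o(n)$: Lemma~\ref{lem:trans-dis-fini} is stated for a full parallelogram from near $\LL_0$ to near $\LL_n$, and one must check that intersecting it with $\LL_m$ really does confine the geodesic to an $O(\ell n^{2/3})$-window on $\LL_m$ with the claimed probability, uniformly over the far-away endpoints $(n+j, n-j)$, and then sum the resulting tails over $j$. A secondary technical point is that the segments appearing here are not centered exactly as in Proposition~\ref{prop:num-int-dis-weak} (whose $A_n$ has midpoint $(xn^{2/3},-xn^{2/3})$ and whose $B_n$ has midpoint $(n,n)$), so one needs the translation invariance of the model together with the fact that the displacement of each window pair from the ``straight'' configuration is $O(\ell n^{2/3})$, hence corresponds to an admissible parameter $x = O(\ell) < \psi n^{1/3}$. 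Neither of these is deep, but both require bookkeeping; everything else is a union bound.
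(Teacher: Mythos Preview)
Your overall strategy---restrict the upper endpoint $j$ to a bounded window via transversal fluctuation, then cover by $O(\ell^2)$ pairs of length-$n^{2/3}$ segments and apply Proposition~\ref{prop:num-int-dis-weak}---is exactly the paper's approach. However, two of your parameter choices are genuinely wrong and break the proof.

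First, your cutoff $|j|\le C\ell n^{2/3}$ is too small. You claim the error probability is $Ce^{-c\ell^3}$, ``which is dominated by the target bound when $\ell<cM^{1/3}$''. But the constraint $\ell<cM^{1/3}$ only bounds $\ell^3$ from \emph{above}, so it gives a \emph{lower} bound $e^{-c\ell^3}\ge e^{-c'M}$, not an upper bound. In particular, for $\ell=2$ your error term is a fixed constant, which certainly does not beat $Ce^{-cM^{1/12800}}$ as $M\to\infty$. (Separately, your tail sum $\sum_{|j|>C\ell n^{2/3}}Ce^{-c(|j|n^{-2/3})^3}$ has $\sim n^{2/3}$ integer terms per unit of the rescaled variable, so it is $\sim n^{2/3}e^{-c\ell^3}$, not $e^{-c\ell^3}$.) The paper's fix is twofold: take the cutoff $\ell'=C\ell+M^{1/10}$ so that the error decays in $M$, and use planarity to reduce the entire range $|j|\ge\ell'n^{2/3}$ to \emph{two} extreme geodesics $\Gamma_{(\pm\lfloor\ell n^{2/3}\rfloor,\mp\lfloor\ell n^{2/3}\rfloor),(n\mp\lfloor\ell'n^{2/3}\rfloor,n\pm\lfloor\ell'n^{2/3}\rfloor)}$ rather than summing, yielding a single application of Lemma~\ref{lem:trans-dis-fini} and an error $Ce^{-cM^{3/10}}$.

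Second, your per-pair threshold $M_0=M/(C\ell^2)$ violates the hypothesis $M_0<n^{0.01}$ of Proposition~\ref{prop:num-int-dis-weak}: since $|F|\le 2\ell n^{2/3}+1$, the relevant range is $\ell^3<M<Cn$, and for bounded $\ell$ your $M_0$ can be of order $n$, not $n^{0.01}$. Your fallback ``for $M\ge n^{0.01}$ the bound holds trivially'' is false---triviality only kicks in at $M>2\ell n^{2/3}$. The paper instead takes $M_0=M^{0.01}$ (which is $<n^{0.01}$ since $M<Cn$) and observes $\ell\ell'\cdot M^{0.01}<M$ to close the union bound; this is also what forces the ugly exponent $1/12800=0.01/128$.
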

Transversal fluctuation estimates (from Section \ref{sssec:transf}) imply that for any geodesic from some $(i,-i)$ with $|i|<\ell n^{2/3}$ to $\LL_n$, if it intersects $\{(m+i,m-i): |i|<\ell n^{2/3}\}$, the distance between its upper end and $(n,n)$ is likely to be of order (at most) $n^{2/3}$.
Therefore we can apply Proposition \ref{prop:num-int-dis-weak}.
\begin{proof}[Proof of Lemma \ref{lem:num-int-dis}]
We let $c,C>0$ denote small and large constants depending on $\alpha$.
We may also assume that $M< 2\ell n^{2/3}+1<n$,
since otherwise we would always have $|F| \le M$.

Let $\ell' = C\ell+M^{1/10}<n^{1/3}/2$. 
Consider $F'=\left(\bigcup_{|i|<\ell n^{2/3}, |j|<\ell' n^{2/3}} \Gamma_{(i,-i),(n+j,n-j)}\right) \cap \{(m+i,m-i): |i|<\ell n^{2/3}\} |$.
Then under $|F|>M$, we have either $|F'|>M$, or there are some $|i|<\ell n^{2/3}$ and $|j|\ge \ell' n^{2/3}$, such that $\Gamma_{(i,-i),(n+j,n-j)}$ intersects $\LL_m$ at some $(m+k,m-k)$, for $|k|<\ell n^{2/3}$.
This would imply that either $\Gamma_{(\lfloor \ell n^{2/3} \rfloor, -\lfloor \ell n^{2/3} \rfloor), (n-\lfloor \ell' n^{2/3} \rfloor, n+\lfloor \ell' n^{2/3} \rfloor)}$ intersects $\LL_m$ at some $(m+k,m-k)$, for $k>-\ell n^{2/3}$;
or $\Gamma_{(-\lfloor \ell n^{2/3} \rfloor, \lfloor \ell n^{2/3} \rfloor), (n+\lfloor \ell' n^{2/3} \rfloor, n-\lfloor \ell' n^{2/3} \rfloor)}$ intersects $\LL_m$ at some $(m+k,m-k)$, for $k<\ell n^{2/3}$.
By Lemma \ref{lem:trans-dis-fini}, these happen with probability $<Ce^{-cM^{3/10}}$.

Finally, by splitting $\{(i,-i):|i|<\ell n^{2/3}\}$ and $\{(n+j,n-j):|j|<\ell' n^{2/3}\}$ into segments of length $n^{2/3}$, we can write $F'$ as the union of $\ell\ell'$ sets; and for each set, the probability that its size is $>M^{0.01}$ is at most $e^{-c(M^{0.01})^{1/128}}$, by Proposition \ref{prop:num-int-dis-weak} (note that $M^{0.01}<n^{0.01}$ by our assumption above).
Then since $\ell \ell' M^{0.01}<M$, we have
\[
\PP[|F'|>M] \le \PP[|F'|>\ell \ell' M^{0.01}]< \ell \ell' e^{-c(M^{0.01})^{1/128}}<Ce^{-cM^{1/12800}}.
\]
Thus the conclusion follows.
\end{proof}

We can get similar results for semi-infinite geodesics (in the $(1,1)$-direction). In this case, note that the lower endpoint can vary among all points in $\LL_0$.
\begin{lemma}  \label{lem:num-int-dis-inf}
For any $n\in\N$ and $\ell > 1$, let $F=(\bigcup_{i\in \Z} \Gamma_{(i,-i)}) \cap \{(n+i,n-i): |i|<\ell n^{2/3}\} $.
Then there exist $c,C>0$, such that $\PP[|F| > M] < Ce^{-c M^{1/12800}}$ when $\ell < cn^{1/3}\wedge M^{1/3}$.
\end{lemma}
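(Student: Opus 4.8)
The plan is to deduce this from the point-to-point estimate in Lemma~\ref{lem:num-int-dis}, exploiting the fact that the restriction of a semi-infinite geodesic to a finite strip is literally a point-to-point geodesic, so that no local-limit input is required. Write $S=\{(n+i,n-i):|i|<\ell n^{2/3}\}\subset\LL_n$ for the target segment, and for each $i\in\Z$ let $q_i=\Gamma_{(i,-i)}\cap\LL_{2n}$ (a single vertex, since an up-right path meets each $\LL_k$ once). By the defining property of $\Gamma_{(i,-i)}$, its portion between $\LL_0$ and $\LL_{2n}$ coincides with the geodesic $\Gamma_{(i,-i),q_i}$, so the part of $\Gamma_{(i,-i)}$ that can meet $S$ lies on $\Gamma_{(i,-i),q_i}$; writing $q_i=(2n+k_i,2n-k_i)$ this already gives $F\subseteq\big(\bigcup_{i\in\Z,\,k\in\Z}\Gamma_{(i,-i),(2n+k,2n-k)}\big)\cap S$, and it remains only to restrict the range of $i$.

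First I would cut the lower endpoint down to a window of size $\Theta(\ell n^{2/3})$. One may assume $M<2\ell n^{2/3}+1$, as otherwise $|F|\le|S|\le M$ deterministically; set $\ell_1=\ell+M^{1/38400}$ (any exponent in $[1/38400,1/3]$ works, and for $n$ large $\ell_1<cn^{1/3}\wedge 2M^{1/3}$). If some $\Gamma_{(i,-i)}$ with $i\ge\ell_1n^{2/3}$ meets $S$, then by the ordering (non-crossing) of $(1,1)$-direction semi-infinite geodesics the geodesic $\Gamma_{(j_0,-j_0)}$ with $j_0=\lceil\ell_1n^{2/3}\rceil$ lies at level $\LL_n$ at anti-diagonal coordinate $<2\ell n^{2/3}$, whereas it emanates from anti-diagonal coordinate $2j_0\ge 2\ell_1n^{2/3}$ --- a transversal fluctuation of order at least $(\ell_1-\ell)n^{2/3}$, whose probability is $\le Ce^{-c(\ell_1-\ell)^3}\le Ce^{-cM^{1/12800}}$ by Lemma~\ref{lem:trans-dis-inf} applied with base point $(j_0,-j_0)\in\LL_0$ (using translation invariance of the model). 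The mirror statement handles $i\le-\ell_1n^{2/3}$, so off an event of probability $<Ce^{-cM^{1/12800}}$ we have $F\subseteq\big(\bigcup_{|i|<\ell_1n^{2/3},\,k\in\Z}\Gamma_{(i,-i),(2n+k,2n-k)}\big)\cap S$.

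It then remains to invoke Lemma~\ref{lem:num-int-dis} with its ``$n$'' taken to be $2n$, its ``$m$'' taken to be $n$ (so $\alpha=1/4$ is admissible since $\tfrac14(2n)<n<\tfrac34(2n)$), and its ``$\ell$'' taken to be $\ell_1$; since $\ell_1n^{2/3}<\ell_1(2n)^{2/3}$, both windows appearing in the inclusion from the previous paragraph sit inside the corresponding windows of that lemma. The required hypothesis reads $\ell_1<c(2n)^{1/3}\wedge M^{1/3}$, which holds once one observes --- as is clear from the proof of Lemma~\ref{lem:num-int-dis}, where $\ell<M^{1/3}$ enters only through an inequality of the shape $\ell\ell'M^{0.01}<M$ with ample slack --- that the hypothesis there may be weakened to $\ell<C_0M^{1/3}$ for any fixed $C_0$, affecting only the constants $c,C$. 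Lemma~\ref{lem:num-int-dis} then bounds the probability that the right-hand side there has more than $M$ points by $Ce^{-cM^{1/12800}}$, and adding the estimate from the previous paragraph completes the proof.

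The step I expect to require the most care is the reduction of the lower endpoint: a crude union bound over the $\Theta(n^{2/3})$ starting points in a single block of $\LL_0$ would lose a factor polynomial in $n$, which is fatal when $M$ is much smaller than $n$, so the argument must be routed through the monotonicity of semi-infinite geodesics --- itself inherited from the non-crossing of point-to-point geodesics upon passing to the limit $\Gamma_{(i,-i)}=\lim_{N\to\infty}\Gamma_{(i,-i),(N,N)}$ --- so that only the two extreme geodesics $\Gamma_{(\pm\lceil\ell_1n^{2/3}\rceil,\mp\lceil\ell_1n^{2/3}\rceil)}$ need be controlled. Everything else is bookkeeping of the rescaling into Lemma~\ref{lem:num-int-dis}, where the tiny exponent $1/12800$ leaves abundant room.
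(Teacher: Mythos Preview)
Your proof is correct and follows essentially the same approach as the paper: restrict the starting points on $\LL_0$ to a window of width $O(\ell n^{2/3})$ via the ordering of semi-infinite geodesics and the transversal fluctuation bound (Lemma~\ref{lem:trans-dis-inf}), then feed the resulting point-to-point geodesics into an intersection bound. The only packaging difference is that you invoke Lemma~\ref{lem:num-int-dis} as a black box (which already allows the upper endpoint to range over all of $\LL_{2n}$), whereas the paper also restricts the upper endpoint to a window on $\LL_{2n}$ and then reapplies Proposition~\ref{prop:num-int-dis-weak} directly; your route is slightly more modular and avoids repeating that argument, and your observation that the hypothesis $\ell<M^{1/3}$ in Lemma~\ref{lem:num-int-dis} can be relaxed to $\ell<C_0M^{1/3}$ (with the slack in $\ell\ell'M^{0.01}<M$) is correct.
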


\begin{proof}
In this proof we let $c,C>0$ denote small and large constants.
We can also assume that $M< 2\ell n^{2/3}+1<n$,
since otherwise we would always have $F\le M$.

Let $\ell' = C\ell+M^{1/10}<n^{1/3}/2$.
Consider $F'=\left(\bigcup_{|i|, |j|<\ell' n^{2/3}} \Gamma_{(i,-i),(2n+j,2n-j)}\right) \cap \{(n+i,n-i): |i|<\ell n^{2/3}\} |$.
Then under $|F|>M$, we have either $|F'|>M$, 
or there are some $|i|\ge \ell' n^{2/3}$, such that $\Gamma_{(i,-i)}$ intersects $\LL_n$ at some $(n+k,n-k)$, for $|k|<\ell n^{2/3}$.
This would imply that either $\Gamma_{(\lfloor \ell' n^{2/3} \rfloor, -\lfloor \ell' n^{2/3} \rfloor)}$ intersects $\LL_n$ at some $(n+k,n-k)$, for $k>-\ell n^{2/3}$;
or $\Gamma_{(-\lfloor \ell' n^{2/3} \rfloor, \lfloor \ell' n^{2/3} \rfloor)}$ intersects $\LL_n$ at some $(n+k,n-k)$, for $k<\ell n^{2/3}$.
By Lemma \ref{lem:trans-dis-inf}, we would have that these happen with probability $<Ce^{-cM^{3/10}}$.

Finally, as in the proof of Lemma \ref{lem:num-int-dis}, we split $\{(i,-i):|i|<\ell' n^{2/3}\}$ and $\{(2n+j,2n-j):|j|<\ell' n^{2/3}\}$ into segments of length $n^{2/3}$, and apply  Proposition \ref{prop:num-int-dis-weak} to each pair.
We then conclude that $\PP[|F'|>M]<Ce^{-cM^{1/12800}}$, so the conclusion follows.
\end{proof}

\subsection{Convergence of Exponential LPP to the directed landscape}   \label{ssec:lpptodl}
In \cite{DV21} it is proved that Exponential LPP converges to the directed landscape after suitable scaling. We record the formal statement in this section.

To state the convergence, we need a linear transformation of Exponential LPP to the scaling of the directed landscape. Among other things, the transformation redirects the orientation from the $(1,1)$-direction to the vertical direction.

For any $p=(x,y)\in\R^2$ we let $\fG_n (p) = ( 2^{-5/3}n^{-2/3}(x-y), n^{-1}y)$.
For any set $A\subset \R^2$ and $n\in \N$, we let $\fG_n(A) = \{( \fG_n(p) : p \in A\}$.
We then define a function $\cK_n$ on $\R^4$, the transformed last-passage times.
It is also a ``directed metric'' on $\R^2$ (as it satisfies a composition law similar to \eqref{eq:DL-compo}).
For any $(x,s;y,t) \in \R^4$, we let
\[
\cK_n(x,s;y,t) = 2^{-4/3}n^{-1/3} F(\fG_n^{-1}(x,s); \fG_n^{-1}(y,t)).
\]
where $F:\R^4 \to \R$ is the rounded version of last-passage times, defined as follows.
Let $\fr:\R^2 \to \Z^2$ be the ``rounding'' function where
\[
\fr(x,y) =
\begin{cases}
(x, y)  &  x, y \in \Z;\\
(\lfloor x \rfloor, y) &  x\not\in \Z, y \in \Z;\\
(x, \lfloor y \rfloor) &  x\in \Z, y \not\in \Z;\\
(\lceil x\rceil, \lfloor y \rfloor) &  x\not\in \Z, y \not\in \Z.
\end{cases}
\]
We then let $F(x,y;z,w)= T_{(\lceil x\rceil, \lceil y\rceil), \fr(z,w)} - 2d(\fr(z,w)-\fr(x,y)) - \omega_{(x,y)}\don[x, y\in \Z]$.
Here $2d(\fr(z,w)-\fr(x,y))$ is the leading linear approximation of $T_{(\lceil x\rceil, \lceil y\rceil), \fr(z,w)}$, and $\omega_{(x,y)}\don[x, y\in \Z]$ is the correction term to make $F$ satisfy a composition law (similar to \eqref{eq:DL-compo}).
We note that $\cK_n(x,s;y,t)=-\infty$ if $(\lceil ns+2^{5/3}n^{2/3}x\rceil, \lceil ns\rceil) \not\preceq \fr(nt+2^{5/3}n^{2/3}y, nt)$, as we let $T_{p,q}=-\infty$ when  $p\not\preceq q$.

For any $p, q\in \R^2$, a set $A\subset \R^2$ is called a \emph{geodesic set} in $\cK_n$ from $p$ to $q$, if there is a total ordering $\preceq_A$ on $A$, such that $p\preceq_A p' \preceq_A q$ for any $p'\in A$; 
and for any $p_1\preceq_A p_2 \preceq_A p_3 \in A$, 
\begin{equation}  \label{eq:cKcomp}
\cK_n(p_1;p_2) + \cK_n(p_2;p_3) = \cK_n(p_1;p_3).
\end{equation}
Such a set $A$ is called a \emph{maximal geodesic set} in $\cK_n$ if it is not contained in any other geodesic set from $p$ to $q$.
From these definitions, we have that for any $p, q\in\Z^2$, the set $\fG_n(\Gamma_{p,q})$ is a maximal geodesic set from $\fG_n(p)$ to $\fG_n(q)$.
Indeed, obviously $\fG_n(\Gamma_{p,q})$ is a geodesic set, and it is maximal because adding any extra point (with any ordering) would violate \eqref{eq:cKcomp} (while this can be checked straightforwardly, we omit the somewhat tedious details.)

Finally, for any continuous path $\pi:I\to\R$, where $I\subset \R$ is any subset, we define its \emph{graph} $\Graph(\pi)$ as the set $\{(\pi(t),t): t\in I\}$. We then have the following convergence result. 
\begin{theorem}[\protect{\cite[Theorem 13.8(2)]{DV21}}]  \label{thm:exp-to-dl}
There is a coupling of $\cL$ with $\cK_n$ for all $n\in\N$, such that the following holds almost surely.
First, for any compact $K\subset \R^4_\uparrow$, we have $\cK_n \to \cL$ uniformly in $K$.
Second, for any $(p_n; q_n) \to (p; q) \in \R^4_\uparrow$,
and any $\cK_n$ maximal geodesic set from $p_n$ to $q_n$, denoted as $\pi_n$, they are pre-compact in Hausdorff topology, and any subsequential limit is $\Graph(\pi)$, for some $\cL$ geodesic $\pi$ from $p$ to $q$.
\end{theorem}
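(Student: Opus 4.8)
\medskip
\noindent\textbf{Proof proposal.}
This is \cite[Theorem 13.8(2)]{DV21}; since a strong invariance principle of this strength is well beyond what the present paper develops from scratch, I will only sketch the argument I would follow. The plan is to split the statement into two essentially independent parts: (i) the convergence of the rescaled directed metrics $\cK_n \to \cL$, uniformly on compact subsets of $\R^4_\uparrow$, first in distribution and then as an almost sure coupling; and (ii) precompactness of the rescaled geodesic sets and the identification of every subsequential limit as the graph of an $\cL$-geodesic. Part (ii) is soft once part (i) and the a priori estimates already collected in this section are in hand, so the substance lies in (i).

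For part (i), I would follow the strong-coupling route of \cite{DV21}: build, on one probability space, a multi-scale (KMT/dyadic) approximation coupling Exponential LPP with Brownian LPP so that the KPZ-rescaled last-passage fields differ by a uniformly $o(1)$ error on compacts; since rescaled Brownian LPP converges to $\cL$ uniformly on compacts by \cite{DOV}, this yields $\cK_n\to\cL$. The Brownian-LPP-to-$\cL$ input itself I would take from \cite{DOV}, where it is obtained by (a) realizing the one-time field $(x,y)\mapsto\cL(x,0;y,1)$ (the Airy sheet), and its prelimiting analog, as ``last passage across'' the parabolic Airy line ensemble via RSK/non-intersecting-path representations, so that convergence of the Airy line ensemble (Brownian--Gibbs property and convergence of the $\cA_2$ process, \cite{prahofer2002PNG}) gives convergence of the one-time field; (b) using the composition law \eqref{eq:DL-compo} (and its $\cK_n$-analog) together with independence of increments over disjoint time intervals to reduce the four-parameter field to finitely many independent one-time fields; and (c) invoking the characterization of $\cL$ --- these properties plus continuity pin down its law --- while tightness comes from the one-point tail \eqref{e:slope} and a modulus-of-continuity bound (the prelimiting analog of Lemma~\ref{lem:modcont}, deducible from Proposition~\ref{prop:seg-to-seg}). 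Skorokhod's representation theorem then produces the coupling in the statement.

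For part (ii), I would work on the coupling from (i). Fix $(p_n;q_n)\to(p;q)=(x,s;y,t)\in\R^4_\uparrow$ and $\cK_n$-maximal geodesic sets $\pi_n$; each $\pi_n$ is the $\fG_n$-image of a lattice geodesic, hence a connected compact up-right set joining $p_n$ to $q_n$ whose time-coordinates become dense in $[s,t]$ as $n\to\infty$. First, the rescaled transversal-fluctuation estimate (Lemma~\ref{lem:trans-dis-fini}), together with \eqref{e:slope} controlling temporal extent, confines all the $\pi_n$ to a fixed compact parallelogram with overwhelming probability, uniformly in large $n$; by the Blaschke selection theorem they are precompact in the Hausdorff topology. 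Next, let $\Pi$ be any subsequential limit: the up-right (monotone) ordering passes to Hausdorff limits, so $\Pi$ is the ``graph'' of a nondecreasing set-valued path, and applying Lemma~\ref{lem:trans-dis-fini} on short sub-time-windows (transversal fluctuation $O(\varepsilon^{2/3})$ over a window of temporal length $\varepsilon$) rules out vertical segments, giving $\Pi=\Graph(\pi)$ for a continuous $\pi:[s,t]\to\R$ with $\pi(s)=x$, $\pi(t)=y$. Finally, to identify $\pi$ as an $\cL$-geodesic, fix a partition $s=t_0<\dots<t_k=t$ and choose $a_i^n\in\pi_n$ with time-coordinate $\to t_i$, so $a_i^n\to(\pi(t_i),t_i)$ along the subsequence; the geodesic-set identity \eqref{eq:cKcomp} gives $\sum_{i=1}^k \cK_n(a_{i-1}^n;a_i^n)=\cK_n(p_n;q_n)$, and passing to the limit using uniform convergence of $\cK_n$ to $\cL$ on the parallelogram and continuity of $\cL$ yields $\sum_{i=1}^k \cL(\pi(t_{i-1}),t_{i-1};\pi(t_i),t_i)=\cL(x,s;y,t)$ for every such partition. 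Hence $\|\pi\|_\cL=\cL(p;q)$, so $\pi$ is a geodesic from $p$ to $q$.

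\medskip
\noindent\textbf{Main obstacle.}
The hard part will be part (i) --- and within it, upgrading the one-time (Airy sheet) convergence to the full multi-time directed-metric convergence: this requires the characterization of $\cL$ and, to run it, control of how different time-increments are coupled, whether via the KMT-type strong approximation of \cite{DV21} or the RSK dynamics of \cite{DOV}. Part (ii), by contrast, is routine: it uses only the uniform metric convergence and the transversal-fluctuation, one-point, and regularity estimates already recorded.
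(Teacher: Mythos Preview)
The paper does not prove this theorem at all: it is stated with a citation to \cite[Theorem 13.8(2)]{DV21} and used as a black box throughout. You correctly recognize this in your first sentence, and your sketch of the \cite{DOV}/\cite{DV21} strategy is a reasonable high-level outline of how that result is obtained in the literature, but there is nothing in the present paper to compare it against.
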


The convergence of the Exponential LPP semi-infinite geodesic $\Gamma_\boo$ to $\pi_\boo$ is also established, in the Hausdorff topology. 
\begin{prop}[\protect{\cite[Proposition 5.1]{GZ+}}]  \label{prop:cov-semi-geo}
For any $h>0$, $\fG_n(\Gamma_\boo) \cap \R\times [0,h] \to \Graph(\pi_\boo) \cap \R\times [0,h]$ in distribution in the Hausdorff topology, as $n\to\infty$.
\end{prop}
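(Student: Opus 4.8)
We work throughout in the coupling of Theorem~\ref{thm:exp-to-dl}, in which $\cK_n\to\cL$ locally uniformly and $\cK_n$-maximal geodesic sets between converging endpoints converge, in the Hausdorff topology, to graphs of $\cL$-geodesics, all almost surely. The idea is to sandwich the semi-infinite geodesic between point-to-point geodesics to a far endpoint --- to which Theorem~\ref{thm:exp-to-dl} applies directly --- on both the discrete and the continuum sides. Fix $h>0$, introduce an auxiliary parameter $T>h$, and let $q_n^{(T)}=(\lceil nT\rceil,\lceil nT\rceil)\in\Z^2$, so that $\fG_n(q_n^{(T)})\to(0,T)$. Abbreviate $S_n:=\fG_n(\Gamma_\boo)\cap(\R\times[0,h])$ and $S_n^{(T)}:=\fG_n(\Gamma_{\boo,q_n^{(T)}})\cap(\R\times[0,h])$; note $S_n=\fG_n(\Gamma_\boo\cap\{y\le hn\})$ and similarly for $S_n^{(T)}$.

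\emph{Step 1: reduction to two coalescence estimates.} Since $\Gamma_\boo$ and $\Gamma_{\boo,q_n^{(T)}}$ are both up--right paths from $\boo$, they agree at every vertex of height $\le hn$ precisely when their last common vertex has height $\ge hn$, and on that event $S_n=S_n^{(T)}$. We claim:
\begin{itemize}
\item[(D)] For every $\delta>0$ there is $T_0(h,\delta)$ with $\PP[S_n=S_n^{(T)}]\ge1-\delta$ for all $T\ge T_0$ and all $n$.
\item[(C)] For every $\delta>0$ there is $T_1(h,\delta)$ with $\PP\big[\Graph(\pi_{\boo,(0,T)})\cap(\R\times[0,h])=\Graph(\pi_\boo)\cap(\R\times[0,h])\big]\ge1-\delta$ for all $T\ge T_1$.
\end{itemize}
Granting (D) and (C), one concludes as follows. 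By Lemma~\ref{lem:trans-dis-inf}, $(S_n)_n$ is tight (its support lies in a fixed compact set off an event of probability $<\varepsilon$, uniformly in $n$). For fixed $T$ the endpoints $(0,0)$ and $(0,T)$ are deterministic, so by Theorem~\ref{thm:exp-to-dl} together with the a.s.\ uniqueness of the $\cL$-geodesic between them, $\fG_n(\Gamma_{\boo,q_n^{(T)}})\to\Graph(\pi_{\boo,(0,T)})$ in the Hausdorff topology a.s.; since the sets involved are $\fG_n$-images of up--right lattice paths (vertically connected with $o(1)$ steps), intersecting with $\R\times[0,h]$ is continuous at this limit, so $S_n^{(T)}\to\Graph(\pi_{\boo,(0,T)})\cap(\R\times[0,h])$ a.s. Thus for any bounded Hausdorff-continuous $\Phi$, (D) gives $|\E\Phi(S_n)-\E\Phi(S_n^{(T)})|\le 2\|\Phi\|_\infty\delta$, bounded convergence gives $\E\Phi(S_n^{(T)})\to\E\Phi(\Graph(\pi_{\boo,(0,T)})\cap(\R\times[0,h]))$, and (C) places the latter within $2\|\Phi\|_\infty\delta$ of $\E\Phi(\Graph(\pi_\boo)\cap(\R\times[0,h]))$; letting $n\to\infty$ and then $\delta\to0$ yields the asserted convergence in distribution.

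\emph{Step 2: the estimates (D) and (C).} For (D), describe the last vertex of $\Gamma_\boo$ at height $hn$ through the Busemann formula (property~(1) of Section~\ref{ssec:busemann}) as $(m_*,hn)$ with $m_*=\argmax_m[\,T_{\boo,(m,hn)}+G(m,hn+1)\,]$, and the corresponding vertex of $\Gamma_{\boo,q_n^{(T)}}$ as $(m_T,hn)$ with $m_T$ the maximiser of $T_{\boo,(m,hn)}$ plus the last-passage weight from height $hn+1$ to $q_n^{(T)}$. By Lemma~\ref{lem:trans-dis-inf} both $m_*$ and $m_T$ lie within $O(n^{2/3})$ of $hn$ off an event of probability $\le\delta/2$, uniformly in $n,T$, so it suffices that $G(\cdot,hn+1)$ and $T_{(\cdot,hn+1),q_n^{(T)}}$ share an argmax over this $n^{2/3}$-window. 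As $T\to\infty$ the point $q_n^{(T)}$ recedes in the $(1,1)$-direction and the increments of $T_{\cdot,q_n^{(T)}}$ converge to those of the Busemann function $G$; the required \emph{quantitative} version --- uniform over an $n^{2/3}$-window on $\LL_{hn}$ and uniform in $n$, with error $\delta(T)\to0$ --- is a coalescence estimate for point-to-point versus semi-infinite geodesics of the type established in \cite{seppalainen2020coalescence,BHS,MSZ,Z20}, the uniformity in $n$ reflecting KPZ scale invariance (everything depends on the ratio $h/T$). Estimate (C) is the exact continuum analogue: $\pi_\boo$ is constructed in \cite[Section~4]{GZ+} (see also \cite{RV21,BSS22}) as the limit of $\pi_{\boo,(0,T)}$ as $T\to\infty$, and the coalescence estimates developed there supply the rate.

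\textbf{Main obstacle.} The substantive input is the uniform-in-$n$ discrete coalescence estimate (D): the point-to-point geodesic to a receding $(1,1)$-direction endpoint coincides with the semi-infinite geodesic below a fixed macroscopic level, with failure probability $\delta(T)\to0$ independent of $n$; and its continuum counterpart (C), which must be read off from the semi-infinite geodesic construction of \cite[Section~4]{GZ+}. By contrast the remainder of Step~1 --- tightness via Lemma~\ref{lem:trans-dis-inf}, continuity of the slab-truncation at path graphs, and the $\varepsilon$-$\delta$ bookkeeping --- is routine.
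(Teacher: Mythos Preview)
The paper does not actually prove this proposition: it is quoted from \cite[Proposition 5.1]{GZ+} (with an additional pointer to \cite[Lemma 4.2]{RV21}) and no argument is given here. So there is no in-paper proof to compare against.

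That said, your plan is exactly the natural one and is essentially how this type of statement is proved: approximate the semi-infinite geodesic by a point-to-point geodesic to a far endpoint $(0,T)$, use Theorem~\ref{thm:exp-to-dl} for the latter, and control the approximation error uniformly in $n$ via coalescence. You have correctly isolated the only non-routine step, namely the estimate (D) that $\Gamma_\boo$ and $\Gamma_{\boo,q_n^{(T)}}$ agree below height $hn$ with probability $1-\delta(T)$ \emph{uniformly in $n$}. Your Busemann/argmax description in Step~2 is a bit loose as written --- equality of the two geodesics below $\LL_{hn}$ is not quite the same as the two profiles sharing an argmax, and the passage from ``increments converge'' to ``argmaxes agree with high probability uniformly in $n$'' needs a genuine quantitative input --- but the references you cite (and the transversal fluctuation bounds of Lemmas~\ref{lem:trans-dis-fini} and~\ref{lem:trans-dis-inf}) do supply what is needed. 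One small technical point worth flagging: the map ``intersect with $\R\times[0,h]$'' is not continuous on the whole Hausdorff space, so you should argue that the limit $\Graph(\pi_{\boo,(0,T)})$ is a.s.\ a continuity point of this map (e.g.\ because $\pi_{\boo,(0,T)}(h)$ is a.s.\ not at the edge of any excursion, or simply because the path is a graph over $[0,T]$), which you gesture at but do not spell out.
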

This is also obtained in \cite[Lemma 4.2]{RV21}.

\subsection{Occupation estimates}

We will rely on the following crucial tail estimate established in \cite{GZ+} on the occupation time of a semi-infinite geodesic in the directed landscape based on an earlier similar estimate for Exponential LPP obtained in \cite{SSZ}.
\begin{lemma}[\protect{\cite[Lemma 2.7]{GZ+}}]  \label{lem:geo-ocp-tail}
There are universal constants $C,c>0$ such that the following is true.
For any closed interval $I\subset \R$ and $J\subset \R_{\ge 0}$, we have
\[
\PP\left[\int_J \don[\pi_\boo(t)\in I] dt > M\LE(I)\LE(J)^{1/3}\right] < Ce^{-cM}
\]
for any $M>0$.
\end{lemma}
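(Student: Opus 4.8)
The plan is to reduce, via the scaling invariance of $\cL$, to a normalized occupation estimate over a unit time window, and then to import that estimate from Exponential LPP, where (essentially) it is the estimate of \cite{SSZ} that underlies the construction of $L$ in \cite{GZ+}.

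\textbf{Step 1: scaling reduction.} Since the direction-$0$ semi-infinite geodesic from $\boo$ is a.s.\ unique, the scaling invariance of Lemma~\ref{lem:DL-symmetry} upgrades to the process identity $\pi_\boo(\cdot)\overset{d}{=}w^{2}\pi_\boo(w^{-3}\,\cdot)$ for every $w>0$. Writing $\LE(I)=2\ell$ and $\LE(J)=h$, taking $w=h^{1/3}$ and changing variables, $\int_{J}\don[\pi_\boo(t)\in I]\,dt$ has, for $J=[0,h]$, the law of $h\int_{0}^{1}\don[\pi_\boo(s)\in h^{-2/3}I]\,ds$ with $\LE(h^{-2/3}I)=2\ell h^{-2/3}$. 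So it suffices to find $c,C>0$ with
\[
\PP\!\left[\int_{0}^{1}\don[\pi_\boo(s)\in[a-r,a+r]]\,ds>2Mr\right]<Ce^{-cM}\qquad\text{for all }a\in\R,\ r>0,\ M>0;
\]
when $2Mr\ge1$ the event is empty (the integrand is $\le1$), so after enlarging $C$ we may take $r$ small and $M$ large. A window $J=[g,h]$ with $g\lesssim h-g$ reduces to the anchored case by enlarging to $[0,h]$ and using $h^{1/3}\lesssim(h-g)^{1/3}$; a ``late and short'' $J$ uses in addition that $\pi_\boo|_{[g,\infty)}$ is again a direction-$0$ semi-infinite geodesic from $(\pi_\boo(g),g)$, whose recentred law is that of $\pi_\boo$, together with the uniformity in $a$ just obtained.

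\textbf{Step 2: transfer from Exponential LPP.} By Proposition~\ref{prop:cov-semi-geo} and Skorokhod's representation, realise $\fG_n(\Gamma_\boo)$ and $\pi_\boo$ on one space so that $\fG_n(\Gamma_\boo)\cap(\R\times[0,2])\to\Graph(\pi_\boo)\cap(\R\times[0,2])$ almost surely in the Hausdorff distance, with errors $\delta_n\to0$. Every point $(\pi_\boo(s),s)$ with $\pi_\boo(s)\in[a-r,a+r]$ lies within $\delta_n$ of a vertex of $\fG_n(\Gamma_\boo)$ whose spatial coordinate is then in $[a-2r,a+2r]$ once $\delta_n<r$, so the time set $\{s\in[0,1]:\pi_\boo(s)\in[a-r,a+r]\}$ is contained in a $\delta_n$-neighbourhood of the set of scaled times of the vertices of $\Gamma_\boo$ in the strip of width $\asymp rn^{2/3}$. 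Using the regularity of the geodesic (the transversal-fluctuation estimates of Section~\ref{sssec:transf}, which off an event of probability $\le Ce^{-cM}$ limit the oscillation of $\Gamma_\boo$), this $\delta_n$-neighbourhood exceeds the actual discrete occupation time of a mildly dilated strip by only $o_n(1)$; and that discrete occupation time is at most $n^{-1}$ times the number of vertices of $\Gamma_\boo$ in a strip of width $\asymp rn^{2/3}$ and depth $\asymp n$. The exponential tail $\PP[\#\{\text{such vertices}\}>C'rnM]<Ce^{-cM}$, uniform in the strip width, is precisely the \cite{SSZ} occupation estimate (a form of which, tailored to the present setting, appears as Lemma~\ref{lem:disc-max} of Appendix~\ref{ssec:max-ocp-seg}). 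Since the left-hand side of the displayed inequality in Step~1 does not depend on $n$, combining these bounds and letting $n\to\infty$ gives the claim with $M$ rescaled by a constant.

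\textbf{The main obstacle.} The crux is that the only available convergence, Hausdorff convergence of geodesics, controls $\pi_\boo$ merely as a planar set, and this is a priori far too weak at the lattice scale to govern occupation times — the $\delta_n$-neighbourhood of a point set of cardinality $\asymp rn$ can have measure of order $rn\,\delta_n$, which need not vanish. Two ingredients save the argument: one must know (from transversal-fluctuation estimates) that $\Gamma_\boo$ visits the strip in only a few runs, so that the Hausdorff error is genuinely negligible; and one must have the discrete occupation estimate for \emph{every} strip width simultaneously, with the correct KPZ exponents, so that the passage to a slightly dilated strip is free. The second point is the genuinely hard input, and is what \cite{SSZ} (hence Lemma~\ref{lem:disc-max}) supplies; alternatively one can prove it directly inside the directed landscape by adapting the \cite{SSZ} argument — partitioning the time window, observing that a geodesic lingering in a thin strip over a sub-window forces an atypically favourable last-passage profile there (bounded via the parabolic curvature of $\cL$ in Lemma~\ref{lem:DLbound} and the Airy/Brownian estimates of Section~\ref{sssec:atlocbr}), and summing over well-separated sub-windows using near-independence of far-apart portions of the landscape. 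This is the route suggested by the phrase ``based on an earlier similar estimate'', and is logically a simpler cousin of the occupation-convergence problem that occupies the body of the present paper.
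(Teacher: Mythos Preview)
First, note that the paper does not actually prove this lemma: it is quoted verbatim from \cite[Lemma~2.7]{GZ+}, so there is no ``paper's own proof'' to compare to beyond the remark that it rests on the \cite{SSZ} discrete occupation estimate. Your proposal therefore attempts to reconstruct what \cite{GZ+} does, and the overall shape (scaling reduction plus transfer from the discrete \cite{SSZ} estimate) is plausible. However, there are two genuine gaps.

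\textbf{Step 1, late-and-short $J$.} Your claim that ``$\pi_\boo|_{[g,\infty)}$ \ldots\ whose recentred law is that of $\pi_\boo$'' is false as stated. The continuation $\pi_\boo|_{[g,\infty)}$ is indeed the semi-infinite geodesic from $(\pi_\boo(g),g)$, but $\pi_\boo(g)$ is \emph{not} independent of the landscape above time $g$: it is determined by the Busemann function, which is a function of that landscape. There is no Markov property for the geodesic, a point the paper emphasizes repeatedly. Consequently the uniform-in-$a$ bound for $J=[0,1]$ cannot simply be applied conditionally on $\pi_\boo(g)$. One can repair this, for instance by proving the estimate for general $J$ directly at the discrete level (where translation along the diagonal is available) before passing to the limit, but your argument as written does not do so.

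\textbf{Step 2, the Hausdorff transfer.} Your justification that ``transversal-fluctuation estimates \ldots\ limit the oscillation of $\Gamma_\boo$'' so that the geodesic makes ``only a few runs'' through the strip is not correct: transversal fluctuation bounds the maximal displacement, not the number of crossings. The workable argument is the one-sided sandwiching implicit in the proof of Lemma~\ref{lem:L-coupl-conv}: Hausdorff convergence alone yields $L_r(1)\le \liminf_n L_{r+\delta}^{(n)}(1)$ for every $\delta>0$, without invoking Lemma~\ref{lem:geo-ocp-tail} itself, so no circularity arises. But then the discrete input you need is a \emph{strip} occupation tail, $\PP[L_r^{(n)}(1)>2Mr]<Ce^{-cM}$ uniformly in $n$ and $r$, whereas Lemma~\ref{lem:disc-max} is only the single-line statement ($r$ of order $n^{-2/3}$). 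Summing the single-line bound over the $\asymp rn^{2/3}$ lines in the strip gives the right expectation but not the tail, since the $Z^{(n)}[m]$ are highly correlated. The strip version is indeed what \cite{SSZ} proves, but you should cite it as such rather than Lemma~\ref{lem:disc-max}, and you should replace the ``few runs'' heuristic by the sandwiching inequality.
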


One obtains the following continuity result as a straightforward consequence of the above. 
\begin{lemma}\label{lem:L-conti-w}
Almost surely the following holds.
For any $h\ge 0$, $w\mapsto L_w(h)$ is continuous for $w\ge 0$.
\end{lemma}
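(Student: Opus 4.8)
The plan is to deduce Lemma~\ref{lem:L-conti-w} from the occupation tail bound of Lemma~\ref{lem:geo-ocp-tail} together with a monotonicity reduction and a Borel--Cantelli argument over a grid of levels. First I would record a reduction in the $h$ variable: for $0\le w<w'$ and $0\le h\le N$ one has $\pi_\boo(t)\in[-w',w']\setminus[-w,w]$ exactly when $\pi_\boo(t)\in[-w',-w)\cup(w,w']$, so $L_{w'}(h)-L_w(h)=\LE(\{t\in[0,h]:\pi_\boo(t)\in[-w',-w)\cup(w,w']\})\le L_{w'}(N)-L_w(N)$. Combined with the monotonicity of $w\mapsto L_w(h)$, this shows that if $w\mapsto L_w(N)$ is continuous on $[0,\infty)$ for every $N\in\N$, then $w\mapsto L_w(h)$ is continuous for every $h\ge 0$; since a countable intersection of almost sure events is almost sure, it suffices to fix $N\in\N$ and prove that almost surely $w\mapsto L_w(N)$ is continuous on $[0,\infty)$.

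Next I would pin down the possible discontinuities of $w\mapsto L_w(N)$. This map is non-decreasing and bounded by $N$; as $w\downarrow w_0$ we have $[-w,w]\downarrow[-w_0,w_0]$ pointwise, so dominated convergence gives right-continuity and leaves only left jumps; as $w\uparrow w_0$ we have $[-w,w]\uparrow(-w_0,w_0)$, so monotone convergence shows the jump at $w_0$ equals $\LE(\{t\in[0,N]:|\pi_\boo(t)|=w_0\})$. Hence continuity reduces to the statement that, almost surely, $\LE(\{t\in[0,N]:|\pi_\boo(t)|=w_0\})=0$ for \emph{every} $w_0\ge0$; writing $[0,\infty)=\bigcup_{W\in\N}[0,W]$, it is enough to fix $W\in\N$ and prove this uniformly over $w_0\in[0,W]$.

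The quantitative core is then the following. For $\epsilon>0$ and an integer $0\le k\le\lceil W/\epsilon\rceil$, apply Lemma~\ref{lem:geo-ocp-tail} with $J=[0,N]$ and $I=[k\epsilon-\epsilon,k\epsilon+\epsilon]$ (and separately with $-I$), obtaining $\PP[\LE(\{t\in[0,N]:\pi_\boo(t)\in I\})>2M\epsilon N^{1/3}]<Ce^{-cM}$ for every $M>0$. Every $w_0\in[0,W]$ lies within $\epsilon$ of some such $k\epsilon$, and then $\{|\pi_\boo(t)|=w_0\}\subseteq\{\pi_\boo(t)\in I\}\cup\{\pi_\boo(t)\in-I\}$, so a union bound over the $O(W/\epsilon)$ relevant intervals controls $\sup_{w_0\in[0,W]}\LE(\{t\in[0,N]:|\pi_\boo(t)|=w_0\})$ by $4M\epsilon N^{1/3}$ with probability at least $1-CW\epsilon^{-1}e^{-cM}$. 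Choosing sequences $\epsilon_j\downarrow0$ and $M_j\uparrow\infty$ with $M_j\epsilon_j\to0$ and $\sum_j W\epsilon_j^{-1}e^{-cM_j}<\infty$ (for instance $\epsilon_j=2^{-j}$, $M_j=(4/c)j$) and invoking Borel--Cantelli, almost surely this supremum is at most $4M_j\epsilon_jN^{1/3}$ for all large $j$, hence is $0$. Taking the union over $W,N\in\N$ finishes the proof.

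The only real obstacle is this last uniformity over levels: for any \emph{single} $w_0$, $\LE(\{t:\pi_\boo(t)=w_0\})=0$ almost surely is immediate from Lemma~\ref{lem:geo-ocp-tail} (or even from Fubini applied to $(t,w)\mapsto\don[\pi_\boo(t)=w]$), but a generic continuous path can have a nonempty---though necessarily countable---set of levels whose level set has positive measure, so one genuinely has to rule out all bad levels at once; the grid discretization is exactly what lets the exponential decay in Lemma~\ref{lem:geo-ocp-tail} absorb the polynomial-in-$\epsilon$ number of levels, and everything else is soft.
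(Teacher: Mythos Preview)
Your proof is correct. Both your argument and the paper's rely on Lemma~\ref{lem:geo-ocp-tail} combined with a discretization and Borel--Cantelli, but the organization differs. The paper works more directly: it fixes a dyadic lattice in the three parameters $(i,a,b)$ (scale of the interval, scale of the time horizon, and location of the interval), shows via a single union bound that there is a random constant $R$ with $\int_0^{2^a}\don[\pi_\boo(t)\in[i2^b,(i+2)2^b]]\,dt < R(|a|+|b|+|i|^{1/2}+1)2^{b+a/3}$ simultaneously for all $(i,a,b)\in\Z^3$, and then reads off an explicit quantitative modulus of continuity for $w\mapsto L_w(h)$ valid for all $h,w,\delta$ at once. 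Your route instead first reduces in $h$ by monotonicity, then identifies the only possible discontinuities as left jumps of size $\LE(\{t\in[0,N]:|\pi_\boo(t)|=w_0\})$, and finally kills these level-set measures uniformly over a grid of levels via Borel--Cantelli. The paper's approach yields a stronger output (a uniform quantitative bound rather than mere continuity), while yours is conceptually a bit cleaner in that it isolates exactly what could go wrong and rules it out; for the lemma as stated either is perfectly adequate.
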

\begin{proof}
By Lemma \ref{lem:geo-ocp-tail}, we have that there exists a random $R>0$, such that for any $i,a,b\in \Z$, we have
\[
\int_0^{2^a} \don[\pi_\boo(t)\in [i2^b, (i+2)2^b] ] dt < R(|a|+|b|+|i|^{1/2}+1) 2^{b+a/3}. 
\]
Indeed, for any fixed $i, a, b\in \Z$ and $R>0$, the probability for this inequality to fail is exponentially small in $R(|a|+|b|+|i|^{1/2}+1)$. Thus it is summable when taking a union bound over $i, a, b$.

For any $h>0$, $w>0$, and $|\delta|<w$, by taking $a=\lceil \log_2(h)\rceil$, $b=\lceil \log_2(\delta)\rceil$, and making appropriate choices of $i$, we have that
\[
|L_w(h)-L_{w+\delta}(h)| < 16R \delta h^{1/3} (|\log_2(h)| + |\log_2(\delta)| + (|w|/|\delta|)^{1/2} +4).
\]
So the conclusion follows.
\end{proof}

We will also use a version of the occupation time bound in the Exponential LPP setting.
For any $a<b \in \Z$, we let $W[a,b]=|\{a\le k \le b: (k,k)\in \Gamma_{(a,a),(b,b)}\}|$, and let $W_{a,b}=\max_{a\le a' \le b' \le b}W[a', b']$.

\begin{lemma}  \label{lem:disc-max}
There are universal constants $C,c>0$ such that the following is true. Take any large enough $A$.
For any $a<b \in \Z$, there is
\begin{equation}  \label{eq:dis-W-exp}
\E[W_{a,b}] < A^{2/3+0.01} (b-a)^{1/3},    
\end{equation}
and for any $M>0$,
\begin{equation}  \label{eq:dis-W-tail}
\PP[W_{a,b} > AM(b-a)^{1/3}] < Ce^{-cM}.
\end{equation}
\end{lemma}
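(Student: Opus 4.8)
The plan is to reduce the statement to the occupation-time tail bound already available in \cite{SSZ}, which controls the number of times a point-to-point geodesic $\Gamma_{(a,a),(b,b)}$ meets the diagonal, and then to upgrade the fixed-endpoint estimate to the running maximum $W_{a,b}=\max_{a\le a'\le b'\le b} W[a',b']$. First I would fix $N=b-a$ and reduce to the case $a=0$, $b=N$ by translation invariance of the weight field. The core input is a tail bound of the form $\PP[W[0,N] > AM N^{1/3}] < C e^{-cM}$ for the single geodesic; this is exactly the content (suitably transcribed into Exponential LPP coordinates) of the estimate used in \cite{SSZ}, and I would cite it as such, so the work is in the maximization over subintervals.

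The main step is the union bound over sub-diagonals $[a',b']$. The naive union bound over all $\binom{N}{2}$ pairs $(a',b')$ loses a factor $N^2$, which is far too lossy to keep the exponential tail. Instead I would use a chaining/coalescence argument: the key geometric fact is that $W[a',b']$ is essentially monotone under enlargement of the interval up to the coalescence structure of geodesics — more precisely, any diagonal point lying on $\Gamma_{(a',a'),(b',b')}$ for some sub-interval is, with overwhelming probability, forced to lie near one of a bounded (order $\log N$) number of "highway" geodesics with dyadic endpoints, by the intersection estimates of Section \ref{ssec:mdg} (in the spirit of Proposition \ref{prop:num-int-dis-weak} and Lemma \ref{lem:num-int-dis}). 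Concretely, I would discretize the endpoints $a',b'$ to a dyadic grid at scales $2^k$, $0\le k\le \log_2 N$; at each scale there are $O(N/2^k)$ relevant pairs, and the geodesic over the dyadic interval $[2^k j, 2^k(j+1)]$ has $W$-value with tail $\PP[\,\cdot\,>AM 2^{k/3}]<Ce^{-cM}$. Summing $W$-contributions across the $O(\log N)$ dyadic scales containing a given $[a',b']$, and using that $\sum_k 2^{k/3} \lesssim N^{1/3}$, gives $W[a',b'] \le C\sum_k (\text{dyadic contributions})$ up to a correction from the few geodesics crossing dyadic boundaries, which is handled exactly as in the proof of Lemma \ref{lem:num-int-dis}. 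The union bound then has a polynomial-in-$N$ prefactor $N^{O(1)}$ against $e^{-cM}$, which is absorbed by enlarging $A$ (this is precisely why \eqref{eq:dis-W-exp} and \eqref{eq:dis-W-tail} are stated with the slack factor $A^{2/3+0.01}$ and with $A$ taken large enough): after the union bound one gets $\PP[W_{a,b}>AM N^{1/3}] < C N^{C} e^{-cM} < C e^{-cM}$ for $M \ge C'\log N$, and the range $M < C'\log N$ is covered by choosing $A$ large and invoking $AM N^{1/3} \ge A N^{1/3}\log N$-type bookkeeping, or more simply by noting the trivial bound $W_{a,b}\le N+1$ which already makes the probability vanish once $AMN^{1/3}>N$.

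For the expectation bound \eqref{eq:dis-W-exp}, I would integrate the tail: $\E[W_{a,b}] = \int_0^\infty \PP[W_{a,b}>t]\,dt \le A N^{1/3} + \int_{AN^{1/3}}^{\infty} \PP[W_{a,b}>t]\,dt$, and the second term is $\le AN^{1/3}\int_0^\infty C e^{-cM}\,dM = O(A N^{1/3})$ once $A$ is large enough, plus the contribution from $M<C'\log N$ handled via the trivial bound, which contributes at most $A N^{1/3}\cdot C'\log N$; absorbing the $\log N$ into the extra exponent $0.01$ (valid once $A$, hence $N$-ranges, are large enough since $\log N \ll N^{0.01}$ eventually, with the small-$N$ cases absorbed into the constant) yields $\E[W_{a,b}] < A^{2/3+0.01}N^{1/3}$. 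The main obstacle I anticipate is making the coalescence-based reduction from arbitrary sub-intervals to dyadic highways fully rigorous while keeping the polynomial loss under control; this is a standard but delicate argument that parallels the proofs of Lemmas \ref{lem:num-int-dis} and \ref{lem:num-int-dis-inf}, and I would structure it the same way: first truncate transversal fluctuations using Lemma \ref{lem:trans-dis-fini}, then split into $O(\log N)$ dyadic scales, apply the single-interval bound from \cite{SSZ} on each, and union bound.
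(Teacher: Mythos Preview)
Your proposal has a genuine gap: the polynomial-in-$N$ prefactor from the union bound cannot be absorbed as you suggest. After your chaining you obtain $\PP[W_{0,N}>AMN^{1/3}]<CN^{O(1)}e^{-cM}$, which is only useful when $M\gtrsim\log N$. For $M$ in the range $[1,C'\log N]$ neither of your fixes works: the trivial bound $W_{0,N}\le N+1$ only kicks in once $AMN^{1/3}>N$, i.e.\ $M>N^{2/3}/A$, far above $\log N$; and ``enlarging $A$'' cannot help because the statement requires $A$ (and $C,c$) to be fixed independently of $N=b-a$. So for each large $N$ there is a whole range of $M$ where you have no bound at all.

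There is also a gap in the chaining step itself. You want to bound $W[a',b']$ by a sum of $W$'s on dyadic subintervals, but these are diagonal occupation counts of \emph{different} geodesics: the restriction of $\Gamma_{(a',a'),(b',b')}$ to a dyadic block is not $\Gamma$ between the dyadic endpoints. Relating them is precisely a coalescence statement, and the intersection estimates you cite (Proposition~\ref{prop:num-int-dis-weak}, Lemma~\ref{lem:num-int-dis}) count intersections with a single anti-diagonal line, not diagonal occupation over a time interval; they do not give the comparison you need.

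The paper's proof avoids both issues by a simultaneous induction on $b-a$. It introduces an auxiliary quantity $W'_{a,b}$ (the maximum of $W[a',b']$ over intervals with at least one endpoint outside $[a,b]$), whose expectation is bounded by $C(b-a)^{1/3}$ via a one-point estimate $\PP[(m,m)\in\bigcup\Gamma_{(i,i),(j,j)}]<Cm^{-2/3}$ proved by translation invariance. For \eqref{eq:dis-W-exp} one writes $W_{0,b}\le\sum_{\Lambda}W'_{i,j}+2\max_{\partial\Lambda}W_{i,j}$ over a dyadic family stopping at scale $2^g\approx b/A$; the sum is handled by the $W'$ bound and the max by the inductive tail hypothesis. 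For \eqref{eq:dis-W-tail} one splits $[0,b]$ into $r=\lfloor A^{1/10}\rfloor$ equal pieces and uses that the $W_i$'s live on disjoint regions of $\Z^2$ and are therefore \emph{independent}; an exponential-moment computation (fed by the inductive expectation and tail bounds on each piece) then gives the tail with no polynomial loss in $N$. This independence is the mechanism your argument is missing.
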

This result in a slightly different setting has appeared in \cite{SSZ}. 
For completeness, we give a proof of Lemma \ref{lem:disc-max} in the appendix, following a similar strategy.

\section{Convergence to the local time}
\label{sec:convlt}

We start by recalling briefly our strategy to prove Theorem \ref{thm:converge-lc}.
The key input is Proposition \ref{prop:cov-semi-geo}, which is the convergence of the semi-infinite geodesic in Exponential LPP to the one in the directed landscape, in the Hausdorff topology. However, the scaling of Proposition \ref{prop:cov-semi-geo} is too coarse to get information on a line. Thus, to use the convergence result, we need to relate the time spent in a strip with the local time on a line. This will be shown by establishing certain smoothness properties of the local time spent on a line as the latter moves in the spatial direction.
Towards this, we consider the time spent in a strip, defined as follows:
for any $w>0$ and $n\in\N$, we define
\begin{equation}  \label{eq:defn-Ln}
L^{(n)}_w(h)=(2n)^{-1}|\{(x,y): 0\le x+y \le 2hn, |x-y| \le 2^{5/3}n^{2/3}w, (x,y) \in \Gamma_\boo \}|.    
\end{equation}
This is a discrete analog of the function $L_w$, and the scalings match those in Section \ref{ssec:lpptodl} to make it converge to $L_w$ as $n\to\infty$.

The proof of Theorem \ref{thm:converge-lc} consists of the following components.
\begin{enumerate}
    \item $(2w)^{-1}L_w(h) \to L(h)$ as $w\to 0$ {almost surely}.
    \item For any $\delta>0$, and any $w>0$ is small enough (depending on $\delta$), we have
    $|(2w)^{-1}L^{(n)}_w(h) - L^{(n)}(h)|<\delta$ for any $n$ large enough (depending on $\delta, w$).
    \item For any $w>0$, $L^{(n)}_w(h)\to L_w(h)$ as $n\to \infty$ (in a sense that will be clear later).
\end{enumerate}
Given the above statements, by first taking $w$ small enough, then $n$ large enough, it follows that $L^{(n)}\to L$ in probability.

The first step has been accomplished in \cite{GZ+} and is recorded as Proposition \ref{prop:om-limit}.
The third step will involve applying Proposition \ref{prop:cov-semi-geo}, and is stated as Lemma \ref{lem:L-coupl-conv} below. The main task that remains is to carry out the second step, which is formulated as the following result.
\begin{prop}  \label{prop:Ln-conti}
For each $h \ge 0$, we have $\lim_{w\to 0} \limsup_{n\to\infty}\E[((2w)^{-1}L^{(n)}_w(h)-L^{(n)}(h))^2] =0$.
\end{prop}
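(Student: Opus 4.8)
The plan is to pass to unscaled coordinates and compare the local times on different lines $\{x-y=m\}$ via a second-moment estimate. Write, for $|m|\le wn^{2/3}$ (using the scaling factor $2^{5/3}n^{2/3}$ as in \eqref{eq:defn-Ln}), $Z^{(n)}[m]$ for the number of points of the form $(k+?,\,?)$ on the line $\{x-y=2m\}$ that lie on $\Gamma_\boo$ below $\LL_n$ (I would fix the precise lattice normalization so that $Z^{(n)}[0]$ matches $L^{(n)}(h)$ up to the prefactor $2^{2/3}n^{-1/3}$ and $\sum_{|m|\le wn^{2/3}} Z^{(n)}[m]$ matches $L^{(n)}_w(h)$ up to its prefactor $(2n)^{-1}$, absorbing $h$ into $n$ by rescaling). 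With this bookkeeping, $(2w)^{-1}L^{(n)}_w(h) - L^{(n)}(h)$ becomes, up to constants, $n^{-1/3}$ times the average over $|m|\le wn^{2/3}$ of $\big(Z^{(n)}[m]-Z^{(n)}[0]\big)$. Expanding the square and using Cauchy--Schwarz over the $m$-average, it suffices to show
\[
\sup_{|m|\le wn^{2/3}} \E\big[(Z^{(n)}[m]-Z^{(n)}[0])^2\big] = o_w(1)\, n^{2/3}
\]
as $n\to\infty$ then $w\to 0$; this is exactly the ``smoothness'' statement flagged in Section \ref{sec:iop}.

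The key step is to control the mixed second moment $\E[Z^{(n)}[m]Z^{(n)}[0]]$ and the pure second moments $\E[(Z^{(n)}[m])^2]$, $\E[(Z^{(n)}[0])^2]$, and to show the first is close to the geometric mean of the latter two. I would reduce this to a two-point estimate: for lattice points $u$ on $\{x-y=2m\}$ and $u'$ on $\{x-y=0\}$ at comparable heights, the probability that both $u\in\Gamma_\boo$ and $u'\in\Gamma_\boo$ is close to the probability that both the corresponding shifted points lie on $\Gamma_\boo$ — this is the content of Proposition \ref{prop:occup-diff} (to be proven in Section \ref{ssec:conti-ocp-mea}). Summing such two-point comparisons over the relevant $u,u'$ (with the number of terms controlled a priori by the intersection-count bounds Lemma \ref{lem:num-int-dis-inf} and Lemma \ref{lem:disc-max}, which keep $Z^{(n)}[m]$ of order $n^{1/3}$ with exponentially good tails, so second moments are genuinely of order $n^{2/3}$ and the error terms are lower order) yields $\E[(Z^{(n)}[m]-Z^{(n)}[0])^2] = o_w(1)\,n^{2/3}$. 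The occupation bound Lemma \ref{lem:disc-max} is used both to truncate the sums (restricting to points within $O(n^{2/3})$ of the bulk of the geodesic, outside of which contributions are negligible by transversal fluctuation, Lemma \ref{lem:trans-dis-inf}) and to provide the uniform-integrability needed to go from the tail bounds to the $L^2$ bound.

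The main obstacle will be the two-point smoothness estimate Proposition \ref{prop:occup-diff}: controlling $\big|\PP[u,u'\in\Gamma_\boo] - \PP[u+\mathbf{v},u'+\mathbf{v}\in\Gamma_\boo]\big|$ for a shift $\mathbf{v}$ of size $\le wn^{2/3}$. The strategy there, as outlined in Section \ref{sec:iop}, is to realize the shift as a translation of the noise environment, which amounts to perturbing the boundary data of the relevant point-to-point geodesics, and then to invoke the stability of geodesics under boundary perturbations (Proposition \ref{prop:stb}, proven in Section \ref{ssec:stab}): a mismatch between the two events forces the original and perturbed geodesics to stay close for a long stretch yet fail to coalesce, an event of small probability by the coalescence estimates. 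Two technical complications I anticipate: (i) semi-infinite geodesics must be approximated by point-to-point geodesics $\Gamma_{(i,-i),(n+j,n-j)}$ with endpoints varying, so the stability statement has to be applied uniformly and then the union over endpoints controlled using Lemma \ref{lem:num-int-dis}; and (ii) because we need a \emph{second}-moment bound, the perturbation argument must be run for the joint event at the pair $(u,u')$ simultaneously, not just for a single point, which is why the smoothness is phrased for two-point functions. Assembling these ingredients — plugging Proposition \ref{prop:occup-diff} into the second-moment expansion and using the a priori bounds for truncation and uniform integrability — completes the proof of Proposition \ref{prop:Ln-conti}.
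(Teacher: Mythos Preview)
Your proposal is correct and follows essentially the paper's route: reduce via Cauchy--Schwarz to the second-moment bound $\sup_{|m|\le wn^{2/3}}\E[(Z^{(n)}[m]-Z^{(n)}[0])^2]=o_w(1)\,n^{2/3}$ (this is Proposition \ref{prop:bd-m-n}), expand into two-point probabilities, and control the differences via Proposition \ref{prop:occup-diff}, with Lemma \ref{lem:disc-max} handling the residual terms. Two small clarifications worth making explicit: in Proposition \ref{prop:occup-diff} only \emph{one} of the two points is shifted (one compares $\PP[p_1,p_2\in\Gamma_\boo]$ with $\PP[p_1,p_3\in\Gamma_\boo]$, not $\PP[u+\mathbf{v},u'+\mathbf{v}\in\Gamma_\boo]$), and because that proposition requires $d(p_2)-d(p_1)>\theta n$ and $d(p_1)\ge \theta n$, the sum over pairs $(i,j)$ must be split into a well-separated bulk part (handled by Proposition \ref{prop:occup-diff}) and near-diagonal and near-origin parts (bounded directly via Lemma \ref{lem:disc-max}, giving contributions of order $\theta^{1/3}n^{2/3}$ up to logarithmic factors), after which one sends $w\to 0$ and then $\theta\to 0$.
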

We leave the proof of this proposition for later and perform the third step now.
The argument will not depend on the exact value of $h$ and hence for concreteness, we shall assume $h=1$.

By Proposition \ref{prop:cov-semi-geo} and using Skorokhod's representation theorem (see e.g. \cite[Chapter 1, Section 6]{billingsley2013convergence}, for the space of all compact subsets of $\R^2$ with the Hausdorff distance metric, which is separable), we couple a sequence of $\Z^2$ Exponential LPP models and the directed landscape, such that if we denote by $\Gamma_\boo^{(n)}$ the $(1,1)$-direction semi-infinite geodesic from $\boo$ in the $n$-th Exponential LPP, we have $\fG_n(\Gamma_\boo^{(n)}) \cap \R\times [0,2] \to \Graph(\pi_\boo) \cap \R\times [0,2]$ in the Hausdorff metric.
We then let $L^{(n)}_w$ be defined using $\Gamma_\boo^{(n)}$.
Then the following convergence holds.
\begin{lemma}  \label{lem:L-coupl-conv}
For any $w>0$, almost surely we have $L^{(n)}_w \to L_w$ uniformly in any compact set, as $n\to\infty$.
\end{lemma}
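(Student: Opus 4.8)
The plan is to establish, for each fixed $h\ge 0$, the pointwise almost sure convergence $L^{(n)}_w(h)\to L_w(h)$ under the given Skorokhod coupling (which we may realize on the time window $[0,H]$ for any $H$ exceeding the compact set of interest, by re-invoking Proposition \ref{prop:cov-semi-geo}), and then to upgrade it to local uniform convergence by a monotonicity argument. The starting point is a bookkeeping identity. Since each semi-infinite geodesic $\Gamma_\boo^{(n)}$ is an up-right lattice path started at $\boo$, it meets the level line $\{x+y=\ell\}$ in a unique vertex $p_\ell$ (and $y(p_\ell)\ge 0$, $|ad(p_\ell)|\le\ell$). Writing $(a_\ell,s_\ell)=\fG_n(p_\ell)$, so that $a_\ell = 2^{-5/3}n^{-2/3}\,ad(p_\ell)$ and $s_\ell = y(p_\ell)/n = \tfrac{\ell}{2n} - 2^{2/3}n^{-1/3}a_\ell$, the definition \eqref{eq:defn-Ln} unwinds to
\[
L^{(n)}_w(h) = \frac{1}{2n}\sum_{\ell=0}^{\lfloor 2hn\rfloor}\don[\,|a_\ell|\le w\,].
\]
Letting $\phi_n:[0,h]\to\R$ be the step function equal to $a_{\lfloor 2nt\rfloor}$ on $[\tfrac{\ell}{2n},\tfrac{\ell+1}{2n})$, the right-hand side equals $\int_0^h \don[\,|\phi_n(t)|\le w\,]\,dt$ up to an error $O(1/n)$.

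The crux is then to prove $\sup_{t\in[0,h]}|\phi_n(t)-\pi_\boo(t)|\to 0$ almost surely. From $|ad(p_\ell)|\le\ell\le 2hn$ one gets $s_\ell\in[0,2h]$ for all $\ell\le 2hn$, so taking $H>2h$ every point $(a_\ell,s_\ell)$ with $\ell\le 2hn$ lies in $\fG_n(\Gamma_\boo^{(n)})\cap(\R\times[0,H])$, which by the coupling converges in the Hausdorff metric to the compact set $\Graph(\pi_\boo)\cap(\R\times[0,H])$. Hence the $a_\ell$ are uniformly bounded for $n$ large, so $|s_\ell-\tfrac{\ell}{2n}| = 2^{2/3}n^{-1/3}|a_\ell|\to 0$ uniformly; and each $(a_\ell,s_\ell)$ lies within the Hausdorff distance $\epsilon_n\to0$ of $\Graph(\pi_\boo)$, which forces $|a_\ell-\pi_\boo(s_\ell)|\le\epsilon_n+\omega(\epsilon_n)$, where $\omega$ is the modulus of continuity of $\pi_\boo$ on $[0,H]$. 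Combining these two facts with the (uniform) continuity of $\pi_\boo$ yields the claimed uniform convergence of $\phi_n$ to $\pi_\boo$ on $[0,h]$.

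Granting $\sup_{t\in[0,h]}|\phi_n(t)-\pi_\boo(t)|<\eta$ for $n$ large, the pointwise inequalities $\don[|\pi_\boo|<w-\eta]\le\don[|\phi_n|\le w]\le\don[|\pi_\boo|\le w+\eta]$ integrate to
\[
\int_0^h\don[\,|\pi_\boo(t)|<w-\eta\,]\,dt \;\le\; \int_0^h\don[\,|\phi_n(t)|\le w\,]\,dt \;\le\; \int_0^h\don[\,|\pi_\boo(t)|\le w+\eta\,]\,dt .
\]
Sending $n\to\infty$ and then $\eta\to 0$, and using that almost surely $\LE\{t\in[0,h]:|\pi_\boo(t)|=w\}=0$ --- which is exactly the content of the continuity of $w\mapsto L_w(h)$ recorded in Lemma \ref{lem:L-conti-w}, itself a consequence of the occupation bound of Lemma \ref{lem:geo-ocp-tail} --- both sides converge to $L_w(h)=\int_0^h\don[|\pi_\boo(t)|\le w]\,dt$, giving $L^{(n)}_w(h)\to L_w(h)$ a.s.\ for each fixed $h$. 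Finally, choosing a countable dense set $D\subset[0,\infty)$ on which this convergence holds simultaneously, and using that each $h\mapsto L^{(n)}_w(h)$ is non-decreasing while $h\mapsto L_w(h)$ is non-decreasing and Lipschitz, a standard sandwiching (P\'olya-type) argument upgrades the convergence to be uniform on every compact subset of $[0,\infty)$, almost surely.

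The main obstacle is the second step: distilling from the coarse, set-level Hausdorff convergence of Proposition \ref{prop:cov-semi-geo} a genuine function-level uniform convergence $\phi_n\to\pi_\boo$, together with coping with the discontinuity of the strip indicator $\don[|\cdot|\le w]$ --- the latter being absorbed precisely by the a.s.\ vanishing of the Lebesgue time that $\pi_\boo$ spends at the levels $\pm w$. On the discrete side one may alternatively control the time $\Gamma_\boo^{(n)}$ spends in the narrow boundary strips around the lines $\{x-y=\pm 2^{5/3}n^{2/3}w\}$ directly via an occupation estimate of the type in Lemma \ref{lem:disc-max}, which is what makes the passage to the limit robust in $w$; either route suffices.
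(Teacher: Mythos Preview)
Your proposal is correct and follows essentially the same strategy as the paper: rewrite $L^{(n)}_w$ as an occupation count, use the Hausdorff convergence from Proposition \ref{prop:cov-semi-geo} to squeeze the discrete geodesic into a thin tube around $\pi_\boo$, sandwich between $L_{w\pm\eta}$, and invoke the continuity in $w$ from Lemma \ref{lem:L-conti-w}. The bookkeeping differs only cosmetically: you extract an explicit reparametrized step function $\phi_n$ (indexed by the diagonal time $\ell/(2n)$) and prove $\phi_n\to\pi_\boo$ uniformly, whereas the paper argues set-theoretically with auxiliary time sets $J^{(n)}$, $J_\delta$, $J_\delta^{in}$, $J_\delta^{out}$; and the paper obtains the uniformity in $h$ directly from the two-sided bound $L_{w-2\delta}(h)\le \LE(J^{(n)}\cap[0,h])\le L_{w+\delta}(h)$ valid for all $h$ simultaneously, rather than via a separate P\'olya-type monotonicity step.
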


To prove this lemma, it suffices to use the convergence given by Proposition \ref{prop:cov-semi-geo}, and bound the time that the geodesic {$\pi_\boo$} spends in $[-w-\delta, -w+\delta]\cup [w-\delta, w+\delta]$, as illustrated in Figure \ref{fig:32}.
For the latter task, we use Lemma \ref{lem:L-conti-w}.

\begin{figure}[t]
         \centering
\begin{tikzpicture}
[line cap=round,line join=round,>=triangle 45,x=0.9cm,y=0.2cm]
\clip(1,-2) rectangle (9,22);

\fill[line width=0.pt,color=blue,fill=blue,fill opacity=0.15]
(6.4,0) -- (6.4,100) -- (6.2,100) -- (6.2,0) -- cycle;
\fill[line width=0.pt,color=green,fill=green,fill opacity=0.15]
(3.8,0) -- (3.8,100) -- (6.2,100) -- (6.2,0) -- cycle;

\fill[line width=0.pt,color=blue,fill=blue,fill opacity=0.15]
(3.8,0) -- (3.8,100) -- (3.6,100) -- (3.6,0) -- cycle;

\draw (-1,0) -- (11,0);

\draw [dashed] (-1,18) -- (11,18);

\draw [fill=uuuuuu] (5,0) circle (1.5pt);

\draw [line width=0.6pt] (5,0) -- (4.2,2) -- (5.1,4) -- (5.8,6) -- (5.3,8) -- (6.9,10) -- (7.1,12) -- (6.22,14) -- (6.35,16) -- (5.7,18) -- (6.3,20) -- (6.6,22);

\begin{scriptsize}
\draw (5,0) node[anchor=north]{$\boo$};
\draw (4,18) node[anchor=south]{$t=1$};
\draw (5.3,17) node[anchor=west]{$\pi_\boo$};

\draw [darkgreen] (2,16.6) node[anchor=south west]{$J_\delta^{in}$};
\draw [darkgreen] (2,4) node[anchor=south west]{$J_\delta^{in}$};

\draw [blue] (7.5,9.1) node[anchor=east]{$J_\delta$};
\draw [blue] (7.5,20.2) node[anchor=east]{$J_\delta$};
\draw [blue] (7.5,15) node[anchor=east]{$J_\delta$};

\draw [red] (8.5,21) node[anchor=east]{$J_\delta^{out}$};
\draw [red] (8.5,11) node[anchor=east]{$J_\delta^{out}$};

\end{scriptsize}

\draw [blue] [thick] [|-|] (7.5,13.25) -- (7.5,16.8);
\draw [blue] [dotted] (6.2,13.25) -- (7.5,13.25);
\draw [blue] [dotted] (6.2,16.8) -- (7.5,16.8);

\draw [blue] [thick] [|-|] (7.5,19.4) -- (7.5,20.97);
\draw [blue] [dotted] (6.2,19.4) -- (7.5,19.4);
\draw [blue] [dotted] (6.2,20.97) -- (7.5,20.97);

\draw [blue] [thick] [|-|] (7.5,8.85) -- (7.5,9.6);
\draw [blue] [dotted] (6.2,9.6) -- (7.5,9.6);
\draw [blue] [dotted] (6.2,8.85) -- (7.5,8.85);

\draw [darkgreen] [thick] [|-|] (2,0) -- (2,9.35);
\draw [darkgreen] [dotted] (6.2,9.35) -- (2,9.35);

\draw [darkgreen] [thick] [|-|] (2,16.3) -- (2,19.9);
\draw [darkgreen] [dotted] (6.2,19.9) -- (2,19.9);
\draw [darkgreen] [dotted] (6.2,16.3) -- (2,16.3);

\draw [red] [thick] [|-|] (8.5,20.5) -- (8.5,30);
\draw [red] [dotted] (6.4,20.5) -- (8.5,20.5);

\draw [red] [thick] [|-|] (8.5,9.2) -- (8.5,13.8);
\draw [red] [dotted] (6.4,13.8) -- (8.5,13.8);
\draw [red] [dotted] (6.4,9.2) -- (8.5,9.2);

\end{tikzpicture}
         \caption{An illustration of the proof of Lemma \ref{lem:L-coupl-conv}: to bound the time $\pi_\boo$ spends near the left and right boundaries of $[-w,w]\times [0,1]$.
         The green, blue, and red segments illustrate the time intervals of $J_\delta^{in}$, $J_\delta$, $J_\delta^{out}$, respectively.}
         \label{fig:32}
     \end{figure}
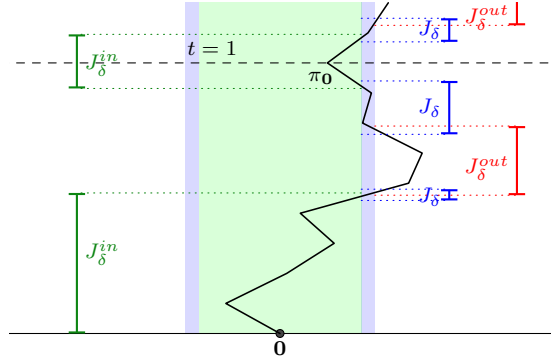

\begin{proof}[Proof of Lemma \ref{lem:L-coupl-conv}]
Without loss of generality we show that $L^{(n)}_w \to L_w$ uniformly in $[0, 1]$.
Given the coupling and the convergence in the Hausdorff topology, the main thing to consider is the set $\Graph(\pi_\boo)$ near the boundary of $[-w, w]\times [0,1]$.
More precisely, for the limit of $L^{(n)}_w$ as $n\to\infty$, we will upper bound it by $L_{w+\delta}$ and lower bound it by $L_{w-\delta}$, for any small $\delta>0$. Then the conclusion follows by the continuity of $L_w$ in $w$.

To connect $L^{(n)}_w$ to $L_w$, we start by re-writing
\[
L^{(n)}_w(h)=(2n)^{-1}|\{(x,s) \in \fG_n(\Gamma_\boo^{(n)}) : 0\le 2^{5/3}n^{2/3}x+2ns \le 2hn, |x| \le w \}|,
\]
for any $h\ge 0$.
This directly follows from \eqref{eq:defn-Ln} and the fact that $\fG_n^{-1}(x,s)=(2^{5/3}n^{2/3}x+ns, ns)$.
We note that for any two different $(x,s), (y,r)\in \fG_n(\Gamma_\boo^{(n)})$, the numbers $2^{5/3}n^{2/3}x+2ns=d(\fG_n^{-1}(x,s))$ and $2^{5/3}n^{2/3}y+2nr=d(\fG_n^{-1}(y,r))$ are different non-negative integers.

We now define
\[
J^{(n)} = \{0\le t \le 1: \exists (x, s) \in \fG_n(\Gamma_\boo^{(n)}), |2nt-(2^{5/3}n^{2/3}x+2ns)| \le 1/2, |x| \le w \}.
\]
Then it equals
\[
[0,1]\cap \bigcup_{(x,s)\in \fG_n(\Gamma_\boo^{(n)}), |x|\le w} [2^{2/3}n^{-1/3}x+s-(4n)^{-1}, 2^{2/3}n^{-1/3}x+s+(4n)^{-1}].
\]
Recall that $\LE$ denotes the Lebesgue measure on $\R$.
Also note that for all $(x,s)\in\fG_n(\Gamma_\boo^{(n)})$, $2^{5/3}n^{2/3}x+2ns$ are mutually different integers.
Then 
\[|\{(x,s)\in\fG_n(\Gamma_\boo^{(n)}): 0\le 2^{2/3}n^{-1/3}x+s \le h, |x|\le w\}| \in \{\lceil 2n\LE(J^{(n)}\cap [0,h]) \rceil, \lfloor 2n\LE(J^{(n)}\cap [0,h]) \rfloor\}.
\]
Therefore $|L^{(n)}_w(h) - \LE(J^{(n)}\cap [0,h])| \le (2n)^{-1}$ for $h\in [0,1]$.
It then suffices to prove the convergence of $\LE(J^{(n)}\cap [0,\cdot])$ to $L_w$ (uniformly in $[0,1]$). For this, we consider the following quantities.
Take any $0< \delta < w$.
We denote
\begin{align*}
    J_\delta &= \{t\ge 0: \exists t', |t'-t|\le \delta, w-\delta \le |\pi_\boo(t')| \le w+\delta\} \\
    J_\delta^{out} &= \{t\ge 0: \exists t', |t'-t|\le \delta, |\pi_\boo(t')| \ge w+\delta\} \\
    J_\delta^{in} &= \{t\ge 0: \exists t', |t'-t|\le \delta, |\pi_\boo(t')| \le w-\delta\}.
\end{align*}
See Figure \ref{fig:32} for an illustration of these sets.
We next show that for any $\delta>0$,
\begin{equation} \label{eq:J-includ-rela}
J^{(n)}\subset J_\delta \cup J_\delta^{in}, \quad [0,1]\setminus J^{(n)} \subset J_\delta \cup J_\delta^{out},
\end{equation}
for $n$ large enough. 
For any $t\in J^{(n)}$, we take $(x, s) \in \fG_n(\Gamma_\boo^{(n)})$ such that $|x| \le w$ and $|2nt-(2^{5/3}n^{2/3}x+2ns)| \le 1/2$.
When $n$ is large enough, we have $|2^{2/3}n^{-1/3}x|<\delta/3$, therefore $|s-t|<\delta/3+(4n)^{-1}$;
and we can find $(x',s') \in \Graph(\pi_\boo)$ such that $|x-x'|, |s-s'| < \delta/3$ (by almost sure convergence under the coupling).
Thus \[|s'-t|\le |s-s'| + |s-t|<2\delta/3+(4n)^{-1}\] and \[|\pi_\boo(s')|=|x'|\le |x|+|x-x'| < w+\delta/3.\] So we have $t \in J_\delta \cup J_\delta^{in}$, and we conclude that $J^{(n)}\subset J_\delta \cup J_\delta^{in}$. By a similar argument we have that $[0,1]\setminus J^{(n)} \subset J_\delta \cup J_\delta^{out}$.

By \eqref{eq:J-includ-rela}, when $n$ is large enough, we have that 
\[
\LE(J^{(n)}\cap [0,h]) \le \LE((J_\delta \cup J_\delta^{in}) \cap [0,h])  = L_{w+\delta}(h),
\]
and
\[
\LE(J^{(n)}\cap [0,h]) \ge h-\LE((J_\delta \cup J_\delta^{out}) \cap [0,h]) = \int_0^h \don[\pi_\boo(t) \in (-w+\delta, w-\delta)] dt  {\ge} L_{w-2\delta}(h),
\]
for any $h\in [0,1]$.
Note that $0\le L_{w+\delta}(h)-L_w(h)\le L_{w+\delta}(1)-L_w(1)$ and $0\ge L_{w-2\delta}(h)-L_w(h)\ge L_{w-2\delta}(1)-L_w(1)$ for any $h\in [0,1]$.
Thus by sending $\delta \to 0$, and using Lemma \ref{lem:L-conti-w}, we conclude that almost surely, $\lim_{n\to\infty} \LE(J^{(n)}\cap [0,h]) = L_w(h)$ uniformly for $h\in [0, 1]$, and the conclusion follows.
\end{proof}

We now assemble the three steps to prove Theorem \ref{thm:converge-lc}.
\begin{proof}[Proof of Theorem \ref{thm:converge-lc}]
Without loss of generality, we prove that $L^{(n)}\to L$ as random functions on $[0, 1]$, weakly in the topology of uniform convergence.

Take the above coupling between a sequence of Exponential LPP models and the directed landscape, and consider the induced coupling between $\Gamma_\boo^{(n)}$, $L^{(n)}$, $L^{(n)}_w$ and $\pi_\boo$, $L$, $L_w$, for all $n\in\N$ and $w>0$.
It is worth remarking that under this coupling, the convergence $L^{(n)}\to L$ can be proved in even stronger senses than the theorem states.

By Lemma \ref{lem:L-coupl-conv}, we assume the almost sure event that for each $m\in \N$, $\lim_{n\to\infty}L^{(n)}_{1/m} = L_{1/m}$ uniformly as functions on $[0,1]$.

For any $m\in \N$ and $0\le h \le 1$ we have
\begin{align*}
\E[(L^{(n)}(h)-L(h))^2] \le & 3\E[(L^{(n)}(h)-(m/2)L^{(n)}_{1/m}(h))^2] + 3(m/2)^{2}\E[(L^{(n)}_{1/m}(h)-L_{1/m}(h))^2] \\ &+ 3\E[(L(h)-(m/2)L_{1/m}(h))^2].    
\end{align*}
Let $c_m= \limsup_{n\to\infty} \E[(L^{(n)}(h)-(m/2)L^{(n)}_{1/m}(h))^2]$.
By Lemma \ref{lem:L-coupl-conv}, $\E[(L^{(n)}_{1/m}(h)-L_{1/m}(h))^2] \to 0$ as $n\to\infty$, since almost surely $L^{(n)}_{1/m}(h), L_{1/m}(h)\le 1$.
Thus we have
\[
\limsup_{n\to\infty}\E[(L^{(n)}(h)-L(h))^2] \le 3c_m + 3\E[(L(h)-(m/2)L_{1/m}(h))^2].
\]
Now we send $m\to\infty$ on the right-hand side. We have $\lim_{m\to\infty}c_m = 0$ by Proposition \ref{prop:Ln-conti}.
We also have that  $\lim_{m\to\infty}\E[(L(h)-(m/2)L_{1/m}(h))^2] = 0$, since $(L(h)-(m/2)L_{1/m}(h))^2 \to 0$ almost surely by Proposition \ref{prop:om-limit}, and these random variables are uniformly integrable in $m$, by Lemma \ref{lem:geo-ocp-tail}.
Thus we conclude that $\lim_{n\to\infty}\E[(L^{(n)}(h)-L(h))^2]=0$.
This implies that $L^{(n)}\to L$ in $[0,1]$, in finite-dimensional distributions.

We next upgrade the result to weak convergence in the topology of uniform convergence in $[0,1]$, using that $L^{(n)}$ and $L$ are non-decreasing.

For any $f:[0,1]\to\R$ and $k\in\N$, we denote by $M_k f$  the function where $(M_kf)(i/k)=f(i/k)$ for any $i\in \llbracket 0, k\rrbracket$, and linear between these points.
Then $M_kL^{(n)}$ converges to  $M_k L$ as $n\to\infty$ weakly in the topology of uniform convergence on compact sets, by the convergence of finite-dimensional distributions.
For any $k\in\N$, we let $D_k = \max_{i\in \llbracket 1,k\rrbracket} L(i/k)-L((i-1)/k)$, and $D_{n,k} = \max_{i\in \llbracket 1,k\rrbracket} L^{(n)}(i/k)-L^{(n)}((i-1)/k)$.
Then for any $h\in[0,1]$ we have $|M_kL(h)-L(h)|\le D_k$ and $|M_kL^{(n)}(h)-L^{(n)}(h)|\le D_{n,k}$, since $L^{(n)}$ and $L$ are non-decreasing.
Also by the convergence of finite-dimensional distributions, we have $D_{n,k}\to D_k$ in distribution as $n\to\infty$, for any $k\in \N$; and by Proposition \ref{prop:om-limit} we have $D_k\to 0$ in distribution as $k\to\infty$.
Thus the conclusion follows. 
\end{proof}

The remainder of this paper is devoted to proving Proposition \ref{prop:Ln-conti}.

\section{Continuity of occupation measure} \label{ssec:conti-ocp-mea}
In this section, we prove Proposition \ref{prop:Ln-conti}.
Without loss of generality, we assume $h=1$.

As already indicated, our general strategy is to write $L^{(n)}_w(1)$ as an average of local times of $\Gamma_\boo$, on different lines parallel to $\{(i,i):i \in \Z\}$; then we bound the differences of these local times (on different lines), in terms of a second moment bound.
More precisely, for each $n\in \N$ and $m\in\Z$, we define
\[
Z^{(n)}[m]= |\{0\le i \le n: (i+\lceil m/2 \rceil, i-\lfloor m/2 \rfloor) \in \Gamma_\boo\}|.
\]
Then we have
\begin{equation}  \label{eq:LwZz}
L^{(n)}(1)=2^{2/3}n^{-2/3}Z^{(n)}[0].
\end{equation}
For the function $L^{(n)}_w$, we have
\begin{equation}  \label{eq:LwZn}
2nL^{(n)}_w(1)=\sum_{|m|\le 2^{5/3}n^{2/3}w} Z^{(n)}[m] - \sum_{|2k+1| \le 2^{5/3}n^{2/3}w} \don[(n+k+1, n-k)\in \Gamma_\boo].
\end{equation}
The second term occurs due to a rounding issue from \eqref{eq:defn-Ln} and is at most $1$ and hence negligible.
We will prove the following result.
\begin{prop} \label{prop:bd-m-n}
We have $\lim_{w\to 0}\limsup_{n\to\infty}\max_{|m|\le wn^{2/3}}n^{-2/3}\E[(Z^{(n)}[m]-Z^{(n)}[0])^2] = 0$.
\end{prop}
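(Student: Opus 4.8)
The goal is to control the second moment of $Z^{(n)}[m]-Z^{(n)}[0]$, where $Z^{(n)}[m]$ counts the intersections of the semi-infinite geodesic $\Gamma_\boo$ with the line $\{x-y=m\}$ (roughly speaking) up to height $2n$. Writing $Z^{(n)}[m]=\sum_{i=0}^{n}\don[p_i^{(m)}\in\Gamma_\boo]$ with $p_i^{(m)}=(i+\lceil m/2\rceil,\,i-\lfloor m/2\rfloor)$, and expanding the square, one gets
\[
\E[(Z^{(n)}[m]-Z^{(n)}[0])^2]=\sum_{i,j}\Big(\PP[p_i^{(m)},p_j^{(m)}\in\Gamma_\boo]-\PP[p_i^{(m)},p_j^{(0)}\in\Gamma_\boo]-\PP[p_i^{(0)},p_j^{(m)}\in\Gamma_\boo]+\PP[p_i^{(0)},p_j^{(0)}\in\Gamma_\boo]\Big),
\]
so the crux is a quantitative \emph{smoothness of the two-point function} of $\Gamma_\boo$ as one (or both) of the probed points is shifted transversally by $m$. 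This is exactly the content flagged in Section~\ref{sec:iop} and is to be isolated as Proposition~\ref{prop:occup-diff}; Proposition~\ref{prop:bd-m-n} should follow from it by summing over $0\le i,j\le n$ (roughly $n^2$ pairs) and checking the bound per pair is $o_w(1)\cdot n^{-4/3}$, together with crude a~priori control that makes the diagonal-ish terms negligible.

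\textbf{Steps.} First I would reduce to the two-point estimate: state and use Proposition~\ref{prop:occup-diff}, which should assert that for $|m|\le wn^{2/3}$ and heights $i\le j$ (at scale $\asymp n$), the four-term alternating sum of two-point probabilities above, summed over $i$ and $j$, contributes $o_w(1)\,n^{2/3}$. Second, I would set up the comparison coupling: to compare $\PP[p\in\Gamma_\boo,\,q\in\Gamma_\boo]$ with the version where $q$ is replaced by its shift $q+(\text{something of order }m)$, one shifts the noise environment $\{\omega_z\}$ by the lattice vector realizing that transversal displacement (the move depicted in Figure~\ref{fig:42}), so that $\Gamma_\boo$ in the shifted field is a geodesic with perturbed boundary/Busemann data. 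Under this coupling the event ``$q\in\Gamma_\boo$ but the shifted point $\notin\Gamma_\boo^{\text{shifted}}$'' forces the original and perturbed semi-infinite geodesics to \emph{not coalesce} by the relevant height, and the stability estimate (Proposition~\ref{prop:stb}, proved in Section~\ref{ssec:stab}) bounds the probability of such non-coalescence. Third, I would handle the ``up to time $n$'' truncation and the fact that $\Gamma_\boo$'s transversal fluctuation at height $\asymp n$ is $\asymp n^{2/3}$: use Lemma~\ref{lem:trans-dis-inf} to localize the geodesic in a rectangle of width $\asymp n^{2/3}$, so the shift by $|m|\le wn^{2/3}$ is a genuinely small (fraction $w$) perturbation, and use Lemma~\ref{lem:num-int-dis-inf} / Lemma~\ref{lem:disc-max} to a~priori bound $Z^{(n)}[m]\lesssim n^{1/3}$ with exponential tails so that contributions from the small-probability bad events, and from the "close" regime where the two-point comparison is handled trivially, are $o(n^{2/3})$ after summing. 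Finally I would assemble: split each pair $(i,j)$ into the main term controlled by stability/non-coalescence and an error term controlled by the occupation tail bound, sum, and take $\limsup_{n\to\infty}$ then $w\to0$.

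\textbf{Main obstacle.} The hard part is the stability/non-coalescence input and its quantitative form. One needs that shifting the noise by a transversal amount of order $wn^{2/3}$ keeps the two semi-infinite geodesics within $o(n^{2/3})$ of each other over a large fraction of the interval $[0,2n]$ — this requires the regularity and decay of last-passage values (Busemann increments, Proposition~\ref{prop:seg-to-seg}, Theorem~\ref{thm:lpp-onepoint}) as endpoints move — and then that two geodesics running side by side with high probability coalesce, with a failure probability that is $o_w(1)$ uniformly as $n\to\infty$. Getting the $w$-dependence right (so the per-pair bound genuinely vanishes as $w\to0$ rather than merely being $O(1)$) is the delicate point, and is why the authors emphasize needing smoothness of \emph{two-point} functions rather than one-point functions: the second coordinate $q$ must be allowed to vary along $\LL_m$ while the comparison is carried out, so the stability statement has to be uniform over the location of this second probed point. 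I expect this uniformity, combined with the bookkeeping of the roughly $n^2$ pairs against a per-pair saving of order $n^{-4/3}$, to be where essentially all the work lies; everything else (the reduction, the truncation, the a~priori occupation bounds) is comparatively routine given the cited inputs.
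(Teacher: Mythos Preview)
Your proposal is correct and follows essentially the same approach as the paper: expand the second moment as a double sum of differences of two-point probabilities, apply the two-point smoothness estimate (Proposition~\ref{prop:occup-diff}) to pairs $(i,j)$ that are both far from the origin and well-separated (the paper introduces a parameter $\theta$ for this), and handle the near-diagonal and near-origin pairs crudely via the occupation-time bounds of Lemma~\ref{lem:disc-max} (together with Lemma~\ref{lem:num-int-dis-inf} for the probability that a short segment meets $\Gamma_\boo$ at all). Your second and third steps and the ``main obstacle'' paragraph actually outline the proof of Proposition~\ref{prop:occup-diff} itself (the shift coupling plus the stability input Proposition~\ref{prop:stb}), which the paper separates out; for Proposition~\ref{prop:bd-m-n} proper one only needs to invoke~\ref{prop:occup-diff} as a black box and do the three-part bookkeeping you describe.
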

As indicated in Section \ref{sec:iop}, we expect $\E[(Z^{(n)}[m]-Z^{(n)}[0])^2]$ to be of order $w^4n^{2/3}$ when $|m|$ is approximately $wn^{2/3}$.
However, Proposition \ref{prop:bd-m-n} will already suffice for our application, i.e., proving Proposition \ref{prop:Ln-conti}.
\begin{proof}[Proof of Proposition \ref{prop:Ln-conti}]
By \eqref{eq:LwZz} and \eqref{eq:LwZn}, we have
\[\left|2nL^{(n)}_w(1) - \sum_{|m|\le 2^{5/3}n^{2/3}w} Z^{(n)}[m]\right| \le 1.\]
Thus we have
\begin{align*}
\left|(2w)^{-1}L^{(n)}_w(1)-L^{(n)}(1)\right| \le & 
 \left|(4nw)^{-1}\sum_{|m|\le 2^{5/3}n^{2/3}w} Z^{(n)}[m] - 2^{2/3}n^{-1/3}Z^{(n)}[0]\right|  + (4nw)^{-1}\\
\le & 
\left|\left( (4nw)^{-1} - \frac{2^{2/3}n^{-1/3}}{2\lfloor 2^{5/3}n^{2/3}w\rfloor +1}\right)\sum_{|m|\le 2^{5/3}n^{2/3}w} Z^{(n)}[m]\right| \\
&+
\left|\sum_{|m|\le 2^{5/3}n^{2/3}w} \frac{2^{2/3}n^{-1/3}}{2\lfloor 2^{5/3}n^{2/3}w\rfloor +1}(Z^{(n)}[m]-Z^{(n)}[0])\right| +(4nw)^{-1}.
\end{align*}
For the term in the second line, since $\sum_{m\in\Z} Z^{(n)}[m]\le 2n+2$, we have
\[
\left|\left( (4nw)^{-1} - \frac{2^{2/3}n^{-1/3}}{2\lfloor 2^{5/3}n^{2/3}w\rfloor +1}\right)\sum_{|m|\le 2^{5/3}n^{2/3}w} Z^{(n)}[m]\right| \le n^{-2/3}w^{-2}.
\]
For the term in the final line, by Cauchy-Schwarz inequality, we have
\begin{align*}
&\E\left[ \left(
\sum_{|m|\le 2^{5/3}n^{2/3}w} \frac{2^{2/3}n^{-1/3}}{2\lfloor 2^{5/3}n^{2/3}w\rfloor +1}(Z^{(n)}[m]-Z^{(n)}[0]) 
\right)^2 \right]\\ \le &
\sum_{|m|\le 2^{5/3}n^{2/3}w} \frac{2^{4/3}n^{-2/3}}{2\lfloor 2^{5/3}n^{2/3}w\rfloor +1}\E\left[(Z^{(n)}[m]-Z^{(n)}[0])^2\right],     
\end{align*}
which decays to $0$ as $n\to \infty$ then $w\to 0$, by Proposition \ref{prop:bd-m-n}.
Thus the conclusion follows by combining the above two estimates.
\end{proof}

It remains to prove Proposition \ref{prop:bd-m-n}.
The idea is to decompose $\E[(Z^{(n)}[m]-Z^{(n)}[0])^2]$ into sums of two-point functions of $\Gamma_\boo$. The next step then involves establishing a certain smoothness property of the two-point function as the points move about in the spatial direction. 
It will suffice to simply consider the case when one point remains the same, and another point moves by $m$ in the spatial direction.
In this direction, the following is the key estimate.
Recall that $d(x,y)=x+y$ and $ad(x,y)=x-y$ for any $(x,y)\in \Z^2$.
\begin{prop} \label{prop:occup-diff}
For any $0<\theta, w <1$, take any $p_1, p_2, p_3 \in \Z^2$ such that 
\begin{enumerate}
    \item $\theta n\le d(p_1) < d(p_2)\le d(p_3) \le 2n+1$,
    \item $d(p_2)-d(p_1)>\theta n$, $d(p_3)-d(p_2)\le 1$,
    \item $|ad(p_1)|, |ad(p_2)|, |ad(p_3)| \le wn^{2/3}$.
\end{enumerate}
Then, when $n$ is large enough (depending on $w, \theta$) we have $|\PP[p_1, p_2 \in \Gamma_\boo] - \PP[p_1, p_3 \in \Gamma_\boo]| < c_{w,\theta}n^{-4/3}$, where $c_{w,\theta}$ depends only on $w, \theta$, and $\lim_{w\to 0}c_{w,\theta} = 0$ for any $\theta$.
\end{prop}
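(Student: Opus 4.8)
The plan is to couple the noise environment with a shifted copy of itself so that, under the coupling, the event $\{p_1, p_2 \in \Gamma_\boo\}$ versus $\{p_1, p_3 \in \Gamma_\boo\}$ (equivalently, $\{p_1,p_2'\in\Gamma_\boo\}$ for the shifted environment with $p_2' $ a lattice point close to $p_2$) can differ only if two geodesics, one in each environment, pass near a common region without coalescing. Concretely, I would let $\Gamma_\boo$ denote the semi-infinite geodesic and, since the event $\{p_1 \in \Gamma_\boo\}$ forces $\Gamma_\boo$ to pass through $p_1$, I would split at $p_1$: conditionally on $\Gamma_\boo$ passing through $p_1$, the continuation $\Gamma_{p_1}$ (from $p_1$ upward) is the relevant object, and I need to compare $\PP[p_2 \in \Gamma_{p_1} \mid p_1 \in \Gamma_\boo]$ with $\PP[p_3 \in \Gamma_{p_1}\mid p_1 \in \Gamma_\boo]$ weighted by $\PP[p_1\in\Gamma_\boo]$. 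The latter one-point probability $\PP[p_1\in\Gamma_\boo]$ is of order $n^{-1/3}$ (transversal fluctuation of order $n^{2/3}$ below $\LL_n$), so it suffices to show the conditional two-point quantities differ by $O(c_{w,\theta}n^{-1})$, which after multiplying by $\PP[p_1\in\Gamma_\boo]$ gives the claimed $c_{w,\theta}n^{-4/3}$ bound; I would track constants carefully here, using Busemann-function stationarity (Section \ref{ssec:busemann}) to reduce the semi-infinite geodesic statement to one about last-passage values with random boundary data on $\LL_{d(p_1)}$ and $\LL_{2n}$.

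Next I would use the stability estimate for geodesics under boundary perturbations (the forthcoming Proposition \ref{prop:stb}, which I am allowed to assume): shifting the environment above $\LL_{d(p_1)}$ by the spatial displacement corresponding to moving $p_2$ to $p_3$ (a shift of order $1$ in a strip of height $\ge \theta n$, hence well within the ``mild perturbation'' regime since $1 \ll n^{2/3}$) produces a geodesic that, with probability $1-o(1)$, coalesces with the original one before reaching height $d(p_2)$. Once the two coupled geodesics have coalesced below $\LL_{d(p_2)}$, the events $\{p_2 \in \Gamma\}$ and $\{p_3 \in \Gamma\}$ coincide exactly (since $p_3 = p_2 + (1,0)$ or a nearby lattice point is the image of $p_2$ under the shift), so the difference of the two probabilities is bounded by the probability of \emph{non-coalescence} of the two geodesics in the strip between $\LL_{d(p_1)}$ and $\LL_{d(p_2)}$, \emph{intersected with} the event that at least one of them actually passes through a point in the $O(1)$-neighborhood of $p_2$. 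This last intersection is crucial: non-coalescence alone has probability only $c_{w,\theta}$ (not $o(n^{-1})$), but conditioned additionally on the geodesic passing through a \emph{specified} lattice point near $\LL_{d(p_2)}$, one picks up another factor of order $n^{-1}$ from the localization — this is where the two-point nature genuinely enters, and where I would invoke the intersection-count estimates (Proposition \ref{prop:num-int-dis-weak}, Lemmas \ref{lem:num-int-dis} and \ref{lem:num-int-dis-inf}) to control the number of lattice points on $\LL_{d(p_2)}$ that lie on geodesics emanating from the relevant strip, trading the event ``passes through this specific point'' against ``number of such points is small'' via Cauchy–Schwarz or a direct union bound.

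The main obstacle I anticipate is making the heuristic ``non-coalescence AND passing through a prescribed point near $\LL_{d(p_2)}$ costs an extra $n^{-1}$'' fully rigorous, because these two events are correlated: a geodesic that fails to coalesce is atypical, and conditioning on it changes the law of the passage point. I would handle this by a two-scale argument — first use the transversal-fluctuation and parallelogram estimates (Proposition \ref{prop:seg-to-seg}, Lemmas \ref{lem:trans-dis-fini}, \ref{lem:trans-dis-inf}) to confine both geodesics to a parallelogram of width $O(n^{2/3})$ with overwhelming probability, reducing everything to point-to-point geodesics with endpoints in segments of length $O(n^{2/3})$; then decompose the strip between $\LL_{d(p_1)}$ and $\LL_{d(p_2)}$ and apply the stability/non-coalescence machinery of \cite{GH20, BHS, MSZ} at the mesoscopic scale, while controlling the passage-point distribution uniformly over the conditioning via the a priori occupation estimate (Lemma \ref{lem:disc-max}) which bounds, with exponential tails, the number of diagonal lattice points any geodesic can occupy on $\LL_{d(p_2)}$ within an $O(n^{2/3})$ window. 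A secondary technical point, which I would address first, is the rounding: because $d(p_3)-d(p_2)\le 1$ the points $p_2, p_3$ may be on different lines $\LL_k$, so I would choose the shift to be the exact lattice translation carrying one to the other (possibly a diagonal shift by $(1,0)$ or $(0,1)$), verify the Busemann data transforms correctly under this translation, and absorb any residual single-line rounding discrepancy into the already-negligible term flagged after \eqref{eq:LwZn}.
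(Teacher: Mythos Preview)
Your overall plan --- couple the environment with a shifted copy and bound the difference by a non-coalescence probability, then invoke Proposition~\ref{prop:stb} --- matches the paper's strategy. However, there is a genuine gap in how you extract the $n^{-4/3}$ order, and a couple of exponent errors.

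First, $\PP[p_1\in\Gamma_\boo]$ is of order $n^{-2/3}$, not $n^{-1/3}$ (the transversal window at height $\theta n$ has width $\sim n^{2/3}$). Correspondingly, the conditional difference you need is $O(c_{w,\theta}n^{-2/3})$, not $O(c_{w,\theta}n^{-1})$.

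More seriously, your plan to ``split at $p_1$'' and then control the intersection of \{non-coalescence\} with \{passes through a specific point near $\LL_{d(p_2)}$\} by a two-scale argument is where the real difficulty lies, and your sketch does not resolve it. Conditioning on $p_1\in\Gamma_\boo$ is not clean: this event depends on the entire environment (above and below $p_1$), so after conditioning you lose control of the law of the environment above, and the stability estimate (which is stated for unconditioned environments) no longer applies directly. Likewise, shifting right at $\LL_{d(p_1)}$ puts $p_1$ exactly at the seam, which makes it awkward to argue that both geodesics pass through $p_1$.

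The paper sidesteps this correlation issue entirely, and this is the idea you are missing. It introduces \emph{two} intermediate lines $\LL_{m_-}$ and $\LL_m$, both strictly between $p_1$ and $p_2$ (with separation of order $\theta n$ from each and from each other), and shifts the environment only above $\LL_m$. It then replaces the events $\{p_1\in\Gamma_\boo\}$ and $\{p_2\in\Gamma_\boo\}$ by \emph{proxy events} $\overline{\mathcal E}_1$ and $\overline{\mathcal E}_2$: roughly, ``$p_1$ lies on some point-to-line geodesic from $\boo$ to $\LL_{m_-}$'' and ``$p_2$ lies on some semi-infinite geodesic started from $\LL_m$''. These proxies dominate the original events, are each of probability $\le C_\theta n^{-2/3}$ (by the intersection-count lemmas you cite), and --- crucially --- are measurable with respect to disjoint pieces of the environment, hence genuinely independent. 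The non-coalescence event $\mathcal E_3^c$ (that $\Gamma_\boo$ and $\Gamma_\boo'$ differ at $\LL_{m_-+1}$) is then bounded \emph{conditionally on} $\mathcal E_1\cap\mathcal E_2$ (the proxies intersected with explicit regularity conditions on the boundary data along $\LL_{m_-}$ and $\LL_m$), and it is this conditioning that puts the boundary data into the form required by Proposition~\ref{prop:stb}. The decomposition
\[
\PP[\overline{\mathcal E}_1\cap\overline{\mathcal E}_2\cap\mathcal E_3^c]
\le \PP[\overline{\mathcal E}_1\setminus\mathcal E_1]\PP[\overline{\mathcal E}_2]
+\PP[\overline{\mathcal E}_1]\PP[\overline{\mathcal E}_2\setminus\mathcal E_2]
+\PP[\mathcal E_1]\PP[\mathcal E_2]\,\PP[\mathcal E_3^c\mid\mathcal E_1\cap\mathcal E_2]
\]
then gives the $n^{-4/3}$ directly from the product $\PP[\overline{\mathcal E}_1]\PP[\overline{\mathcal E}_2]$, with the small factor $c_{w,\theta}$ coming from the conditional non-coalescence probability and the regularity-failure probabilities. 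No Cauchy--Schwarz or delicate correlation analysis is needed. Your proposal would benefit from adopting this buffer-zone/proxy-event structure rather than trying to condition on $\{p_1\in\Gamma_\boo\}$ directly.
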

The heuristic behind the upper bound $c_{w,\theta}n^{-4/3}$ is as follows. 
Since the spatial fluctuation of $\Gamma_\boo$ is in the order of $n^{2/3}$, we expect that each of $\PP[p_1 \in \Gamma_\boo]$, $\PP[p_2 \in \Gamma_\boo]$, and $\PP[p_3 \in \Gamma_\boo]$  is in the order of $n^{-2/3}$.
Further, the separation lower bound on $d(p_2)-d(p_1)>\theta n$ implies a degree of independence leading to both $\PP[p_1, p_2 \in \Gamma_\boo]$ and $\PP[p_1, p_2 \in \Gamma_\boo]$ being in the order of $n^{-4/3}$. The fact that the distance between $p_2$ and $p_3$ is at most $2wn^{2/3}$ produces the factor of $c_{w,\theta}$.

Postponing the proof of Proposition \ref{prop:occup-diff} till later, momentarily we prove Proposition \ref{prop:bd-m-n} assuming it.

\begin{figure}
     \centering
     \begin{subfigure}[t]{0.30\textwidth}
         \centering
\begin{tikzpicture}
[line cap=round,line join=round,>=triangle 45,x=0.4cm,y=0.4cm]
\clip(-1,-1) rectangle (11,11);

\draw [line width=0.05pt] [blue] (0,0) -- (1.6,0.4) -- (1.7,2.3) -- (3,3) -- (4.9,3.1) -- (5.1,4.9) -- (6.8,5.2) -- (7.5,6.5) -- (7.9,8.1) -- (9.2,8.8) -- (9.3,10.7) -- (11.2,10.8);

\begin{tiny}
\draw (0,0) node[anchor=south east]{$\boo$};
\draw (3,3) node[anchor=south east]{$(i,i)$};
\draw (7,7) node[anchor=south east]{$(j,j)$};
\draw (3.5,2.5) node[anchor=north west]{$(i+\lceil m/2\rceil,i-\lfloor m/2\rfloor)$};
\draw (7.5,6.5) node[anchor=north]{$(j+\lceil m/2\rceil,j-\lfloor m/2\rfloor)$};
\draw [blue] (9.3,10.7) node[anchor=east]{$\Gamma_\boo$};
\end{tiny}

\draw [ultra thin] (-10,-10) -- (12,12);
\draw [ultra thin] (-10,-11) -- (11,10);

\draw [fill=uuuuuu] (0,0) circle (1.5pt);

\draw [fill=red, color=red] (3,3) circle (1.5pt);
\draw [fill=red, color=red] (3.5,2.5) circle (1.5pt);
\draw [fill=red, color=red] (7,7) circle (1.5pt);
\draw [fill=red, color=red] (7.5,6.5) circle (1.5pt);

\end{tikzpicture}
     \end{subfigure}
     \hfill
     \begin{subfigure}[t]{0.30\textwidth}
         \centering
\begin{tikzpicture}
[line cap=round,line join=round,>=triangle 45,x=0.4cm,y=0.4cm]
\clip(-1,-1) rectangle (11,11);

\draw [line width=0.05pt] [blue] (0,0) -- (1.6,0.4) -- (1.7,2.3) -- (3,3) -- (4.5,3.5) -- (5.1,4.9) -- (6.8,5.2) -- (7.5,6.5) -- (7.9,8.1) -- (9.2,8.8) -- (9.3,10.7) -- (11.2,10.8);

\begin{tiny}
\draw (0,0) node[anchor=south east]{$\boo$};
\draw (3,3) node[anchor=south east]{$(i,i)$};
\draw (4,4) node[anchor=south east]{$(j,j)$};
\draw (3.5,2.5) node[anchor=north west]{$(i+\lceil m/2\rceil,i-\lfloor m/2\rfloor)$};
\draw (4,3.5) node[anchor=north west]{$(j+\lceil m/2\rceil,j-\lfloor m/2\rfloor)$};
\draw [blue] (9.3,10.7) node[anchor=east]{$\Gamma_\boo$};
\end{tiny}

\draw [ultra thin] (-10,-10) -- (12,12);
\draw [ultra thin] (-10,-11) -- (11,10);

\draw [fill=uuuuuu] (0,0) circle (1.5pt);

\draw [fill=red, color=red] (3,3) circle (1.5pt);
\draw [fill=red, color=red] (3.5,2.5) circle (1.5pt);
\draw [fill=red, color=red] (4,4) circle (1.5pt);
\draw [fill=red, color=red] (4.5,3.5) circle (1.5pt);

\end{tikzpicture}
     \end{subfigure}
     \hfill
     \begin{subfigure}[t]{0.30\textwidth}
         \centering
\begin{tikzpicture}
[line cap=round,line join=round,>=triangle 45,x=0.4cm,y=0.4cm]
\clip(-1,-1) rectangle (11,11);

\draw [line width=0.05pt] [blue] (0,0) -- (1.5,0.5) -- (1.7,2.3) -- (3,3) -- (4.9,3.1) -- (5.1,4.9) -- (6.8,5.2) -- (7.5,6.5) -- (7.9,8.1) -- (9.2,8.8) -- (9.3,10.7) -- (11.2,10.8);

\begin{tiny}
\draw (0,0) node[anchor=south east]{$\boo$};
\draw (1,1) node[anchor=south east]{$(i,i)$};
\draw (7,7) node[anchor=south east]{$(j,j)$};
\draw (1.5,0.5) node[anchor=north west]{$(i+\lceil m/2\rceil,i-\lfloor m/2\rfloor)$};
\draw (7.5,6.5) node[anchor=north]{$(j+\lceil m/2\rceil,j-\lfloor m/2\rfloor)$};
\draw [blue] (9.3,10.7) node[anchor=east]{$\Gamma_\boo$};
\end{tiny}

\draw [ultra thin] (-10,-10) -- (12,12);
\draw [ultra thin] (-10,-11) -- (11,10);

\draw [fill=uuuuuu] (0,0) circle (1.5pt);

\draw [fill=red, color=red] (1,1) circle (1.5pt);
\draw [fill=red, color=red] (1.5,0.5) circle (1.5pt);
\draw [fill=red, color=red] (7,7) circle (1.5pt);
\draw [fill=red, color=red] (7.5,6.5) circle (1.5pt);

\end{tikzpicture}
     \end{subfigure}
        \caption{An illustration of the proof of Proposition \ref{prop:bd-m-n}: we decompose the second moment of $Z^{(n)}[m]-Z^{(n)}[0]$ as the sum of two-point probabilities of $\Gamma_\boo$. To apply Proposition \ref{prop:occup-diff}, we further divide the sum over the two points into three parts, which are respectively illustrated in this figure.}
        \label{fig:3e}
\end{figure}
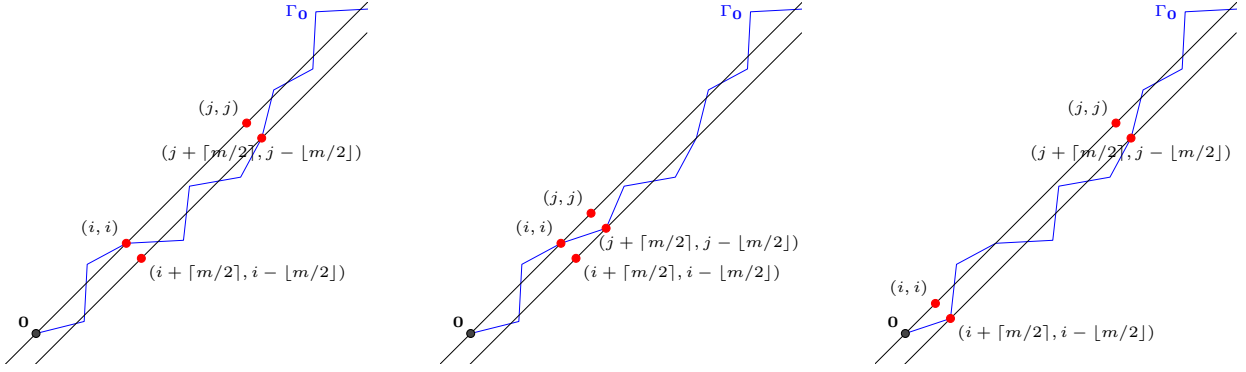

\begin{proof}[Proof of Proposition \ref{prop:bd-m-n}]In this proof we use $C,c>0$ to denote large and small constants.

We can write
\begin{align*}
\E[(Z^{(n)}[0]-Z^{(n)}[m])^2] =& \E[(Z^{(n)}[0])^2] - 2\E[Z^{(n)}[0]Z^{(n)}[m]] + \E[(Z^{(n)}[m])^2]
\\
=& \sum_{i,j=0}^n 
\Big(\PP[(i,i), (j,j) \in \Gamma_\boo] -
2\PP[(i,i), (j+\lceil m/2 \rceil, j-\lfloor m/2 \rfloor)\in \Gamma_\boo] \\ &+
\PP[(i+\lceil m/2 \rceil, i-\lfloor m/2 \rfloor), (j+\lceil m/2 \rceil, j-\lfloor m/2 \rfloor)\in \Gamma_\boo]\Big).
\end{align*}

For the sum on $i, j$ from $0$ to $n$, Proposition \ref{prop:occup-diff} only applies to the case where both $i, j$ are far from $0$, and $|i-j|$ is not small.
There are two other parts in the sum: where $i$ and $j$ are close to each other, and where $i$ or $j$ is near $0$.
For these two cases, we just bound the sum of each probability (instead of the differences), using Lemma \ref{lem:disc-max} and translation invariant arguments.

More precisely, we consider the following three parts (see Figure \ref{fig:3e}):
\begin{enumerate}
    \item $\theta n/2\le i,j\le n, |i-j|>\theta n/2$,
    \item $0\le i,j\le n, |i-j|\le \theta n/2$,
    \item $0\le i \le \theta n /2, 0\le j\le n$ or $0\le i \le n, 0\le j\le \theta n /2$.
\end{enumerate}
For Part 1, we use Proposition \ref{prop:occup-diff}. 
Indeed, take any $0<\theta, w<1$.
Since $|m|\le wn^{2/3}$, we have
\begin{align*}
&\sum_{\theta n/2\le i,j\le n, |i-j|>\theta n/2}\Big(\PP[(i,i), (j,j) \in \Gamma_\boo] -
2\PP[(i,i), (j+\lceil m/2 \rceil, j-\lfloor m/2 \rfloor)\in \Gamma_\boo] \\ &+
\PP[(i+\lceil m/2 \rceil, i-\lfloor m/2 \rfloor), (j+\lceil m/2 \rceil, j-\lfloor m/2 \rfloor)\in \Gamma_\boo]\Big)
\\
< & 2c_{w,\theta} n^{2/3},
\end{align*}
since by Proposition \ref{prop:occup-diff}, each summand is bounded by $2c_{w,\theta}n^{-4/3}$.

We next bound the sums for $i, j$ in Part 2 and Part 3.
We write the sums as the number of points in the intersection of $\Gamma_\boo$ with some short segments of length $\theta n$, and use the bound of Lemma \ref{lem:disc-max}.
For Part 3, the only such short segment we would consider is the one near the origin. For Part 2, we would consider short segments far from the origin as well, and we would need to also use the fact that such segments would be disjoint from $\Gamma_\boo$ with high probability.
Eventually, we will bound the sums over $i,j$ in Part 2 and Part 3 by
$C\log(\theta^{-1})^4\theta^{1/3} n^{2/3}$ and $C\theta^{1/3} n^{2/3}$, respectively. 
Combining this with the above bound for Part 1, we conclude that
\[
n^{-2/3}\E[(Z^{(n)}[m]-Z^{(n)}[0])^2] \le 2c_{w,\theta}  + C\log(\theta^{-1})^4\theta^{1/3}.
\]
By sending $w\to 0$, and then noting that $\theta>0$ is arbitrary, the conclusion follows.

We now finish the bounds for Part 2 and Part 3.
For this we let $w<\theta^{2/3}$. 
For each $k\in \llbracket 1, \lceil 2/\theta \rceil \rrbracket$, and $m\in\Z$, $|m|\le wn^{2/3}$, we define
\[
Z^{(n,k)}[m]= |\{(k-1)\theta n/2 \le i \le (k+1)\theta n/2 : (i+\lceil m/2 \rceil, i-\lfloor m/2 \rfloor) \in \Gamma_\boo\}|.
\]
\noindent\textbf{Part 2:}
We have
\begin{equation} \label{eq:i-j-close}
\begin{split}
&\sum_{0\le i,j\le n, |i-j|\le \theta n/2}\Big(\PP[(i,i), (j,j) \in \Gamma_\boo] \\ &+
\PP[(i+\lceil m/2 \rceil, i-\lfloor m/2 \rfloor), (j+\lceil m/2 \rceil, j-\lfloor m/2 \rfloor)\in \Gamma_\boo]\Big)
\\
\le &\sum_{k=1}^{\lceil 2/\theta \rceil} \E[(Z^{(n,k)}[0])^2] + \E[(Z^{(n,k)}[m])^2].
\end{split}
\end{equation}
We next bound each $\E[(Z^{(n,k)}[m])^2]$.
The general idea is to use Lemma \ref{lem:disc-max}; but to get the desired bound, we also need to bound $\PP[Z^{(n,k)}[m]> 0]$, since $Z^{(n,k)}[m]=0$ with high probability, when $k$ is large.
In other words, we need to bound the probability that $\Gamma_\boo$ intersects a line segment, which is in the $(1,1)$-direction and has length $\theta n$.
Our strategy is to reduce this to the probability that $\Gamma_\boo$ intersects a line segment in the $(1,-1)$-direction with length of order $(\theta n)^{2/3}$, using transversal fluctuation estimates of geodesics. 

Take $j_* \in \Z$ such that $(\lceil (k-1)\theta n/2 \rceil + j_*, \lceil (k-1)\theta n/2\rceil -j_*) \in \Gamma_\boo$. 
Denote \[p_{*,1} = (\lceil (k-1)\theta n/2 \rceil + \lfloor m/2 + \log(k)^2(\theta n)^{2/3} \rfloor, \lceil (k-1)\theta n/2\rceil -\lfloor m/2 + \log(k)^2(\theta n)^{2/3} \rfloor),\]\[p_{*,2} = (\lceil (k-1)\theta n/2 \rceil + \lfloor m/2 - \log(k)^2(\theta n)^{2/3} \rfloor, \lceil (k-1)\theta n/2\rceil -\lfloor m/2 - \log(k)^2(\theta n)^{2/3} \rfloor).\]
The line segment connecting $p_{*,1}$ and $p_{*,2}$ is the $(1,-1)$-direction line segment that we will use.
If $Z^{(n,k)}[m]> 0$, and $(\lceil (k-1)\theta n/2 \rceil + j_*, \lceil (k-1)\theta n/2\rceil -j_*)$ is not contained in the line segment connecting $p_{*,1}$ and $p_{*,2}$, then (by the tree structure of semi-infinite geodesics) one of $\Gamma_{p_{*,1}}$ and $\Gamma_{p_{*,2}}$ must intersect the set $\{(i+\lceil m/2 \rceil, i-\lfloor m/2 \rfloor) : (k-1)\theta n/2 \le i \le (k+1)\theta n/2\}$.
We thus have 
\begin{multline*}
\PP[Z^{(n,k)}[m]> 0] \le 
\PP[ |j_* - m/2| < 2\log(k)^2(\theta n)^{2/3} ] \\ + \PP[(\Gamma_{p_{*,1}} \cup\Gamma_{p_{*,2}})\cap \{(i+\lceil m/2 \rceil, i-\lfloor m/2 \rfloor) : (k-1)\theta n/2 \le i \le (k+1)\theta n/2\} \neq \emptyset].    
\end{multline*}
By Lemma \ref{lem:trans-dis-inf}, the last line is bounded by $Ce^{-c\log(k)^6}$.
By translation invariance we have
\[
\PP[ |j_* - m/2| < 2\log(k)^2(\theta n)^{2/3} ] < \frac{2\log(k)^2(\theta n)^{2/3}}{(k\theta n)^{2/3}} \E[X],
\]
where 
\[X=|\{p\in \LL_{2\lceil (k-1)\theta n/2 \rceil}: |ad(p)|\le 2(k\theta n)^{2/3}+|m|+4\log(k)^2(\theta n)^{2/3}, p \in \cup_{|i| < (k\theta n)^{2/3}} \Gamma_{(i,-i)} \}|.
\]
Note that $|m|\le wn^{2/3} < (\theta n)^{2/3}$.
By Lemma \ref{lem:num-int-dis-inf} we have $\E[X]< C$. Thus we conclude that 
\[\PP[Z^{(n,k)}[m]>0]< C\log(k)^2k^{-2/3}.\]
On the other hand, by Lemma \ref{lem:disc-max} we also have that $\PP[Z^{(n,k)}[m]>M(\theta n)^{1/3}]<Ce^{-cM}$, for any $M>0$.
Therefore we have
\[
\begin{split}
\E[(Z^{(n,k)}[m])^2] &= (\theta n)^{2/3} \int_0^\infty 2M\PP[Z^{(n,k)}[m]>Mk^{-1/3}(\theta n)^{1/3}] dM 
\\
&\le C(\theta n)^{2/3} \int_0^\infty M(\log(k)^2k^{-2/3} \wedge e^{-cM}) dM
\\
& \le C\log(k)^4 k^{-2/3}(\theta n)^{2/3}.
\end{split}
\]
For $k=1$ we have that $\E[(Z^{(n,1)}[m])^2]\le C(\theta n)^{2/3}$ by Lemma \ref{lem:disc-max}.
Thus we can bound the last line of \eqref{eq:i-j-close} by $C\log(\theta^{-1})^4\theta^{1/3} n^{2/3}$, as desired.

\noindent\textbf{Part 3:}
We have that
\begin{equation} \label{eq:i-small}
\begin{split}
&\sum_{0\le i \le \theta n /2, 0\le j\le n}\Big(\PP[(i,i), (j,j) \in \Gamma_\boo] \\ &+
\PP[(i+\lceil m/2 \rceil, i-\lfloor m/2 \rfloor), (j+\lceil m/2 \rceil, j-\lfloor m/2 \rfloor)\in \Gamma_\boo]\Big)
\\
\le & \E[Z^{(n,1)}[0]Z^{(n)}[0]] + \E[Z^{(n,1)}[m]Z^{(n)}[m]]
\\
\le
& \sqrt{\E[(Z^{(n,1)}[0])^2]\E[(Z^{(n)}[0])^2]} + \sqrt{\E[(Z^{(n,1)}[m])^2]\E[(Z^{(n)}[m])^2]}.
\end{split}
\end{equation}
By Lemma \ref{lem:disc-max}, $\E[(Z^{(n)}[m])^2] \le C n^{2/3}$ and $\E[(Z^{(n,1)}[m])^2]\le C(\theta n)^{2/3}$.
Thus we can bound the last line of \eqref{eq:i-small} by $C\theta^{1/3} n^{2/3}$, as desired. 
\end{proof}

\subsection{Smoothness of the two-point distribution}  \label{ssec:diff-two-point-prob}
The rest of this section is devoted to proving Proposition \ref{prop:occup-diff}, 
the key estimate on the two-point distribution of the geodesic $\Gamma_\boo$.

We now explain the general strategy.
We cut $\Z^2$ into two parts by a line $\LL_m$, which separates $p_1$ and $p_2, p_3$.
For the random field $\{\omega_p:p\in\Z^2\}$, we translate the part above $\LL_m$, sending $p_2$ to $p_3$.
It then remains to show that, given $p_1, p_2 \in \Gamma_\boo$, with a high probability this translation would not affect the local structures of $\Gamma_\boo$, and hence after the translation, we have $p_1, p_3 \in \Gamma_\boo$ (see Figure \ref{fig:42}, left panel).

\begin{figure}[t]
         \centering
     \begin{subfigure}[t]{0.7\textwidth}
         \centering
\begin{tikzpicture}
[line cap=round,line join=round,>=triangle 45,x=0.7cm,y=0.7cm]
\clip(-3,-3) rectangle (11,11);

\fill[line width=0.pt,color=green,fill=green,fill opacity=0.15]
(2,12) -- (12,2) -- (12,12) -- cycle;

\draw [line width=1.2pt] [blue]  (6.7,7.3) -- (7.7,8.3) -- (9.2,8.8) -- (9.3,10.7) -- (11.2,10.8);
\draw [line width=0.7pt] [brown] (7,7) -- (8.2,7.8) -- (9.7,8.3) -- (9.8,10.2) -- (11.7,10.3);

\draw [line width=1.2pt] [blue] (-2,-2) -- (-1.4,-0.6) -- (0.2,-0.2) -- (1.5,0.5) -- (1.7,2.3) -- (3,3) -- (3.6,3.4);
\draw [line width=0.7pt] [brown] (-2,-2) -- (-1.4,-0.6) -- (0.2,-0.2) -- (1.5,0.5) -- (1.7,2.3) -- (3,3) -- (3.6,3.4);

\draw [line width=1.2pt] [blue] (3.6,3.4) -- (4.3,3.7) -- (4.4,4.6) -- (5.1,4.9) -- (5.4,5.6) -- (6,6) -- (6.2,6.8) -- (6.7,7.3);
\draw [line width=0.7pt] [brown] (3.6,3.4) -- (4.3,3.7) -- (4.4,4.6) -- (5.1,4.9) -- (5.4,5.6) -- (6.2,5.8) -- (6.4,6.6) -- (7,7);

\begin{scriptsize}
\draw (1.7,2.3) node[anchor=south east]{$p_1$};
\draw (9.2,8.8) node[anchor=south east]{$p_2$};
\draw (9.7,8.3) node[anchor=north west]{$p_3$};

\draw [blue] (9.3,10.7) node[anchor=east]{$\Gamma_\boo$};
\draw [brown] (9.8,10.2) node[anchor=north west]{$\Gamma_\boo'$};

\draw (-2,-2) node[anchor=north east]{$\boo$};
\draw (11,3) node[anchor=north east]{$\LL_m$};
\draw (9,-2) node[anchor=north east]{$\LL_{m_-}$};
\end{scriptsize}

\draw [fill=uuuuuu] (-2,-2) circle (1.5pt);

\draw [fill=red, color=red] (1.7,2.3) circle (1.5pt);
\draw [fill=red, color=red] (9.2,8.8) circle (1.5pt);
\draw [fill=red, color=red] (9.7,8.3) circle (1.5pt);

\draw [line width=0.4pt] [blue]  plot [smooth] coordinates {(5.5,11.5) (5,10.3) (6.3,9.8) (6.3,8.2) (6.7,7.3) (8,7.7) (8.5,6) (10,5.7)};
\draw [line width=0.4pt] [brown]  plot [smooth] coordinates {(6,11) (5.5,9.8) (6.8,9.3) (6.8,7.7) (7.2,6.8) (8.5,7.2) (9,5.5) (10.5,5.2)};

\draw [line width=0.4pt] [blue]  plot [smooth] coordinates {(-0.3,4.6) (1,5.1) (1.2,3.6) (2.8,3.7) (3.6,3.4) (3.4,2) (5,1.4) (5.3,-0.1)};
\draw [line width=0.2pt] [brown]  plot [smooth] coordinates {(-0.3,4.6) (1,5.1) (1.2,3.6) (2.8,3.7) (3.6,3.4) (3.4,2) (5,1.4) (5.3,-0.1)};

\draw [line width=1pt] (2,5) -- (5,2);
\draw [line width=1pt] (5.5,8.5) -- (8.5,5.5);
\draw [dotted] [thick] (0,14) -- (14,0);
\draw [dotted] [thick] (-5,12) -- (12,-5);

\end{tikzpicture}
\end{subfigure}
\hfill
     \begin{subfigure}[t]{0.28\textwidth}
         \centering
         \begin{tikzpicture}
[line cap=round,line join=round,>=triangle 45,x=1.5cm,y=1.5cm]
\clip(0,-0.2) rectangle (4,7.2);

\draw [line width=0.3pt]  (3.1,0.3) -- (0.3,3.1);
\draw [line width=0.3pt]  (3.1,4.3) -- (0.3,7.1);

\foreach \i in {1,...,15}
{
\foreach \j in {1,...,15}
\draw [fill=uuuuuu, color=uuuuuu] (\i/5,\j/5) circle (0.8pt);
}

\foreach \i in {2,...,15}
{
\foreach \j in {2,...,\i}
\draw [fill=blue, color=blue] (\i/5,3.4-\j/5) circle (1.2pt);
}

\foreach \i in {3,...,15}
{
\foreach \j in {3,...,\i}
\draw [fill=brown, color=brown] (\i/5,3.6-\j/5) circle (0.6pt);
}

\foreach \i in {1,...,15}
{
\foreach \j in {1,...,15}
\draw [fill=uuuuuu, color=uuuuuu] (\i/5,\j/5+4) circle (0.8pt);
}

\foreach \i in {2,...,15}
{
\foreach \j in {2,...,\i}
\draw [fill=blue, color=blue] (\i/5,3.4-\j/5+4) circle (1.2pt);
}

\foreach \i in {2,...,15}
{
\foreach \j in {2,...,\i}
\draw [fill=brown, color=brown] (\i/5,3.4-\j/5+4) circle (0.6pt);
}

\draw [thick] [red] [-latex] (0.6,6.8) to [out=-30,in=120] (1.8,5.6);
\draw [thick] [red] [-latex] (0.8,6.8) to [out=-30,in=120] (2.0,5.6);
\draw [thick] [red] [-latex] (1.0,6.8) to [out=-30,in=120] (2.2,5.6);

\draw [thick] [red] [-latex] (0.6,2.8) to (1.8,1.8);
\draw [thick] [red] [-latex] (0.8,2.8) to (2.0,1.8);
\draw [thick] [red] [-latex] (1.0,2.8) to (2.2,1.8);

\begin{scriptsize}
\draw (3.1,0.3) node[anchor=east]{$\LL_m$};
\draw (3.1,4.3) node[anchor=east]{$\LL_m$};

\draw (1.5,0.1) node[anchor=north]{$d(p_3)=d(p_2)+1$};
\draw (1.5,4.1) node[anchor=north]{$d(p_3)=d(p_2)$};
\end{scriptsize}

\end{tikzpicture}
    \end{subfigure}
        \caption{An illustration of the proof of Proposition \ref{prop:occup-diff}: by shifting the random field above $\LL_m$, it is likely that the geodesics $\Gamma_\boo$ and $\Gamma_\boo'$ are the same below $\LL_{m_-}$ (left panel).
        The right panel is on the details of shifting the random field in two cases. The random field at the blue vertices is shifted to the random field at the brown vertices. In the case of $d(p_3)=d(p_2)+1$, the random variables $\{\omega_p': d(p)=2m\}$ are independent of $\{\omega_p: p\in\Z^2\}$. }
        \label{fig:42}
\end{figure}
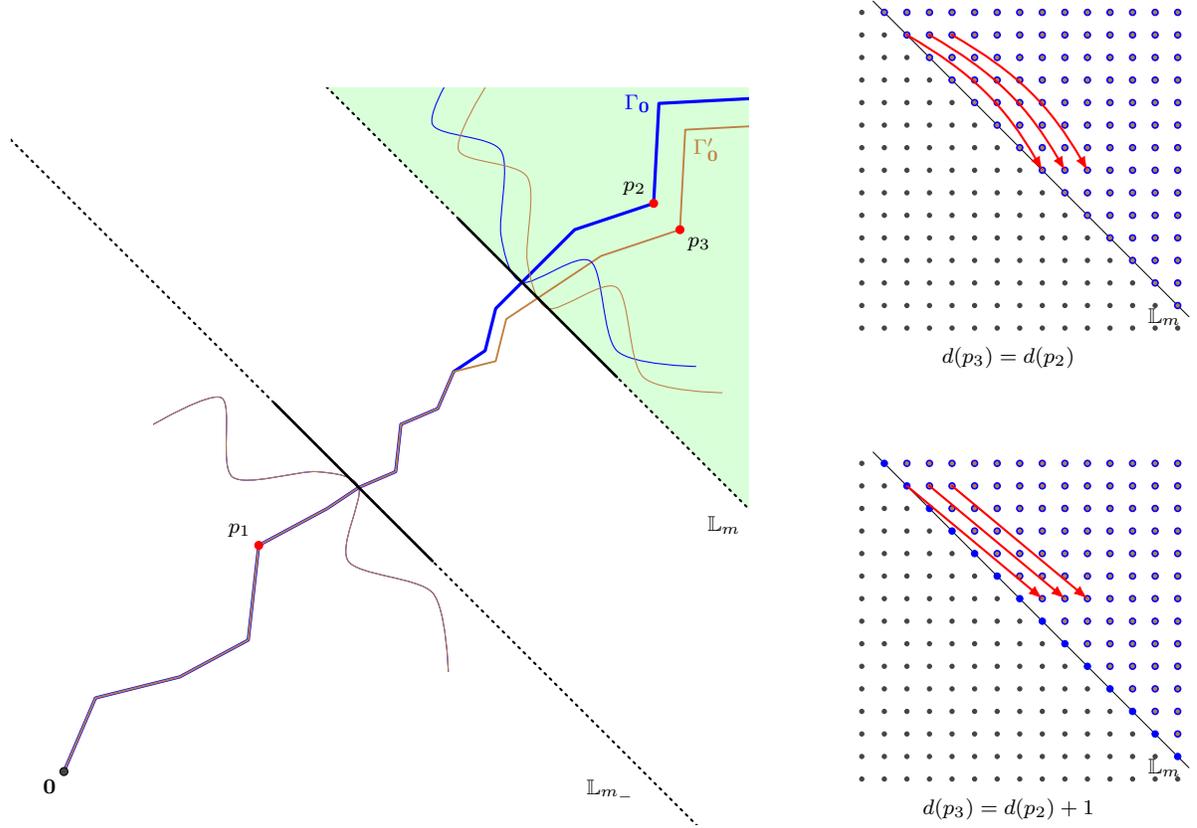

To get the desired stability of the local structures, we will use properties of the directed landscape, via the limiting transition of Theorem \ref{thm:exp-to-dl}.
This way we would obtain results for $m/n$ converging to a constant.
However, our ultimate goal is to get estimates that are uniform over $p_1, p_2, p_3$ satisfying the three conditions in the statement of this proposition.
To address this issue, we will take $m$ among a mesh of numbers, whose cardinality does not grow as $n\to\infty$, while covering all possible choices of $p_1, p_2, p_3$.
\begin{proof}[Proof of Proposition \ref{prop:occup-diff}]
Take $m=\lfloor \kappa\theta n/100 \rfloor$ for some $\kappa \in \llbracket 1, 100\theta^{-1}\rrbracket$, such that
$d(p_1)+\theta n /3 < 2m < d(p_2)-\theta n/3$.
Note that the number of possible values of $m$ is $\lfloor 100\theta^{-1} \rfloor$. 
Such $m$ always exists due to the first two conditions on $p_1, p_2, p_3$ in the statement of the proposition.

We consider another random field $\{\omega_p': p \in \Z^2\}$ of i.i.d.\ $\Exp(1)$ random variables, coupled with the original field $\{\omega_p: p\in\Z^2\}$, by letting $\omega_p = \omega_p'$ for each $p\in\Z^2, d(p)< 2m$, and $\omega_p = \omega_{p+p_3-p_2}'$ for each $p\in\Z^2, d(p)\ge 2m$.
In the case where $d(p_3)=d(p_2)+1$, the weights $\{\omega_p': d(p)=2m\}$ remain undetermined, and we let them be i.i.d.\ $\Exp(1)$ and independent of $\{\omega_p: p\in\Z^2\}$ (see Figure \ref{fig:42}, right panel).
For each $q\in\Z^2$ we let $\Gamma_q'$ be the $(1,1)$-direction semi-infinite geodesic in the field $\{\omega_p': p\in\Z^2\}$.

We then have that
\begin{align*}
\PP[p_1, p_2 \in \Gamma_\boo] - \PP[p_1, p_3 \in \Gamma_\boo]&=\PP[p_1, p_2 \in \Gamma_\boo] - \PP[p_1, p_3 \in \Gamma'_\boo]\\
&\le \PP[p_1, p_2 \in \Gamma_\boo, \{p_1, p_3\} \not\subset \Gamma_\boo']
\\
&\le
\PP[p_1, p_2 \in \Gamma_\boo, p_1 \not\in \Gamma_\boo']
+
\PP[p_1, p_2 \in \Gamma_\boo, p_3 \not\in \Gamma_\boo'].    
\end{align*}

We next bound $\PP[p_1, p_2 \in \Gamma_\boo, p_1 \not\in \Gamma_\boo']$. The bound for the other term follows the same arguments. 
The general idea is to decompose the event into independent events whose probabilities are easier to control.

Take $m_- = \lfloor \kappa_-\theta n/100 \rfloor$ for some $\kappa_- \in \llbracket 1, 100\theta^{-1}\rrbracket$, such that
$d(p_1)+\theta n /7 < 2m_- < 2m-\theta n/7$.
In words, $2m_-$ is order $\theta n$ separated from $d(p_1)$ and $2m$, as illustrated in Figure \ref{fig:42} (left panel).
Like $m$, this $m_-$ is also chosen among a finite mesh of numbers.

Note that on and below $\LL_{m_-+1}$, both $\Gamma_\boo$ and $\Gamma_\boo'$ are geodesics in the random field $\{\omega_p:p\in\Z^2\}$.
Thus $p_1 \in \Gamma_\boo, p_1 \not\in \Gamma_\boo'$ would imply that $\Gamma_\boo \cap \LL_{m_-+1} \neq \Gamma_\boo'\cap \LL_{m_-+1}$ (see Figure \ref{fig:42}, left panel).
It now suffices to bound 
\begin{equation}  \label{eq:p1p2e3}
\PP[p_1, p_2 \in \Gamma_\boo,\; \Gamma_\boo \cap \LL_{m_-+1} \neq \Gamma_\boo'\cap \LL_{m_-+1}].
\end{equation}
We would like to exploit the fact that owing to their separation, the three events $p_1 \in \Gamma_\boo$, $p_2 \in \Gamma_\boo$, and $\Gamma_\boo \cap \LL_{m_-+1} \neq \Gamma_\boo'\cap \LL_{m_-+1}$ mostly rely on some local randomness: below $\LL_{m_-}$, above $\LL_m$, or between these two lines which can be used to construct independent proxy events. We design these next.

Take $\beta>0$ to be a small constant, which will be used to quantify the regularities of (1) the passage times from $\boo$ to $\LL_{m_-}$, and (2) the Busemann function along $\LL_m$.
\begin{enumerate}
    \item Define $\ocE_1$ to be the event that $p_1 \in \cup_{|i|\le m_-} \Gamma_{\boo, (m_--i,m_-+i)}$. A reason behind this definition is that this event is measurable with respect to $\{\omega_p: d(p)\le 2m_-\}$, the random field on and below $\LL_{m_-}$. Let $\cE_1$ be the event that $\ocE_1$ holds along with the following regularity conditions:
  \begin{multline}  \label{eq:betareg}
       |T_{\boo, (m_--i,m_-+i)} - T_{\boo, (m_--j,m_-+j)}| < \beta^{-2}|i-j|^{1/2}(n^{2/3}/|i-j|)^{0.01},\\ \forall |i|, |j|<\beta^{-1}n^{2/3}, wn^{2/3}<|i-j|<n^{2/3}, 
    \end{multline}
    \begin{equation}  \label{eq:betarege}
    |T_{\boo, (m_--i,m_-+i)}-\E[T_{\boo, (m_--i,m_-+i)}]| < \beta^{-1}n^{1/3} + \beta i^2/n, \quad \forall i\in\Z.        
    \end{equation}
    Then $\cE_1$ is also measurable with respect to $\{\omega_p: d(p)\le 2m_-\}$.
    For the passage times from $\boo$ to $\LL_{m_-}$, the local fluctuation should actually exhibit random walk behavior (similar to how the Airy$_2$ process is locally Brownian like as stated in Section \ref{sssec:atlocbr}; this is also indicated by the random walk behavior of the Busemann function stated in Section \ref{ssec:busemann}), so the left-hand side of \eqref{eq:betareg} should be of order $|i-j|^{1/2}$. For the bound to hold uniformly in $i$ and $j$, an extra factor of $(n^{2/3}/|i-j|)^{0.01}$ is added to the right-hand side.
    For \eqref{eq:betarege}, the term $\beta i^2/n$ is added to make this bound hold for all $i\in\Z$. 
    As $\beta$ is small, \eqref{eq:betarege} ensures that $T_{\boo, (m_--i,m_-+i)}$ decays as $|i|$ increases, since the expectation $\E[T_{\boo, (m_--i,m_-+i)}]$ already has a parabolic decay in $i$ (see \eqref{e:mean}).
    \item Similarly, let $\ocE_2$ be the event where $p_2 \in \cup_{|i|\le m} \Gamma_{(m-i,m+i)}$. 
 Let $\cE_2$ be the event that $\ocE_2$ as well as the following regularity conditions hold:
    \begin{multline*}
        |G((m-i,m+i)) - G((m-j,m+j))| < \beta^{-2}|i-j|^{1/2}(n^{2/3}/|i-j|)^{0.01},\\ \forall |i|, |j|<\beta^{-1}n^{2/3}, wn^{2/3}<|i-j|<n^{2/3}, 
    \end{multline*}
    \begin{equation}  \label{eq:betaregein}
    |G( (m-i,m+i))-G((m,m))| < \beta^{-1}n^{1/3} + \beta i^2/n, \quad \forall i\in\Z.
    \end{equation}
    Here $G$ is the Busemann function from Section \ref{ssec:busemann}.
    The reasoning behind the design of these bounds is similar to those for \eqref{eq:betareg} and \eqref{eq:betarege}.
    Then $\ocE_2$ and $\cE_2$ are measurable with respect to $\{\omega_p: d(p)\ge 2m\}$, the random field on and above $\LL_{m}$.
    \item Let $\cE_3$ be the event where $\Gamma_\boo \cap \LL_{m_-+1} = \Gamma_\boo'\cap \LL_{m_-+1}$.
\end{enumerate}
Note that the definition of $\ocE_1$ involves not the geodesic $\Gamma_\boo$ but the geodesics $\cup_{|i|\le m_-} \Gamma_{\boo, (m_--i,m_-+i)}$ and similarly for $\ocE_2$. This helps make $\ocE_1,\cE_1$  independent of $\cE_2, \ocE_2$.
The probability \eqref{eq:p1p2e3} can now be written as $\PP[\{p_1, p_2 \in \Gamma_\boo\}\cap \cE_3^c]$, which is bounded by $\PP[\ocE_1\cap \ocE_2 \cap \cE_3^c]$.
The additional regularity conditions are added to allow us to invoke a crucial geodesic stability result that we will prove shortly (Proposition \ref{prop:stb} below) to bound the probability of $\cE_3^c$ (see Claim \ref{cla:w-be} below).

We assume the following claims when $n$ is large enough (depending on $w,\beta, \theta$).
\begin{cla}  \label{cla:total}
We have $\PP[\ocE_1], \PP[\ocE_2] < C_\theta n^{-2/3}$, where $C_\theta>0$ is a constant depending only on $\theta$.
\end{cla}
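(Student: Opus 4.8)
The plan is to reformulate both events geometrically and then exploit the $\Theta(n^{2/3})$ transversal fluctuation of geodesics. By definition, $\ocE_2$ says exactly that $p_2$ lies on the trace, along the line through $p_2$ parallel to $\LL_0$, of the forest of $(1,1)$-direction semi-infinite geodesics rooted at (first-quadrant points of) $\LL_m$; and $\ocE_1$ says exactly that $p_1$ lies on the trace, along the line through $p_1$, of the point-to-point geodesic tree from $\boo$ to $\LL_{m_-}$. In both cases the relevant source line is at macroscopic (order $\theta n$) vertical distance from $p_i$ and --- for $\ocE_1$ --- also from $\boo$, since by hypothesis $d(p_1)\ge\theta n$, $d(p_1)+\theta n/7<2m_-$, $2m+\theta n/3<d(p_2)$; because a geodesic fluctuates transversally on scale $n^{2/3}$, a fixed near-diagonal point at such a distance should lie on the tree/forest with probability of order $n^{-2/3}$. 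I will realize this order by (a) bounding the expected number of tree/forest crossings of the line through $p_i$, inside a window of width $\Theta_\theta(n^{2/3})$ about the diagonal, by a constant $C_\theta$, using the intersection estimates of Lemmas~\ref{lem:num-int-dis} and~\ref{lem:num-int-dis-inf}; and (b) passing from this expected count to the per-point probability.

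For $\ocE_2$ this is clean. Enlarging the index set from $|i|\le m$ to all of $\Z$ only enlarges the forest, so $\ocE_2\subseteq\{p_2\in\bigcup_{i\in\Z}\Gamma_{(m-i,m+i)}\}$. The bi-infinite forest rooted at $\LL_m$, and hence its trace on the line through $p_2$, is invariant in distribution under lattice shifts along the anti-diagonal direction $(1,-1)$, because $\LL_m$ is invariant under such shifts and semi-infinite geodesics are defined intrinsically; therefore $\PP[p'\in\mathrm{trace}]$ is the same for every lattice point $p'$ on that line. Summing this common value over a window of $\asymp n^{2/3}$ such points and invoking Lemma~\ref{lem:num-int-dis-inf}, rescaled so that the macroscopic gap $d(p_2)-2m\ge\theta n/3$ plays the role of ``$n$'' (which makes the window a bounded-in-$\theta$ number of $n^{2/3}$-blocks and hence bounds the expected trace-size in the window by $C_\theta$), we obtain $\PP[\ocE_2]\le C_\theta n^{-2/3}$.

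For $\ocE_1$, step (a) goes through with the point-to-point tree: applying Lemma~\ref{lem:num-int-dis} with $\boo$ inside the source window, with ``$n$'' $=m_-$, and with $\alpha:=\theta/14$ so that $\alpha m_-<d(p_1)/2<(1-\alpha)m_-$, the expected number of crossings of the line through $p_1$ by geodesics from $\boo$ to $\LL_{m_-}$, within the window $|ad|\le 2n^{2/3}$, is $\le C_\theta$. The difficulty --- and the step I expect to be the real obstacle --- is (b): since the root $\boo$ is pinned there is no exact translation invariance, and the crossing location genuinely concentrates near the diagonal, exactly where $p_1$ sits, so one cannot merely divide the expected count by the window width. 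The substitute is a quantitative coalescence statement: the tree crossings of the line through $p_1$ are separated on scale $n^{2/3}$, equivalently the transversal location of the tree on that line carries no atom of mass $\gg_\theta n^{-2/3}$. This should follow from the usual mechanism --- two geodesics from $\boo$ to points of $\LL_{m_-}$ within $\delta_\theta n^{2/3}$ of each other have already coalesced below the line through $p_1$ with probability $1-o(1)$, because their separation from the common trunk occurs within $O(\delta_\theta^{3/2}n)$ of $\LL_{m_-}$, which is smaller than the macroscopic gap $2m_--d(p_1)>\theta n/7$ once $\delta_\theta$ is small --- proved by the standard combination of Proposition~\ref{prop:seg-to-seg}, Lemma~\ref{lem:trans-dis-fini} and Proposition~\ref{prop:num-int-dis-weak}. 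Throughout, the preliminary step of discarding geodesics whose $\LL_{m_-}$-endpoint lies outside a $\Theta_\theta(n^{2/3})$-window should be carried out via monotonicity of geodesics in their endpoint, reducing to a single application of Lemma~\ref{lem:trans-dis-fini} rather than a union bound over $\Theta(n)$ endpoints, so that no spurious logarithmic factors enter. Combining (a) and (b) yields $\PP[\ocE_1]\le C_\theta n^{-2/3}$; the treatments of $\ocE_1$ and $\ocE_2$ are otherwise symmetric.
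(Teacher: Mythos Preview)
Your treatment of $\ocE_2$ is essentially the paper's: both use translation invariance of the semi-infinite forest along the anti-diagonal, then bound the expected number of trace points via Lemma~\ref{lem:num-int-dis-inf}.

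For $\ocE_1$, however, there is a genuine gap. You correctly identify the obstacle --- the root $\boo$ is pinned, so there is no translation invariance --- but your proposed fix does not close it. The coalescence argument you sketch (geodesics from $\boo$ to nearby endpoints on $\LL_{m_-}$ merge below the line through $p_1$) establishes only that the tree trace on that line has $O_\theta(1)$ points; this is your step (a) again. It says nothing about the probability that the \emph{particular} point $p_1$ is one of them. ``Crossings are separated on scale $n^{2/3}$'' and ``no point carries mass $\gg n^{-2/3}$'' are not equivalent: a tree with a single crossing that deterministically sits at $p_1$ satisfies the first but not the second. To get the per-point bound by your route one would need a genuine anti-concentration estimate for the crossing location of a geodesic from a fixed point, which you do not supply and which is not an immediate consequence of the cited inputs.

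The paper sidesteps this entirely with a trick you should note: it translates the \emph{whole event}, including the root. For $|j|\le n^{2/3}$, let $\ocE_{1,j}$ be $\ocE_1$ shifted by $(-j,j)$; by exact translation invariance of LPP, $\PP[\ocE_{1,j}]=\PP[\ocE_1]$, so
\[
\PP[\ocE_1]=(2\lfloor n^{2/3}\rfloor+1)^{-1}\sum_{|j|\le n^{2/3}}\PP[\ocE_{1,j}]\le (2\lfloor n^{2/3}\rfloor+1)^{-1}\,\E[X],
\]
where $X$ counts points $p$ on $\LL_{d(p_1)/2}$ with $|ad(p)|\le 3n^{2/3}$ that lie on some $\Gamma_{(-i,i),(m_--j,m_-+j)}$ with $|i|\le n^{2/3}$, $j\in\Z$. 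The point is that translating the event converts the single root $\boo$ into the whole segment $\{(-i,i):|i|\le n^{2/3}\}$, and Lemma~\ref{lem:num-int-dis} bounds $\E[X]$ by $C_\theta$ directly. No anti-concentration is needed.
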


\begin{cla}  \label{cla:be-te}
For each $0<\beta, \theta<1$, there exists a constant $c_{\beta, \theta}^{bad}$, such that $\PP[\ocE_1\setminus \cE_1], \PP[\ocE_2\setminus \cE_2] < c_{\beta, \theta}^{bad}n^{-2/3}$ for $n$ large enough (depending on $\beta, \theta$), and $\lim_{\beta\to 0} c_{\beta, \theta}^{bad} = 0$ for fixed $\theta>0$.
\end{cla}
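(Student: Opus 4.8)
\medskip
\noindent
I would prove the bound for $\ocE_1\setminus\cE_1$ in detail, the bound for $\ocE_2\setminus\cE_2$ being obtained by the same scheme and in fact more easily, since by the second property of Section~\ref{ssec:busemann} the map $i\mapsto G((m-i,m+i))$ is exactly a mean-zero random walk with i.i.d.\ increments, independent of the field strictly below $\LL_m$. Since $\cE_1\subseteq\ocE_1$, the event $\ocE_1\setminus\cE_1$ equals $\ocE_1$ intersected with the failure of \eqref{eq:betareg} or of \eqref{eq:betarege}, so it suffices to bound $\PP[\ocE_1\cap F]$ for each such failure $F$. The structural point is that a crude union bound over the index $i$ in those displays -- which ranges over $\asymp n^{2/3}$ or more values -- destroys the gain of $n^{-2/3}$; hence the a priori estimate $\PP[\ocE_1]\le C_\theta n^{-2/3}$ of Claim~\ref{cla:total} must be combined \emph{multiplicatively} with the probability of a regularity failure (which is small in $\beta$), and for that a conditional form of Claim~\ref{cla:total} is needed.

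Write $\ell_1:=d(p_1)/2$, so that $\LL_{\ell_1}$ separates $\boo$ from $\LL_{m_-}$ with both temporal gaps of order $\theta n$. First I would split the range of $i$ at $|i|=Kn^{2/3}$ for a threshold $K=K(\beta)$ with $K\to\infty$ as $\beta\to 0$. For \emph{far} indices $|i|>Kn^{2/3}$: on $\ocE_1$ the set of $q\in\LL_{m_-}$ with $p_1\in\Gamma_{\boo,q}$ is, by planarity, an interval of $\LL_{m_-}$; were it to contain a point with $ad$-value at least $2Kn^{2/3}$, then the geodesic $\Gamma_{\boo,q_K}$ to the fixed point $q_K\in\LL_{m_-}$ with $ad(q_K)=2\lceil Kn^{2/3}\rceil$ would, at level $\ell_1$, have $ad$-coordinate at most $ad(p_1)\le wn^{2/3}$, hence deviate by $\gtrsim K\theta n^{2/3}$ from the straight segment $\boo q_K$; Lemma~\ref{lem:trans-dis-fini} bounds this by $Ce^{-c(K\theta)^3}$ (alternatively one may count tree points directly via Lemma~\ref{lem:num-int-dis}). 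This is $n$-independent and so not by itself enough; a decoupling refinement of the proof of Claim~\ref{cla:total} -- conditioning on the field below $\LL_{\ell_1}$ and charging the parabolic cost $\gtrsim K^2n^{1/3}$ of routing a geodesic through a point at $ad$-distance $Kn^{2/3}$ -- upgrades the intersection to $c(K)C_\theta n^{-2/3}$ with $c(K)\to 0$, and the same parabolic penalty together with the one-point tails \eqref{e:wslope}--\eqref{e:slope} disposes of the far-index portion of the failures of \eqref{eq:betareg} and \eqref{eq:betarege}, whose right-hand sides there exceed $\beta i^2/n$.

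For \emph{near} indices $|i|,|j|\le Kn^{2/3}$ I would condition on $\cF^-:=\sigma\{\omega_p:d(p)<2\ell_1\}$. On the (high-probability) $\cF^-$-event that the frozen profile $p\mapsto T_{\boo,p}$, $p\in\LL_{\ell_1}$, is Brownian-like on the $n^{2/3}$ scale -- which one extracts from the coupling of Theorem~\ref{thm:exp-to-dl} together with Theorems~\ref{thm:airytail} and~\ref{thm:abs-bm-gen} -- one has $T_{\boo,(m_--i,m_-+i)}=\max_{p\in\LL_{\ell_1}}\big(T_{\boo,p}+W_i(p)\big)$, where $W_i$ is a fresh, independent LPP in the strip $\ell_1\le d/2\le m_-$ (of temporal width $\gtrsim\theta n$); both $\ocE_1$ and the near-window values of $i\mapsto T_{\boo,(m_--i,m_-+i)}$ are then functionals of this strip LPP with fixed Brownian-type boundary data. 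An argmax-localization argument reproves $\PP[\ocE_1\mid\cF^-]\le C_\theta n^{-2/3}$ on this good $\cF^-$-event (off it, $\cF^-$ itself is $\beta$-rare), while local-Brownian and parabolic-recentering estimates for the single family $i\mapsto W_i(p)$ show that \eqref{eq:betareg} and \eqref{eq:betarege} restricted to $|i|\le Kn^{2/3}$ fail with probability small in $\beta$; multiplying the two bounds uses a decoupling between the argmax location and the within-window increments of $i\mapsto T_{\boo,(m_--i,m_-+i)}$, which is the delicate point. Adding the far and near contributions and sending $\beta\to0$ then gives $c_{\beta, \theta}^{bad}\to0$.

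The main obstacle is exactly this last decoupling in the near regime: quantifying the (small) motion of the argmax across the $O(n^{2/3})$ window, and showing that conditionally on $p_1$ realizing $\ocE_1$ the increments of $i\mapsto T_{\boo,(m_--i,m_-+i)}$ there still obey KPZ/Brownian fluctuation bounds with constants decaying in $\beta$ -- quite possibly this is cleanest via a change of measure on the Brownian-type boundary data rather than an honest independence. For $\ocE_2\setminus\cE_2$ this obstacle evaporates: $i\mapsto G((m-i,m+i))$ is an exact random walk, so \eqref{eq:betareg} and \eqref{eq:betarege} for it become a maximal inequality against a parabola, and the decoupling from $\ocE_2$ is immediate because that walk is supported on $\LL_m$ while $\ocE_2$ is measurable with respect to the field strictly above $\LL_m$.
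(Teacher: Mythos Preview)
Your proposal identifies the right difficulty—one cannot simply union-bound the regularity failure and multiply by $\PP[\ocE_1]$—but the conditional decoupling scheme you outline is both unnecessary and, as you yourself flag, incomplete at its ``delicate point''. The paper avoids any decoupling by reusing the \emph{translation invariance} trick from Claim~\ref{cla:total}. Writing $\ocE_{1,j}$ and $\cE_{1,j}$ for the $(-j,j)$-translates, one has
\[
\PP[\ocE_1\setminus\cE_1]=(2\lfloor n^{2/3}\rfloor+1)^{-1}\sum_{|j|\le n^{2/3}}\PP[\ocE_{1,j}\setminus\cE_{1,j}]\le(2\lfloor n^{2/3}\rfloor+1)^{-1}\,\E\big[X\,\don[\cE_1'\cup\cE_1'']\big],
\]
where $X$ is the same intersection count as in Claim~\ref{cla:total} and $\cE_1',\cE_1''$ are ``translation-union'' versions of the failures of \eqref{eq:betareg} and \eqref{eq:betarege} (i.e.\ the failure is allowed with the starting point $\boo$ replaced by any $(-k,k)$, $|k|\le n^{2/3}$). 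This displaces the whole problem: the factor $n^{-2/3}$ comes for free from the averaging, and one only needs $\E[X\don[\cE_1'\cup\cE_1'']]\to 0$ as $\beta\to 0$, which follows from the stretched-exponential tail on $X$ (Lemma~\ref{lem:num-int-dis}) together with $\PP[\cE_1'\cup\cE_1'']\to 0$. The latter is now a single \emph{unconditioned} regularity estimate, handled by passing to the directed landscape and invoking Theorem~\ref{thm:airytail} for $\cE_1'$ and the parallelogram/one-point bounds for $\cE_1''$. No interaction between $\ocE_1$ and the profile increments is ever analyzed.

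Your treatment of $\ocE_2\setminus\cE_2$ also contains an error. The Busemann increments along $\LL_m$ are independent of the field \emph{below} $\LL_m$ (that is what the second property in Section~\ref{ssec:busemann} says), not of the field above; since $\ocE_2$ is measurable with respect to the field on and above $\LL_m$, the Busemann walk and $\ocE_2$ are \emph{not} independent, and the decoupling you call ``immediate'' is unavailable. The paper handles $\ocE_2\setminus\cE_2$ by the same translation trick, with the tail on the analogous count $Y$ coming from Lemma~\ref{lem:num-int-dis-inf} and the random-walk structure of $G$ used only to bound $\PP[\cE_2'\cup\cE_2'']$.
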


\begin{cla}  \label{cla:w-be}
For each $0<w, \beta, \theta<1$, there exists a constant $c_{w, \beta, \theta}^{disj}$, such that $\PP[\cE_3^c \mid \cE_1 \cap \cE_2] < c_{w, \beta, \theta}^{disj}$, and $\lim_{\beta\to 0}\limsup_{w\to 0} c_{w, \beta, \theta}^{disj} = 0$ for fixed $\theta>0$.
\end{cla}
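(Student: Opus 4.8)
The plan is to realize the two intersection points $\Gamma_\boo\cap\LL_{m_-+1}$ and $\Gamma'_\boo\cap\LL_{m_-+1}$ as being determined by two line-to-line last passage problems in the strip between $\LL_{m_-}$ and $\LL_m$ which share their bulk and lower boundary data and differ only by a mild, spatially local perturbation of the upper boundary data, and then to invoke the geodesic stability estimate, Proposition~\ref{prop:stb}. Below $\LL_m$ the fields $\omega$ and $\omega'$ agree, and $2m-2m_->\theta n/7$, so $\LL_{m_-+1}$ sits well below $\LL_m$; by the tree structure of semi-infinite geodesics together with the first property of the Busemann function in Section~\ref{ssec:busemann}, the portion of $\Gamma_\boo$ from its crossing of $\LL_{m_-}$ to its crossing of $\LL_m$ is exactly the geodesic of the line-to-line problem on $\{2m_-\le d\le 2m\}$ with lower profile $u\mapsto T_{\boo,u}$ on $\LL_{m_-}$ and upper profile $v\mapsto G(v)$ on $\LL_m$ (with the usual bookkeeping so as not to double count endpoint weights); and the same holds for $\Gamma'_\boo$ with the Busemann function $G'$ of $\omega'$ playing the role of the upper profile. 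Thus $\Gamma_\boo\cap\LL_{m_-+1}$, and likewise $\Gamma'_\boo\cap\LL_{m_-+1}$, is a deterministic function of (lower profile, bulk weights, upper profile), and it suffices to show these two geodesics agree on $\LL_{m_-+1}$ with conditional probability at least $1-c_{w,\beta,\theta}^{disj}$ given $\cE_1\cap\cE_2$.

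Next I would pin down the independence and the niceness. By the second property of the Busemann function in Section~\ref{ssec:busemann}, the increments of $G$ along $\LL_m$ are a function of $\{\omega_p:d(p)\ge 2m\}$ and are independent of all weights strictly below $\LL_m$; consequently the lower profile (measurable in $\{d\le 2m_-\}$), the bulk weights ($\{2m_-<d<2m\}$), and the increments of the upper profile ($\{d\ge 2m\}$) are mutually independent, so that conditioning on $\cE_1\cap\cE_2$ affects only the boundary data and leaves the bulk untouched. The regularity imposed in $\cE_1$ — the bounds \eqref{eq:betareg} and \eqref{eq:betarege} — and in $\cE_2$ — the bound \eqref{eq:betaregein} together with its counterpart for $G$ — are precisely the ``nice boundary conditions'' hypothesis of Proposition~\ref{prop:stb}, with the niceness controlled by $\beta$. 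Finally, since $\omega'$ restricted to $\{d\ge 2m\}$ is, up to one additional i.i.d.\ line in the case $d(p_3)=d(p_2)+1$, the spatial translate of $\omega$ restricted to $\{d\ge 2m\}$ by the vector $p_2-p_3$, whose transversal length is $O(wn^{2/3})$, the upper profile $G'$ is a spatial translate of $G$ by $O(wn^{2/3})$ modulo an irrelevant additive constant (and the extra wrinkle below); this is a mild perturbation relative to the transversal scale $\Theta((\theta n)^{2/3})$ of the strip, with mildness controlled by $w$.

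Given these inputs, Proposition~\ref{prop:stb} applies conditionally on $\cE_1\cap\cE_2$ and yields that, with conditional probability at least $1-c_{w,\beta,\theta}^{disj}$, the two geodesics coincide over a large fraction of the strip — in particular on $\LL_{m_-+1}$, so that $\cE_3$ holds — with $c_{w,\beta,\theta}^{disj}\to 0$ as $w\to 0$ and then $\beta\to 0$, matching the asserted order of limits. When $d(p_3)=d(p_2)$ the two upper profiles are exact translates and no further work is needed. When $d(p_3)=d(p_2)+1$ one first replaces $G'$ by the Busemann profile on $\LL_m$ of the field obtained from $\omega|_{\{d\ge 2m\}}$ by the same spatial translation but without inserting the fresh line $\{\omega'_p:d(p)=2m\}$; the discrepancy between the two is controlled via the recursive/stationary structure of the Busemann process (again the second property of Section~\ref{ssec:busemann}), reducing matters to the previous case. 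One also notes that, since niceness is translation invariant up to the negligible $O(\beta w^2 n^{1/3})$ change in the parabolic correction, conditioning on $\cE_2$ keeps $G'$ nice as well.

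The main obstacle is Proposition~\ref{prop:stb} itself, together with the verification that the conditions defining $\cE_1,\cE_2$ genuinely instantiate its hypotheses and that the translate is ``mild'' in the precise sense it needs. Inside the stability argument, the two substantive steps are: (a) showing that a mild perturbation of nice boundary data displaces the geodesic by only a small fraction of $n^{2/3}$ over most of the strip, which relies on regularity and parabolic-decay estimates for last passage values as the endpoints are varied; and (b) upgrading ``two geodesics staying within $o(n^{2/3})$ of each other over a vertical distance $\Theta(\theta n)$'' to ``they coalesce below $\LL_{m_-}$'' with high probability, a step of a type already used elsewhere (\cite{GH20,BHS,MSZ}). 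The remaining technical points — arranging the bookkeeping so that the iterated limit $\lim_{\beta\to 0}\limsup_{w\to 0}c_{w,\beta,\theta}^{disj}=0$ comes out, and handling the fresh-line wrinkle when $d(p_3)=d(p_2)+1$ — are comparatively routine.
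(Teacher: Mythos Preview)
Your proposal is correct and follows essentially the same approach as the paper: set up the line-to-line problem in the strip between $\LL_{m_-}$ and $\LL_m$, use the regularity in $\cE_1,\cE_2$ to place the (restricted) boundary profiles in $\sF_{w,l}$ with $|f_2-f_3|$ small, and invoke Proposition~\ref{prop:stb}. The one step you leave implicit is the explicit localization (the paper's event $\cE_{fluc}$): since Proposition~\ref{prop:stb} is stated for profiles on a \emph{bounded} interval, one must first show---via the parabolic decay in \eqref{eq:betarege}, \eqref{eq:betaregein} combined with Theorem~\ref{thm:lpp-onepoint} and Proposition~\ref{prop:seg-to-seg}---that the optimizing endpoints land in a window of width $H_1 l^{2/3}\asymp\beta^{-1}n^{2/3}$ with conditional probability $\ge 1-c^{div}_{\beta,\theta}\to 1$ as $\beta\to 0$; the paper also handles the fresh-line case $d(p_3)=d(p_2)+1$ more directly than your Busemann-recursion idea, simply bounding $\PP[\max_{|i|\le H_1 l^{2/3}}\omega'_{(m+i,m-i)}>w^{1/2-0.01}l^{1/3}]$.
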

Thus we must have that
\begin{align*}
\PP[p_1, p_2 \in \Gamma_\boo, p_1 \not\in \Gamma_\boo']
\le & \PP[\{p_1, p_2 \in \Gamma_\boo\}\cap \cE_3^c]
\\
\le & \PP[\ocE_1\cap \ocE_2 \cap \cE_3^c]
\\ 
\le & \PP[\ocE_1\setminus \cE_1] \PP[\ocE_2] +  \PP[\ocE_1]\PP[\ocE_2\setminus \cE_2] + \PP[\cE_1]\PP[\cE_2]\PP[\cE_3^c \mid \cE_1 \cap \cE_2]
\\
<& (2C_\theta c_{\beta, \theta}^{bad} + C_\theta^2 c_{w, \beta, \theta}^{disj}) n^{-4/3}.
\end{align*}
Let $c_{w,\theta}$ (in the statement of Proposition \ref{prop:occup-diff}) be $\inf_{\beta>0} 2C_\theta c_{\beta, \theta}^{bad} + C_\theta^2 c_{w, \beta, \theta}^{disj}$.
For fixed $\theta>0$, we have
\[\limsup_{w\to 0} c_{w,\theta} \le 2C_\theta c_{\beta, \theta}^{bad} + \limsup_{w\to 0} C_\theta^2 c_{w, \beta, \theta}^{disj}\]
for any $\beta>0$ in the right-hand side. Then by sending $\beta\to 0$ in the right-hand side, by Claims \ref{cla:be-te} and \ref{cla:w-be} we have $c_{w,\theta} \to 0$ as $w\to 0$.
Thus we get the desired upper bound for $\PP[p_1, p_2 \in \Gamma_\boo, p_1 \not\in \Gamma_\boo']$.
By a similar argument we can bound $\PP[p_1, p_2 \in \Gamma_\boo, p_3 \not\in \Gamma_\boo']$, thus we can upper bound $\PP[p_1, p_2 \in \Gamma_\boo] - \PP[p_1, p_3 \in \Gamma_\boo]$.
By symmetry we can also upper bound $\PP[p_1, p_3 \in \Gamma_\boo] - \PP[p_1, p_2 \in \Gamma_\boo]$, thereby finish the proof. \end{proof}

We now prove the claims.
For Claim \ref{cla:total} which bounds $\PP[\ocE_1]$ and $\PP[\ocE_2]$, we use translation invariance of Exponential LPP, and the bound on the number of disjoint geodesics (Lemma \ref{lem:num-int-dis}).
\begin{proof} [Proof of Claim \ref{cla:total}]
For each $j\in\Z$,  we let $\ocE_{1,j}$ be the event $\ocE_{1}$ translated by $(-j, j)$.
Then we have
\[
\PP[\ocE_1] = (2\lfloor n^{2/3}\rfloor +1)^{-1} \sum_{|j|\le n^{2/3}} \PP[\ocE_{1,j}] \le (2\lfloor n^{2/3}\rfloor +1)^{-1} \E[X],
\]
where \[X= |\{ p\in \Z^2: d(p)=d(p_1), |ad(p)| \le 3n^{2/3}, p \in \cup_{|i|\le n^{2/3}, j \in \Z} \Gamma_{(-i,i), (m_--j,m_-+j)} \}|.\] By Lemma \ref{lem:num-int-dis}, $\E[X]$ is bounded by a constant depending only on $\theta$. Thus we get the desired bound for $\PP[\ocE_1]$. 

Similarly, we have
\[
\PP[\ocE_2] \le (2\lfloor n^{2/3}\rfloor +1)^{-1} \E[Y],
\]
where \[Y= |\{ p\in \Z^2: d(p)=d(p_2), |ad(p)| \le 3n^{2/3}, p \in \cup_{j \in \Z} \Gamma_{(m-j,m+j)} \}|.\]
By Lemma \ref{lem:num-int-dis-inf}, $\E[Y]$ is bounded by a constant depending only on $\theta$ which then implies the sought bound for $\PP[\ocE_2]$. 
\end{proof}

We next prove Claim \ref{cla:be-te}.
In addition to the translation invariance arguments, to prove regularity and bound $\PP[\ocE_2\setminus \cE_2]$, we use the random walk description of the Busemann function on $\LL_m$.
For $\PP[\ocE_1\setminus \cE_1]$, we instead send $n\to\infty$ and use regularities of the directed landscape, because of the absence of exact random walk descriptions of the joint distribution of passage times for point-to-point geodesics in the pre-limit.
\begin{proof}[Proof of Claim \ref{cla:be-te}]
For each $j\in\Z$,  recall the events $\ocE_{1,j}$, $\ocE_{2,j}$ from the proof of Claim \ref{cla:total}. We also let $\cE_{1,j}$ and $\cE_{2,j}$ be the event $\cE_{1}$ and $\cE_{2}$ translated by $(-j, j)$, respectively.
Also recall the random variables $X$ and $Y$ from the proof of Claim \ref{cla:total}.

By translation invariance we have
\begin{equation}  \label{eq:ocEppX}
\PP[\ocE_1\setminus \cE_1] = (2\lfloor n^{2/3}\rfloor +1)^{-1} \sum_{|j|\le n^{2/3}} \PP[\ocE_{1,j}\setminus \cE_{1,j}] \le (2\lfloor n^{2/3}\rfloor +1)^{-1} \E[X\don[\cE_1'\cup \cE_1'']],    
\end{equation}
where $\cE_1'$ is the event where there exist $i, j, k \in \Z$, such that $|i|, |j|<2\beta^{-1}n^{2/3}$, $|k|\le n^{2/3}$, $wn^{2/3}<|i-j|<n^{2/3}$, and $|T_{(-k,k), (m_--i,m_-+i)} - T_{(-k,k), (m_--j,m_-+j)}| \ge \beta^{-2}|i-j|^{1/2}(n^{2/3}/|i-j|)^{0.01}$;
and $\cE_1''$ is the event where there exist $i, k \in \Z$, such that $|k|\le n^{2/3}$, and {$|T_{(-k,k), (m_--i,m_-+i)}-\E[T_{(-k,k), (m_--i,m_-+i)}]| \ge \beta^{-1}n^{1/3} + \beta (i-k)^2/n$.}
Namely, $\cE_1'$ (resp. $\cE_1''$) is the ``translation union version'' (in an order $n^{2/3}$ window) of the event that the regularity condition \eqref{eq:betareg} (resp. \eqref{eq:betarege}) does not hold.
    
By Theorem \ref{thm:exp-to-dl}, for fixed $\beta, \theta, \kappa_-$ (recall that $m_- = \lfloor \kappa_-\theta n/100 \rfloor$), we have that $\PP[\cE_1']$ converges as $n\to\infty$. 
The limit is the probability of the following event: there exist $x, y\in [-2^{1/3}\beta^{-1}, 2^{1/3}\beta^{-1}]$ with $w2^{-2/3}<|x-y|<2^{-2/3}$, and $z\in [-2^{-2/3}, 2^{-2/3}]$, such that 
\[|\cL(z,0;x,\kappa_-\theta/100) - \cL(z,0;y,\kappa_-\theta/100)|\ge 2^{-2}\beta^{-2}|x-y|^{1/2-0.01}.\]
Note that by the quadrangle inequality (Lemma \ref{lem:DL-quad}), if the above inequality holds, it must also hold when $z$ is replaced by either $-2^{-2/3}$ or $2^{-2/3}$.
Then by covering $[-2^{1/3}\beta^{-1}, 2^{1/3}\beta^{-1}]$ with overlapping intervals, we have that the above probability is bounded by the probability of the following event: there exist $i\in\Z$, $|i|<2\beta^{-1}$, and $x,y \in [(i-1)2^{-2/3}, (i+1)2^{-2/3}]$ such that \[|\cL(2^{-2/3},0;x,\kappa_-\theta/100) - \cL(2^{-2/3},0;y,\kappa_-\theta/100)|\ge 2^{-2}\beta^{-2}|x-y|^{1/2-0.01},\] or \[|\cL(-2^{-2/3},0;x,\kappa_-\theta/100) - \cL(-2^{-2/3},0;y,\kappa_-\theta/100)|\ge 2^{-2}\beta^{-2}|x-y|^{1/2-0.01}.\]
Since $x\mapsto (\kappa_-\theta/100)^{-1/3}\cL(2^{-2/3},0;x+2^{-2/3},\kappa_-\theta/100)$ is the Airy$_2$ process, we can bound this probability for each $i$, using that the Airy$_2$ process plus a parabola is stationary and Theorem \ref{thm:airytail}, and take a union bound for all $i$. {We conclude that (for fixed $\beta, \theta$)
$\lim_{n\to\infty} \PP[\cE_1'] < C\beta^{-1} e^{-c\beta^{-2}}$,
where $C,c>0$ are constants depending on $\theta, \kappa_-$. }

For $\ocE_1''$,
we use estimates in Section \ref{sssec:ptest} to bound its probability.
A key thing to notice is the slope conditions in these estimates.
When $|i|\le m_-/2$, the slope of $(m_--i+k,m_-+i-k)$ is always bounded away from $0$ and $\infty$.
Thus we split $\{(m_- - i, m_- + i): |i|\le m_-/2\}$ and $\{(-k, k): |k|\le n^{2/3}\}$ into segments of length $m_-^{2/3}$, and apply Proposition \ref{prop:seg-to-seg} to each one of them and take a union bound.
When $|i|> m_-/2$, the slope can be close to $0$ or very large, in which cases Proposition \ref{prop:seg-to-seg} cannot be applied,
so we just apply \eqref{e:wslope} in Theorem \ref{thm:lpp-onepoint} to each $T_{(-k,k),(m_--i,m_-+i)}$ and take a union bound.
In the end, we conclude that (for fixed $\beta, \theta$) $\limsup_{n\to\infty}\PP[\cE_1'']<C\beta^{-1/2}e^{-c\beta^{-1}}$, where $C,c>0$ are constants depending on $\theta, \kappa_-$.

By considering all $\kappa_- \in \llbracket 1, 100\theta^{-1}\rrbracket$, we have $\PP[\cE_1'\cup\cE_1'']<C\beta^{-1/2}e^{-c\beta^{-1}}$ for constants $C,c>0$ depending on $\theta$.
Besides, by Lemma \ref{lem:num-int-dis} we also have that $\PP[X>M]<Ce^{-cM^{1/12800}}$ for any $M>0$, with $C,c>0$ being constants that depend on $\theta$.
{Thus by \eqref{eq:ocEppX}, we get the desired bound for $\PP[\ocE_1\setminus \cE_1]$.}

For $\PP[\ocE_2\setminus\cE_2]$, we similarly have 
\[
\PP[\ocE_2\setminus\cE_2] \le (2\lfloor n^{2/3}\rfloor +1)^{-1} \E[Y\don[\cE_2'\cap \cE_2'']],
\]
where $\cE_2'$ is the event where there exist $i, j\in \Z$, such that $|i|, |j|<2\beta^{-1}n^{2/3}$, $wn^{2/3}<|i-j|<n^{2/3}$,
and $|G((m-i,m+i)) - G((m-j,m+j))| \ge \beta^{-2}|i-j|^{1/2}(n^{2/3}/|i-j|)^{0.01}$; and $\cE_2''$ is the event where there exists $i \in \Z$, such that $|G( (m-i,m+i))-G((m,m))| \ge \beta^{-1}n^{1/3} + \beta i^2/n$.
We note that $j\mapsto G((m-j,m+j))-G((m,m))$ is a random walk where each step has Exponential tails (see the second property in Section \ref{ssec:busemann}).
Then we have that for fixed $\beta, \theta, \kappa$, the probabilities $\PP[\cE_2']$ and $\PP[\cE_2'']$ converge as $n\to\infty$; and for the limits, they decay to zero as $\beta \to 0$, for any fixed $\theta, \kappa$.
Thus with the Exponential tail of $Y$ obtained from Lemma \ref{lem:num-int-dis-inf}, we get the desired bound for $\PP[\ocE_2\setminus \cE_2]$.
\end{proof}

Finally, for Claim \ref{cla:w-be}, we use the following stability result on geodesics.

Let $H_1, H_2>0$.
For each $w > 0$ and $n\in\N$, we let $\sF_{w,n}$ be the collection of all functions $f:\llbracket -H_1n^{2/3}, H_1n^{2/3}\rrbracket \to \R$, such that $|f|\le H_2n^{1/3}$,
and $|f(i)-f(j)| < H_2|i-j|^{1/2}(n^{2/3}/|i-j|)^{0.01}$ for any $|i|, |j|\le H_1n^{2/3}$, $wn^{2/3}<|i-j|<n^{2/3}$.
For each $f_1, f_2 \in \sF_{w,n}$, suppose
\[
i_*, j_* = \argmax_{i,j\in \llbracket -H_1n^{2/3}, H_1n^{2/3}\rrbracket} f_1(i) + f_2(j) + T_{(-i,i),(n-j,n+j)},
\]
then we let $\Gamma_{f_1,f_2}^n=\Gamma_{(-i_*,i_*),(n-j_*,n+j_*)}$.
Note that almost surely, such $i_*$ and $j_*$ are unique, because the passage times $T_{(-i,i),(n-j,n+j)}$ have continuous distributions. The next result states that $\Gamma_{f_1,f_2}^n$ overlaps significantly with $\Gamma_{f_1,f_3}^n$
\begin{prop}\label{prop:stb}
For each $\epsilon>0$, the following is true for any small enough $w, g>0$ (depending on $H_1, H_2$) and any $n\in\N$ large enough (depending on $w, g$ and $H_1, H_2$).
Take any $f_1, f_2, f_3 \in \sF_{w,n}$ with $|f_2-f_3|\le gn^{1/3}$, we have
\[
\PP[\Gamma_{f_1,f_2}^n \cap \LL_{\lfloor n/2\rfloor} = \Gamma_{f_1,f_3}^n \cap \LL_{\lfloor n/2\rfloor}] > 1-\epsilon.
\]
\end{prop}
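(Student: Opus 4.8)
The plan is to keep the environment $\{\omega_p\}$ fixed and compare the two geodesics through their crossings of intermediate lines. Fix $0<\alpha<1/2<\beta<1$ and set $k_-=\lfloor\alpha n\rfloor$, $k_+=\lceil\beta n\rceil$. For any $k$ with $k_-\le k\le k_+$, a short computation with the composition law shows that $\Gamma^n_{f_1,f_2}\cap\LL_k$ is the point $(k-c_*,k+c_*)$ with $c_*=\argmax_{c}\Psi_{2,k}(c)$, where $\Psi_{2,k}=A_k+B_{2,k}$, $A_k(c)=\max_i\bigl(f_1(i)+T_{(-i,i),(k-c,k+c)}\bigr)$ and $B_{2,k}(c)=\max_j\bigl(f_2(j)+T_{(k-c,k+c),(n-j,n+j)}\bigr)$, the maximiser lying, with probability tending to $1$, in an $O(n^{2/3})$-window about $(k,k)$ by the transversal fluctuation estimate (Lemma \ref{lem:trans-dis-fini}); similarly $\Gamma^n_{f_1,f_3}\cap\LL_k$ comes from $\Psi_{3,k}=A_k+B_{3,k}$. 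Crucially $A_k$ is common to both problems and $\|B_{2,k}-B_{3,k}\|_\infty\le\|f_2-f_3\|_\infty\le gn^{1/3}$. I would then carry out the two steps of Section \ref{sec:iop}: \textbf{(A)} the crossings of $\Gamma^n_{f_1,f_2}$ and $\Gamma^n_{f_1,f_3}$ on each of $\LL_{k_-}$ and $\LL_{k_+}$ differ by at most $\rho n^{2/3}$, with probability tending to $1$ as $g\to 0$ (for fixed small $w$ and fixed $\rho>0$); and \textbf{(B)} two geodesics whose endpoints on $\LL_{k_-}$ and on $\LL_{k_+}$ are each within $\rho n^{2/3}$ agree on $\LL_{\lfloor n/2\rfloor}$ with probability at least $1-\phi(\rho)$, where $\phi(\rho)\to 0$ as $\rho\to 0$. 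Granting (A) and (B), the proposition follows by choosing $\rho$ with $\phi(\rho)<\epsilon/3$, then $g,w$ small and $n$ large so that the two failure events in (A) each have probability below $\epsilon/3$, and taking a union bound.

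For step (A) the key point is that both $A_k$ and $B_{2,k}$ are obtained from the boundary data by evolving it over a \emph{macroscopic} time span ($k_-\asymp n$ and $n-k_+\asymp n$). Hence, after subtracting the deterministic parabola (controlled by \eqref{e:mean} in Theorem \ref{thm:lpp-onepoint}), each of $A_k,B_{2,k}$, and therefore $\Psi_{2,k}$, is, in the KPZ rescaling and along subsequences, a limit of profiles that are absolutely continuous with respect to Brownian motion on compact intervals by Theorem \ref{thm:abs-bm-gen}, sitting on top of a strictly concave parabola; such a limiting profile has an almost surely unique maximiser, and its super-level set $\{\Psi\ge\max\Psi-\eta\}$ shrinks to that maximiser as $\eta\downarrow 0$. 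Since $\argmax\Psi_{3,k}$ lies in $\{\Psi_{2,k}\ge\max\Psi_{2,k}-2gn^{1/3}\}$ (because $\Psi_{3,k}=\Psi_{2,k}+(B_{3,k}-B_{2,k})$ with the correction bounded by $gn^{1/3}$), the two crossings differ by at most $\rho n^{2/3}$ with probability tending to $1$ as $g\to 0$. The delicate point is to make this bound \emph{uniform} over the infinite family $\sF_{w,n}$; here I would use that the LPP evolution smooths the data instantaneously, so that the rescaled profiles $\{A_k,B_{2,k}\}$ stay in a precompact set as $f_1,f_2$ range over $\sF_{w,n}$ and $n\to\infty$ (the requisite a priori tails coming from Theorem \ref{thm:lpp-onepoint} and the Airy tail bound Theorem \ref{thm:airytail}), combined with a standard subsequence argument and the convergence of Exponential LPP geodesics to those of the landscape (Theorem \ref{thm:exp-to-dl}); the wildness of $f_1,f_2$ at scales below $wn^{2/3}$ is then harmless, being erased by the macroscopic evolution.

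Step (B) is a now-standard coalescence estimate, established and used in \cite{GH20,BHS,MSZ}. On the event from (A), writing $u_i=\Gamma^n_{f_1,f_i}\cap\LL_{k_-}$ and $v_i=\Gamma^n_{f_1,f_i}\cap\LL_{k_+}$ for $i=2,3$, the restrictions of $\Gamma^n_{f_1,f_2}$ and $\Gamma^n_{f_1,f_3}$ to the strip between $\LL_{k_-}$ and $\LL_{k_+}$ are the point-to-point geodesics $\Gamma_{u_2,v_2}$ and $\Gamma_{u_3,v_3}$, with $\|u_2-u_3\|,\|v_2-v_3\|\le\rho n^{2/3}$. Inserting $\Gamma_{u_2,v_3}$ and using that two geodesics sharing an endpoint coalesce (which here is clean, by uniqueness), it suffices to bound the probability that $\Gamma_{u_2,v_2}$ and $\Gamma_{u_2,v_3}$ split off well below $\LL_{k_+}$, or that $\Gamma_{u_2,v_3}$ and $\Gamma_{u_3,v_3}$ merge only well above $\LL_{k_-}$; by the bound on the number of disjoint geodesics through an $O(n^{2/3})$-window (Proposition \ref{prop:num-int-dis-weak}) together with a translation-averaging argument, and a union bound over a mesh of $O(n^{2/3})$-windows for the random positions of $u_2,u_3,v_2,v_3$, each of these has probability at most $\phi(\rho)$. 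On the complementary event $\Gamma_{u_2,v_2}$ and $\Gamma_{u_3,v_3}$ coincide over the central part of the strip, in particular at $\LL_{\lfloor n/2\rfloor}$.

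The main obstacle is step (A), and within it the uniformity over $\sF_{w,n}$: one must turn the qualitative ``curved peak'' of the macroscopically evolved last-passage profile into a quantitative bound that does not degrade as the boundary data varies over $\sF_{w,n}$ --- using the Brownian comparison (Theorems \ref{thm:abs-bm-gen} and \ref{thm:airytail}) together with the one-point estimate (Theorem \ref{thm:lpp-onepoint}) --- whereas step (B) can be quoted essentially off the shelf from \cite{GH20,BHS,MSZ}.
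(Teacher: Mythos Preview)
Your Step~(A) matches the paper's Proposition~\ref{prop:stb-weak} closely: both establish closeness on an intermediate line via a ``no twin peaks'' estimate for the profile $\Psi_{2,k}$, proved first in the directed landscape using Brownian absolute continuity (Theorem~\ref{thm:abs-bm-gen}) and then transferred to LPP via Theorem~\ref{thm:exp-to-dl}. The paper's uniformity-over-$\sF_{w,n}$ argument (a finite $g$-net of boundary functions, so that $\NTP_{3g,n}$ at a net function implies $\NTP_{g,n}$ at any nearby $f$) is a concrete realization of the compactness you sketch.

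Step~(B), however, departs from the paper and is not ``off the shelf'' as you claim. The obstacle is that $u_2,u_3,v_2,v_3$ are random and depend on the environment \emph{inside} the strip $[\LL_{k_-},\LL_{k_+}]$ through the global $\argmax$, so fixed-endpoint coalescence estimates do not apply directly. Your ``union bound over a mesh of $O(n^{2/3})$-windows'' would need, for every pair of $\rho n^{2/3}$-segments $A\subset\LL_{k_-}$, $B\subset\LL_{k_+}$, that all geodesics from $A$ to $B$ agree at $\LL_{\lfloor n/2\rfloor}$; but there are $\asymp\rho^{-2}$ such pairs while the per-pair failure probability is only polynomially small in $\rho$, so the bound does not close. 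More fundamentally, the deterministic event ``for every pair of endpoints within $\rho n^{2/3}$ the geodesics agree at the middle'' is almost surely \emph{false} for any $\rho>0$, since the map $(u,v)\mapsto\Gamma_{u,v}\cap\LL_{\lfloor n/2\rfloor}$ is not constant and must jump somewhere in the $H'n^{2/3}$-window. Any successful argument must therefore exploit the specific law of the random endpoints, not just their closeness.

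The paper handles this by a genuinely different mechanism. It applies Step~(A) at $M{+}1$ levels $\alpha_0<\cdots<\alpha_M$, all placed in $[0.6,0.7]$ (hence above $\tfrac12$), then \emph{conditions} on the entire path $\Gamma^n_{f_1,f_2}=\Gamma$. The conditioning is a decreasing event for the environment off $\Gamma$, so by FKG the conditional probability that some path disjoint from $\Gamma$ threads the $M{+}1$ short segments with competitive weight is at most its unconditional probability, which is small by Lemma~\ref{lemma:stwtostb} (independence across the $M$ strips plus negativity of the Tracy--Widom mean). Hence $\Gamma^n_{f_1,f_3}$ must intersect $\Gamma$ somewhere in $[\LL_{\lfloor\alpha_0 n\rfloor},\LL_{\lfloor\alpha_M n\rfloor}]$, and since both geodesics share the lower boundary $f_1$, a single intersection above $\LL_{\lfloor n/2\rfloor}$ forces agreement at $\LL_{\lfloor n/2\rfloor}$. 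The FKG step is exactly what converts the random-endpoint problem into a fixed-segment one.
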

We leave its proof to the next section and prove Claim \ref{cla:w-be} now.

\begin{proof} [Proof of Claim \ref{cla:w-be}]
Denote $l=m-m_--1$. Let $H_1>0$ be a constant depending only on $\beta, \theta$, such that we always have $\beta^{-0.9}n^{2/3} < H_1l^{2/3} < \beta^{-1}n^{2/3}$.
Such an $H_1$ exists whenever $\beta$ is small enough (depending on $\theta$).
Let $H_2>0$ be large enough depending on $\beta, \theta$.

We want to apply Proposition \ref{prop:stb} to the strip between $\LL_{m_-}$ and $\LL_m$. The regularity conditions (of Proposition \ref{prop:stb}) are given by $\cE_1\cap \cE_2$. 
Also, Proposition \ref{prop:stb} considers boundary functions in $\sF_{w,n}$, which are defined on the finite interval $\llbracket -H_1n^{2/3}, H_1n^{2/3}\rrbracket$.
To apply it we need to bound transversal fluctuations of $\Gamma_\boo$ and $\Gamma_\boo'$.

Towards this, we first express parts of $\Gamma_\boo$ and $\Gamma_\boo'$ in the strip between $\LL_{m_-}$ and $\LL_m$ in terms of boundary conditions.
We let $G'$ be the Busemann function for the random field $\{\omega_p': p\in\Z^2\}$.
Because of the shift by $p_3-p_2,$ we then have that $G(p)-G(q) = G'(p+p_3-p_2) - G'(q+p_3-p_2)$, for any $p, q\in\Z^2$ on or above $\LL_m$.
Next, define $f_1, f_2, f_3:\Z \to \R$ as
\[
f_1(i) = T_{\boo, (m_-+i,m_--i)} \vee T_{\boo, (m_-+i+1,m_--i-1)} - 4m_-,
\]
\[
f_2(i) = G( (m+i,m-i))\vee G( (m+i+1,m-i-1)) -G((m,m)),
\]
\[
f_3(i) = G'( (m+i,m-i))\vee G'( (m+i+1,m-i-1)) -G'((m,m)).
\]
In these definitions, we take the maximum over two nearest points, because for $\Gamma_\boo$ or $\Gamma_\boo'$, the immediate steps beyond the strip (between $\LL_{m_-}$ and $\LL_m$) can take one of two possible vertices.
We let $\Gamma_*=\Gamma_{(m_-+i_*+1,m_--i_*), (m+j_*, m-j_*-1)}$, where
\[
i_*, j_* = \argmax_{i,j\in \Z} f_1(i) + f_2(j) + T_{(m_-+i+1,m_--i),(m+j,m-j-1)};
\]
and define $\Gamma_*'=\Gamma_{(m_-+i_*'+1,m_--i_*'), (m+j_*', m-j_*'-1)}$ the same way as $\Gamma_*$, but using $f_3$ in place of $f_2$.
{We then have that $\Gamma_*$ is the part of $\Gamma_\boo$ between $\LL_{m_-}$ and $\LL_m$, and similarly $\Gamma_*'$ is the part of $\Gamma_\boo'$ between $\LL_{m_-}$ and $\LL_m$.}

For the regularity conditions, we need (1) the functions $f_1, f_2, f_3$ (restricted to finite intervals) to be in $\sF_{w,n}$, and (2) $f_2$ to be close to $f_3$.
Specifically, for (1), we let $\cE_{reg}$ be the event where for each $\iota = 1,2,3$, $f_\iota$ on $\llbracket -H_1l^{2/3}, H_1l^{2/3}\rrbracket$ is in $\sF_{w,n}$ (i.e., $|f_\iota| < H_2 l^{1/3}$ on $\llbracket -H_1l^{2/3}, H_1l^{2/3}\rrbracket$; and $|f_\iota(i)-f_\iota(j)| < H_2 |i-j|^{1/2}(l^{2/3}/|i-j|)^{0.01}$ for any $|i|, |j| \le H_1l^{2/3}$, $wl^{2/3} < |i-j| < l^{2/3}$).
For (2), we let $\cE_{diff}$ be the event where $|f_2-f_3|\le H_2w^{1/2-0.01}l^{1/3}$ on $\llbracket -H_1l^{2/3}, H_1l^{2/3}\rrbracket$.
We have that these events are essentially implied by $\cE_1\cap \cE_2$.
Indeed, when $d(p_2)=d(p_3)$, one precisely has $\cE_1\cap \cE_2\subset \cE_{reg}\cap \cE_{diff}$ (since $H_1l^{2/3} < \beta^{-1}n^{2/3}$ by the choice of $H_1$).
When $d(p_3)=d(p_2)+1$, $\cE_1\cap \cE_2$ does not imply $\cE_{reg}\cap \cE_{diff}$, because of the extra random weights $\{\omega_{(m+i,m-i)}': |i|\le H_1l^{2/3}\}$ (which are independent of $\cE_1\cap\cE_2$).
However, as long as each $\omega_{(m+i,m-i)}'$ is not too large (i.e., $\le w^{1/2-0.01}l^{1/3}$), the event $\cE_{reg}\cap \cE_{diff}$ still holds.
Therefore we have
\[\PP[\cE_{reg}^c\cup \cE_{diff}^c\mid \cE_1\cap \cE_2] < \PP[\max_{|i|\le H_1l^{2/3}} \omega_{(m+i,m-i)}' > w^{1/2-0.01}l^{1/3}],\]
and the right hand side $\to 0$ as $n\to\infty$, for fixed $w,\beta,\theta$.
Then we have
\begin{equation}  \label{eq:clapf2}
\lim_{n\to\infty} \PP[\cE_{reg}^c\cup \cE_{diff}^c\mid \cE_1\cap \cE_2] =0   
\end{equation}
for any $w, \beta, \theta$.

To apply Proposition \ref{prop:stb}, 
we need that the optimal paths with endpoints in $\{(m_-+i+1,m_--i): i\in \llbracket -H_1l^{2/3}, H_1l^{2/3}\rrbracket\}$ and $\{(m+j+1,m-j): j\in \llbracket -H_1l^{2/3}, H_1l^{2/3}\rrbracket\}$ are the same as the overall optimal paths $\Gamma_*$ and $\Gamma_*'$.
For this, we need to bound $\PP[\cE_{fluc}\mid \cE_1\cap \cE_2]$, for $\cE_{fluc}$ being the event that $|i_*|\vee |i_*'|\vee |j_*|\vee |j_*'| > H_1l^{2/3}$.
For this, we show that when $n$ is large enough, given any $\{\omega_p: d(p)\le 2m_-\}$ and $\{\omega_p: d(p)\le 2m_-\}$ such that $\cE_1\cap\cE_2$ holds, the probability of the event where
\begin{multline} \label{eq:ijtransla}
\max_{i,j\in \Z, |i|\vee |j|>H_1l^{2/3}} f_1(i) + f_2(j) + T_{(m_-+i+1,m_--i),(m+j,m-j-1)} \\ > f_1(0) + f_2(0) + T_{(m_-+1,m_-),(m,m-1)}
\end{multline}
or
\begin{multline} \label{eq:ijtranslat}
\max_{i,j\in \Z, |i|\vee |j|>H_1l^{2/3}} f_1(i) + f_3(j) + T_{(m_-+i+1,m_--i),(m+j,m-j-1)} \\ > f_1(0) + f_3(0) + T_{(m_-+1,m_-),(m,m-1)}
\end{multline}
is bounded by $c^{div}_{\beta,\theta}$, which depends only on $\beta, \theta$ and satisfies that $\lim_{\beta\to 0} c^{div}_{\beta,\theta} = 0$.
By \eqref{eq:betarege} and \eqref{eq:betaregein} from the event $\cE_1\cap\cE_2$, and the expectation estimate \eqref{e:mean}, we have that for any $i, j \in \Z$,
\begin{multline*}
(f_1(i)+ i^2/m_--\beta^{-1}n^{1/3}-\beta i^2/n) + f_2(j) + (\E[T_{(m_-+i+1,m_--i),(m+j,m-j-1)}] + (i-j)^2/l)\\
< (f_1(0)+\beta^{-1}n^{1/3}+\beta i^2/n) + (f_2(0)+\beta^{-1}n^{1/3}+\beta j^2/n) + \E[T_{(m_-+1,m_-),(m,m-1)}] + Cn^{1/3},
\end{multline*}
where $C>0$ is a universal constant.
Recall from the beginning of this proof that $H_1$ is chosen such that $H_1l^{2/3} > \beta^{-0.9}n^{2/3}$.
Thus when $|i|\vee |j|>H_1l^{2/3}>\beta^{-0.9}n^{2/3}$, we must have
\begin{multline*}
f_1(i) + f_2(j) + \E[T_{(m_-+i+1,m_--i),(m+j,m-j-1)}] \\ < f_1(0) + f_2(0) + \E[T_{(m_-+1,m_-),(m,m-1)}] - c ((i-j)^2+\beta^{-1.8})/n,    
\end{multline*}
where $c>0$ is another universal constant.
Then we can bound the probability of \eqref{eq:ijtransla} conditioned on $\cE_1\cap\cE_2$, by using Theorem \ref{thm:lpp-onepoint} and Proposition \ref{prop:seg-to-seg} and spitting $\Z$ into intervals of length $l^{2/3}$.
The probability of \eqref{eq:ijtranslat} conditioned on $\cE_1\cap\cE_2$ can be bounded similarly.
We then conclude that
\begin{equation}  \label{eq:clapf1}
\PP[\cE_{fluc}\mid \cE_1\cap \cE_2] < c^{div}_{\beta,\theta}   
\end{equation}
when $n$ is large enough.

Unless $\cE_{fluc}$ happens, the optimal paths $\Gamma_*$ and $\Gamma_*'$ are the same as the optimal paths with endpoints restricted to two intervals of length $2H_1l^{2/3}$.
Therefore, given any $f_1, f_2, f_3$ such that $\cE_{reg}\cap \cE_{diff}$ holds, Proposition \ref{prop:stb} applies. It implies that with high probability, either $\Gamma_*\cap \LL_{m_-+1} = \Gamma_*'\cap \LL_{m_-+1}$ (which is equivalent to $\cE_3$) or $\cE_{fluc}$ happens.
More precisely, we have
\begin{equation}  \label{eq:clapf3}
\lim_{w\to 0}\liminf_{n\to\infty}\PP[\cE_3\cup \cE_{fluc} \mid \cE_{reg}\cap \cE_{diff} \cap \cE_1 \cap \cE_2]
> c^{sta}_{w,\beta,\theta},  
\end{equation}
when $n$ is large enough; here $c^{sta}_{w,\beta,\theta}$ depends on $w, \beta, \theta$ and satisfies that $\lim_{w\to 0} c^{sta}_{w,\beta,\theta}= 1$.

By the elementary inequality of $x+(1-y)\ge (x\wedge y)/y$ for any $x,y\in (0,1)$,
we have
\[
\PP[\cE_3\cup \cE_{fluc} \mid \cE_{1}\cap \cE_{2} ] + \PP[\cE_{reg}^c\cup \cE_{diff}^c\mid \cE_1\cap \cE_2]
\ge 
\PP[\cE_3\cup \cE_{fluc} \mid \cE_{reg}\cap \cE_{diff} \cap \cE_1 \cap \cE_2].
\]
Thus we have
\begin{multline*}
\PP[\cE_3\mid \cE_1\cap \cE_2] \ge 
\PP[\cE_3\cup \cE_{fluc} \mid \cE_{reg}\cap \cE_{diff} \cap \cE_1 \cap \cE_2]
\\
- \PP[\cE_{reg}^c\cup \cE_{diff}^c\mid \cE_1\cap \cE_2] - \PP[\cE_{fluc}\mid \cE_1\cap \cE_2]. 
\end{multline*}
Then by plugging \eqref{eq:clapf2}, \eqref{eq:clapf1}, \eqref{eq:clapf3} to the right-hand side, we have that the conclusion holds.
\end{proof}

\section{Stability with respect to boundary conditions}  \label{ssec:stab}

In this section, we prove Proposition \ref{prop:stb}. 
Throughout this section, we take $H_1, H_2 > 0$ as two fixed constants, and all other constants are allowed to depend on them.

As already indicated in Section \ref{sec:iop}, the general idea to prove Proposition \ref{prop:stb} involves two steps. In the first, we show that with high probability, the geodesics $\Gamma_{f_1,f_2}^n$ and $\Gamma_{f_1,f_3}^n$ share a significant part of their journey being quite close to each other; in the second step we show that the conclusion of the first step implies that the geodesics coalesce with high probability. The two steps are carried out in  Sections \ref{ssec:stabw} and \ref{ssec:close-to-coal} respectively.

\subsection{Closeness of geodesics at one time} \label{ssec:stabw}
\begin{prop}\label{prop:stb-weak}
For each $\epsilon, \lambda>0$, $0.1<\alpha <0.9$, the following is true for any small enough $w, g>0$ and any $n\in\N$ large enough (depending on $w, g$).
Take any $f_1, f_2, f_3 \in \sF_{w,n}$ with $|f_2-f_3|\le g n^{1/3}$, we have
\[
\PP[|\Gamma_{f_1,f_2}^n \cap \LL_{\lfloor \alpha n\rfloor} - \Gamma_{f_1,f_3}^n \cap \LL_{\lfloor \alpha n\rfloor}| < \lambda n^{2/3}] > 1-\epsilon.
\]
\end{prop}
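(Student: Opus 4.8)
The plan is to compare the two constrained geodesics $\Gamma_{f_1,f_2}^n$ and $\Gamma_{f_1,f_3}^n$ on the line $\LL_{\lfloor \alpha n\rfloor}$ by showing that the optimizing endpoints $(i_*, j_*)$ for the two boundary-data configurations are close, and that the location where each geodesic crosses $\LL_{\lfloor \alpha n\rfloor}$ is a (Hölder-)continuous functional of those endpoints. First I would use the regularity built into $\sF_{w,n}$ together with the transversal fluctuation and parallelogram estimates (Lemma~\ref{lem:trans-dis-fini}, Proposition~\ref{prop:seg-to-seg}, Theorem~\ref{thm:lpp-onepoint}) to confine, with probability $>1-\epsilon/4$, all the relevant optimizers to windows of size $O(n^{2/3})$: the boundary functions $f_\iota$ are bounded by $H_2 n^{1/3}$ with the stated modulus of continuity, so the parabolic decay of $\E T_{(-i,i),(n-j,n+j)}$ forces $|i_*|, |j_*| \le H n^{2/3}$ for a constant $H$ depending only on $H_1,H_2,\alpha$, and similarly for $(i_*', j_*')$.

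Next I would quantify the effect of the perturbation $|f_2 - f_3| \le g n^{1/3}$. Since $\Gamma_{f_1,f_3}^n$ uses $f_3 \le f_2 + gn^{1/3}$, the optimal value for the $f_3$-problem differs from that of the $f_2$-problem by at most $gn^{1/3}$, and conversely. Combining this with a lower bound on the ``second-best curvature'' of the objective $i \mapsto f_1(i) + f_2(j_*) + T_{(-i,i),(n-j_*,n+j_*)}$ near its maximizer — namely that moving the endpoint by $r n^{2/3}$ costs order $r^2 n^{1/3}$ in last-passage value, which follows from the parabolic shape of $\E T$ plus the parallelogram fluctuation bound applied on dyadic scales — I would conclude that $(i_*', j_*')$ is within $O(g^{1/2} n^{2/3} + o(n^{2/3}))$ of $(i_*, j_*)$ with probability $>1-\epsilon/4$. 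Taking $g$ small then makes this displacement $\le \lambda' n^{2/3}$ for any prescribed $\lambda'$.

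Finally, I would transfer closeness of endpoints to closeness of the crossing points on $\LL_{\lfloor \alpha n\rfloor}$. For a fixed pair of endpoint segments of length $O(n^{2/3})$, the set $\bigl(\bigcup_{u,v} \Gamma_{u,v}\bigr)\cap \LL_{\lfloor \alpha n\rfloor}$ — the union over all endpoints in those windows — has diameter $O(n^{2/3})$ and in fact, by the coalescence-based bound Proposition~\ref{prop:num-int-dis-weak} (with $\alpha$ in the allowed range $(0.1,0.9)$), consists of only $O(1)$ points with high probability; more to the point, the monotonicity of geodesics (ordering of $\Gamma_{u,v}$ as $u,v$ vary, a consequence of planarity and the quadrangle inequality) means the crossing location is a monotone function of the endpoints, so a displacement of the endpoints by $\le \lambda' n^{2/3}$ changes the crossing point by at most the total transversal spread, which is $O(n^{2/3})$ but can be made $< \lambda n^{2/3}$ by first choosing $w$ small (controlling the modulus) and then tracking constants. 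Absorbing all the bad events (endpoint confinement failure, fluctuation bound failure, excess number of highways) into a total probability $<\epsilon$ completes the argument.

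\textbf{Main obstacle.} The delicate point is the quantitative ``parabolic stability'' estimate: showing that a perturbation of size $gn^{1/3}$ in the boundary data moves the optimizer by only $O(g^{1/2}n^{2/3})$ rather than by a macroscopic amount. This requires a uniform (over all $f_\iota \in \sF_{w,n}$ and over the $O(n^{2/3})$-window of candidate endpoints) lower bound on the concavity of the last-passage objective near its max, which in turn needs the parallelogram fluctuation bound Proposition~\ref{prop:seg-to-seg} applied simultaneously across dyadic scales together with a union bound — and care that the $(n^{2/3}/|i-j|)^{0.01}$ slack in the definition of $\sF_{w,n}$ does not overwhelm the $|i-j|^{1/2}$ gain. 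The role of small $w$ is precisely to keep this competition favorable on the short scales where $f_\iota$ is not controlled.
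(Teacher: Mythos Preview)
Your proposal has a genuine gap in the ``parabolic stability'' step. The claim that moving an endpoint by $r n^{2/3}$ costs order $r^2 n^{1/3}$ is not valid at the relevant scale. Within the $O(n^{2/3})$ window where the optimizers live, the expectation parabola of $T$ contributes only $O(n^{1/3})$ total variation, while the random fluctuations of $j\mapsto T_{(-i,i),(n-j,n+j)}$ and of $f_2$ are themselves of order $n^{1/3}$ and, over a sub-window of width $rn^{2/3}$, of order $\sqrt{r}\,n^{1/3}$ --- which dominates $r^2 n^{1/3}$ for small $r$. The objective is thus a locally Brownian-like process, and its argmax is \emph{not} a priori stable under a perturbation of size $gn^{1/3}$: if the profile happens to have a second near-maximum far from the first (a ``twin-peaks'' configuration), an arbitrarily small perturbation can make the argmax jump macroscopically. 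Ruling out twin peaks is exactly what the proposition asserts, so invoking curvature here is circular. Your step~3 has a related issue: monotonicity only sandwiches the crossing of $\Gamma_{f_1,f_3}^n$ between two extreme geodesics, but the gap between those crossings is an $O(n^{2/3})$ random quantity not controlled by $w$; making it small requires a coalescence input, not the modulus of $f_\iota$.

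The paper instead works directly with the profile at the crossing level, setting $T_{f_1,f_2}(i)=\max_{i_1,i_2} f_1(i_1)+f_2(i_2)+T_{(i_1,-i_1),(\lfloor\alpha n\rfloor+i,\lfloor\alpha n\rfloor-i)}+T^*_{(\lfloor\alpha n\rfloor+i,\lfloor\alpha n\rfloor-i),(n+i_2,n-i_2)}$, and observes that far-apart crossings for $f_2$ versus $f_3$ force $T_{f_1,f_2}$ to have two values within $gn^{1/3}$ of its max at separation $\ge\lambda n^{2/3}$. This no-twin-peaks event is then established by passing to the directed landscape (Theorem~\ref{thm:exp-to-dl}), where the limiting profile is absolutely continuous with respect to Brownian motion on compact intervals (Theorem~\ref{thm:abs-bm-gen}) and therefore almost surely has a unique maximum; uniformity over $f_1,f_2\in\sF_{w,n}$ is obtained by approximating with a finite net of boundary functions (this is where small $w$ and $g$ enter) and a hypograph-compactness argument.
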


To prove this, we first prove a limiting version in the directed landscape. While this is in principle not necessary, taking advantage of the local Brownian property (Theorem \ref{thm:abs-bm-gen}) in the limit is convenient for the exposition and helps make the proof cleaner. 

Overall the underlying observation is that the non-closeness of the geodesic implies a ``twin-peaks'' event, i.e., where certain passage time profiles on $\LL_{\lfloor \alpha n\rfloor}$ are nearly maximized at two points with distance $\ge \lambda n^{2/3}$. In the directed landscape, this is essentially the same as a Brownian motion having a ``near-maxima'' significantly away from the global maximum, an event whose probability is small.
The limiting twin-peaks event estimate is stated as follows.
We let $\sF$ be the collection of all upper semi-continuous functions $f:[-2^{-2/3}H_1, 2^{-2/3}H_1] \to \R$, such that $|f|\le 2^{-4/3}H_2$.
For each $f\in \sF$ we denote its hypograph $\hypo(f)$ as
\[
\hypo(f) = \{(x,y): x \in [-2^{-2/3}H_1, 2^{-2/3}H_1], -2^{-4/3}H_2 \le y \le f(x)\},
\]
which is a closed subset of $[-2^{-2/3}H_1, 2^{-2/3}H_1] \times [-2^{-4/3}H_2, 2^{-4/3}H_2]$.
Since the space of all compact subsets of $[-2^{-2/3}H_1, 2^{-2/3}H_1] \times [-2^{-4/3}H_2, 2^{-4/3}H_2]$ is complete (with the Hausdorff distance metric), for any sequence of functions in $\sF$, we can find a subsequence $\{f_k\}_{k\in\N}$, such that $\hypo(f_k)$ converges in the Hausdorff topology as $k\to\infty$.
It is straightforward to check that the limit must be $\hypo(f)$ for some function $f\in \sF$.

Below we let $\cH(\cdot, \cdot)$ denote the Hausdorff distance between compact sets in $\R^2$.
\begin{lemma}  \label{lem:stb-conti}
For each $\epsilon, \lambda>0$, $0.1<\alpha <0.9$, and $H'>0$, there is $\delta>0$, such that the following is true.

Take any $f_1, f_2\in \sF$, 
define $f_*:\R\to\R$ as \[f_*(x) = \sup_{|y|,|z|\le 2^{-2/3}H_1}\cL(y,0;x,\alpha) + \cL(x,\alpha;z,1) + f_1(y) + f_2(z),\]
and we let $\NTP_\delta(f_1, f_2)$ denote the following event: for any $x_1,x_2\in [-2^{-2/3}H', 2^{-2/3}H']$ with $x_2-x_1\ge 2^{-2/3}\lambda$ we have $f_*(x_1)\wedge f_*(x_2) \le \sup_{x\in [-2^{-2/3}H',2^{-2/3}H']} f_*(x) - 2^{-4/3}\delta$ (i.e., the event of no-twin-peaks).
Then we have $\PP[\NTP_\delta(f_1, f_2)]>1-\epsilon$.
\end{lemma}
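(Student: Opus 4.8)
The plan is to reduce the no-twin-peaks event for $f_*$ to a no-twin-peaks statement for a single-variable random function on $\LL_{\lfloor\alpha n\rfloor}$'s continuum analog, namely the line $\{(x,\alpha):x\in\R\}$, and then exploit the Brownian absolute continuity of that function. First I would introduce the profile $g_*(x) := \sup_{|y|\le 2^{-2/3}H_1} \cL(y,0;x,\alpha) + f_1(y)$ and the profile $h_*(x) := \sup_{|z|\le 2^{-2/3}H_1} \cL(x,\alpha;z,1) + f_2(z)$, so that $f_*(x) = g_*(x) + h_*(x)$. By Theorem~\ref{thm:abs-bm-gen} (applied with $t = \alpha$ and initial data $f_0$ equal to $f_1$ on $[-2^{-2/3}H_1,2^{-2/3}H_1]$ and $-\infty$ elsewhere, which satisfies the hypotheses since $f_1$ is bounded and the domain is compact), the law of $g_*$ on any compact interval is absolutely continuous with respect to Brownian motion. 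Similarly, using flip invariance (Lemma~\ref{lem:DL-symmetry}) to turn $h_*$ into a forward KPZ-fixed-point evolution, $h_*$ is also locally Brownian-absolutely-continuous. Crucially, $g_*$ is measurable with respect to $\cL$ restricted to times $[0,\alpha]$ and $h_*$ with respect to $\cL$ on $[\alpha,1]$, hence $g_*$ and $h_*$ are independent. Therefore $f_* = g_* + h_*$ restricted to a compact interval is absolutely continuous with respect to the sum of two independent Brownian motions, i.e.\ a Brownian motion with doubled variance.

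Given this, the lemma reduces to a statement about Brownian motion: for a Brownian motion $B$ on a compact interval $[-2^{-2/3}H', 2^{-2/3}H']$, and for any $\lambda>0,\epsilon>0$, there is $\delta>0$ such that with probability $>1-\epsilon$ there do not exist two points at distance $\ge 2^{-2/3}\lambda$ both within $2^{-4/3}\delta$ of the supremum. This is the classical fact that Brownian motion attains its maximum at a unique point, made quantitative: the event $\{\exists x_1,x_2, |x_1-x_2|\ge 2^{-2/3}\lambda, B(x_1)\wedge B(x_2) \ge \sup B - 2^{-4/3}\delta\}$ decreases to the probability-zero event of a non-unique near-argmax as $\delta\to 0$, so its probability tends to $0$; choose $\delta$ small enough to make it $<\epsilon\cdot(\text{density bound})^{-1}$ after transferring via absolute continuity. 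To transfer: the Radon--Nikodym derivative $\frac{d\mu_{f_*}}{d\mu_B}$ may not be uniformly bounded, so I would instead argue that the Brownian no-twin-peaks event has probability $\to 1$, its complement has probability $\to 0$, and absolute continuity implies the $f_*$-complement also has probability $\to 0$ (absolute continuity preserves null sets, hence preserves "probability $\to 0$ along a decreasing sequence of events"). Concretely: let $A_\delta$ be the bad (twin-peaks) event written as a function of the path; $\PP_B[A_\delta]\downarrow 0$; since $\mu_{f_*}\ll \mu_B$, we get $\PP_{f_*}[A_\delta]\downarrow 0$ as well (dominated convergence against the density), so pick $\delta$ with $\PP_{f_*}[A_\delta]<\epsilon$.

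One technical point to handle with care: the absolute continuity in Theorem~\ref{thm:abs-bm-gen} is on a fixed interval, but here the relevant interval $[-2^{-2/3}H',2^{-2/3}H']$ and the supremum over $[-2^{-2/3}H',2^{-2/3}H']$ are both compact and fixed, so this is fine; I just need to make sure the near-maximum is compared to the sup over the same compact set (it is, by the statement). A second point is the independence-and-sum step: I should verify that conditionally on the $\sigma$-algebra generated by $\cL$ on $[0,\alpha]$, the function $h_*$ has the same law as an unconditioned landscape profile, which follows from the independent-increments/Markov property of the directed landscape in the time variable (the composition law \eqref{eq:DL-compo} together with independence of $\cL$ across disjoint time strips, established in \cite{DOV}). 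The main obstacle I anticipate is not any single step but rather assembling the absolute-continuity transfer cleanly: one must avoid claiming a uniform density bound (false in general) and instead phrase everything as "null-set preservation along a decreasing family," and one must make sure the constant $\delta$ produced depends only on $\epsilon,\lambda,\alpha,H'$ (and $H_1,H_2$ through the fixed bounds) and not on $f_1,f_2$ — this uniformity comes for free because the Brownian comparison law does not see $f_1,f_2$ at all once absolute continuity is invoked, but it should be stated explicitly. A cleaner alternative, avoiding the quantitative Brownian argument, is to note that $f_*$ has a unique argmax almost surely (again by absolute continuity w.r.t.\ Brownian motion, which has a unique argmax), so $\NTP_\delta(f_1,f_2)$ increases to a probability-one event as $\delta\to 0$; by monotone convergence there is $\delta$ with $\PP[\NTP_\delta]>1-\epsilon$, and this $\delta$ works uniformly over all $f_1,f_2\in\sF$ provided one checks the argmax-uniqueness holds with the density being $\sigma(f_1,f_2)$-measurable in a way that keeps the exceptional null set independent of $f_1,f_2$ — which it does, since $(f_1,f_2)$ ranges over a fixed compact family and one can take a countable dense subset plus continuity of $f_*$ in $(f_1,f_2)$.
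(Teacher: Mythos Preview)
Your decomposition $f_* = g_* + h_*$ with $g_*, h_*$ independent and each locally absolutely continuous with respect to Brownian motion is correct, and this is exactly how the paper concludes (see the last paragraph of its proof). The genuine gap is in your uniformity argument over $(f_1,f_2)\in\sF\times\sF$, which you yourself flag as the main obstacle but then dispatch too quickly.

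Specifically: for each \emph{fixed} $(f_1,f_2)$, absolute continuity plus monotone convergence gives $\PP[\NTP_\delta(f_1,f_2)]\uparrow 1$ as $\delta\downarrow 0$, but the rate depends on the Radon--Nikodym density, which in turn depends on $(f_1,f_2)$. Your claim that ``the exceptional null set [is] independent of $f_1,f_2$'' is simply false --- the set of landscapes on which $f_*$ has a non-unique argmax changes as the boundary data changes. And ``countable dense subset plus continuity of $f_*$ in $(f_1,f_2)$'' does not by itself upgrade pointwise convergence to uniform convergence: over a countable family you would get a sequence of $\delta$'s whose infimum may well be zero. To make a compactness argument work you would need (i) continuity of $(f_1,f_2)\mapsto \PP[\NTP_\delta(f_1,f_2)]$ in the hypograph topology for each fixed $\delta$, and then (ii) something like Dini's theorem. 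Step (i) is non-trivial and is precisely where the paper invests its effort: it runs a contradiction argument, extracts a hypograph-convergent subsequence $(f_{1,k},f_{2,k})\to(f_{1,\infty},f_{2,\infty})$, and uses the modulus of continuity of $\cL$ (Lemma~\ref{lem:modcont}) to show $\sup_x |f_{*,k}(x)-f_{*,\infty}(x)|\to 0$ in probability, thereby transferring the twin-peaks event to the limit pair. This continuity estimate in the boundary data is the missing ingredient in your sketch; once you supply it, your argument and the paper's coincide.
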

\begin{proof}
It suffices to show that
$\lim_{\delta\to 0} \inf_{f_1,f_2\in \sF} \PP[\NTP_\delta(f_1, f_2)] = 1$.
We argue by contradiction, and suppose that for each $k\in\N$ we can find some $\delta_k>0$, and $f_{1,k}, f_{2,k} \in \sF$, such that $\lim_{k\to\infty}\delta_k = 0$, and $\lim_{k\to\infty} \PP[\NTP_{\delta_k}(f_{1,k}, f_{2,k})] = \gamma < 1$.
By taking a subsequence we can assume that $\hypo(f_{1,k})\to \hypo(f_{1,\infty})$ and $\hypo(f_{2,k})\to \hypo(f_{2,\infty})$ in the Hausdorff topology, for some $f_{1,\infty}, f_{2,\infty} \in \sF$.
In other words, if we let $\tau_k=\cH(\hypo(f_{1,k}), \hypo(f_{1,\infty})) \vee \cH(\hypo(f_{2,k}), \hypo(f_{2,\infty}))$, then $\tau_k\to 0$ as $k\to\infty$.

We now explain how we will reach a contradiction.
Define $f_{*,k}:\R\to\R$ by \[f_{*,k}(x) = \sup_{|y|,|z|\le 2^{-2/3}H_1}\cL(y,0;x,\alpha) + \cL(x,\alpha;z,1) + f_{1,k}(y) + f_{2,k}(z),\] for $k\in \N\cup\{\infty\}$.
On one hand, $f_{*,\infty}$ is local Brownian (by Theorem \ref{thm:abs-bm-gen}), so almost surely it has a unique maximum in any compact interval.
However, we will use $\lim_{k\to\infty} \PP[\NTP_{\delta_k}(f_{1,k}, f_{2,k})] = \gamma < 1$ to deduce that with probability $\ge 1-\gamma$, the maximum of $f_{*,\infty}$ in $[-2^{-2/3}H', 2^{-2/3}H']$ is not unique.
This can be achieved by bounding the difference between $f_{*,k}$ and $f_{*,\infty}$, using Lemma \ref{lem:modcont}.

We next fill in the details of this outlined strategy.
Take $R$ as given by Lemma \ref{lem:modcont}, where the compact set $K$ is taken as the union of \[([-2^{-2/3}H_1,2^{-2/3}H_1]\times [0,\alpha/3]) \times ([-2^{-2/3}H',2^{-2/3}H']\times [2\alpha/3, 2\alpha/3+1/3])\] and \[([-2^{-2/3}H',2^{-2/3}H']\times [2\alpha/3, 2\alpha/3+1/3]) \times ([-2^{-2/3}H_1,2^{-2/3}H_1]\times [\alpha/3+2/3, 1]).\]
We take $K$ this way so that we can now bound $|\cL(y,0;x,\alpha)-\cL(y',0;x,\alpha)|$ and $|\cL(x,\alpha;z,1)-\cL(x,\alpha;z',1)|$, for any $|x|\le 2^{-2/3}H'$ and $|y|,|z|, |y'|, |z'|\le 2^{-2/3}H_1$.

Using $\tau_k$ defined above, for any $|x|\le 2^{-2/3}H'$ and $|y|,|z|\le 2^{-2/3}H_1$, we can find $y', z'$ such that $|y'|,|z'|\le 2^{-2/3}H_1$, $|y-y'|, |z-z'|\le \tau_k$, and $f_{1,k}(y') \ge f_{1,\infty}(y)-\tau_k$, $f_{2,k}(z') \ge f_{2,\infty}(z)-\tau_k$.
Thus
\[f_{*,k}(x)- f_{*,\infty}(x) \ge - (C\tau_k + 2R\tau_k^{1/2}\log^{1/2}(1+\tau_k^{-1/2})),\]
where $C>0$ is a constant depending on $H'$ and $H_1$.
Similarly, the same lower bound holds for $f_{*,\infty}(x)- f_{*,k}(x)$.
So we have
\[
|f_{*,k}(x) - f_{*,\infty}(x)| \le C\tau_k + 2R\tau_k^{1/2}\log^{1/2}(1+\tau_k^{-1/2}).
\]
Given that $R\le \tau_k^{-1/4}\log^{-1/2}(1+\tau_k^{-1/2})$, the right-hand side is $\le 2\tau_k^{1/4}+C\tau_k$, and we have that $\NTP_{\delta_k+2\tau_k^{1/4}+C\tau_k}(f_{1,\infty}, f_{2,\infty})$ implies $\NTP_{\delta_k}(f_{1,k}, f_{2,k})$.
Therefore we have
\[
\PP[\NTP_{\delta_k+2\tau_k^{1/4}+C\tau_k}(f_{1,\infty}, f_{2,\infty})] \le \PP[\NTP_{\delta_k}(f_{1,k}, f_{2,k})] + \PP[R>\tau_k^{-1/4}\log^{-1/2}(1+\tau_k^{-1/2})].
\]
Thus $\limsup_{k\to\infty} \PP[\NTP_{\delta_k+2\tau_k^{1/4}+C\tau_k}(f_{1,\infty}, f_{2,\infty})] \le \gamma$.

If $f_{*,\infty}$ has a unique maximum in $[-2^{-2/3}H', 2^{-2/3}H']$, for any sufficiently large $k$ the event $\NTP_{\delta_k+2\tau_k^{1/4}+C\tau_k}(f_{1,\infty}, f_{2,\infty})$ holds.
So the probability of $f_{*,\infty}$ having a unique maximum in $[-2^{-2/3}H', 2^{-2/3}H']$ is at most $\gamma$.

By Theorem \ref{thm:abs-bm-gen} the functions
\[
x\mapsto \sup_{|y|\le 2^{-2/3}H_1}\cL(y,0;x,\alpha) + f_{1,k}(y), \quad x\mapsto \sup_{|z|\le 2^{-2/3}H_1}\cL(x,\alpha;z,1) + f_{2,k}(z)
\]
are absolutely continuous with respect to Brownian motions in any compact interval. (Note that the suprema are taken over compact intervals.)
Then we have that their sum $f_{*,\infty}$ in $[-2^{-2/3}H', 2^{-2/3}H']$ is absolutely continuous with respect to a Brownian motion, thus has a unique maximum almost surely. 
Thus we get a contradiction.
\end{proof}
We next use this estimate on the twin-peaks event in the limiting setting to prove Proposition \ref{prop:stb-weak}.
Note that the estimate in Proposition \ref{prop:stb-weak} is required to hold uniformly for any $f_1, f_2, f_3 \in \sF_{w,n}$.
To achieve this we will take a finite family of functions so that any function in $\sF_{w,n}$ is close to one in this family.

From now on, let $T_{p,q}^*=T_{p,q}-\omega_p$ for any $\Z^2$ vertices $p\preceq q$.
Removing the weight of the first vertex as such helps to get useful independence between passage times, as we will soon see.
\begin{proof}[Proof of Proposition \ref{prop:stb-weak}]
We first define the no-twin-peaks event in Exponential LPP.
Let $H'>0$ be a large number depending on $\epsilon, \alpha$, and the requirement for its value will be evident from the arguments below.
For each $f_1, f_2:\llbracket -H_1n^{2/3}, H_1n^{2/3}\rrbracket \to \R$, we let $T_{f_1,f_2}:\Z\to \R$ be the function where
\[
T_{f_1,f_2}(i) = \max_{|i_1|, |i_2|\le H_1n^{2/3}} T_{(i_1,-i_1),(\lfloor \alpha n\rfloor +i,\lfloor \alpha n\rfloor -i)} + T^*_{(\lfloor \alpha n\rfloor +i,\lfloor \alpha n\rfloor -i), (n+i_2,n-i_2)} + f_1(i_1)+f_2(i_2).
\]
Let
$\NTP_{\delta,n}(f_1, f_2)$ be the event where for any $i_1, i_2 \in \llbracket -H'n^{2/3}, H'n^{2/3}\rrbracket$ with $i_2-i_1 \ge \lambda n^{2/3}$, one has $T_{f_1,f_2}(i_1)\wedge T_{f_1,f_2}(i_2) \le \max_{|i|\le H'n^{2/3}}T_{f_1,f_2}(i)-\delta n^{1/3}$.

As the no-twin-peaks event only involves the function $T_{f_1,f_2}$ on the interval $\llbracket -H'n^{2/3}, H'n^{2/3}\rrbracket$, we need to bound the transversal fluctuation of geodesics.
By Lemma \ref{lem:trans-dis-fini}, by taking $H'>0$ large, for any $n$ large enough we have $\PP[\cE_{trans}] > 1-\epsilon/2$, where $\cE_{trans}$ is the event
\[
\Gamma_{(-\lfloor H_1n^{2/3}\rfloor, \lfloor H_1n^{2/3}\rfloor), (n-\lfloor H_1n^{2/3}\rfloor, n+\lfloor H_1n^{2/3}\rfloor)} \cap \LL_{\lfloor \alpha n \rfloor}\subset \{(\lfloor \alpha n \rfloor-i,\lfloor \alpha n \rfloor+i): |i|\le H'n^{2/3}\} ,
\]
and
\[
\Gamma_{(\lfloor H_1n^{2/3}\rfloor, -\lfloor H_1n^{2/3}\rfloor), (n+\lfloor H_1n^{2/3}\rfloor, n-\lfloor H_1n^{2/3}\rfloor)} \cap \LL_{\lfloor \alpha n \rfloor} \subset \{(\lfloor \alpha n \rfloor-i,\lfloor \alpha n \rfloor+i): |i|\le H'n^{2/3}\} .
\]
Then under $\cE_{trans}$, for any $f_1, f_2 \in \sF_{w,n}$ we must have that
\[
\Gamma_{f_1,f_2}^n \cap \LL_{\lfloor \alpha n\rfloor} \subset \{(\lfloor \alpha n \rfloor-i,\lfloor \alpha n \rfloor+i): |i|\le H'n^{2/3}\},
\]
since any two geodesics cannot cross each other twice.

Now by taking $w, g$ small and then $n$ large, we have that for any $f_1, f_2, f_3 \in \sF_{w,n}$ with $|f_2-f_3|\le g n^{1/3}$,
\begin{equation}  \label{eq:fotbd}
\begin{split}
& \PP[|\Gamma_{f_1,f_2}^n \cap \LL_{\lfloor \alpha n\rfloor} - \Gamma_{f_1,f_3}^n \cap \LL_{\lfloor \alpha n\rfloor}| < \lambda n^{2/3}] \\
\ge & \PP[\NTP_{g,n}(f_1, f_2)] - (1-\PP[\cE_{trans}]) \\
> & \PP[\NTP_{g,n}(f_1, f_2)] - \epsilon/2.
\end{split}    
\end{equation}
Here the first inequality is due to that, if the intersections $\Gamma_{f_1,f_2}^n \cap \LL_{\lfloor \alpha n\rfloor}$ and $\Gamma_{f_1,f_3}^n \cap \LL_{\lfloor \alpha n\rfloor}$ are $\ge \lambda n^{2/3}$ from each other, and the transversal fluctuation of geodesics is bounded (i.e., $\cE_{trans}$ happens), then the twin-peaks event (i.e., the complement of $\NTP_{g,n}(f_1, f_2)$) must occur.
Then it remains to bound the probability of the latter, for which we use the limiting estimate given by Lemma \ref{lem:stb-conti}.

For any $f:\llbracket -H_1n^{2/3}, H_1n^{2/3}\rrbracket \to \R$, let $\sfS_n f$ be the function on $[-2^{-2/3}H_1, 2^{-2/3}H_1]$, such that $\sfS_n f(x) = 2^{-4/3}n^{-1/3}f(\lfloor (2n)^{2/3}x \rfloor)$.
Note that this is well defined only when $\lfloor (2n)^{2/3}x \rfloor \ge -H_1n^{2/3}$. For $x$ near $-2^{-2/3}H_1$ (i.e., when $\lfloor (2n)^{2/3}x \rfloor < -H_1n^{2/3}$), we instead let $\sfS_n f(x) = 2^{-4/3}n^{-1/3}f(\lceil (2n)^{2/3}x \rceil)$.

To bound the twin-peaks probability uniformly in the boundary functions, we take a finite family of functions in $\sF_{w,n}$ to approximate every function in $\sF_{w,n}$, and do limit transition for each function in this finite family to use Lemma \ref{lem:stb-conti}.

Towards this, for any $w, g > 0$ and $n\in\N$, we can find $\{f_{w,g,n;k}\}_{k=1}^{m_{w,g}}\subset \sF_{w,n}$, and $\{f_{w,g;k}\}_{k=1}^{m_{w,g}} \subset \sF$, where $m_{w,g}\in\N$  depends only on $w, g$, such that
\begin{enumerate}
    \item $\sfS_n f_{w,g,n;k} \to f_{w,g;k}$ uniformly as $n\to\infty$;
    \item for each $f\in \sF_{w,n}$, there is $f_{w,g,n;k}$, such that $|f-f_{w,g,n;k}| \le gn^{1/3}$.
\end{enumerate}
One way to choose such functions is as follows. 
First, take some $\iota>0$ small enough (depending on $w,g$). 
Let $\{f_{w,g;k}\}_{k=1}^{m_{w,g}}$ contain all functions on $[-2^{-2/3}H_1, 2^{-2/3}H_1]$, which (1) take values in $\iota \Z\cap[-2^{-4/3}H_2, 2^{-4/3}H_2]$ at $(\iota\Z\cap [-2^{-2/3}H_1, 2^{-2/3}H_1])\cup\{-2^{-2/3}H_1, 2^{-2/3}H_1\}$, (2) are linear on the rest of the domain, and (3) are in the closure (in the uniform convergence topology) of $\cup_{n\in\N}\{\sfS_n f: f\in \sF_{w,n}\}$.
Then we let $f_{w,g,n;k}$ be the function in $\sF_{w,n}$ with the smallest $\|\sfS_n f_{w,g,n;k}-f_{w,g;k}\|_{\infty}$.

By Lemma \ref{lem:stb-conti}, we can take $\delta>0$ small enough, such that $\PP[\NTP_\delta(f_1,f_2)]>1-\epsilon/4$ for any $f_1, f_2\in \sF$.
By Theorem \ref{thm:exp-to-dl}, we have 
\begin{equation}  \label{eq:pcov}
\lim_{n\to\infty}\PP[\NTP_{\delta,n}(f_{w,g,n;k_1}, f_{w,g,n;k_2})]= \PP[\NTP_\delta(f_{w,g;k_1}, f_{w,g,k_2})]>1-\epsilon/4,
\end{equation}
for any $w,g>0$, $1\le k_1, k_2 \le m_{w,g}$.
We next extend this bound to all boundary functions in $\sF_{w,n}$.

We note that for any $f_1, f_2, f_1', f_2' \in \sF_{w,n}$, 
\[
\max_{i\in\Z}|T_{f_1,f_2}(i)-T_{f_1',f_2'}(i)| \le \max_{|i|\le H_1n^{2/3}}|f_1(i)-f_1'(i)|
+ \max_{|i|\le H_1n^{2/3}}|f_2(i)-f_2'(i)|.
\]
Then by the second property of $\{f_{w,g,n;k}\}_{k=1}^{m_{w,g}}$ from above, for any $f_1, f_2 \in \sF_{w,n}$ we can find $f_{w,g,n;k_1}$ and $f_{w,g,n;k_2}$ such that
\[
\max_{i\in\Z}|T_{f_1,f_2}(i)-T_{f_{w,g,n;k_1},f_{w,g,n;k_2}}(i)| \le 2gn^{1/3}.
\]
Then $\NTP_{3g,n}(f_{w,g,n;k_1}, f_{w,g,n;k_2})$ implies $\NTP_{g,n}(f_1, f_2)$.
Therefore, by taking $g$ small enough (such that $3g < \delta$), we have
\begin{align*}
 & \lim_{n\to\infty} \inf_{f_1, f_2 \in \sF_{w,n}} \PP[\NTP_{g,n}(f_1, f_2)]  \\
 \ge & \lim_{n\to\infty} \inf_{1\le k_1, k_2\le m_w} \PP[\NTP_{\delta,n}(f_{w,g,n;k_1}, f_{w,g,n;k_2})] \\
 \ge & 1-\epsilon/4.
\end{align*}
Here the second inequality is due to \eqref{eq:pcov}. 
By plugging this into \eqref{eq:fotbd} the conclusion follows.
\end{proof}

\subsection{From closeness to coalescence}\label{ssec:close-to-coal}

We now deduce Proposition \ref{prop:stb} from Proposition \ref{prop:stb-weak}.
The approach involves taking a set of different $\alpha$ in Proposition \ref{prop:stb-weak}, and showing that it is unlikely for $\Gamma_{f_1,f_2}^n$ and $\Gamma_{f_1,f_3}^n$ to be close at their intersections with several $\LL_{\lfloor \alpha n\rfloor }$, while remaining disjoint (see Figure \ref{fig:46}).

We will need the following lemma, which says that any path passing through a given sequence of short segments is likely to have a small weight and thereby unlikely to be the geodesic.
Similar estimates have appeared in the literature in studying Exponential or other integrable LPP models. See e.g. \cite[Lemma 3.5]{BHS}, \cite[Theorem 1.9]{GH20}, \cite[Lemma 7.4]{MSZ}. 
\begin{lemma}  \label{lemma:stwtostb}
Given any $\epsilon>0$, any $M\in\N$ large enough (depending on $\epsilon$), and any $n\in\N$ large enough (depending on $\epsilon$ and $M$), the following is true.

Take any $\alpha_0 \in\R$, and let $\alpha_i = \alpha_0 + \frac{i}{10M}$ for each $i=1,\ldots,M$.
For any $r_0,\ldots,r_M \in \Z$ with each $|r_i| < M^{0.01}n^{2/3}$, we have
\[
\PP\left[ \max_{k_0,\ldots,k_M\in \llbracket 0, n^{2/3}/M^4\rrbracket}
\sum_{i=0}^{M-1}T^*_{w_i[r_i+k_i], w_{i+1}[r_{i+1}+k_{i+1}]} > \frac{4n}{10} - M^{0.05} n^{1/3}
\right] < \epsilon,
\]
where $w_i[k] = (\lfloor \alpha_in \rfloor + k, \lfloor \alpha_in \rfloor-k)$ for each $i\in \llbracket 1, M\rrbracket$ and $k\in\Z$. (Here recall that $T^*_{p,q} = T_{p,q}-\omega_p$ for any $p,q\in\Z^2$, $p\preceq q$.)
\end{lemma}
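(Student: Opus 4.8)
\emph{Proof plan.} Fix $r_0,\dots,r_M$ as in the statement and write $D_i=2\lfloor\alpha_{i+1}n\rfloor-2\lfloor\alpha_in\rfloor$, so that $\sum_{i=0}^{M-1}D_i=2\lfloor\alpha_Mn\rfloor-2\lfloor\alpha_0n\rfloor=\tfrac{n}{5}+O(1)$ and each $D_i=(1+o(1))\tfrac{n}{5M}$ (here $o(1)\to 0$ as $n\to\infty$ with $M$ fixed). For each $i$ let $S_i=\{w_i[r_i+k]:0\le k\le n^{2/3}/M^4\}\subset\LL_{\lfloor\alpha_in\rfloor}$ be the short segment appearing in the statement, and put $T^*_{S_i,S_{i+1}}=\max_{u\in S_i,\,v\in S_{i+1}}T^*_{u,v}$. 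First I would reduce to a sum over $i$: a concatenation of up/right paths through $S_0,\dots,S_M$ is itself an up/right path from $S_0$ to $S_M$, and each intermediate vertex $w_i[r_i+k_i]$ ($1\le i\le M-1$) is visited once, as the initial vertex of the $i$-th piece, so that for every choice of $k_0,\dots,k_M$ one has $\sum_{i=0}^{M-1}T^*_{w_i[r_i+k_i],\,w_{i+1}[r_{i+1}+k_{i+1}]}\le\sum_{i=0}^{M-1}T^*_{S_i,S_{i+1}}$ (each summand is at most the corresponding maximum); hence it suffices to bound $\PP[\sum_{i=0}^{M-1}T^*_{S_i,S_{i+1}}>\tfrac{4n}{10}-M^{0.05}n^{1/3}]$. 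The point of subtracting the first vertex's weight is that $T^*_{S_i,S_{i+1}}$ is measurable with respect to the weights in the strip $\{p: 2\lfloor\alpha_in\rfloor<d(p)\le 2\lfloor\alpha_{i+1}n\rfloor\}$ (the weights on $\LL_{\lfloor\alpha_in\rfloor}$ itself never enter, being subtracted off when a path starts there and unreachable otherwise); these strips are disjoint across $i$, so $T^*_{S_0,S_1},\dots,T^*_{S_{M-1},S_M}$ are \emph{independent}.

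Next I would show $\E\big[\sum_{i=0}^{M-1}T^*_{S_i,S_{i+1}}\big]\le\tfrac{4n}{10}-cM^{2/3}n^{1/3}$ for a universal $c>0$ and all $n$ large (depending on $M$). Since the transversal positions satisfy $|r_i|<M^{0.01}n^{2/3}=o(D_i)$, every pair $u\in S_i$, $v\in S_{i+1}$ has $v-u$ of slope bounded away from $0$ and $\infty$ and of $d$-difference exactly $D_i$, so $\E[T_{u,v}]=(\sqrt a+\sqrt b)^2+\mathrm{corr}(a,b)$ with $a+b=D_i$, $(\sqrt a+\sqrt b)^2\le 2D_i$ (from $2\sqrt{ab}\le a+b$), and $\mathrm{corr}(a,b)\le-c_0\min(a,b)^{1/3}\le-c_0'D_i^{1/3}$ by the standard fact that the $N^{1/3}$-order correction in \eqref{e:mean} is strictly negative for slopes bounded away from $0$ and $\infty$ (equivalently, the Tracy--Widom GUE law has negative mean). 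Thus $\sup_{u\in S_i,v\in S_{i+1}}\E[T_{u,v}]\le 2D_i-c_0'D_i^{1/3}$. Because $S_i$ has length $n^{2/3}/M^4\ll D_i^{2/3}$, a transversal regularity estimate for passage times with endpoints varying on a line inside a window of length $\ll D_i^{2/3}$---a refinement of Proposition \ref{prop:seg-to-seg} at scale $D_i$ using the local Brownianity of the weight profile (or obtained by chaining Proposition \ref{prop:seg-to-seg} with the Busemann random-walk description of Section \ref{ssec:busemann})---gives $\E\big[\sup_{u\in S_i,v\in S_{i+1}}|T_{u,v}-\E T_{u,v}|\big]=o(D_i^{1/3})$ uniformly. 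Combined with $T^*_{u,v}\le T_{u,v}$ this yields $\E[T^*_{S_i,S_{i+1}}]\le 2D_i-\tfrac12 c_0'D_i^{1/3}$ for $M$ large, and summing over $i$ using $D_i\ge n/(6M)$ and $\sum_i 2D_i\le\tfrac{2n}{5}+O(1)$ gives the claim with $c=\tfrac14 c_0'6^{-1/3}$.

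Finally, $\sum_{i=0}^{M-1}(T^*_{S_i,S_{i+1}}-\E T^*_{S_i,S_{i+1}})$ is a sum of $M$ independent centred variables, each with exponential upper tails on scale $D_i^{1/3}\asymp(n/M)^{1/3}$ by Proposition \ref{prop:seg-to-seg} (applied at scale $D_i/2$, inside which $S_i,S_{i+1}$ fit) and Theorem \ref{thm:lpp-onepoint}; a Bernstein-type estimate then gives $\PP[\sum_i(T^*_{S_i,S_{i+1}}-\E T^*_{S_i,S_{i+1}})>\tfrac12 cM^{2/3}n^{1/3}]\le e^{-c'M}$, which is $<\epsilon$ once $M\ge M_0(\epsilon)$. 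Together with the mean bound this gives $\PP[\sum_i T^*_{S_i,S_{i+1}}>\tfrac{4n}{10}-\tfrac12 cM^{2/3}n^{1/3}]<\epsilon$, and since $\tfrac12 cM^{2/3}>M^{0.05}$ for $M$ large, the lemma follows. The main obstacle is the second step: one must extract a genuine deficit of order $M^{2/3}n^{1/3}$ below the straight-line value $\tfrac{4n}{10}$, which forces the use of the \emph{strict} negativity (not merely the magnitude) of the $N^{1/3}$-order correction together with a sufficiently sharp control of the maximisation over the short segments---a crude monotonicity bound, which would stretch each piece in the time direction by the segment length $n^{2/3}/M^4$ and so cost $\asymp n^{2/3}/M^3\gg M^{2/3}n^{1/3}$ in total, is far too lossy.
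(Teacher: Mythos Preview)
Your overall strategy matches the paper's: exploit the independence of the $T^*_{S_i,S_{i+1}}$, extract a deficit of order $M^{2/3}n^{1/3}$ from the strict negativity of the Tracy--Widom mean, and then apply a concentration bound. There is, however, a genuine error in your mean estimate. You assert that transversal regularity yields
\[
\E\Big[\sup_{u\in S_i,\,v\in S_{i+1}}\big|T_{u,v}-\E T_{u,v}\big|\Big]=o(D_i^{1/3}),
\]
but this is false: already for a single fixed pair $(u_0,v_0)$ one has $\E\big|T_{u_0,v_0}-\E T_{u_0,v_0}\big|\asymp D_i^{1/3}$ by Tracy--Widom scaling, so the supremum cannot be $o(D_i^{1/3})$. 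Proposition~\ref{prop:seg-to-seg} only gives $O(D_i^{1/3})$ here, and local Brownianity is a statement about the \emph{increments} $T_{u,v}-T_{u_0,v_0}$ as the endpoints vary, not about the centred fluctuation at a point.

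The correct quantity to control is precisely that increment: $\sup_{u,v}(T^*_{u,v}-T^*_{u_0,v_0})$, which \emph{is} $o(D_i^{1/3})$ because the segments have length $\ll D_i^{2/3}$. The paper carries this out in probability rather than in expectation: it first shows, via convergence to the directed landscape (Theorem~\ref{thm:exp-to-dl}) and the modulus of continuity (Lemma~\ref{lem:modcont}), that for each $i$
\[
\PP\Big[\max_{k_i,k_{i+1}}T^*_{w_i[r_i+k_i],w_{i+1}[r_{i+1}+k_{i+1}]}-T^*_{w_i[r_i],w_{i+1}[r_{i+1}]}>M^{-1}n^{1/3}\Big]<Ce^{-cM},
\]
and union-bounds over $i$ to get $\PP\big[\sum_i(\max-T^*_{\text{fixed}})>n^{1/3}\big]<\epsilon/2$. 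It then handles the point-to-point sum $\sum_i T^*_{w_i[r_i],w_{i+1}[r_{i+1}]}$ separately, using the negative mean and a variance bound (Chebyshev suffices; Bernstein is not needed). Your argument is repaired simply by replacing the false claim with $\E[\sup_{u,v}T^*_{u,v}]\le \E[T^*_{u_0,v_0}]+\E\big[\sup_{u,v}(T^*_{u,v}-T^*_{u_0,v_0})\big]$ and bounding the second term; once this is done, the rest of your outline goes through and is essentially identical to the paper's proof.
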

\begin{proof}
As before, in this proof we let $C,c>0$ denote large and small enough constants.

For each $i$, one has (for $n$ large enough)
\[
\PP\left[ \max_{k_i,k_{i+1}\in \llbracket 0, n^{2/3}/M^4\rrbracket}
T^*_{w_i[r_i+k_i], w_{i+1}[r_{i+1}+k_{i+1}]}
- T^*_{w_i[r_i], w_{i+1}[r_{i+1}]}
> M^{-1}n^{1/3}
\right] < C e^{-cM}.
\]
This is by Theorem \ref{thm:exp-to-dl}, which says that this probability converges to the corresponding probability in the directed landscape (uniformly in $r_i, r_{i+1}$), which is bounded using scaling and skew-shift invariance of the directed landscape (Lemma \ref{lem:DL-symmetry}), and the modulus of continuity of the directed landscape (Lemma \ref{lem:modcont}).
Next we take a union bound over all $0\leq i \leq M-1$, and take $M$ large, to conclude that
\[
\PP\left[ \sum_{i=0}^{M-1}\max_{k_i,k_{i+1}\in \llbracket 0, n^{2/3}/M^4\rrbracket}
T^*_{w_i[r_i+k_i], w_{i+1}[r_{i+1}+k_{i+1}]}
- T^*_{w_i[r_i], w_{i+1}[r_{i+1}]}
> n^{1/3}
\right] < \epsilon/2.
\]
We next estimate $T^*_{w_i[r_i], w_{i+1}[r_{i+1}]}$ for each $i$.
By the convergence to the directed landscape (Theorem \ref{thm:exp-to-dl}), the GUE Tracy-Widom marginal of the directed landscape, and that the GUE Tracy-Widom distribution has negative expectation \footnote{see \cite[Lemma A.4]{BGHH} for a proof by Ivan Corwin or \cite[Proposition 4.1]{GH20} for another proof.}, we have that for each $0\leq i \leq M-1$,
\[\E [T^*_{w_i[r_i], w_{i+1}[r_{i+1}]}] < \frac{4n}{10M} - c\left(\frac{n}{M}\right)^{1/3},\]
when $n$ is large enough.
Then since $T^*_{w_i[r_i], w_{i+1}[r_{i+1}]}$ are independent for all $i$ (due to that the weight of the first vertex $w_i[r_i]$ is removed from the passage time $T_{w_i[r_i], w_{i+1}[r_{i+1}]}$), using Theorem \ref{thm:lpp-onepoint}, we have that for large enough $n$,
\[
\Var\left(\sum_{i=0}^{M-1}T^*_{w_i[r_i], w_{i+1}[r_{i+1}]}\right)
=
\sum_{i=0}^{M-1}\Var(T^*_{w_i[r_i], w_{i+1}[r_{i+1}]}) < CM\left(\frac{n}{M}\right)^{2/3}.
\]
Thus by taking $M$ large, we have
\[
\PP\left[
\sum_{i=0}^{M-1}T^*_{w_i[r_i], w_{i+1}[r_{i+1}]} > \frac{4n}{10} - M^{0.1} n^{1/3}
\right] < \epsilon/2.
\]
and hence since for $M$ large enough  $-M^{0.1}+1<-M^{0.05}$, our conclusion follows.
\end{proof}

\begin{figure}[t]
         \centering
\begin{tikzpicture}
[line cap=round,line join=round,>=triangle 45,x=1cm,y=1cm]
\clip(-1,-1) rectangle (11,11);

\draw [line width=0.6pt] [blue] (1.6,1.4) -- (1.9,2.1) -- (2.3,2.7) -- (3.1,2.9) -- (3.6,3.4) -- (4.3,3.7) -- (4.4,4.6) -- (5.1,4.9) -- (5.4,5.6) -- (6,6) -- (6.2,6.8) -- (6.7,7.3) -- (7.3,7.7) -- (7.6,8.4) -- (8.3,8.7);
\draw [line width=0.3pt] [brown] (1.6,1.4) -- (1.9,2.1) -- (2.3,2.7) -- (3.1,2.9) -- (3.6,3.4) -- (4.3,3.7) -- (5.1,3.9) -- (5.4,4.6) -- (5.5,5.5) -- (6.1,5.9) -- (6.4,6.6) -- (7.2,6.8) -- (7.5,7.5) -- (7.9,8.1) -- (8.8,8.2);

\begin{scriptsize}

\draw (1,2) node[anchor=north east]{$f_1$};
\draw (8,9) node[anchor=south west]{$f_2$, $f_3$};
\draw (3.5,-0.5) node[anchor=north east]{$\LL_0$};
\draw (10.3,-0.3) node[anchor=north east]{$\LL_{\lfloor n/2\rfloor}$};
\draw (11,6) node[anchor=south east]{$\LL_n$};
\draw [blue] (7.6,8.4) node[anchor=south east]{$\Gamma^n_{f_1,f_2}$};
\draw [brown] (7.9,8.1) node[anchor=north west]{$\Gamma^n_{f_1,f_3}$};

\draw (5.4+2*0.6/5,5.6+2*0.4/5) node[anchor=east]{$p_0$};
\draw (5.5+2*0.6/5,5.5+2*0.4/5) node[anchor=north]{$q_0$};
\draw (6+4*0.2/5,6+4*0.8/5) node[anchor=south east]{$p_M$};
\draw (6.1+4*0.3/5,5.9+4*0.7/5) node[anchor=north west]{$q_M$};
\end{scriptsize}

\draw [line width=0.8pt] (0,3) -- (3,0);
\draw [line width=0.8pt] (10,7) -- (7,10);

\draw [dotted] [line width=0.5pt] (-2,5) -- (5,-2);
\draw [dotted] [line width=0.5pt] (12,5) -- (5,12);
\draw [dotted] [line width=0.5pt] (-2,12) -- (12,-2);

\foreach \i in {0,...,7}
{
\draw [dotted] [line width=0.5pt] (13.4+\i*0.2,-2) -- (-2,13.4+\i*0.2);
}

\foreach \i in {2,...,4}
{
\draw [fill=red, color=red] (5.4+\i*0.6/5,5.6+\i*0.4/5) circle (0.8pt);
\draw [fill=red, color=red] (5.5+\i*0.6/5,5.5+\i*0.4/5) circle (0.8pt);
}

\foreach \i in {0,...,4}
{
\draw [fill=red, color=red] (6+\i*0.2/5,6+\i*0.8/5) circle (0.8pt);
\draw [fill=red, color=red] (6.1+\i*0.3/5,5.9+\i*0.7/5) circle (0.8pt);
}

\end{tikzpicture}
        \caption{An illustration of the proof of Proposition \ref{prop:stb}: for the geodesics $\Gamma_{f_1,f_2}^n$ and $\Gamma_{f_1,f_3}^n$, if their intersections with each $\LL_{\lfloor \alpha_i n\rfloor}$ are close, it is unlikely for them to remain disjoint between $\LL_{\lfloor \alpha_0 n\rfloor}$ and $\LL_{\lfloor \alpha_M n\rfloor}$.}
        \label{fig:46}
\end{figure}
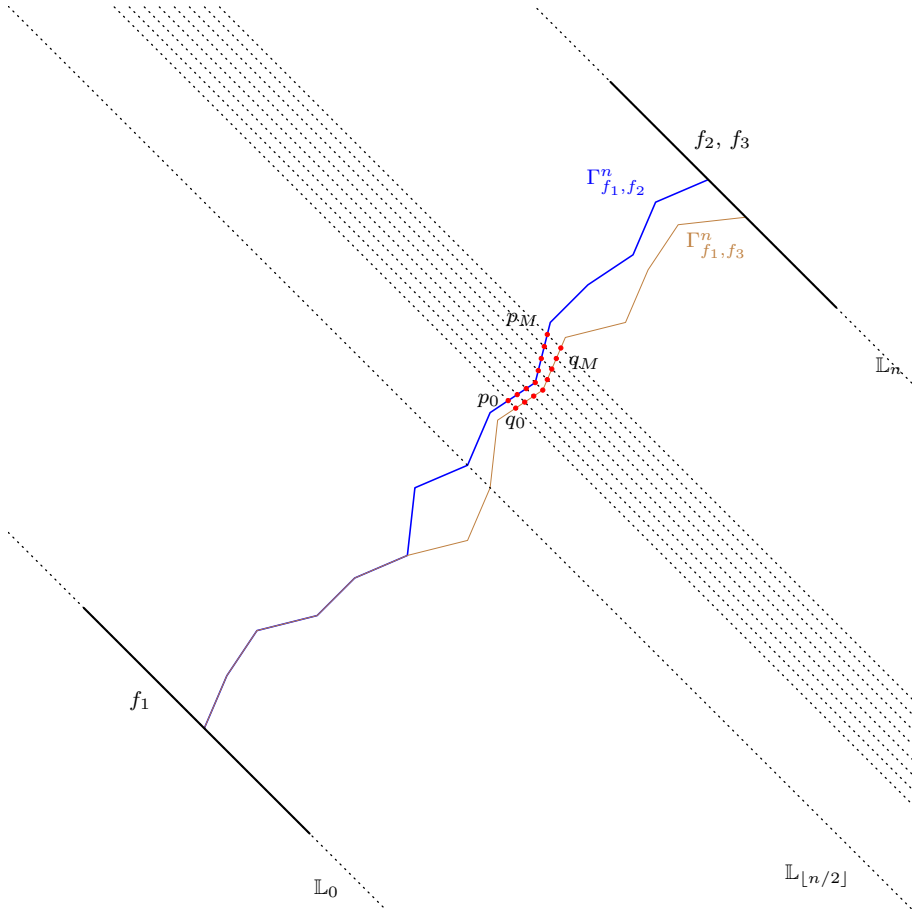

We now finish proving the stability result, by upgrading Proposition \ref{prop:stb-weak} using Lemma \ref{lemma:stwtostb}.

\begin{proof}[Proof of Proposition \ref{prop:stb}]
Let $C,c>0$ denote large and small enough constants in this proof.

We now explain the strategy.
From Proposition \ref{prop:stb-weak} we know that $\Gamma_{f_1,f_2}^n$ and $\Gamma_{f_1,f_3}^n$ are likely to stay close to each other at several given times.
To deduce coalescence from this, we will show that $\Gamma_{f_1,f_3}^n$ is unlikely to be competitive if it were disjoint from $\Gamma_{f_1,f_2}^n.$ To this end we will employ Lemma \ref{lemma:stwtostb}. However a key difference is that in the lemma the path is steered through fixed intervals whereas in the current situation one can view $\Gamma_{f_1,f_3}^n$ as passing through intervals adjacent to $\Gamma_{f_1,f_2}^n$ which are of course random. To address this we fix any upper-right path $\Gamma$ with two ending points on $\LL_0$ and $\LL_n$,
and condition on the event $\Gamma_{f_1,f_2}^n=\Gamma$. This ``fixes'' the intervals on one hand, but on the other hand, forces us to work with the conditioned environment. Fortunately, however, the FKG inequality comes to our aid and asserts that the above conditioning (being decreasing in nature) only makes the remainder of the environment  lower than what it typically is and hence  
the estimate from Lemma \ref{lemma:stwtostb} continues to hold true. 

Therefore we conclude that it is unlikely for $\Gamma_{f_1,f_3}^n$ to stay close to while being disjoint from $\Gamma$, given that $\Gamma_{f_1,f_2}^n=\Gamma$. Averaging over $\Gamma$ finishes the proof.

To implement the above plan, we first obtain closeness from Proposition \ref{prop:stb-weak}.
Let $M\in\N$ be a large constant to be determined.
Take $\alpha_0 = 0.6$, and for each $i=0,\cdots, M$, let $\alpha_i=\alpha_0+\frac{i}{10M}$.
Then each $\alpha_i$ is contained in $[0.6, 0.7]$, thus away from $0$ and $1$.
We let $p_i=(\lfloor \alpha_i n\rfloor + x_i, \lfloor \alpha_i n\rfloor - x_i)$, $q_i=(\lfloor \alpha_i n\rfloor + y_i, \lfloor \alpha_i n\rfloor - y_i)$ be the intersections of $\Gamma_{f_1,f_2}^n$ and $\Gamma_{f_1,f_3}^n$ with $\LL_{\lfloor \alpha_i n \rfloor}$, where $x_i, y_i \in \Z$.
By Proposition \ref{prop:stb-weak}, and taking a union bound over $i\in\llbracket 0, M\rrbracket$, we have
\begin{equation}  \label{eq:xydfbd}
\lim_{w, g\to 0}\limsup_{n\to\infty}\PP\Big[\max_{i\in\llbracket 0, M\rrbracket}|x_i-y_i|\ge n^{2/3}/M^4\Big] =0,    
\end{equation}
for any fixed $M\in \N$.

Next, we denote $\cE_M$ as the event where $|x_i-y_i| \le n^{2/3}/M^4, \forall 0\leq i \leq M$, and $\Gamma_{p_0, p_M}$, $\Gamma_{q_0, q_M}$ are disjoint.
Our goal is to show that, by taking $M$ large enough, we can make $\limsup_{n\to\infty}\PP[\cE_M]$ arbitrarily small.
To apply Lemma \ref{lemma:stwtostb} towards this, we need to bound each $x_i$ and $y_i$, and lower bound $T_{q_0,q_M}$.

\noindent\textbf{Estimate 1: transversal fluctuation.}
By Lemma \ref{lem:trans-dis-fini} we have
\[
\PP\left[\max\{|x_i|, |y_i| : 0\leq i \leq M \}> M^{0.01}n^{2/3}\right] < Ce^{-cM^{0.03}},
\]
which $\to 0$ as $M\to \infty$.\\
\noindent\textbf{Estimate 2: lower bound of $T_{q_0,q_M}$.}
Next, by splitting the sets $\{(\lfloor \alpha_0 n\rfloor +i, \lfloor \alpha_0 n\rfloor -i): i \in \llbracket -M^{0.01}n^{2/3}, M^{0.01}n^{2/3}\rrbracket\}$ and $\{(\lfloor \alpha_M n\rfloor +j, \lfloor \alpha_M n\rfloor -j): j \in \llbracket -M^{0.01}n^{2/3}, M^{0.01}n^{2/3}\rrbracket\}$ into segments of length in the order of $n^{2/3}$, and applying Proposition \ref{prop:seg-to-seg} to each pair, with the expectations of the passage times lower bounded using \eqref{e:mean} in Theorem \ref{thm:lpp-onepoint}, we have that
\[
\PP\left[\min_{i,j \in \llbracket -M^{0.01}n^{2/3}, M^{0.01}n^{2/3}\rrbracket} 
T_{(\lfloor \alpha_0 n\rfloor +i, \lfloor \alpha_0 n\rfloor -i), (\lfloor \alpha_M n\rfloor +j, \lfloor \alpha_M n\rfloor -j)}
< \frac{4n}{10} - CM^{0.02}n^{1/3} \right] < CM^{0.02}e^{-cM^{0.02}}
\]
for $n$ large enough.
From these we see that $\PP[ T_{q_0,q_M} < \frac{4n}{10} - CM^{0.02}n^{1/3}] < Ce^{-cM^{0.01}}$ for $n$ large enough.\\
\noindent\textbf{Estimate 3: staying close while disjoint makes $T_{q_0,q_M}$ small.}
We take any upper-right path $\Gamma$, with two ending points on $\LL_0$ and $\LL_n$.
Take $r_i = \Z$ such that
$(\lfloor \alpha_i n\rfloor + r_i, \lfloor \alpha_i n\rfloor - r_i)$ is the intersection of $\Gamma$ with $\LL_{\lfloor \alpha_i n\rfloor}$, and assume that each $|r_i|<M^{0.01}n^{2/3}$.
Let $\cE_\Gamma$ be the event where there exist $k_0,\ldots,k_M\in \llbracket -m^{2/3}/M^4, m^{2/3}/M^4\rrbracket$, such that there is an upper-right path from $(\lfloor \alpha_0 n\rfloor +r_0+k_0, \lfloor \alpha_0 n\rfloor -r_0-k_0)$ to $(\lfloor \alpha_M n\rfloor +r_M+k_M, \lfloor \alpha_M n\rfloor -r_M-k_M)$, passing through each $(\lfloor \alpha_i n\rfloor +r_i+k_i, \lfloor \alpha_i n\rfloor -r_i-k_i)$, being disjoint from $\Gamma$, and having total weight $>\frac{4n}{10} - CM^{0.02}n^{1/3}$.
By the FKG inequality, one has $\PP[\cE_{\Gamma} \mid \Gamma_{f_1,f_2}^n=\Gamma]\leq \PP[\cE_\Gamma]$,
since $\cE_{\Gamma}$ is a positive event for the random field $\omega$ on $\Z^2\setminus\Gamma$, while $\Gamma_{f_1,f_2}^n=\Gamma$ is a negative event for the random field outside $\Gamma$.
By Lemma \ref{lemma:stwtostb} we can make   $\PP[\cE_{\Gamma}]$ small uniformly in $\Gamma$, by taking $M$ and then $n$ large enough.

Note that
\begin{align*}
\PP[\cE_M] \le &\PP\left[\max\{|x_i|, |y_i| : 0\leq i \leq M \}> M^{0.01}n^{2/3}\right] \\
&+ \PP[ T_{q_0,q_M} < \frac{4n}{10} - CM^{0.02}n^{1/3}] \\ &+ \sum_{\Gamma}\PP[\cE_{\Gamma}]\PP[\Gamma_{f_1,f_2}^n=\Gamma]    
\end{align*}
where the sum is over all $\Gamma$ satisfying the above conditions. 
By the three estimates above, we conclude that $\PP[\cE_M]$ could be made arbitrarily small by taking $M$ and then $n$ large enough.

Finally, from the definition of $\cE_M$ we have
\[
\PP[\Gamma_{p_0,p_M}\cap\Gamma_{q_0,q_M}=\emptyset] \le \PP[\cE_M] + \PP\Big[\max_{i\in\llbracket 0, M\rrbracket}|x_i-y_i|\ge n^{2/3}/M^4\Big].
\]
Using \eqref{eq:xydfbd} and the above bound of $\PP[\cE_M]$, we can make $\PP[\Gamma_{p_0,p_M}\cap\Gamma_{q_0,q_M}=\emptyset]$ arbitrarily small, by taking $M$ large enough, then $w,g$ small enough, and finally $n$ large enough.
Noting that $\Gamma_{p_0,p_M}\cap\Gamma_{q_0,q_M}\neq\emptyset$ would imply $\Gamma_{f_1,f_2}^n \cap \LL_{\lfloor n/2\rfloor} = \Gamma_{f_1,f_3}^n \cap \LL_{\lfloor n/2\rfloor}$, the conclusion follows.
\end{proof}

\bibliography{bibliography}

\appendix

\section{Maximum occupation time of a segment}   \label{ssec:max-ocp-seg}

In this appendix we prove Lemma \ref{lem:disc-max}, using arguments similar to those appearing in \cite[Section 3]{SSZ}.
For each $a<b\in\Z,$ we denote
\[
W'_{a,b}=\max_{2a-b\le a' \le a, b'\ge b} W[a',b'] \vee \max_{a' \le a, b\le b'\le 2b-a} W[a',b'].
\]
Note that one always has $W'_{a,b}\le W_{a,b}$.
\begin{lemma}  \label{lem:disc-max-out}
We have $\E[W_{a,b}']  < C (b-a)^{1/3}$, where $C>0$ is a universal constant.
\end{lemma}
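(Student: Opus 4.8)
\textbf{Proof proposal for Lemma \ref{lem:disc-max-out}.}

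The plan is to bound $W'_{a,b}$ by a union bound over a dyadic family of passage-time windows, exploiting that $W[a',b']$ is the occupation time of the diagonal segment between $(a',a')$ and $(b',b')$, which is controlled by last-passage estimates. By translation invariance we may take $a=0$ and write $N=b$. The quantity $W'_{0,N}$ splits into two symmetric pieces; consider $\max_{-N\le a'\le 0,\; b'\ge N} W[a',b']$ (the other is handled identically by the flip $p\mapsto -p$). For a single pair $(a',b')$ with $a'\le 0 \le N\le b'$, the key observation is that $W[a',b']$ is the number of diagonal points $(k,k)$, $a'\le k\le b'$, lying on $\Gamma_{(a',a'),(b',b')}$, and this is a quantity of the same type as the ones already estimated in Section \ref{ssec:mdg}: by Proposition \ref{prop:num-int-dis-weak} (or the argument behind it), intersections of a point-to-point geodesic with an order-$(b'-a')^{2/3}$ window of the anti-diagonal line through a point at distance $O((b'-a')^{2/3})$ from the diagonal have an exponential tail at scale $(b'-a')^{1/3}$. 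So one expects $\PP[W[a',b']>M(b'-a')^{1/3}]<Ce^{-cM^{\kappa}}$ for some small exponent $\kappa$ (e.g. $\kappa=1/128$), uniformly in the pair.

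The main step is then to upgrade this one-pair bound to a bound on the maximum over all admissible pairs $(a',b')$, by a chaining/dyadic-scale argument. First note that $W[a',b']$ is monotone under enlarging the interval only in a weak sense, so instead one groups the pairs: for integers $j\ge 0$ let $b'$ range over $[2^j N, 2^{j+1}N)$ and $a'$ over $(-2^{j+1}N,-2^j N]\cup\cdots$, i.e. dyadic blocks in the ``excess length'' $(b'-a')-N$. Within one block, the interval length is comparable to $2^j N$, and there are $O(2^{2j}N^2)$ pairs, but crucially $W[a',b']$ changes by at most $1$ when an endpoint is moved by one, and more importantly one can dominate the maximum over the block by $O(2^{2j})$ applications of the single-window estimate after tiling the endpoint segments into length-$(2^jN)^{2/3}$ pieces (exactly as in the proof of Lemma \ref{lem:num-int-dis}). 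This gives, for each block $j$,
\[
\PP\Big[\max_{(a',b')\text{ in block }j} W[a',b'] > M (2^j N)^{1/3}\Big] < C\, 2^{2j}\, e^{-c M^{1/128}}
\]
for $M$ in the admissible range (and $\le$ the trivial bound $W\le b'-a'$ otherwise, which is harmless). Summing the tail over $M$ gives $\E[\max_{\text{block }j} W] \le C (2^j N)^{1/3} (\log(2^j))^{C}$ (the polylog absorbing the $2^{2j}$ prefactor through $M\sim (\log 2^{2j})^{128}$), and then summing the geometric-in-$2^{j/3}$ series—wait, this diverges, so one must be more careful.

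The resolution, and the point I expect to be the real obstacle, is that $\E[W'_{0,N}]$ cannot come from naively summing over all scales $j$: a geodesic from a far point $a'\approx -2^jN$ to a far point $b'\approx 2^jN$ only spends $O((2^jN)^{1/3})$ time near the diagonal \emph{total}, and by transversal fluctuations (Lemma \ref{lem:trans-dis-fini}) such a geodesic is, with overwhelming probability $1-Ce^{-c 2^{2j}}$, far (distance $\gg (2^jN)^{2/3} \gg$ anything relevant once $j$ is large) from the diagonal over the entire central portion—so $W[a',b']=0$ with probability $1-Ce^{-c\,2^{2j}}$ for $j$ large. Thus the block-$j$ contribution to $\E[W'_{0,N}]$ is really bounded by $(2^jN)^{1/3}\cdot\PP[\text{geodesic hits diagonal}]$, and the hitting probability decays like $e^{-c2^{2j}}$ (doubly-exponentially killing the $2^{j/3}(\log)^C$ factor), so the series over $j$ converges and is dominated by the $j=0$ term, giving $\E[W'_{0,N}]\le C N^{1/3}$. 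Concretely, I would: (i) for $j=0$ use the single-scale estimate directly, getting $CN^{1/3}$; (ii) for $j\ge 1$ intersect with the transversal-fluctuation event of Lemma \ref{lem:trans-dis-fini} applied at scale $2^jN$ with deviation parameter $x\sim 2^j$, contributing $Ce^{-c2^{3j}}$, times the worst-case bound $2^{j+1}N$ for $W$, which sums to something $\le CN$; combining and re-examining constants yields $\E[W'_{0,N}]<C N^{1/3}$ after absorbing. The delicate bookkeeping is making the $j\ge1$ terms sum to $o(N^{1/3})$ rather than $O(N)$—this requires using the $(2^jN)^{1/3}$ scale (not $2^jN$) for the geodesic that does hit, which is legitimate precisely because conditioned on hitting the diagonal at all, the occupation time is still governed by Proposition \ref{prop:num-int-dis-weak} at the appropriate scale. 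That interplay between "rare to hit" and "even if you hit, occupation is small" is the crux.
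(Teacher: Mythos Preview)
Your transversal fluctuation fix is based on a mistaken geometric picture and cannot work. The geodesics in question are $\Gamma_{(a',a'),(b',b')}$ with \emph{both} endpoints on the diagonal $\{(k,k)\}$; the straight line joining the endpoints \emph{is} the diagonal, so Lemma \ref{lem:trans-dis-fini} says the geodesic stays \emph{within} $O((b'-a')^{2/3})$ of the diagonal, not far from it. In particular $W[a',b']\ge 2$ always (the endpoints themselves are diagonal hits), and for large $b'-a'$ one expects $W[a',b']\asymp (b'-a')^{1/3}$ with high probability, not $0$. So the mechanism you propose for killing the large-$j$ blocks---``the long geodesic misses the diagonal''---is simply false, and the geometric series you worried about really does diverge under your scheme. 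The attempted interplay ``rare to hit / even if you hit, occupation is small'' collapses because the first factor is not rare at all.

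The paper bypasses the max-over-scales issue entirely by switching to a first-moment argument. One uses the crude domination $W'_{0,b}\le |\{m: (m,m)\in \bigcup_{(a',b')}\Gamma_{(a',a'),(b',b')}\}|$ and then linearity of expectation, so it suffices to bound, for each fixed $m\in[0,b]$, the probability that $(m,m)$ lies on \emph{some} geodesic from $\{(i,i):-b\le i\le 0\}$ to $\{(j,j):j\ge b\}$ (and the symmetric piece). This one-point probability is shown to be $\le Cm^{-2/3}$ (Lemma \ref{lem:oc-one-pt}) by a dyadic decomposition of the \emph{lower} endpoint together with translation invariance and the disjoint-geodesics bound (Lemma \ref{lem:num-int-dis}); summing over $m$ gives $Cb^{1/3}$. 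The point is that by moving to expectations of indicators one never has to compare $W[a',b']$ across different scales of $b'-a'$, which is exactly where your approach got stuck.
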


\begin{proof}
Without loss of generality, we assume that $a=0$.
We take a dyadic decomposition. Namely, by Lemma \ref{lem:oc-one-pt} below, for any integer $0\le m\le b/2$ we have
\begin{multline*}
\PP[ (m,m) \in \cup_{-b\le i \le 0, j \ge b} \Gamma_{(i,i),(j,j)}] \\< \sum_{k=0}^{\lfloor\log_2(b/m+1)\rfloor} \PP[ (m,m) \in \cup_{m-2^{k+1}m\le i \le m-2^km, j \ge b} \Gamma_{(i,i),(j,j)}] < Cm^{-2/3},    
\end{multline*}
and
\[
\PP[ (m,m) \in \cup_{i \le -b, b\le j \le 2b} \Gamma_{(i,i),(j,j)}] < Cb^{-2/3}.
\]
Here $C>0$ is a universal constant.
By symmetry, we have similar bounds when $b/2\le m \le b$.
Thus by summing over $0\le m \le b$ we get the conclusion. 
\end{proof}

\begin{lemma}  \label{lem:oc-one-pt}
For any $m\in\N$ we have $\PP[(m,m)\in \cup_{-m\le i \le 0, j \ge 2m} \Gamma_{(i,i),(j,j)} ] < C m^{-2/3}$, where $C>0$ is a universal constant.
\end{lemma}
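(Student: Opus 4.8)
The statement asks us to bound the probability that the diagonal point $(m,m)$ lies on \emph{some} point-to-point geodesic $\Gamma_{(i,i),(j,j)}$ with lower endpoint $(i,i)$, $-m\le i\le 0$, and upper endpoint $(j,j)$, $j\ge 2m$. The plan is to reduce this to a transversal-fluctuation-plus-coalescence count, exactly as in the sketch after Lemma~\ref{lem:num-int-dis}. First I would note that if $(m,m)\in\Gamma_{(i,i),(j,j)}$ for some $i,j$ in the stated ranges, then by the tree/coalescence structure of finite geodesics it suffices to control the geodesics whose endpoints lie near the extreme positions: by a standard ordering argument the union $\cup_{-m\le i\le 0,\,j\ge 2m}\Gamma_{(i,i),(j,j)}$ intersected with a window around $(m,m)$ is contained, with high probability, in the union of a few ``boundary'' geodesics. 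Concretely, I would first restrict $j$ to the range $2m\le j\le Cm$ using Lemma~\ref{lem:trans-dis-fini}: if $j$ is much larger than $m$, then for $\Gamma_{(i,i),(j,j)}$ (with $i$ of order $m$) to pass through $(m,m)$ — which sits at macroscopic fraction of the way from $\LL_i$ to $\LL_j$ only when $j=O(m)$, but here $m$ is close to the \emph{bottom} — one needs an atypically large transversal fluctuation, whose probability is superpolynomially small in the relevant scale. The same lemma lets me cap $|i|$ at order $m^{2/3}$ near the relevant scale; combined with a dyadic decomposition over the location of $i$ this handles the tail.

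The core estimate is then: for $i$ in a window of length $O(m^{2/3})$ near some base point and $j$ in a window of length $O(m^{2/3})$, bound $\PP[(m,m)\in \cup \Gamma_{(i,i),(j,j)}]$. Here I would invoke Proposition~\ref{prop:num-int-dis-weak} (or directly Lemma~\ref{lem:num-int-dis}): the expected number of points of $\LL_m$ (equivalently, the single candidate point $(m,m)$) lying on the union of such geodesics, over an $O(m^{2/3})$-window of endpoints, is $O(1)$ by coalescence. Since $(m,m)$ is a single fixed point and the relevant window of starting points on $\LL_0$ that could possibly route a geodesic through $(m,m)$ has length $O(m^{2/3})$, the probability that $(m,m)$ is hit is $O(1)\cdot (m^{2/3})^{-1} = O(m^{-2/3})$. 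Summing the dyadic pieces (each contributing a geometric factor) and the negligible large-$j$ tail yields the claimed $Cm^{-2/3}$.

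Let me outline the steps in order: (1) By Lemma~\ref{lem:trans-dis-fini}, show that with probability $1-e^{-cm^{c'}}$ (say), every geodesic $\Gamma_{(i,i),(j,j)}$ with $-m\le i\le 0$, $j\ge 2m$ passing through $(m,m)$ must have $i\ge -Cm$ actually — more to the point, must have $|i|\le C m$ and the ``return to the diagonal at height $m$'' forces both $|i-m|$ and a geometric constraint that push us to $2m\le j\le Cm$ up to an exponentially small error; absorb the bad event into the bound since $e^{-cm^{c'}}\ll m^{-2/3}$. (2) Dyadically decompose the range of $i$: for $i\in[m-2^{k+1}m,\,m-2^km]$, i.e.\ $|i|\in[2^k m, 2^{k+1}m]$ roughly, apply Lemma~\ref{lem:trans-dis-fini} to force the geodesic into a parallelogram of transversal width $O((2^k m)^{2/3})$ near $(m,m)$, so only starting points within $O((2^k m)^{2/3})$ of the relevant location matter. (3) For each dyadic block, apply Proposition~\ref{prop:num-int-dis-weak} (after splitting the $O((2^k m)^{2/3})$-window into unit-order-$n^{2/3}$ segments as in the proof of Lemma~\ref{lem:num-int-dis}) to bound the expected number of segments whose geodesic-union hits $(m,m)$; this gives probability $O((2^k m)^{2/3}/m)$ for block $k$, wait — one must be careful: the correct reading is $\PP[\text{hit}]\le \E[\#\{\text{starting points on }\LL_0\text{ routing through }(m,m)\}]/(\text{window length})$, and coalescence makes the numerator $O(1)$, so block $k$ contributes $O(1/(2^k m)^{1/3})\cdot$(something) — I would match exponents carefully so the sum over $k$ telescopes to $O(m^{-2/3})$. (4) Handle $j\ge Cm$ separately: $\PP[(m,m)\in\cup_{i\le 0,\,j\ge Cm}\Gamma_{(i,i),(j,j)}]$ is bounded by transversal fluctuation (Lemma~\ref{lem:trans-dis-fini}) since $(m,m)$ is then at a vanishing fraction of the way up, giving $e^{-c}$-type or $O(m^{-2/3})$ decay.

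The main obstacle I anticipate is step~(3): getting the exponents to line up so the dyadic sum actually converges to $Cm^{-2/3}$ rather than, say, $Cm^{-2/3}\log m$. The subtle point is that as the starting point recedes to distance $2^k m$ below the diagonal, the relevant window on $\LL_0$ that can route a geodesic through the \emph{fixed} point $(m,m)$ does \emph{not} grow like $(2^k m)^{2/3}$ in a way that hurts us — rather, the coalescence bound of Proposition~\ref{prop:num-int-dis-weak} says that regardless of endpoint window, only $O(1)$ ``highways'' cross $\LL_m$, so the probability that the particular point $(m,m)$ is one of the $O(1)$ crossing points within its local window is controlled by $O(1)/(\text{local window length on }\LL_m)$, which is $O(m^{-2/3})$ \emph{per dyadic block}, and the geometric gain must come from the transversal-fluctuation cost of reaching back to $(m,m)$ from a far starting point. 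I would want to double-check, perhaps by comparing with the structure of the proof of Lemma~\ref{lem:disc-max-out} which invokes this lemma, whether the intended bound per block is $C2^{-2k/3}m^{-2/3}$ (summable) — this is the natural guess since the transversal fluctuation of a geodesic of ``length'' $2^k m$ is $(2^k m)^{2/3}$ and demanding it pass through a point off its typical trajectory by the relevant amount costs a stretched-exponential, or at least a polynomial, factor. Cleanly packaging this combination of (i) the $O(1)$-coalescence count and (ii) the transversal-fluctuation cost of an atypical return is the crux; everything else is routine union bounds.
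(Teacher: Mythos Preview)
Your core idea---translation invariance to convert the one-point hitting probability into an expected count over an $m^{2/3}$-window on $\LL_m$, then bounding that count by $O(1)$ via coalescence (Lemma~\ref{lem:num-int-dis}) after using transversal fluctuation (Lemma~\ref{lem:trans-dis-fini}) to localize the starting points---is exactly what the paper does, and your sentence ``the probability that $(m,m)$ is hit is $O(1)\cdot(m^{2/3})^{-1}=O(m^{-2/3})$'' is the whole proof.

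Where you go astray is in wrapping this in a dyadic decomposition over $i$. In this lemma the range $-m\le i\le 0$ is already a \emph{single} macroscopic scale: the nearest starting point $(0,0)$ and the nearest ending point $(2m,2m)$ are both at distance of order $m$ from $(m,m)$, so the situation is as clean as it gets and no decomposition is needed. (Indeed, your proposed blocks $i\in[m-2^{k+1}m,\,m-2^k m]$ already cover all of $[-m,0]$ at $k=0$; higher $k$ lie outside the stated range.) The dyadic step you are reaching for lives in the \emph{calling} lemma, Lemma~\ref{lem:disc-max-out}, where the distance from the diagonal point to the endpoints genuinely varies over scales---not here. Consequently your worry about whether the per-block bound is $C2^{-2k/3}m^{-2/3}$ is self-inflicted.

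One more simplification you missed: the unbounded range $j\ge 2m$ costs nothing. If $(m,m)\in\Gamma_{(i,i),(j,j)}$ with $j\ge 2m$, then by the sub-geodesic property $(m,m)$ also lies on the restriction of that geodesic to the strip below $\LL_{2m}$, i.e.\ on some $\Gamma_{(i,i),q}$ with $q\in\LL_{2m}$. So you may replace ``$j\ge 2m$'' by ``upper endpoint anywhere on $\LL_{2m}$'' and apply Lemma~\ref{lem:num-int-dis} directly; no separate handling of large $j$ is required. The paper's proof is then literally: $\PP[(m,m)\text{ hit}]\le m^{-2/3}\E[X]$ with $X$ the number of hit points in a window of length $2m^{2/3}$ on $\LL_m$, and $\E[X]<C$ by combining Lemmas~\ref{lem:trans-dis-fini} and~\ref{lem:num-int-dis}.
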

This estimate is easier (than Lemmas \ref{lem:disc-max} and \ref{lem:disc-max-out}) because here $(m,m)$ is away from the endpoints of the considered geodesics $\Gamma_{(i,i),(j,j)}$, allowing for using translation invariance arguments.
\begin{proof}[Proof of Lemma \ref{lem:oc-one-pt}]
By translation invariance, we have that
\[
\PP[(m,m)\in \cup_{-m\le i \le 0, j \ge 2m} \Gamma_{(i,i),(j,j)} ] \le m^{-2/3} \E[X],
\]
where \[X=|\{ |k| \le m^{2/3}:  (m+k, m-k) \in \cup_{-m\le i \le 0, |i'| \le m^{2/3}, j' \in \Z} \Gamma_{(i+i', i-i'), (2m+j',2m-j')}  \}|.\]

Take $1<M\le 2m^{2/3}+1$, we now bound $\PP[X>M]$.
Let $\ell = M^{1/4}$.
Let $\cE_1$ be the event where the geodesic $\Gamma_{(-m + \lfloor 2\ell m^{2/3}\rfloor, -m - \lfloor 2\ell m^{2/3}\rfloor), (m + \lfloor 2\ell m^{2/3}\rfloor, m - \lfloor 2\ell m^{2/3}\rfloor)}$
always stays between the lines $\{(x + \ell m^{2/3}, x-\ell m^{2/3}): x\in \R\}$ and $\{(x + 3\ell m^{2/3}, x-3\ell m^{2/3}): x\in \R\}$;
and the geodesic $\Gamma_{(-m - \lfloor 2\ell m^{2/3}\rfloor, -m + \lfloor 2\ell m^{2/3}\rfloor), (m - \lfloor 2\ell m^{2/3}\rfloor, m + \lfloor 2\ell m^{2/3}\rfloor)}$ always stays between the lines $\{(x - \ell m^{2/3}, x+\ell m^{2/3}): x\in \R\}$ and $\{(x - 3\ell m^{2/3}, x+3\ell m^{2/3}): x\in \R\}$.
By Lemma \ref{lem:trans-dis-fini} we have $\PP[\cE_1] > 1-Ce^{-c\ell^3}$ for some constants $c,C>0$.

We let $\cE_2$ be the event where
\[
|\cup_{|i|<3\ell m^{2/3}, j\in\Z} \Gamma_{(i,-i), (2m+j,2m-j)} \cap \{(m+k, m-k): k \in \llbracket -m^{2/3}, m^{2/3}\rrbracket\}| \le  M.
\]
By Lemma \ref{lem:num-int-dis}, we have $\PP[\cE_2] > 1-Ce^{-cM^{1/12800}}$ for some constants $c,C>0$.

We note that under $\cE_1$, any geodesic $\Gamma_{(i+i', i-i'), (2m+j',2m-j')}$ for $-m\le i \le 0, |i'| \le m^{2/3}, j' \in \Z$ must always stay between the lines $\{(x - 3\ell m^{2/3}, x+3\ell m^{2/3}): x\in \R\}$ and $\{(x + 3\ell m^{2/3}, x-3\ell m^{2/3}): x\in \R\}$; so under $\cE_1\cap \cE_2$ we must have $X\le M$.
Then we have that $\E[X]$ is bounded by a constant (independent of $m$), so the conclusion follows.
\end{proof}

We finish proving Lemma \ref{lem:disc-max} using Lemma \ref{lem:disc-max-out}, and another dyadic decomposition.
\begin{proof}[Proof of Lemma \ref{lem:disc-max}]
Without loss of generality, we always assume that $a=0$.

We prove by induction in $b$.
The case where $b<A$ is obvious, so we assume that $b \ge A$, and that \eqref{eq:dis-W-exp} and \eqref{eq:dis-W-tail} hold for all smaller $b$.

We first prove \eqref{eq:dis-W-exp}.
Take $g$ to be the largest integer such that $2^g\le b/A$.
Let $\Lambda=\{(m2^k, (m+1)2^k): k\ge g, m\in\Z, 0\le m2^k\le (m+1)2^k\le b\}$, and $\partial\Lambda=\{(m2^g, (m+1)2^g): m\in\Z, 0\le m2^g\le (m+1)2^g\le b\}$.
We then have that
\[
W_{0,b} \le \sum_{(i,j)\in \Lambda} W_{i,j}'+2\max_{(i,j)\in \partial\Lambda} W_{i,j}.
\]
By Lemma \ref{lem:disc-max-out} we have
\[
\E[W_{0,b}] \le C_1 b2^{-2g/3} + 2\E[\max_{(i,j)\in \partial\Lambda} W_{i,j}],
\]
where $C_1>0$ is a universal constant.
By induction hypothesis, for each $(i,j)\in \partial\Lambda$ and $M>0$, we have
\[
\PP[W_{i,j} >AM2^{g/3}] < Ce^{-cM}.
\]
Thus we have
\[
\E[\max_{(i,j)\in \partial\Lambda}W_{i,j}] < c^{-1}(\log(2^{-g}b)+C)A 2^{g/3},
\]
and this implies that 
\[
\E[W_{0,b}]  < C_1 b2^{-2g/3} + c^{-1}(\log(2^{-g}b)+C)A 2^{g/3} < (2C_1 + c^{-1}(\log(2A)+C))A^{2/3}b^{1/3}
< A^{2/3+0.01}b^{1/3},
\]
where the last inequality is by requiring $A$ being large enough so that $A^{0.01}>2C_1 + c^{-1}(\log(2A)+C)$.

We next prove \eqref{eq:dis-W-tail}. Denote $r=\lfloor A^{1/10} \rfloor$.
For each $1\le i \le r$ we denote \[W_i=W_{\lfloor(i-1)(b+1)/r \rfloor,\lfloor i(b+1)/r\rfloor-1},\] and we then have $W_{0,b}\le \sum_{i=1}^r W_i$.
Thus
\[
\sum_{i=1}^r W_i \le \sum_{i=1}^r W_i\don[W_i\le Ab^{1/3}/r] + \sum_{i=1}^r W_i\don[W_i> Ab^{1/3}/r] \le Ab^{1/3} + \sum_{i=1}^r W_i\don[W_i> Ab^{1/3}/r].
\]
Denote $X_i = (A(b/r+2))^{-1/3}W_i\don[W_i> Ab^{1/3}/r]$. We also have that (from induction hypothesis) $\E[W_i]\le A^{2/3+0.01}(b/r+2)^{1/3}$; so for any $M>0$ by induction hypothesis we have
\[
\PP[X_i>M] < Ce^{-cM} \wedge 2A^{-1/3+0.01}r^{2/3}.
\]
Take $M_*>0$ such that $Ce^{-cM_*} = A^{-1/3+0.01}r^{2/3}$.
Take $a=c/10$. We then have
\[
\E[e^{aX_i}] = 1 + \int ae^{aM}\PP[X_i>M] dM
< 1 + 4e^{aM_*} A^{-1/3+0.01}r^{2/3},
\]
and
\[
\E[e^{a\sum_{i=1}^rX_i}] < (1 + 4e^{aM_*} A^{-1/3+0.01}r^{2/3})^r
<
e^{C(A^{-1/3+0.01}r^{2/3})^{1/2}r},
\]
where the last inequality is by 
\[
4e^{aM_*} A^{-1/3+0.01}r^{2/3} < C(A^{-1/3+0.01}r^{2/3})^{1/2},
\]
which is true by taking $C>10$.
Then 
\[
\PP[\sum_{i=1}^rX_i > r^{1/3}(M-1)/2] < e^{C(A^{-1/3+0.01}r^{2/3})^{1/2}r - ar^{1/3}(M-1)/2} < Ce^{-cM},
\]
where the last inequality is true by taking $A$ large enough so that $ar^{1/3}>4c$, and \\
$e^{C(A^{-1/3+0.01}r^{2/3})^{1/2}r} < C$, and assuming that $M>2$.
Note that $\sum_{i=1}^rX_i \le r^{1/3}(M-1)/2$ implies that $W_{0,b}\le \sum_{i=1}^r W_i \le AMb^{1/3}$, we have that the conclusion follows.
\end{proof}

\end{document}